\definecolor{darkblue}{rgb}{0.0, 0.0, 0.8}
\definecolor{darkred}{rgb}{0.8, 0.0, 0.0}
\definecolor{darkgreen}{rgb}{0.0, 0.8, 0.0}
\newcommand{\eps}{\varepsilon}
\newcommand{\diam}{\mathrm{diam}}
\newcommand{\R}{\mathbb{R}}
\newcommand{\dis}{\mathrm{dis}}
\newcommand{\supp}{\mathrm{supp}}
\newcommand{\dgh}{d_\mathcal{GH}}
\newcommand{\dgws}[1]{d_{\mathcal{GW},{#1}}^\mathrm{S}}
\newcommand{\dgw}[1]{d_{\mathcal{GW},{#1}}}
\newcommand{\dW}[1]{d_{\mathcal{W},{#1}}}
\newcommand{\dH}{d_\mathcal{H}}
\newcommand{\lc}{\left(}
\newcommand{\rc}{\right)}
\newcommand{\ls}{\left|}
\newcommand{\rs}{\right|}
\newcommand{\ms}{\mathcal{M}}
\newcommand{\ws}{\mathcal{M}^w}
\newcommand{\X}{\mathcal{X}}
\newcommand{\Y}{\mathcal{Y}}
\newtheorem{theorem}{Theorem}[section]
\newtheorem{proposition}[theorem]{Proposition}
\newtheorem{coro}[theorem]{Corollary}
\newtheorem{lemma}[theorem]{Lemma}
\newtheorem{remark}[theorem]{Remark}
\newtheorem{example}[theorem]{Example}
\newtheorem{claim}[theorem]{Claim}
\newtheorem{definition}[theorem]{Definition}
\newtheorem{conjecture}{Conjecture}
\providecommand{\keywords}[1]
{
  \small	
  \textbf{\textit{Keywords---}} #1
}
\begin{document}
\title{Characterization of Gromov-type geodesics}

\author{Facundo M\'emoli\thanks{Dept. of Mathematics and Dept. of Computer Science and Engineering, The Ohio State University, Columbus, OH, USA. \url{memoli@math.osu.edu}.} 
\and Zhengchao Wan\thanks{Dept. of Mathematics, The Ohio State University, Columbus, OH, USA. \url{wan.252@osu.edu}}}
\date{}
\maketitle

\begin{abstract}
    It is well known that, when endowed with the Gromov-Hausdorff distance $d_\mathcal{GH}$, the collection $\mathcal{M}$ of all isometry classes of compact metric spaces  is a {complete and separable} space. It is also known that $(\mathcal{M},d_\mathcal{GH})$ is a geodesic metric space, but there is no known {structural} characterization of geodesics in $\mathcal{M}$. 
    
    In this paper we provide two characterizations of geodesics in $\mathcal{M}$. We call a Gromov-Hausdorff geodesic $\gamma:[0,1]\rightarrow\mathcal{M}$ \emph{Hausdorff-realizable} if there exists a compact metric space $Z$ containing isometric copies of $\gamma\left(t\right)$ for each $t\in[0,1]$ such that the Hausdorff distance satisfies $d_\mathcal{H}^Z\left(\gamma\left(s\right),\gamma\left(t\right)\right)=d_\mathcal{GH}\left(\gamma\left(s\right),\gamma\left(t\right)\right)$ for all $s,t\in[0,1]$. In this way, $\gamma$ is actually a geodesic in the Hausdorff hyperspace of $Z$, and we call it a Hausdorff geodesic. We prove that in fact \emph{every} Gromov-Hausdorff geodesic is Hausdorff-realizable. Inspired by this characterization, we further elucidate a structural connection between Hausdorff geodesics and Wasserstein geodesics: we show that every Hausdorff geodesic is equivalent to a so-called \emph{Hausdorff displacement interpolation}. This equivalence allows us to establish that every Gromov-Hausdorff geodesic is \emph{dynamic}, a notion which we develop in analogy with dynamic optimal couplings in the theory of optimal transport. 
    
    Besides geodesics in $\mathcal{M}$, we also study geodesics on the collection $\mathcal{M}^w$ of isomorphism classes of compact metric measure spaces. Sturm constructed a family of Gromov-type distances on $\mathcal{M}^w$, which we denote $d_{\mathcal{GW},{p}}^\mathrm{S}$ (for $p\in[1,\infty)$), as an analogue of $d_\mathcal{GH}$, and proved that $\left(\mathcal{M}^w,d_{\mathcal{GW},{p}}^\mathrm{S}\right)$ is also a geodesic space. We define a notion of \emph{Wasserstein-realizable} $d_{\mathcal{GW},{p}}^\mathrm{S}$ geodesics in a sense similar to Hausdorff-realizable geodesics and show that the set of all \emph{Wasserstein-realizable} $d_{\mathcal{GW},{p}}^\mathrm{S}$ geodesics is dense in the set of all $d_{\mathcal{GW},{p}}^\mathrm{S}$ geodesics. We further identify a rich class of $d_{\mathcal{GW},{p}}^\mathrm{S}$ geodesics which are Wasserstein-realizable. 
\end{abstract}

\keywords{Gromov-Hausdorff distance, metric measure space, Sturm's Gromov-Wasserstein distance, geodesic, optimal transport, displacement interpolation} 

\newpage

\tableofcontents

\newpage
\section{Introduction}
The Gromov-Hausdorff distance $\dgh$ was independently introduced by Edwards \cite{edwards1975structure} and Gromov \cite{gromov1981groups} in order to quantify the difference between two given metric spaces. Let $(X,d_X)$ and $(Y,d_Y)$ be two compact metric spaces. Then, the Gromov-Hausdorff distance between them is defined as follows:
\begin{equation}\label{eq:dgh}
    \dgh\left(X,Y\right)\coloneqq\inf_{Z}\dH^Z\left(\varphi_X\left(X\right),\varphi_Y\left(Y\right)\right),
\end{equation}
where $\dH^Z$ denotes the Hausdorff distance between nonempty closed subsets of $Z$ (cf. \Cref{def:dH}) and the infimum is taken over all metric spaces $Z$ and isometric embeddings $\varphi_X:X\hookrightarrow Z$ and $\varphi_Y:Y\hookrightarrow Z$.
Let $\ms$ denote the collection of isometry classes of compact metric spaces. In this regard, it has been established that when endowed with $\dgh$, $\left(\ms,\dgh\right)$ is a complete and separable metric space \cite{burago2001course,petersen2006riemannian}. The geometry of $\left(\ms,\dgh\right)$ has been studied extensively recently \cite{ivanov2017local,ivanov2019isometry,klibus2018convexity}.

A metric measure space $\mathcal{X}=(X,d_X,\mu_X)$ is a metric space $(X,d_X)$ endowed with a Borel probability measure $\mu_X$. Denote by $\ws$ the collection of isomorphism classes (cf. \Cref{def:isomorphism}) of compact metric measure spaces with full support. By replacing the Hausdorff distance in \Cref{eq:dgh} with the $\ell^p$-Wasserstein distance (cf. \Cref{def:p-w-dist}) between probability measures, Sturm introduced the \emph{$L^p$-transportation distance} on $\ws$ as a counterpart to $\dgh$ \cite{sturm2006geometry,sturm2012space} and in this paper, we denote by $\dgws{p}$ (for each $p\in[1,\infty]$) his $L^p$-transportation distance. In \cite{memoli2007use,memoli2011gromov} the first author introduced the Gromov-Wasserstein distance $\dgw{p}$ (for each $p\in[1,\infty]$) on $\ws$ based on an alternative representation of $\dgh$ (cf. \Cref{thm:dgh-dual}). $\dgws{p}$ and $\dgw{p}$ are both legitimate metrics on $\ws$ which generate the same topology, yet they do not coincide in general.

In recent years, the Gromov-Hausdorff and the Gromov-Wasserstein distances have found applications in shape analysis \cite{memoli2004comparing,memoli2005theoretical,memoli2007use,bronstein2008numerical,bronstein2010gromov}, machine learning \cite{peyre2019computational,alvarez2018gromov,titouan2019optimal,bunne2019learning,chowdhury2020generalize,xu2019scalable,vayer2019sliced,le2019fast}, biology \cite{liebscher2018new,demetci2020gromov, cao2020manifold}, and network analysis \cite{hendrikson2016using,chowdhury2019gromov, chowdhury2019metric}. It is worth noting that the three distances $\dgh$, $\dgws{p}$ and $\dgw{p}$ and their variants have been applied in topological data analysis \cite{chazal2009gromov,chazal2014persistence,blumberg2014robust,memoli2019quantitative,chowdhury2019gromov,blumberg2020stability,rolle2020stable} to establish stability results of invariants. As such, clarifying the properties of these distances, especially with respect  to the structure of their associated geodesics, may help develop suitable concomitant statistical methods  (see \cite{sturm2012space,chowdhury2020gromov}).

\subsection{Geodesic properties of $(\ms,\dgh)$,  $\left(\ws,\dgws{p}\right)$  and  $\left(\ws,\dgw{p}\right)$}
The fact that $\left(\ws,\dgw{p}\right)$ is a geodesic space for each $p\in[1,\infty)$ was proved by K.T. Sturm in \cite[Theorem 3.1]{sturm2012space}.\footnote{Sturm proved the geodesic property for the collection $\ws_p$ larger than $\ws$ for each $p\in[1,\infty]$ which contains complete metric measure spaces with finite $\ell^p$-size, a generalization of the notion of diameter. However, the same technique applies to the case of $\ws$ without any changes.} His proof is constructive in that a special type of geodesics, which in this paper we call \emph{straight-line $\dgw p$ geodesics}, was identified for any two given metric measure spaces (cf. \cite[Theorem 3.1]{sturm2012space}). Furthermore, Sturm provided a complete characterization of geodesics in $(\ws,\dgw p)$ for each $p\in(1,\infty)$ by proving that \emph{every} geodesic in $\left(\ws,\dgw{p}\right)$ is a straight-line $\dgw p$ geodesic.

The geodesic property of $(\ms,\dgh)$ was first proved in \cite{ivanov2016gromov} via the so called \emph{mid-point criterion} (cf. \Cref{thm:mid-pt-geo}) which did not yield explicit geodesics. Later in \cite{chowdhury2018explicit}, the fact that $(\ms,\dgh)$ is geodesic was reproved by identifying the \emph{straight-line $\dgh$ geodesics} (cf. \Cref{thm:str-line-geo}) which are analogous to the straight-line $\dgw p$ geodesics constructed in \cite{sturm2012space}. In {\cite{sturm2020email} Sturm proved that $\left(\ws,\dgws p\right)$ (for each $p\in[1,\infty)$) is geodesic by constructing what we call the \emph{straight-line $\dgws p$ geodesic} between any two metric measure spaces, which is a slight variant of the straight-line $\dgw p$ geodesic with respect to $\dgw p$ (cf. \Cref{thm:straight-line-gw-geo}).} 

Unlike the case of $\left(\ws,\dgw{p}\right)$ where the only geodesics are the straight-line $\dgw p$ geodesics, neither straight-line $\dgh$ geodesics nor straight-line $\dgws p$ geodesics completely characterize geodesics in $(\ms,\dgh)$ and $\left(\ws,\dgws p\right)$, respectively. Indeed, the authors of \cite{chowdhury2018explicit} discovered \emph{deviant} geodesics in $(\ms,\dgh)$, i.e., geodesics which are not straight-line $\dgh$ geodesics. Following a strategy similar to the one used in \cite{chowdhury2018explicit}, we discover non-straight-line $\dgws p$ geodesics in $\left(\ws,\dgws p\right)$ and present our construction in \Cref{app:d-b-geodesic} of this paper. This inspires us to obtain better understanding of geodesics in $(\ms,\dgh)$ and $\left(\ws,\dgws{p}\right)$,

\subsection{Our results}

In this paper we elucidate characterization results for geodesics in $(\ms,\dgh)$ and $\left(\ws,\dgws{p}\right)$. These characterizations accommodate both straight-line $\dgh$ / $\dgws p$ geodesics and the above mentioned deviant geodesics.

\paragraph{Hausdorff-realizable Gromov-Hausdorff geodesics.} {In this paper, the terms ``Gromov-Hausdorff geodesics'' and ``$\dgh$ geodesics'' are used interchangeably when referring to geodesics in $(\ms,\dgh)$.}

Given a metric space $X$, its Hausdorff hyperspace $\mathcal{H}\left(X\right)$ is the set of all nonempty bounded closed subsets of $X$ endowed with the Hausdorff distance $\dH^X$ (cf. \Cref{def:dH}). Blaschke's compactness theorem (see for example \cite[Theorem 7.3.8]{burago2001course}) states that $\mathcal{H}\left(X\right)$ is compact whenever $X$ is compact. Furthermore, it is known that if $X$ is a compact geodesic space then so is $\mathcal{H}\left(X\right)$ \cite{bryant1970convexity,serra1998hausdorff}. We call a geodesic in the Hausdorff hyperspace of some metric space a \emph{Hausdorff geodesic}. Hausdorff geodesics are easier to study than Gromov-Hausdorff geodesics since the definition of $\dgh$ relies on finding the infimum of Hausdorff distances over certain metric embeddings (cf. \Cref{def:dGH}). This inspires us to relate Gromov-Hausdorff geodesics with Hausdorff geodesics.

It was first observed and proved in \cite{ivanov2019hausdorff} that every straight-line $\dgh$ geodesic can be realized as a Hausdorff geodesic with respect to some ambient metric space (cf. \Cref{prop:hausdorff-geodesic-straight-line}). We further show that under mild conditions, such a metric space construction is actually compact and thus the corresponding straight-line $\dgh$ geodesic can be realized as a geodesic in the Hausdorff hyperspace of certain compact metric space (cf. \Cref{prop:strline-hausdorff}). We call any such a Gromov-Hausdorff geodesic \emph{Hausdorff-realizable}. It turns out that Hausdorff-realizability is a universal phenomenon:

\begin{restatable}{thm}{thmhreal}\label{thm:main-h-realizable}
Every Gromov-Hausdorff geodesic is Hausdorff-realizable.
\end{restatable}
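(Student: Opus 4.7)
The plan is to approximate $\gamma$ by piecewise straight-line $\dgh$-geodesics $\eta^n$, each of which is Hausdorff-realizable by \Cref{prop:strline-hausdorff}, and then extract a $\dgh$-limit. Set $L \coloneqq \dgh(\gamma(0),\gamma(1))$ (the case $L=0$ is trivial). For $n \geq 1$ and $t_i^n = i/n$, define $\eta^n : [0,1] \to \ms$ by setting $\eta^n(t_i^n) = \gamma(t_i^n)$ and interpolating on each $[t_i^n, t_{i+1}^n]$ by a straight-line $\dgh$-geodesic between the endpoints. The total length of $\eta^n$ equals $L$, so $\eta^n$ is itself a $\dgh$-geodesic from $\gamma(0)$ to $\gamma(1)$, and the triangle inequality yields $\dgh(\eta^n(t),\gamma(t)) \leq 2L/n$ for every $t$.

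Next I would realize each $\eta^n$ in a compact space. \Cref{prop:strline-hausdorff} provides a compact $V_i^n$ Hausdorff-realizing the straight-line segment on $[t_i^n, t_{i+1}^n]$; gluing the $V_i^n$'s along the common copies of $\gamma(t_i^n)$ yields a compact $W^n$ containing an isometric copy of every $\eta^n(t)$. Hausdorff distances of points lying in a single piece match $|s-t|L$ by construction; the triangle inequality for $\dH^{W^n}$ then supplies the $\leq$ bound for arbitrary $s,t$, while the universal lower bound $\dH^{W^n} \geq \dgh$ combined with $\eta^n$ being a $\dgh$-geodesic forces equality. I would then establish uniform total boundedness of $\{W^n\}$: since $\gamma([0,1])$ is compact in $(\ms,\dgh)$, the covering numbers $N_\eps(\gamma(t))$ are uniformly bounded in $t$; since consecutive $\gamma(t_i^n)$'s have mutual Hausdorff distance $L/n$ in $W^n$, only about $\lceil L/\eps\rceil$ of them need to be $\eps$-netted in order to $O(\eps)$-cover all of $W^n$, yielding a net of cardinality independent of $n$.

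Gromov's precompactness theorem then delivers a subsequence $W^{n_k}$ converging in $\dgh$ to some compact $W$. Fix a countable dense $T \subset [0,1]$ with $0,1 \in T$. Through the $\eps_k$-isometries witnessing $W^{n_k} \to W$, transport each subset $\eta^{n_k}(t) \subset W^{n_k}$ to an approximate subset of $W$; by Blaschke compactness of the hyperspace $\mathcal{H}(W)$ and a diagonal argument over $T$, extract Hausdorff limits $A_t \in \mathcal{H}(W)$ for every $t \in T$. Since $\eta^{n_k}(t) \to \gamma(t)$ in $\dgh$ and Hausdorff convergence implies $\dgh$-convergence, uniqueness of $\dgh$-limits forces $A_t \cong \gamma(t)$, and continuity of $\dH^W$ gives $\dH^W(A_s,A_t) = |s-t|L$ for all $s,t \in T$. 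For $t \in [0,1]\setminus T$, the family $(A_s)_{s \in T}$ is Hausdorff-Cauchy as $s \to t$, so by compactness of $\mathcal{H}(W)$ the limit $A_t \in \mathcal{H}(W)$ exists and is isometric to $\gamma(t)$; setting $Z \coloneqq \overline{\bigcup_{t \in [0,1]} A_t} \subset W$ gives a compact space containing each $A_t \cong \gamma(t)$ isometrically with all pairwise Hausdorff distances equal to the corresponding $\dgh$-distances.

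The main obstacle is the extraction step in the third paragraph: the sets $\eta^{n_k}(t)$ live in varying ambient spaces $W^{n_k}$, so producing a coherent family $(A_t)_{t \in T}$ inside $W$ requires careful use of the approximate isometries underlying $\dgh$-convergence, after which one must verify that the limits are simultaneously isometric to $\gamma(t)$ and positioned so as to realize the correct pairwise Hausdorff distances. A secondary but delicate technicality is the uniform total boundedness of $\{W^n\}$, which fails under naive bounds and depends on exploiting compactness of $\gamma([0,1])$ inside $\ms$.
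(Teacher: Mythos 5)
Your proposal is correct, and its overall architecture coincides with the paper's: approximate $\gamma$ by Hausdorff-realizable geodesics that agree with $\gamma$ on a finite partition, then extract a limit realization by combining Gromov's pre-compactness theorem (\Cref{thm:pre-compact}) applied to the realizing ambient spaces with a compactness argument in the hyperspace (your Blaschke-plus-diagonal extraction over a countable dense $T$ is exactly the content of the paper's generalized Arzel\`a--Ascoli theorem, \Cref{thm:general-AA}). The two proofs differ in how the approximants are built and in the generality of the limiting step. For the density step, the paper realizes the finite family $\{\gamma(t_i)\}$ inside one compact space with all pairwise Hausdorff distances equal to the Gromov-Hausdorff distances via the Urysohn space (\Cref{coro:finite-seq-h-dgh}), passes to $\mathcal{W}_1$ of that space to make it geodesic (\Cref{thm:W-geodesic}, \Cref{thm:hgeo}), and concatenates abstract Hausdorff geodesics; you instead interpolate by straight-line $\dgh$ geodesics and glue the explicit realization spaces of \Cref{prop:strline-hausdorff} along the common slices $\gamma(t_i^n)$. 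Your route avoids the Urysohn machinery at this stage but obliges you to verify that the gluing is isometric on each piece and that the glued $W^n$ realizes all pairwise Hausdorff distances; your reduction of the latter to the triangle inequality plus the universal bound $\dH^{W^n}\geq\dgh$ is exactly right. For the limiting step, the paper proves the stronger statement that $\Gamma_\mathcal{H}$ is closed in $(\Gamma,d_\infty)$, which forces the bookkeeping with geodesic hulls in \Cref{clm:key-in-main}; you only prove convergence of your specific, well-controlled sequence, which lets you bound $\diam(W^n)$ and $\mathrm{cov}_\eps(W^n)$ directly --- and your key observation that only $O(L/\eps)$ of the slices need separate $\eps$-nets is precisely the paper's trick too. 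The two technical points you flag are both handled by tools already in the paper: the ``varying ambient spaces'' issue is resolved either by your $\eps_k$-isometry transport into $W$ or, more cleanly, by embedding all $W^{n_k}$ and $W$ into a common Polish space so that the convergence becomes Hausdorff convergence (\Cref{lm:ghconv=hconv}), after which \Cref{thm:blaschke} and \Cref{thm:general-AA} apply verbatim.
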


\paragraph{Wasserstein-realizable Gromov-Wasserstein geodesics.} Given a metric space $X$, the $\ell^p$-\emph{Wasserstein hyperspace} $\mathcal{W}_p\left(X\right)$ is the set of all Borel probability measures on $X$ with finite moments and endowed with the \emph{$\ell^p$-Wasserstein distance} $d_{\mathcal{W},p}$ (cf. \Cref{def:p-w-dist}). Sturm's $L^p$-transportation distance $\dgws{p}$ is defined by replacing the Hausdorff distance term in the definition of $\dgh$ with an $\ell^p$-Wasserstein distance term (cf. \Cref{def:dGW}). For the sake of symmetry of nomenclature, we call the $L^p$-transportation distance $\dgws{p}$ the \emph{Sturm's $\ell^p$-Gromov-Wasserstein distance}. Since we only focus on $\dgws{p}$ instead of $\dgw{p}$ in this paper, we also simply call $\dgws{p}$ the $\ell^p$-Gromov-Wasserstein distance without causing any confusion. See also \Cref{fig:gh vs gw} for an illustration. {Then, in this paper, the terms ``$\ell^p$-Gromov-Wasserstein geodesics'' and ``$\dgws{p}$ geodesics'' are used interchangeably when referring to geodesics in $\lc\ws,\dgws p\rc$.}

\begin{figure}[htb]
	\centering		\includegraphics[width=0.2\textwidth]{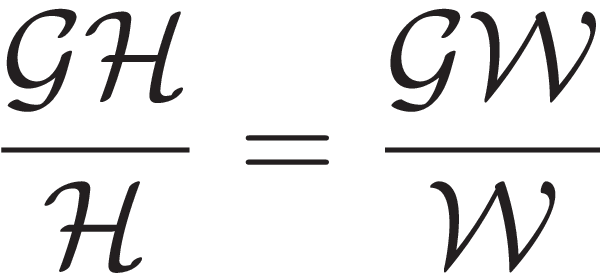}
	\caption{{\textbf{Nomenclature.} The Gromov-Hausdorff distance between two metric spaces is defined via infimizing Hausdorff distances on certain ambient spaces (cf. \Cref{def:dGH}). This procedure of infimizing some quantities over certain ambient spaces is called ``gromovization" in \cite{memoli2011gromov}, e.g., the Gromov-Hausdorff distance is the gromovization of the Hausdorff distance. In this way, Sturm's $L^p$-transportation distance is the gromovization of the Wasserstein distance and hence we call it the $\ell^p$-Gromov-Wasserstein distance in our paper. The figure illustrates the respective gromovization processes for $\dgh$ and $\dgws p$.}} \label{fig:gh vs gw}
\end{figure}

Inspired by \Cref{thm:main-h-realizable}, we analogously define the so-called $\ell^p$-Wasserstein-realizable geodesics in $\lc \ws,\dgws p\rc$ (cf. \Cref{def:w-real-geo}). {For $p\in[1,\infty)$, denote $\Gamma^p$ the collection of all $\dgws{p}$ geodesics. Let $d_{\infty,p}$ be the uniform metric on $\Gamma^p$, i.e., 
$$d_{\infty,p}\left(\gamma_1,\gamma_2\right)\coloneqq\sup_{t\in[0,1]}\dgws{p}\left(\gamma_1\left(t\right),\gamma_2\left(t\right)\right)$$
for any $\gamma_1,\gamma_2\in\Gamma^p$. Let $\Gamma_\mathcal{W}^p$ denote the subset of $\Gamma^p$ consisting of all Wasserstein-realizable geodesics. Then, by applying  techniques and strategies similar to those used in proving \Cref{thm:main-h-realizable}, we obtain that $\Gamma^p_\mathcal{W}$ is a dense subset of $\Gamma^p$ (cf. \Cref{prop:main-w-real-dense}).
}

Though we conjecture that every $\dgws{p}$ geodesic is Wasserstein-realizable, we have not been able to establish the closedness of $\Gamma_\mathcal{W}^p$ and the conjecture still remains open. We then turn to consider geodesics satisfying certain constraints and eventually, in \Cref{thm:bdd-geo-w-real}, we are able to identify a certain type of $\dgws{p}$ geodesics called \emph{Hausdorff-bounded} (cf. \Cref{def:hausdorff-bdd}) which turn out to be Wasserstein-realizable. For any two metric measure spaces $\gamma(s)=(X_s,d_s,\mu_s)$ and $\gamma(t)=(X_t,d_t,\mu_t)$ along a given Hausdorff-bounded geodesic $\gamma$, the Gromov-Hausdorff distance $\dgh(X_s,X_t)$ between their underlying metric spaces is bounded above by the Gromov-Wasserstein distance $f\lc\dgws{p}(\gamma(s),\gamma(t))\rc$ through some suitable function $f$. Such control over Gromov-Hausdorff distances allows us to exploit the techniques used for proving \Cref{thm:main-h-realizable} in order to also establish Wasserstein-realizability.

\begin{restatable}{thm}{thmbddgeo}\label{thm:bdd-geo-w-real}
Given $p\in[1,\infty)$, every Hausdorff-bounded $\ell^p$-Gromov-Wasserstein geodesic is $\ell^p$-Wasserstein-realizable.
\end{restatable}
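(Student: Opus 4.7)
The plan is to adapt the construction behind \Cref{thm:main-h-realizable} (and the density argument yielding \Cref{prop:main-w-real-dense}) by using the Hausdorff-bounded condition to supply the compactness that was automatically present in the purely metric setting. Hausdorff-boundedness of $\gamma$ means that $t\mapsto X_t$ is a continuous curve in $(\ms,\dgh)$; together with compactness of $[0,1]$, this forces $\{X_t\}_{t\in[0,1]}$ to be compact in $\ms$ and, in particular, uniformly totally bounded. This is precisely the input that makes the GH-realizability argument portable to the measure-valued setting.

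First, I would discretize time by a partition $0=t_0^n<t_1^n<\cdots<t_n^n=1$ of mesh $1/n$, and for each consecutive pair choose an optimal $\dgws p$ coupling, which supplies a pseudo-metric on $X_{t_i^n}\sqcup X_{t_{i+1}^n}$ together with a coupling measure realizing the Sturm distance. Concatenating these pairwise optimal couplings (and quotienting by the zero-pseudo-distance equivalence) produces a compact metric space $Z_n$ containing isometric copies $\iota_i^n\colon X_{t_i^n}\hookrightarrow Z_n$ and pushforward measures $\widetilde\mu_i^n\coloneqq\lc\iota_i^n\rc_\ast\mu_{t_i^n}\in\fwp\lc Z_n\rc$. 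Since $\gamma$ is a $\dgws p$ geodesic, the triangle inequality saturates along the partition, and the usual sandwich between the Sturm lower bound and the in-ambient-$Z_n$ upper bound yields $\dW p^{Z_n}\lc\widetilde\mu_i^n,\widetilde\mu_j^n\rc=\dgws p\lc\gamma\lc t_i^n\rc,\gamma\lc t_j^n\rc\rc$ for \emph{all} pairs $i,j$.

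Next, I would pass to the limit $n\to\infty$. The Hausdorff bound $\dgh\lc X_s,X_t\rc\leq f\lc\dgws p\lc\gamma(s),\gamma(t)\rc\rc$ together with compactness of $[0,1]$ lifts to uniform total boundedness of the $Z_n$; Gromov's precompactness theorem then produces, along a subsequence, a compact limit space $Z$ in $\dgh$. Simultaneously, Prokhorov's theorem plus the standard continuity of the Wasserstein distance under isometric embeddings let me transport the $\widetilde\mu_i^n$ to $\ell^p$-Wasserstein limits $\widetilde\mu_i^\infty\in\fwp\lc Z\rc$ whose pairwise Wasserstein distances in $Z$ equal $\dgws p\lc\gamma\lc t_i\rc,\gamma\lc t_j\rc\rc$. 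A standard diagonal/densification argument over a sequence of refining partitions, combined with continuity of $t\mapsto X_t$ in $\dgh$ provided by Hausdorff-boundedness, extends the construction to a continuous family $\{\widetilde\mu_t\}_{t\in[0,1]}\subset\fwp\lc Z\rc$ realizing $\dgws p$ Wasserstein-isometrically, which is exactly the desired Wasserstein-realizability.

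The main obstacle will be the simultaneous passage to the limit in metric and measure. The ambient spaces $Z_n$ only converge up to $\dgh$, and one must verify that, after embedding into a common realization, the pushforward measures do not leak mass onto points outside the limiting copies of the $X_{t_i}$ and that the Wasserstein distances are preserved in the limit. This is precisely where Hausdorff-boundedness is indispensable: without it, the supports $\iota_i^n\lc X_{t_i^n}\rc$ could drift arbitrarily as $n$ grows, $\{X_t\}$ need not be precompact in $\ms$, and no ambient compact $Z$ could host all the $X_t$ simultaneously. With the hypothesis in hand, the remaining work reduces to a careful bookkeeping of Gromov-Hausdorff and Wasserstein convergence along the discretizations, together with the continuity of the Wasserstein metric under isometric embeddings of the underlying spaces.
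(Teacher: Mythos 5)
Your proposal shares the paper's skeleton almost exactly: discretize $[0,1]$, realize each finite collection of slices in a common compact ambient space with all pairwise Wasserstein distances equal to the corresponding $\dgws p$ distances (your gluing of pairwise optimal metric couplings plus the triangle-inequality sandwich is a valid substitute for the paper's route through the Urysohn space in \Cref{coro:finite-seq-w-dgw}), and then use Hausdorff-boundedness to make the sequence of ambient spaces uniformly totally bounded so that Gromov's pre-compactness theorem (\Cref{thm:pre-compact}) applies. Your observation that Hausdorff-boundedness makes $t\mapsto X_t$ continuous in $(\ms,\dgh)$, hence $\{X_t\}_{t\in[0,1]}$ compact and uniformly totally bounded, is correct and actually streamlines the covering-number estimate: the paper instead fixes a dyadic reference partition $T^M$ and transfers $\frac{\eps}{2}$-nets of the reference slices to all of $Y^n$ via the bound $\dH^{Y^n}\leq f\circ\dW{1}^{Y^n}$ applied \emph{inside} $Y^n$ --- note that this in-ambient use of the Hausdorff bound (not merely the $\dgh$ consequence) is what you need to make ``lifts to uniform total boundedness of the $Z_n$'' precise, and with mesh-$1/n$ partitions you must either pick reference slices from within each $T^n$ or switch to nested (dyadic) partitions. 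Where you genuinely diverge is the limit passage: the paper interpolates the discrete slices by actual Wasserstein geodesics (linear interpolation in $\mathcal{W}_1$ for $p=1$, and geodesics in $\mathcal{W}_p(\mathcal{W}_1(Y^n))$ for $p>1$, via \Cref{lm:W-geodesic} and \Cref{thm:wp-geo}), realizes the $\dgh$-convergence $Y^n\to X$ as Hausdorff convergence in one Polish space (\Cref{lm:ghconv=hconv}), upgrades it to $\dH^{\mathcal{W}_1(Z)}$-convergence of the hyperspaces via \Cref{thm:W-equal}, and extracts a limit curve with the generalized Arzel\`a--Ascoli theorem (\Cref{thm:general-AA}); you instead take pointwise weak limits of the discrete measures over a dense set of times and extend by completeness. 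Your route buys a uniform treatment of all $p\in[1,\infty)$ (no detour through $\mathcal{W}_1(Y^n)$ to restore geodesicity), at the cost of replacing one compactness argument (Arzel\`a--Ascoli for curves) by another (Prokhorov plus diagonalization plus a density extension).

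Two steps in your limit passage are asserted rather than argued and are exactly where the work lies. First, Prokhorov requires the measures $\widetilde\mu_i^n$ to live on a single space, so you must first embed all the $Z_n$ and the limit space into a common Polish ambient space realizing the $\dgh$-convergence as $\dH$-convergence (this is \Cref{lm:ghconv=hconv}, which uses the homogeneity of the Urysohn space); on the resulting compact set $\overline{\cup_n Z_n}$ weak convergence does coincide with $\ell^p$-Wasserstein convergence, so the pairwise distances pass to the limit. Second, in the density extension you get, for irrational $t$, a Cauchy (hence convergent) sequence $\widetilde\mu_{t_k}$ in $\mathcal{W}_p(Z)$; you still must check that the limit measure, restricted to its support, is isomorphic to $\gamma(t)$ --- this follows because $(\supp\widetilde\mu_{t_k},d_Z,\widetilde\mu_{t_k})\cong_w\gamma(t_k)$ and $\dgws p\leq\dW p^Z$, so the mm-space defined by the limit is the $\dgws p$-limit of $\gamma(t_k)$, i.e.\ $\gamma(t)$. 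With those two points supplied, your argument closes and yields the same conclusion as the paper's.
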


It turns out that both the straight-line $\dgws p$ geodesics introduced in \cite{sturm2020email} and the deviant geodesics constructed in \Cref{app:d-b-geodesic} are Hausdorff-bounded and thus Wasserstein-realizable (cf. \Cref{prop:strline-gw-h-bdd} and \Cref{rmk:deviant and branching GW}). Then, \Cref{thm:bdd-geo-w-real} indeed characterizes a large class of Gromov-Wasserstein geodesics as Wasserstein geodesics.

\paragraph{Dynamic (Gromov-)Hausdorff geodesics.} Since every Gromov-Hausdorff geodesic is a Hausdorff geodesic, we turn to study properties of Hausdorff geodesics. In the last part of the paper, we devote to draw connection between Hausdorff geodesics and Wasserstein geodesics. In particular, we establish a counterpart to the theory of displacement interpolation for Hausdorff geodesics.

For $p>1$, a geodesic $\gamma:[0,1]\rightarrow \mathcal{W}_p(X)$ of probability measures in $\mathcal{W}_p\left(X\right)$ is also called a \emph{displacement interpolation} since it is characterized by a probability measure on the set of all geodesics in $X$ itself (see \cite[Chapter 7]{villani2008optimal} and \Cref{thm:dis-int} for a precise statement). More precisely, let $\Gamma([0,1],X)$ denote the set of all geodesics in $X$, then there exists a probability measure $\Pi\in\mathcal{P}(\Gamma([0,1],X))$ such that $\gamma(t)=(e_t)_\#(\Pi)$ for each $t\in[0,1]$, where $e_t:\Gamma([0,1],X)\rightarrow X$ is the evaluation map at $t$ sending any function $\gamma:[0,1]\rightarrow X$ to $\gamma(t)$.

Now, we state an analogous characterization for Hausdorff geodesics. Let $X\in\ms$ and $A,B\subseteq X$ be two closed subsets. Let $\rho\coloneqq \dH^X\left(A,B\right)>0$. We define
$$\mathfrak{L}\left(A,B\right)\coloneqq\left\{\gamma:[0,1]\rightarrow X:\,\gamma(0)\in A,\,\gamma(1)\in B\text{ and }\forall s,t\in[0,1],\,d_X\left(\gamma(s),\gamma(t)\right)\leq|s-t|\,\rho\right\}. $$
{In other words, $\mathfrak{L}\left(A,B\right)$ is the set of all $\rho$-Lipschitz curves (cf. \Cref{sec:geo}) in $X$ starting in $A$ and ending in $B$.}
We call a closed subset $\mathfrak{D}\subseteq\mathfrak{L}\left(A,B\right)$ a \emph{Hausdorff displacement interpolation} between $A$ and $B$ if $e_0\left(\mathfrak{D}\right)=A$ and $e_1\left(\mathfrak{D}\right)=B$, where $e_0$ and $e_1$ are evaluation maps at $t=0$ and $t=1$, respectively. Then, we have the following characterization of Hausdorff geodesics via the Hausdorff displacement interpolation:

\begin{restatable}{thm}{thmdynhaus}\label{thm:main-dyn-hausdorff}
Given a compact metric space $X$, let $\gamma:[0,1]\rightarrow \mathcal{H}\left(X\right)$ be any map. We assume that $\rho\coloneqq\dH^X\left(\gamma\left(0\right),\gamma\left(1\right)\right)>0$. Then, the following two statements are equivalent:
\begin{enumerate}
    \item $\gamma$ is a Hausdorff geodesic;
    \item there exists a \emph{nonempty} closed subset $\mathfrak{D}\subseteq\mathfrak{L}\left(\gamma\left(0\right),\gamma\left(1\right)\right) $ such that $\gamma\left(t\right)=e_t\left(\mathfrak{D}\right)$ for all $t\in[0,1]$.
\end{enumerate}
\end{restatable}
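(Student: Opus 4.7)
The plan is to prove the two directions separately. Direction $(2) \Rightarrow (1)$ is short: for any $s,t \in [0,1]$ and any $x \in \gamma(s) = e_s(\mathfrak{D})$, pick $\sigma \in \mathfrak{D}$ with $\sigma(s) = x$, so $\sigma(t) \in \gamma(t)$ lies within $|s-t|\,\rho$ of $x$; the symmetric argument starting from a point of $\gamma(t)$ yields $\dH^X(\gamma(s),\gamma(t)) \leq |s-t|\,\rho$, and since $\dH^X(\gamma(0),\gamma(1)) = \rho$, the triangle inequality forces equality for every $s,t$, so $\gamma$ is a Hausdorff geodesic.

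For direction $(1) \Rightarrow (2)$, the canonical candidate is
\[
\mathfrak{D} \coloneqq \left\{ \sigma \in \mathfrak{L}(\gamma(0),\gamma(1)) \,:\, \sigma(t) \in \gamma(t) \text{ for every } t \in [0,1] \right\}.
\]
Closedness of $\mathfrak{D}$ in the uniform topology is immediate from the closedness of each $\gamma(t)$ and the stability of the $\rho$-Lipschitz condition under uniform limits, so the crux is to prove $e_t(\mathfrak{D}) = \gamma(t)$ for every $t$, which simultaneously forces $\mathfrak{D} \neq \emptyset$. Given $t_0 \in [0,1]$ and $x_0 \in \gamma(t_0)$, I would construct $\sigma \in \mathfrak{D}$ with $\sigma(t_0) = x_0$ by a discrete-to-continuous limiting procedure. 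For each $n \geq 1$ form the partition $P_n \coloneqq \{t_0\} \cup \{k/2^n : 0 \leq k \leq 2^n\}$, enumerated as $0 = t_0^n < \cdots < t_{m_n}^n = 1$; starting from $x_0$ at $t_0$ and working outward in both directions, use the identity $\dH^X(\gamma(t_k^n),\gamma(t_{k+1}^n)) = (t_{k+1}^n - t_k^n)\,\rho$ together with compactness of each $\gamma(\cdot)$ as a closed subset of the compact $X$ to select $\sigma_n(t_{k+1}^n) \in \gamma(t_{k+1}^n)$ with $d_X(\sigma_n(t_k^n),\sigma_n(t_{k+1}^n)) \leq (t_{k+1}^n - t_k^n)\,\rho$. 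Extend $\sigma_n$ to a step function on $[0,1]$ via $\sigma_n(t) \coloneqq \sigma_n(t_k^n)$ for $t \in [t_k^n, t_{k+1}^n)$, which satisfies $d_X(\sigma_n(s),\sigma_n(t)) \leq (|s-t| + 2^{-n})\,\rho$.

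Next, fix a countable dense $D \subseteq [0,1]$ containing $t_0$, use compactness of $X$ and a diagonal argument to extract a subsequence converging pointwise on $D$ to a map $\widetilde\sigma: D \to X$, pass the almost-Lipschitz estimate to the limit to see that $\widetilde\sigma$ is $\rho$-Lipschitz on $D$, and extend by uniform continuity (using completeness of $X$) to a $\rho$-Lipschitz map $\sigma: [0,1] \to X$. To verify $\sigma(t) \in \gamma(t)$ for every $t$, observe that $\sigma_n(t) \in \gamma(t_k^n)$ for some $t_k^n$ within $2^{-n}$ of $t$, so $d_X(\sigma_n(t),\gamma(t)) \leq 2^{-n}\,\rho$ by the Hausdorff-Lipschitzness of $\gamma$; taking limits first along the subsequence for $q \in D$ and then along $D \ni q \to t$, and invoking closedness of $\gamma(t)$, gives $\sigma(t) \in \gamma(t)$. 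Thus $\sigma \in \mathfrak{D}$ with $\sigma(t_0) = x_0$, establishing $\gamma(t_0) \subseteq e_{t_0}(\mathfrak{D})$; the reverse inclusion is built into the definition of $\mathfrak{D}$.

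\textbf{Main obstacle.} The chief subtlety is that $X$ is \emph{not} assumed to be a geodesic space, so the discrete step functions $\sigma_n$ cannot be filled in by geodesic segments to produce continuous approximants, and Arzel\`a--Ascoli is not directly available on them. Instead, the limit curve has to be manufactured from pointwise convergence on a countable dense set followed by a purely metric Lipschitz extension relying only on completeness of $X$. A secondary care point is ensuring that this limit lies in every slice $\gamma(t)$, which requires exploiting the Hausdorff-Lipschitz continuity of $t \mapsto \gamma(t)$ afforded by $\gamma$ being a Hausdorff geodesic, together with the closedness of each $\gamma(t)$.
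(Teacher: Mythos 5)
Your proposal is correct, and while the overall skeleton matches the paper's (the easy direction $2\Rightarrow 1$ is argued identically via thickenings; for $1\Rightarrow 2$ you take the same canonical $\mathfrak{D}$ of all $\rho$-Lipschitz selections $\sigma$ with $\sigma(t)\in\gamma(t)$, and reduce everything to constructing one such selection through a prescribed point $x_0\in\gamma(t_0)$ by chaining points along refining partitions containing $t_0$), you resolve the key limiting step in a genuinely different way. The paper confronts the same obstacle you flag --- $X$ need not be geodesic, so the discrete chains cannot be joined by segments inside $X$ --- by isometrically embedding $X$ into $Z=\mathcal{W}_1(X)$, which is geodesic for \emph{any} Polish space (\Cref{thm:W-geodesic}); it then interpolates consecutive chain points by genuine geodesics of $Z$, concatenates them into continuous $\rho$-Lipschitz curves $\zeta_k:[0,1]\rightarrow Z$, applies the Arzel\`a--Ascoli theorem in $Z$, and finally checks that the uniform limit lands back in $X$ and in each slice $\gamma(t)$. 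You instead stay entirely inside $X$: you form discontinuous step functions satisfying the almost-Lipschitz bound $d_X(\sigma_n(s),\sigma_n(t))\leq(|s-t|+2^{-n})\rho$, extract pointwise convergence on a countable dense set by a diagonal argument, pass the estimate to the limit to get a $\rho$-Lipschitz map on the dense set, and extend by completeness; membership $\sigma(t)\in\gamma(t)$ then follows from the Hausdorff-Lipschitz continuity of $\gamma$ and closedness of the slices, exactly as in the paper. Your route is more elementary and self-contained (no Wasserstein machinery, no Arzel\`a--Ascoli for curves), at the cost of handling the Lipschitz extension and the almost-Lipschitz bookkeeping by hand; the paper's route reuses infrastructure ($\mathcal{W}_1$ as a geodesic extensor, \Cref{thm:AA}) that it has already built for other purposes. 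Both are complete proofs.
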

As in the theory of optimal transport where people accept that ``a geodesic in the space of laws is the law of a geodesic'' \cite[page 126]{villani2008optimal}, \emph{a geodesic in the space of closed subsets is the closed subset of a certain set of Lipschitz curves}. {It is tempting to ask whether one can require $\mathfrak{D}$ to contain only geodesics instead of Lipschitz curves. There are, however, counterexamples to this; see \Cref{rmk:couterex-haus-geo}.} As an application of \Cref{thm:main-dyn-hausdorff}, in \Cref{thm:exist infinite geod} we prove the existence of \emph{infinitely} many distinct Gromov-Hausdorff geodesics connecting any two compact metric spaces.

We further extend our characterization of Hausdorff geodesics via displacement interpolation to Gromov-Hausdorff geodesics. Though defined via Hausdorff distances, there is a dual formula for $\dgh$ which involves \emph{correspondences} between sets (cf. \Cref{sec:gh-detail}). This notion is akin to the notion of coupling between probability measures which are inherent to the Wasserstein distance (see \Cref{rmk:relation-dyn-coup-corr} for a more detailed comparison). We extend the notion of correspondence to the so-called \emph{dynamic optimal correspondence} (cf. \Cref{def:dyn-cor}), which is a concept analogous to \emph{dynamic optimal couplings} (cf. \Cref{def:dyn-coup}) in the theory of measure displacement interpolation. We say that a geodesic $\gamma$ in $\ms$ is \emph{dynamic} if it possesses a dynamic optimal correspondence.

{The above facts are put together as follows: via \Cref{thm:main-h-realizable} we identify any given Gromov-Hausdorff geodesic with a Hausdorff geodesic; then we invoke \Cref{thm:main-dyn-hausdorff} to generate a Hausdorff displacement interpolation, which gives rise to a dynamic optimal correspondence. In the end, we obtain the following characterization of Gromov-Hausdorff geodesics:}

\begin{restatable}{thm}{thmdyngh}\label{thm:main-dyn-gh}
Every Gromov-Hausdorff geodesic $\gamma:[0,1]\rightarrow \ms$ is dynamic.
\end{restatable}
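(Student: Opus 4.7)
The plan is to read off the dynamic structure directly from the two preceding characterization theorems: \Cref{thm:main-h-realizable} will turn the abstract geodesic into a concrete Hausdorff geodesic living in a single compact ambient space, and \Cref{thm:main-dyn-hausdorff} will then produce a family of Lipschitz curves whose time-pair evaluations give the required dynamic optimal correspondence.

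Concretely, let $\gamma:[0,1]\to\ms$ be a Gromov-Hausdorff geodesic and set $\rho\coloneqq \dgh(\gamma(0),\gamma(1))$. First I would apply \Cref{thm:main-h-realizable} to obtain a compact metric space $Z$ together with isometric copies of $\gamma(t)$ inside $Z$ such that $\dH^Z(\gamma(s),\gamma(t)) = \dgh(\gamma(s),\gamma(t)) = |s-t|\,\rho$ for all $s,t$. In particular, $\gamma$ is now a Hausdorff geodesic in the hyperspace $\mathcal{H}(Z)$. Next I would invoke \Cref{thm:main-dyn-hausdorff} (in the case $\rho>0$; the case $\rho=0$ is trivial since $\gamma$ is constant) to produce a nonempty closed subset $\mathfrak{D}\subseteq\mathfrak{L}(\gamma(0),\gamma(1))$ of $\rho$-Lipschitz curves in $Z$ with $e_t(\mathfrak{D})=\gamma(t)$ for every $t\in[0,1]$.

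With this $\mathfrak{D}$ in hand, for each pair $s,t\in[0,1]$ I would define the relation
$$R_{s,t}\coloneqq\{(\sigma(s),\sigma(t)):\sigma\in\mathfrak{D}\}\subseteq\gamma(s)\times\gamma(t).$$
The evaluation property $e_s(\mathfrak{D})=\gamma(s)$ and $e_t(\mathfrak{D})=\gamma(t)$ makes both projections surjective, so $R_{s,t}$ is a correspondence between $\gamma(s)$ and $\gamma(t)$. For any $\sigma,\tau\in\mathfrak{D}$, the $\rho$-Lipschitz property and the triangle inequality give
$$|d_Z(\sigma(s),\tau(s))-d_Z(\sigma(t),\tau(t))|\leq d_Z(\sigma(s),\sigma(t))+d_Z(\tau(t),\tau(s))\leq 2|s-t|\,\rho,$$
so $\dis(R_{s,t})\leq 2|s-t|\rho = 2\,\dgh(\gamma(s),\gamma(t))$, and thus $R_{s,t}$ is an \emph{optimal} correspondence by the distortion formula for $\dgh$. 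Because all $R_{s,t}$ arise by evaluating the \emph{same} family $\mathfrak{D}$ of curves, they are automatically compatible across different time pairs, which is exactly the content of being a dynamic optimal correspondence in analogy with dynamic optimal couplings.

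The main obstacle, such as it is, lies not in the estimates above but in matching the output $\mathfrak{D}$ against the precise bookkeeping demanded by \Cref{def:dyn-cor} (closedness of the underlying set of curves, surjectivity of evaluations, and the uniform distortion bound at every pair of times). Closedness is delivered by \Cref{thm:main-dyn-hausdorff}; surjectivity and distortion are handled by the short argument above. All of the genuine analytic work has already been absorbed into \Cref{thm:main-h-realizable} and \Cref{thm:main-dyn-hausdorff}, so the theorem is essentially their formal composition.
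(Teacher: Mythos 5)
Your proposal is correct and follows essentially the same route as the paper: realize $\gamma$ as a Hausdorff geodesic via \Cref{thm:main-h-realizable}, extract a Hausdorff displacement interpolation $\mathfrak{D}$ via \Cref{thm:main-dyn-hausdorff}, and check that the pairwise evaluations of $\mathfrak{D}$ give optimal correspondences via the same $2|s-t|\rho$ distortion estimate. The only cosmetic difference is that the paper explicitly packages $\mathfrak{D}$ as a subset $\mathfrak{R}\subseteq\Pi_{t\in[0,1]}\gamma(t)$ (pulling back through the embeddings $\varphi_t$) so that your $R_{s,t}$ is literally $e_{st}(\mathfrak{R})$, which is the one formal step your write-up leaves implicit.
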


\paragraph{Miscellaneous results.} In the course of proving the main results described above, we established various supporting results and provided novel proofs of some known results both of which are of independent interest. Here we list some of such results:
\begin{enumerate}
    \item In \Cref{thm:hyper-geod-iff} we prove that a compact metric space is geodesic \textit{if and only if} its Hausdorff hyperspace is geodesic.
    \item In \Cref{ex:geod-sphere} we reconstruct the Gromov-Hausdorff geodesic defined in \cite{chowdhury2018explicit} between $\mathbb S^0$ and $\mathbb S^n$ via a Hausdorff geodesic construction.
    \item In \Cref{thm:W-1-lip} we prove that the Wasserstein extensor $\mathcal{W}_p$ for each $p\in[1,\infty]$ is 1-Lipschitz.
    \item In page \pageref{succinct proof} we provide a {novel} succinct proof of the fact that $(\ms,\dgh)$ is geodesic.
    \item In \Cref{app:d-b-geodesic} we construct examples of deviant and branching $\dgws p$ geodesics.
\end{enumerate}

\subsection{Organization of the paper} 
In \Cref{sec:background}, we collect necessary background material regarding real functions, geodesics, the Gromov-Hausdorff distance and the Gromov-Wasserstein distance. In \Cref{sec:metric-ext}, we further examine properties of metric extensions including Hausdorff hyperspaces, Wasserstein hyperspaces and the Urysohn universal metric space. In \Cref{sec:real-geo}, we study Hausdorff-realizable and Wasserstein-realizable geodesics and prove \Cref{thm:main-h-realizable} and \Cref{thm:bdd-geo-w-real}. In \Cref{sec:dyn-geo}, we study the Hausdorff displacement interpolation and dynamic Gromov-Hausdorff geodesics and we prove \Cref{thm:main-dyn-hausdorff} and \Cref{thm:main-dyn-gh}. In \Cref{sec:discussion}, we discuss some open problems and in the Appendix, we provide extra proofs and constructions.

\section{Background material}\label{sec:background}
In this section, we first collect some notions and basic results for real functions. Then, we provide necessary background materials regarding continuous curves and geodesics in metric spaces. We also introduce definitions and certain results of both the Gromov-Hausdorff distance and the Gromov-Wasserstein distance.
\subsection{Elementary properties of real functions}\label{sec:real function}

{In this section we collect some notions and basic results of real functions which we will use in \Cref{sec:W-reall-geo}.}

\begin{definition}[Proper functions]\label{def:proper-function}
For any function $f:I\rightarrow J$ where $I$ and $J$ are intervals in $\overline{\mathbb
R}\coloneqq[0,\infty]$ containing 0, we say $f$ is \emph{proper} if $f(0)=0$ and $f$ is continuous at $0$.
\end{definition}

For any increasing real function $f:[0,\infty)\rightarrow [0,\infty)$, we define its \emph{inverse} $f^{-1}:[0,\infty)\rightarrow[0,\infty]$ as follows:
$$f^{-1}(y)\coloneqq\inf\{x\geq 0:\,f(x)\geq y\},\quad\forall y\in [0,\infty), $$
where we adopt the convention that $\inf\emptyset=\infty$. Then, we have the following elementary properties of inverse of increasing functions:

\begin{proposition}[Some properties of inverse of increasing functions]\label{prop:increasing-prop}
Fix an increasing function $f:[0,\infty)\rightarrow[0,\infty)$. Then, we have the following:
\begin{enumerate}
    \item $f^{-1}:[0,\infty)\rightarrow[0,\infty]$ is still an increasing function; 
    \item if $f$ is unbounded, i.e., $\lim_{x\rightarrow\infty}f(x)=\infty$, then $f^{-1}(y)<\infty$ for any $y\in [0,\infty)$;
    \item for all $x,y\in[0,\infty)$, $x<f^{-1}(y)$ implies that $f(x)\leq y$;
    \item if $f$ is \emph{strictly} increasing, then for $x,y\in[0,\infty)$, $f(x)\leq y$ implies that $x\leq f^{-1}(y)$;
    \item if $f$ is proper, then for any $y>0$, $f^{-1}(y)>0$ and $f^{-1}(0)=0$. If $f$ is moreover \emph{strictly} increasing, then $f^{-1}$ is continuous at $0$ and in particular $f^{-1}$ is proper.
\end{enumerate}
\end{proposition}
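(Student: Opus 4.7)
The plan is to handle each of the five items separately, working directly from the definition $f^{-1}(y) = \inf S_y$ with $S_y \coloneqq \{x \geq 0 : f(x) \geq y\}$ (and the convention $\inf \emptyset = \infty$). Items (1)--(3) are essentially set-theoretic. For (1) I would note that $y_1 \leq y_2$ implies $S_{y_2} \subseteq S_{y_1}$, which reverses on infima to give $f^{-1}(y_1) \leq f^{-1}(y_2)$. For (2), unboundedness of $f$ makes every $S_y$ nonempty, so the infimum is a finite real number. For (3), if $x < \inf S_y$ then $x$ lies strictly below every element of $S_y$ (and in the edge case $S_y = \emptyset$ no $x' \geq 0$ satisfies $f(x') \geq y$ at all); either way $x \notin S_y$, so $f(x) < y$, which in particular yields the claimed $f(x) \leq y$.

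For (4) I would argue by contradiction: assuming $x > f^{-1}(y)$, the definition of infimum produces some $x' \in S_y$ with $x' < x$, and \emph{strict} monotonicity of $f$ then forces the contradiction $y \leq f(x') < f(x) \leq y$. The strictness hypothesis is essential, as is already clear from any constant function $f \equiv c$.

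For (5), I would exploit the properness of $f$ --- that is, continuity at $0$ combined with $f(0) = 0$ --- to produce, for each $y > 0$, a $\delta > 0$ with $f < y$ on $[0,\delta)$; this means $[0,\delta) \cap S_y = \emptyset$, forcing $f^{-1}(y) \geq \delta > 0$. The identity $f^{-1}(0) = 0$ is immediate from $S_0 = [0,\infty)$. To upgrade to continuity of $f^{-1}$ at $0$ under strict monotonicity, I would fix $\epsilon > 0$ and take $\delta' \coloneqq f(\epsilon)$ as an explicit modulus: strict monotonicity together with $f(0) = 0$ gives $\delta' > 0$, and for any $y \in [0,\delta']$ the point $\epsilon$ already belongs to $S_y$, so $f^{-1}(y) \leq \epsilon$.

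I do not anticipate any substantive obstacle; the proposition is essentially a compendium of standard facts about generalized inverses. The only points demanding a little care are the empty-infimum convention in (3) and isolating exactly where \emph{strict} monotonicity is genuinely needed --- namely item (4) and the continuity half of item (5) --- as opposed to where plain monotonicity already suffices.
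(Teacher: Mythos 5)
Your proposal is correct and complete; all five items are verified by exactly the kind of direct manipulation of the sets $S_y=\{x\geq 0: f(x)\geq y\}$ that the statement calls for, including the empty-set convention in item 3 and the explicit modulus $\delta'=f(\epsilon)$ for continuity at $0$ in item 5. The paper itself states this proposition without any proof (treating it as elementary), so there is no argument to diverge from; your write-up is a correct filling-in of the omitted details.
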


\subsection{Curves and geodesics in metric spaces}\label{sec:geo}

A \emph{metric space} $(X,d_X)$ is a pair such that $X$ is a set and $d_X:X\times X\rightarrow\mathbb R_{\geq 0}$ is a function satisfying the following conditions:
\begin{enumerate}
    \item for any $x,x'\in X$, $d_X(x,x')\geq 0$ and the equality holds if and only if $x=x'$;
    \item for any $x,x'\in X$, $d_X(x,x')=d_X(x',x)$;
    \item for any $x,x',x''\in X$, $d_X(x,x')\leq d_X(x,x'')+d_X(x'',x')$.
\end{enumerate}
We often abbreviate $(X,d_X)$ to $X$ to represent a metric space.

A map $\varphi:X\rightarrow Y$ between metric spaces is called an \textit{isometric embedding}, usually denoted by $\varphi:X\hookrightarrow Y$, if for each $x,x'\in X$
$$d_Y(\varphi(x),\varphi(x'))=d_X(x,x'). $$
We say a metric space $X$ is \emph{isometric} to another metric space $Y$ if there exists a surjective isometric embedding $\varphi:X\hookrightarrow Y$. When $X$ is isometric to $Y$, we write $X\cong Y$.

Given a metric space $X$, a \textit{curve} in $X$ is any continuous map $\gamma:[0,1]\rightarrow X$. For $C>0$, a $C$-Lipschitz curve is any curve $\gamma$ such that $d_X(\gamma(s),\gamma(t))\leq C\cdot |s-t|$ for $s,t\in[0,1]$. One important result that we use in the sequel is the following variant of the Arzel\`a-Ascoli theorem (compare with the version given in \cite[Theorem 2.5.14]{burago2001course}). We omit the proof here since it is essentially the same as the one for \cite[Theorem 2.5.14]{burago2001course} and it is also a direct consequence of a more general statement which we prove later (cf. \Cref{thm:general-AA}).

\begin{theorem}[Arzel\`a-Ascoli theorem]\label{thm:AA}
Let $X$ be a compact metric space and let $\{\gamma_i:[0,1]\rightarrow X\}_{i=0}^\infty$ be a sequence of $C$-Lipschitz curves for a fixed $C>0$, i.e., $d_X\left(\gamma_i\left(s\right),\gamma_i\left(t\right)\right)\leq C\cdot|s-t|$ for any $s,t\in[0,1]$ and $i=0,1,\ldots$. Then, there is a uniformly convergent subsequence of $\{\gamma_i\}_{i=0}^\infty$ with a $C$-Lipschitz limit $\gamma:[0,1]\rightarrow X$.
\end{theorem}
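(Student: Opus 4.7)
The plan is to carry out the classical Arzel\`a-Ascoli argument, leveraging the two ingredients already baked into the hypotheses: compactness of $X$ (giving sequential pointwise precompactness) and the uniform $C$-Lipschitz bound (giving equicontinuity of the family $\{\gamma_i\}$). No machinery from elsewhere in the paper is required; the argument is self-contained and splits cleanly into three stages.

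First, I would fix a countable dense subset $Q = \{q_1, q_2, \ldots\}$ of $[0,1]$, for instance the rationals. By compactness of $X$, the sequence $\{\gamma_i(q_1)\}_{i \geq 0}$ admits a convergent subsequence; extracting further subsequences that also converge at $q_2, q_3, \ldots$ and then passing to the diagonal produces a single subsequence, which I will still denote $\{\gamma_{i_k}\}_{k \geq 0}$, with the property that $\gamma_{i_k}(q)$ converges in $X$ for every $q \in Q$.

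Next, I would upgrade pointwise convergence on $Q$ to uniform convergence on all of $[0,1]$. Given $\eps > 0$, I would select finitely many points $q_{j_1}, \ldots, q_{j_N} \in Q$ forming an $\eps/(3C)$-net in $[0,1]$. For any $t \in [0,1]$, picking some $q_{j_\ell}$ within $\eps/(3C)$ of $t$, the $C$-Lipschitz bound forces $d_X(\gamma_{i_k}(t), \gamma_{i_k}(q_{j_\ell})) < \eps/3$ for every $k$, while pointwise convergence at the \emph{finitely} many points $q_{j_1}, \ldots, q_{j_N}$ supplies an index $K$ beyond which the values there are mutually within $\eps/3$. A standard three-$\eps$ estimate via the triangle inequality then shows that $\{\gamma_{i_k}\}_{k}$ is uniformly Cauchy on $[0,1]$, and since $X$ is complete (being compact) the subsequence converges uniformly to a limit $\gamma:[0,1] \to X$.

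Finally, passing to the limit $k \to \infty$ in $d_X(\gamma_{i_k}(s), \gamma_{i_k}(t)) \leq C|s-t|$ and using continuity of $d_X$ yields that $\gamma$ itself is $C$-Lipschitz (and in particular continuous, so it is indeed a curve). I do not anticipate any serious obstacle: every step is textbook, and the only mild care required is bookkeeping in the diagonal extraction and tracking the constants in the three-$\eps$ argument. Since the authors announce a more general version later in the paper that formally subsumes this statement, the present proof can be viewed as the prototype for that stronger result.
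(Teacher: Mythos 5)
Your proof is correct and follows essentially the same route as the paper, which omits a direct proof of this statement but proves the generalized version (\Cref{thm:general-AA}) by exactly this scheme: diagonal extraction over a countable dense subset of $[0,1]$, an $\eps/(3C)$-net plus three-$\eps$ argument to upgrade to uniform convergence, and passage to the limit in the Lipschitz inequality. The only cosmetic difference is that you establish uniform Cauchyness and invoke completeness to obtain the limit, whereas the paper first constructs the limit curve on the dense set and then verifies uniform convergence to it.
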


There is one special type of Lipschitz curves called geodesics:

\begin{definition}[Geodesics]
A curve $\gamma:[0,1]\rightarrow X$ is called a \emph{geodesic} if for any $s,t\in[0,1]$ one has
$d_X\left(\gamma\left(s\right),\gamma\left(t\right)\right)\leq|t-s|\cdot d_X\left(\gamma\left(0\right),\gamma\left(1\right)\right),$ i.e., $\gamma$ is $d_X(\gamma(0),\gamma(1))$-Lipschitz.
\end{definition}
By the triangle inequality, it is clear that $d_X\left(\gamma\left(s\right),\gamma\left(t\right)\right)=|t-s|\cdot d_X\left(\gamma\left(0\right),\gamma\left(1\right)\right)$ for any $s,t\in[0,1]$. 

If for any $x_0,x_1\in X$, there exists a geodesic $\gamma:[0,1]\rightarrow X$ such that $\gamma(0)=x_0$ and $\gamma(1)=x_1$, we call $X$ a \emph{geodesic space}. The following is a useful criterion for checking whether a metric space is geodesic or not.

\begin{theorem}[Mid-point criterion,  {\cite[Theorem 2.4.16]{burago2001course}}]\label{thm:mid-pt-geo}
A \emph{complete} metric space $X$ is geodesic if and only if for any $x,y\in X$, there exists $z\in X$ (which we call a mid-point between $x$ and $y$) such that 
$$d_X(x,z)=d_X(y,z)=\frac{1}{2}d_X(x,y). $$
\end{theorem}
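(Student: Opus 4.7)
The forward implication is immediate: given a geodesic $\gamma:[0,1]\rightarrow X$ with $\gamma(0)=x$ and $\gamma(1)=y$, the point $z\coloneqq\gamma(1/2)$ satisfies $d_X(x,z)=d_X(z,y)=\tfrac{1}{2}d_X(x,y)$ directly from the geodesic equation. So the real work is in the converse.

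For the converse, assume the mid-point property holds and fix $x,y\in X$ with $L\coloneqq d_X(x,y)$. The plan is to construct $\gamma$ by dyadic bisection on the set $D\coloneqq\{k/2^n:n\geq 0,\,0\leq k\leq 2^n\}$ of dyadic rationals, then extend to $[0,1]$ by uniform continuity using completeness of $X$. First I would set $\gamma(0)=x$, $\gamma(1)=y$, and define $\gamma(1/2)$ to be any mid-point of $x$ and $y$. Inductively, once $\gamma$ is defined on $D_n\coloneqq\{k/2^n:0\leq k\leq 2^n\}$, I would define $\gamma((2k+1)/2^{n+1})$ to be a mid-point of $\gamma(k/2^n)$ and $\gamma((k+1)/2^n)$ for each admissible $k$.

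The key claim is that for each $n$ and each $0\leq k<2^n$, the consecutive distance satisfies
$$d_X\bigl(\gamma(k/2^n),\gamma((k+1)/2^n)\bigr)=L/2^n.$$
The upper bound follows immediately by iterated application of the mid-point identity, which halves the distance at each refinement step. For the lower bound, the triangle inequality along the full chain from $\gamma(0)$ to $\gamma(1)$ through all $2^n$ consecutive pairs gives $L\leq\sum_{k=0}^{2^n-1}d_X(\gamma(k/2^n),\gamma((k+1)/2^n))\leq 2^n\cdot(L/2^n)=L$, so equality must hold at every link. Consequently, for any two dyadic points $s=j/2^n<t=k/2^n$, the triangle inequality yields
$$d_X(\gamma(s),\gamma(t))\leq\sum_{i=j}^{k-1}d_X\bigl(\gamma(i/2^n),\gamma((i+1)/2^n)\bigr)=(k-j)\cdot L/2^n=|t-s|\cdot L,$$
which shows $\gamma|_D$ is $L$-Lipschitz (after checking consistency across different dyadic levels, which is automatic because $D_n\subseteq D_{n+1}$ and the bisection at level $n+1$ extends $\gamma|_{D_n}$).

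For the extension step, I would use that $D$ is dense in $[0,1]$: for any $t\in[0,1]$, pick dyadics $t_m\to t$; then $\{\gamma(t_m)\}$ is Cauchy because $d_X(\gamma(t_m),\gamma(t_{m'}))\leq L|t_m-t_{m'}|$, hence converges by completeness of $X$. The limit is independent of the chosen sequence (again by the Lipschitz bound), so $\gamma$ is well-defined on $[0,1]$, and the Lipschitz estimate passes to the closure by continuity of $d_X$. Since $d_X(\gamma(0),\gamma(1))=L$, $\gamma$ is in fact a geodesic. The main technical nuisance is simply the indexing in the inductive construction and verifying that the Lipschitz estimate is compatible across dyadic levels; completeness is essential and is exactly what lets the dyadic skeleton be promoted to a continuous geodesic, so the hypothesis cannot be weakened.
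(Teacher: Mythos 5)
Your proof is correct; the paper does not prove this statement itself but cites \cite[Theorem 2.4.16]{burago2001course}, and your dyadic-bisection argument (exact halving of consecutive distances, the chain triangle inequality forcing equality, the Lipschitz bound on dyadics, and extension by completeness) is precisely the standard proof given there. One minor remark: the mid-point identity already gives \emph{equality} $d_X(\gamma(k/2^n),\gamma((k+1)/2^n))=L/2^n$ by direct induction, so your separate lower-bound step via the full chain is harmless but not needed.
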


Concatenation is one way of constructing new geodesics from existing ones.

\begin{proposition}[Geodesic concatenation]\label{prop:geo-concatenate}
Let $X$ be a metric space and for $i=1,\ldots,n$ let $\gamma_i:[0,1]\rightarrow X$ be a geodesic. For $i=1,\ldots,n$, let $\rho_i\coloneqq  d_X(\gamma_i(0),\gamma_i(1))$. Assume that $\gamma_i(1)=\gamma_{i+1}(0)$ for $i=1,\ldots,n-1$ and $\rho_i>0$ for all $i=1,\ldots,n$. Let $\rho\coloneqq\sum_{i=1}^n\rho_i. $
Then, the curve $\gamma:[0,1]\rightarrow X$ defined as follows is a $\rho$-Lipschitz curve:
$$\gamma(t)\coloneqq\begin{cases}
\gamma_1\lc\frac{\rho}{\rho_1}t\rc, & t\in\left[0,\frac{\rho_1}{\rho}\right]\\
\gamma_2\lc\frac{\rho}{\rho_2}\lc t-\frac{\rho_1}{\rho}\rc\rc, & t\in \left(\frac{\rho_1}{\rho},\frac{\rho_1+\rho_2}{\rho}\right]\\
\cdots,&\cdots\\
\gamma_n\lc\frac{\rho}{\rho_n}\lc t-\sum_{i=1}^{n-1}\frac{\rho_i}{\rho}\rc\rc, & t\in \left(\sum_{i=1}^{n-1}\frac{\rho_i}{\rho},1\right]
\end{cases}$$
In particular, if $\rho=d_X(\gamma_1(0),\gamma_n(1))$, then $\gamma$ is a geodesic.
\end{proposition}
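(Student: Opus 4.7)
The plan is to break the verification of the $\rho$-Lipschitz property into local and global pieces. First I would introduce shorthand for the junction points $t_k \coloneqq \sum_{j=1}^k \rho_j/\rho$, with the convention $t_0 = 0$ and $t_n = 1$; note the hypothesis $\rho_i > 0$ guarantees that these points are strictly increasing and that the affine reparametrizations appearing in the piecewise definition are well-defined. The first sanity check is continuity of $\gamma$: at each interior junction $t_k$, the formula on $[t_{k-1},t_k]$ evaluates to $\gamma_k(1)$, while the formula on $[t_k,t_{k+1}]$ evaluates to $\gamma_{k+1}(0)$, and these agree by the concatenation hypothesis.

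Next I would verify the Lipschitz estimate on each single sub-interval. On $[t_{k-1}, t_k]$, the map $\gamma$ is the composition of $\gamma_k$ with the affine bijection $[t_{k-1}, t_k] \to [0,1]$ of slope $\rho/\rho_k$. Since $\gamma_k$ is a geodesic with $d_X(\gamma_k(0), \gamma_k(1)) = \rho_k$, it is $\rho_k$-Lipschitz on $[0,1]$, so post-composing with an affine map of slope $\rho/\rho_k$ makes the restriction of $\gamma$ to $[t_{k-1},t_k]$ exactly $\rho$-Lipschitz.

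For arbitrary $s \leq t$ in $[0,1]$, I would pick indices $k \leq \ell$ with $s \in [t_{k-1}, t_k]$ and $t \in [t_{\ell-1}, t_\ell]$. If $k = \ell$, the previous step applies directly. Otherwise, inserting all intermediate junctions and using the triangle inequality gives
$$d_X(\gamma(s), \gamma(t)) \leq d_X(\gamma(s), \gamma(t_k)) + \sum_{i=k+1}^{\ell-1} d_X(\gamma(t_{i-1}), \gamma(t_i)) + d_X(\gamma(t_{\ell-1}), \gamma(t)),$$
and bounding each term by $\rho$ times the length of the corresponding sub-interval collapses the sum telescopically to $\rho(t-s)$, which yields the claimed $\rho$-Lipschitz bound. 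For the last assertion, assuming $\rho = d_X(\gamma_1(0), \gamma_n(1)) = d_X(\gamma(0), \gamma(1))$, the Lipschitz inequality $d_X(\gamma(s), \gamma(t)) \leq \rho|s-t|$ is literally the geodesic condition, so $\gamma$ is a geodesic. The argument is essentially routine; the only point requiring slight care is tracking the slope of the affine reparametrization on each sub-interval and confirming that the endpoint-matching hypothesis gives continuity at each junction.
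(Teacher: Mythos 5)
Your proposal is correct and follows essentially the same route as the paper: verify the exact $\rho$-Lipschitz bound within a single sub-interval via the affine reparametrization, then handle points in different sub-intervals by inserting the intermediate junction points and telescoping with the triangle inequality. The explicit continuity check at the junctions is a welcome (if implicit in the paper) addition.
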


\begin{proof}
Given any $s,t\in[0,1]$, there are two cases to consider.
\begin{enumerate}
    \item There exists $k$ such that $s,t\in\left[\sum_{i=1}^{k-1}\frac{\rho_i}{\rho},\sum_{i=1}^{k}\frac{\rho_i}{\rho}\right]$. Then, 
    \begin{align*}
        d_X(\gamma(s),\gamma(t))&=d_X\lc\gamma_k\lc\frac{\rho}{\rho_k}\lc s-\sum_{i=1}^{k-1}\frac{\rho_i}{\rho}\rc\rc,\gamma_k\lc\frac{\rho}{\rho_k}\lc t-\sum_{i=1}^{k-1}\frac{\rho_i}{\rho}\rc\rc\rc\\
        &=\left|\frac{\rho}{\rho_k}\lc s-\sum_{i=1}^{k-1}\frac{\rho_i}{\rho}\rc-\frac{\rho}{\rho_k}\lc t-\sum_{i=1}^{k-1}\frac{\rho_i}{\rho}\rc\right| \rho_k\\
        &=|s-t|\rho.
    \end{align*}
            
    \item There exist $k,l>0$ such that $s\in\left[\sum_{i=1}^{k-1}\frac{\rho_i}{\rho},\sum_{i=1}^{k}\frac{\rho_i}{\rho}\right]$ and $t\in\left[\sum_{i=1}^{k+l-1}\frac{\rho_i}{\rho},\sum_{i=1}^{k+l}\frac{\rho_i}{\rho}\right]$. Then, by case 1, we have that
    $$d_X\lc\gamma(s),\gamma\lc\sum_{i=1}^{k}\frac{\rho_i}{\rho}\rc\rc=\left|s-\sum_{i=1}^{k}\frac{\rho_i}{\rho}\right|\rho, $$
    and
    $$d_X\lc\gamma(t),\gamma\lc\sum_{i=1}^{k+l-1}\frac{\rho_i}{\rho}\rc\rc=\left|t-\sum_{i=1}^{k+l-1}\frac{\rho_i}{\rho}\right|\rho, $$
    \begin{align*}
        d_X(\gamma(s),\gamma(t))&\leq d_X\lc\gamma(s),\gamma\lc\sum_{i=1}^{k}\frac{\rho_i}{\rho}\rc\rc+ d_X\lc\gamma(t),\gamma\lc\sum_{i=1}^{k+l-1}\frac{\rho_i}{\rho}\rc\rc\\
        &+\sum_{j=0}^{l-2} d_X\lc\gamma\lc\sum_{i=1}^{k+j}\frac{\rho_i}{\rho}\rc,\gamma\lc\sum_{i=1}^{k+j+1}\frac{\rho_i}{\rho}\rc\rc\\
        &=\lc\left|s-\sum_{i=1}^{k}\frac{\rho_i}{\rho}\right|+\left|t-\sum_{i=1}^{k+l-1}\frac{\rho_i}{\rho}\right|+ \sum_{j=0}^{l-2}\frac{\rho_{k+j+1}}{\rho}\rc \rho\\
        &=|t-s|\rho.
    \end{align*}
\end{enumerate}
Therefore, $\gamma$ is a $\rho$-Lipschitz curve.
\end{proof}


\subsection{Gromov-Hausdorff distance}\label{sec:gh-detail}

Given a metric space $X$, there is a well-known notion of distance between closed subsets of $X$: the \emph{Hausdorff distance}.

\begin{definition}[Hausdorff distance]\label{def:dH}
For nonempty closed subsets $A,B\subseteq X$, the Hausdorff distance $\dH^X$ between them is defined by
$$\dH^X\left(A,B\right)\coloneqq\inf\{r:\,A\subseteq B^r,\,B\subseteq A^r\}, $$
where $A^r\coloneqq\{x\in X:\,d_X\left(x,A\right)\leq r\}$ is called the $r$-thickening of $A$.
\end{definition}

\begin{remark}\label{rmk:alternative dH}
It is easy to see that $\dH^X\left(A,B\right)=\max\lc\sup_{x\in A}\inf_{y\in B} d_X(x,y),\sup_{y\in B}\inf_{x\in A} d_X(x,y)\rc$ (cf. \cite[Exercise 7.3.2]{burago2001course}). This formula is also sometimes given as the definition of the Hausdorff distance.
\end{remark}

\begin{lemma}[Hausdorff distance under isometric embedding]\label{lm:dH under embedding}
Let $\varphi:X\hookrightarrow Y$ be an isometric embedding of two compact metric spaces. Let $A,B$ be nonempty closed subsets of $X$. Then, we have that
\[\dH^X(A,B)=\dH^Y(\varphi(A),\varphi(B)).\]
\end{lemma}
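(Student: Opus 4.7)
The plan is to reduce the claim to the alternative formula for the Hausdorff distance from \Cref{rmk:alternative dH}, namely
\[
\dH^X(A,B)=\max\lc\sup_{a\in A}\inf_{b\in B} d_X(a,b),\ \sup_{b\in B}\inf_{a\in A} d_X(a,b)\rc,
\]
and the analogous formula for $\dH^Y(\varphi(A),\varphi(B))$. Once the problem is phrased this way, the isometric embedding property does all the work.

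First I would observe that since $\varphi$ is an isometric embedding it is in particular injective, so it restricts to bijections $\varphi|_A: A\to \varphi(A)$ and $\varphi|_B: B\to \varphi(B)$. Fix any $a\in A$; then for every $b\in B$ we have $d_Y(\varphi(a),\varphi(b))=d_X(a,b)$, and as $b$ ranges over $B$, $\varphi(b)$ ranges over all of $\varphi(B)$. Therefore
\[
\inf_{b\in B} d_X(a,b)\;=\;\inf_{b'\in\varphi(B)} d_Y(\varphi(a),b').
\]
Taking the supremum as $a$ ranges over $A$ (which is equivalent to $\varphi(a)$ ranging over $\varphi(A)$), I obtain
\[
\sup_{a\in A}\inf_{b\in B} d_X(a,b)\;=\;\sup_{a'\in \varphi(A)}\inf_{b'\in\varphi(B)} d_Y(a',b').
\]

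Next I would repeat the same argument with the roles of $A$ and $B$ swapped to conclude
\[
\sup_{b\in B}\inf_{a\in A} d_X(a,b)\;=\;\sup_{b'\in\varphi(B)}\inf_{a'\in\varphi(A)} d_Y(a',b').
\]
Plugging both equalities into the alternative formula for $\dH$ yields the desired identity $\dH^X(A,B)=\dH^Y(\varphi(A),\varphi(B))$.

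There is really no obstacle here: the only subtlety is making sure the bijection induced by $\varphi$ between $A$ (resp.\ $B$) and $\varphi(A)$ (resp.\ $\varphi(B)$) is used when reindexing the infima and suprema, so that nothing from $Y\setminus\varphi(X)$ sneaks into the formula on the right-hand side. Compactness of $X$ and $Y$ and closedness of $A,B$ are not needed for the argument itself (they ensure closedness of $\varphi(A),\varphi(B)$ in $Y$, so that $\dH^Y(\varphi(A),\varphi(B))$ is defined in the sense of \Cref{def:dH}).
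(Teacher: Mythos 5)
Your proposal is correct and follows essentially the same route as the paper's own proof: both rewrite $\dH$ via the sup-inf formula of \Cref{rmk:alternative dH}, substitute $d_X(a,b)=d_Y(\varphi(a),\varphi(b))$, and reindex the suprema and infima over $\varphi(A)$ and $\varphi(B)$. No gaps.
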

\begin{proof}
Since $\varphi$ is an isometric embedding, by \Cref{rmk:alternative dH} we have that 
\begin{align*}
 \dH^X\left(A,B\right)&=\max\lc\sup_{x\in A}\inf_{y\in B} d_X(x,y),\sup_{y\in B}\inf_{x\in A} d_X(x,y)\rc   \\
 &=\max\lc\sup_{x\in A}\inf_{y\in B} d_Y(\varphi(x),\varphi(y)),\sup_{y\in B}\inf_{x\in A} d_Y(\varphi(x),\varphi(y))\rc\\
 &=\max\lc\sup_{x\in \varphi(A)}\inf_{y\in \varphi(B)} d_Y(x,y),\sup_{y\in \varphi(B)}\inf_{x\in\varphi(A)} d_Y(x,y)\rc\\
 &=\dH^Y(\varphi(A),\varphi(B)).
\end{align*}
\end{proof}

Now we recall the definition of the Gromov-Hausdorff distance defined in \Cref{eq:dgh} as follows:

\begin{definition}[Gromov-Hausdorff distance]\label{def:dGH}
Given two metric spaces $X$ and $Y$, the Gromov-Hausdorff distance between them is defined by
$$\dgh\left(X,Y\right)\coloneqq\inf_{Z}\dH^Z\left(\varphi_X\left(X\right),\varphi_Y\left(Y\right)\right), $$
where the infimum is taken over all metric spaces $Z$ and isometric embeddings $\varphi_X:X\hookrightarrow Z$ and $\varphi_Y:Y\hookrightarrow Z$.
\end{definition}

If $X$ and $Y$ are compact, then the infimum can be restricted to only compact metric spaces $Z$. 

For two compact metric spaces $X$ and $Y$, $\dgh(X,Y)=0$ if and only if $X\cong Y$. Recall that $\ms$ denote the set of all isometry classes of compact metric spaces. Then, $\left(\ms,\dgh\right)$ is a metric space. Moreover, $\left(\ms,\dgh\right)$ is a Polish\footnote{A metric space is Polish if it is complete and separable.} space; see \cite[Theorem 7.3.30]{burago2001course} and \cite[Proposition 42 and 43]{petersen2006riemannian} for more details.

\medskip
\noindent\textbf{Note}: although $\ms$ is the set of isometry classes, by a slight abuse of notation, we write $X\in \ms$ to refer to an individual compact metric space $X$ instead of its isometry class.
\medskip

One important description of $\dgh$ is the following duality formula (cf. \Cref{thm:dgh-dual}) via correspondences between sets \cite[Chapter 7]{burago2001course}. Given $\left(X,d_X\right),\left(Y,d_Y\right)\in\ms$, define $\mathcal{R}\left(X,Y\right)$ as the set of all $R\subseteq X\times Y$ such that $\pi_X(R)=X$ and $\pi_Y(R)=Y$ where $\pi_X:X\times Y\rightarrow X$ and $\pi_Y:X\times Y\rightarrow Y$ are canonical projections. We call each $R\in\mathcal{R}\left(X,Y\right)$ a \emph{correspondence} between $X$ and $Y$. For a correspondence $R$, we define its \emph{distortion} with respect to $d_X$ and $d_Y$ by
$$\dis\left(R\right)\coloneqq\inf_{\left(x,y\right),\left(x',y'\right)\in R}|d_X\left(x,x'\right)-d_Y\left(y,y'\right)|. $$
\begin{theorem}\label{thm:dgh-dual}
For any $X,Y\in\ms$, we have
$$\dgh\left(X,Y\right)=\frac{1}{2}\inf_{R\in\mathcal{R}\left(X,Y\right)}\dis\left(R\right). $$
\end{theorem}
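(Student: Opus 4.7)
The plan is to establish the two inequalities $2\dgh(X,Y) \leq \inf_{R} \dis(R)$ and $2\dgh(X,Y) \geq \inf_{R} \dis(R)$ separately, interpolating between the two sides of the duality through the following dictionary: an admissible ambient embedding $X,Y \hookrightarrow Z$ produces a correspondence by collecting pairs that lie close in $Z$, while a correspondence produces an ambient metric by ``gluing'' $X$ and $Y$ along $R$.

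For the inequality $\inf_{R}\dis(R)\leq 2\dgh(X,Y)$, fix any metric space $Z$ with isometric embeddings $\varphi_X:X\hookrightarrow Z$ and $\varphi_Y:Y\hookrightarrow Z$, and any $r>\dH^Z(\varphi_X(X),\varphi_Y(Y))$. I define the ``closeness'' correspondence
\[
R := \{(x,y) \in X \times Y : d_Z(\varphi_X(x),\varphi_Y(y)) \leq r\}.
\]
Both projections are surjective because $r$ exceeds the Hausdorff distance, so every $x \in X$ admits some $y \in Y$ with $d_Z(\varphi_X(x),\varphi_Y(y)) \leq r$ (and symmetrically). For any pair $(x,y),(x',y') \in R$, the triangle inequality in $Z$ along the chain $\varphi_X(x) \to \varphi_Y(y) \to \varphi_Y(y') \to \varphi_X(x')$ yields $d_X(x,x') \leq d_Y(y,y') + 2r$, and by symmetry $|d_X(x,x') - d_Y(y,y')| \leq 2r$. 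Hence $\dis(R) \leq 2r$, and taking infima first over $r$ and then over $Z$ delivers the inequality.

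For the inequality $2\dgh(X,Y) \leq \inf_{R}\dis(R)$, fix a correspondence $R$ with $\dis(R) = 2r$. Assuming $r > 0$, I build a metric on the disjoint union $Z := X \sqcup Y$ by keeping $d_X$ on $X$, $d_Y$ on $Y$, and setting
\[
d_Z(x,y) := \inf\{d_X(x,x') + r + d_Y(y',y) : (x',y') \in R\}
\]
for $x \in X$ and $y \in Y$. Symmetry is built in, and each cross-distance satisfies $d_Z(x,y) \geq r > 0$, which gives separation. The main obstacle is verifying the triangle inequality for mixed triples: for $x,x' \in X$ and $y \in Y$, the inequality $d_X(x,x') \leq d_Z(x,y) + d_Z(y,x')$ (and its $Y$-counterpart) is the delicate part, and it is precisely where the distortion bound $|d_X(a,a') - d_Y(b,b')| \leq 2r$ on pairs $(a,b),(a',b') \in R$ gets invoked to transfer distances between the two sides through $R$. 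Once $(Z,d_Z)$ is a bona fide metric extension, the Hausdorff estimate is immediate: for each $x \in X$, choosing any $y$ with $(x,y) \in R$ (which exists since $\pi_X(R) = X$) gives $d_Z(x,y) \leq r$ via the feasible choice $(x',y') = (x,y)$ in the infimum, so $X \subseteq Y^r$, and symmetrically $Y \subseteq X^r$. Therefore $\dgh(X,Y) \leq r$, and letting $R$ vary closes the argument. The degenerate case $\dis(R) = 0$ forces $R$ to be the graph of an isometry $X \cong Y$, so $\dgh(X,Y) = 0$ trivially.
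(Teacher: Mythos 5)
Your proof is correct and is the standard argument for this duality; the paper itself does not prove the theorem but cites \cite[Chapter 7]{burago2001course}, where essentially this two-inequality argument (closeness correspondence in one direction, gluing metric in the other) appears, and your gluing metric $d_Z(x,y)=\inf_{(x',y')\in R}\left(d_X(x,x')+r+d_Y(y',y)\right)$ with $r=\tfrac{1}{2}\dis(R)$ is exactly the construction the paper reuses in \Cref{lm:dgh_hausdorff-realizable}. One small remark: your argument (correctly) treats $\dis(R)$ as a \emph{supremum} over pairs of elements of $R$, whereas the paper's displayed definition writes an infimum --- that is a typo in the paper, since with an infimum the right-hand side would vanish for every correspondence and the theorem would fail.
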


We let  $\mathcal{R}^\mathrm{opt}\left(X,Y\right)$ denote the set of all correspondences such that the equality in \Cref{thm:dgh-dual} holds. It is proved in \cite[Proposition 1.1]{chowdhury2018explicit} that $\mathcal{R}^\mathrm{opt}\left(X,Y\right)\neq \emptyset$ and there exists an $R\in\mathcal{R}^\mathrm{opt}\left(X,Y\right)$ which is a \emph{compact} subset of $\left(X\times Y,\max\left(d_X,d_Y\right)\right)$. A direct consequence of this fact is the following result:

\begin{lemma}\label{lm:dgh_hausdorff-realizable}
If $X$ and $Y$ are compact, then there exists a compact metric space $Z$ and isometric embeddings $\varphi_X:X\hookrightarrow Z$ and $\varphi_Y:Y\hookrightarrow Z$ such that
$$\dgh\left(X,Y\right)= \dH^Z\left(\varphi_X\left(X\right),\varphi_Y\left(Y\right)\right). $$
\end{lemma}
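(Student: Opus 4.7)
The plan is to build the desired ambient space $Z$ explicitly from a compact optimal correspondence. Invoking the fact cited from \cite[Proposition 1.1]{chowdhury2018explicit}, I would pick a compact $R\in\mathcal{R}^{\mathrm{opt}}(X,Y)$, write $r:=\dgh(X,Y)$ so that $\dis(R)\leq 2r$, take $Z$ to be the disjoint union $X\sqcup Y$ as a set, and define
\[
d_Z(x,x'):=d_X(x,x')\ \text{for }x,x'\in X,\qquad d_Z(y,y'):=d_Y(y,y')\ \text{for }y,y'\in Y,
\]
\[
d_Z(x,y):=r+\inf_{(x',y')\in R}\bigl(d_X(x,x')+d_Y(y',y)\bigr)\ \text{for }x\in X,\ y\in Y,
\]
with $\varphi_X,\varphi_Y$ the natural inclusions. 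If $r=0$ then $X\cong Y$ by the basic property of $\dgh$ on compact spaces and I would instead take $Z:=X$ with $\varphi_Y$ any realizing isometry, so from now on I assume $r>0$.

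The first step is to check that $d_Z$ is a genuine metric. Symmetry and nonnegativity are immediate from the $r$-offset. The nontrivial case of the triangle inequality is when two of the points lie in one factor and one in the other; the key algebraic input is the distortion bound: for any $(x',y'),(x'',y'')\in R$ one has $|d_X(x',x'')-d_Y(y',y'')|\leq 2r$, which, combined with the triangle inequalities in $X$ and $Y$, is exactly what is needed to dominate the cross-terms. Compactness of $Z$ follows at once from compactness of $X$ and $Y$ once one checks that $d_Z$ is jointly continuous in the product topology on each piece, which in turn uses that $R$ is closed (hence compact) in $X\times Y$ and that the infimum defining $d_Z(x,y)$ is therefore attained.

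Next, I would verify $\dH^Z(\varphi_X(X),\varphi_Y(Y))=r$. The inequality $\dH^Z\geq r$ is immediate from the definition of $\dgh$: any isometric embedding into any $Z$ realizes a Hausdorff distance bounded below by $r$. For the reverse inequality, given $x\in X$ pick $y\in Y$ with $(x,y)\in R$ (such $y$ exists because $R$ is a correspondence); then $d_Z(x,y)\leq r+d_X(x,x)+d_Y(y,y)=r$, so every point of $X$ lies within $r$ of $Y$, and symmetrically every point of $Y$ lies within $r$ of $X$. Hence $\dH^Z(X,Y)\leq r$, completing the proof.

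I do not expect serious obstacles here. The only delicate point is the triangle inequality check, which rests on the distortion estimate $\dis(R)\leq 2r$; once one has accepted the existence of a compact optimal correspondence (which is the content borrowed from \cite{chowdhury2018explicit}), the rest of the argument is a straightforward unwinding of definitions.
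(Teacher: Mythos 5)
Your construction is exactly the paper's: since $\tfrac{1}{2}\dis(R)=\dgh(X,Y)=r$ for an optimal correspondence $R$, your formula for the cross-distance coincides with the one in the paper's proof, and your compactness argument (a sequence in $X\sqcup Y$ has a subsequence living in one compact piece) is the same. The only difference is that you verify the metric axioms and the identity $\dH^Z(X,Y)=r$ by hand where the paper cites \cite[Lemma 2.8]{memoli2018sketching}; your verification is correct.
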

\begin{proof}
Let $R\in\mathcal{R}^\mathrm{opt}\left(X,Y\right)$ and $Z\coloneqq X\cup Y$. Let $d_Z:Z\times Z\rightarrow\mathbb{R}$ be such that $d_Z|_{X\times X}=d_X$, $d_Z|_{Y\times Y}=d_Y$ and for $x\in X$ and $y\in Y$
$$d_Z\left(x,y\right)\coloneqq\inf_{\left(x',y'\right)\in R}\left(d_X\left(x,x'\right)+d_Y\left(y,y'\right)+\frac{1}{2}\dis\left(R\right)\right).$$
It is proved in \cite[Lemma 2.8]{memoli2018sketching} that $\left(Z,d_Z\right)$ is a metric space and $\dH^Z\left(X,Y\right)=\dgh\left(X,Y\right).$ $\left(Z,d_Z\right)$ is obviously compact since any sequence $\{z_i\}_{i=0}^\infty\subseteq Z$ must contain either a convergent subsequence in $X$ or a convergent subsequence in $Y$. 
\end{proof}

\paragraph{Geodesics.} The following result is proved in \cite[Theorem 1]{ivanov2016gromov} using the mid-point criterion (cf. \Cref{thm:mid-pt-geo}):

\begin{restatable}{theorem}{thmgeoGH}\label{thm:GH-geo}
$\left(\ms,\dgh \right)$ is a geodesic metric space.

\end{restatable}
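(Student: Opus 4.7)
The natural plan is to invoke the mid-point criterion (\Cref{thm:mid-pt-geo}): since $(\ms,\dgh)$ is complete, it suffices to exhibit, for every pair $X,Y\in\ms$, a compact metric space $Z\in\ms$ with
\[\dgh(X,Z)=\dgh(Y,Z)=\tfrac{1}{2}\dgh(X,Y).\]

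The construction of $Z$ will start from an optimal correspondence. By the remark following \Cref{thm:dgh-dual} (cited from \cite[Proposition 1.1]{chowdhury2018explicit}), there exists $R\in\mathcal{R}^{\mathrm{opt}}(X,Y)$ which is a compact subset of $X\times Y$ under the max metric, and by definition $\dis(R)=2\dgh(X,Y)$. I would set $Z\coloneqq R$ and endow it with the averaged metric
\[d_Z\bigl((x,y),(x',y')\bigr)\coloneqq\tfrac{1}{2}\bigl(d_X(x,x')+d_Y(y,y')\bigr).\]
A short verification shows that $d_Z$ satisfies the metric axioms (the symmetric-averaging preserves triangle inequality, and positivity follows because $(x,y)\ne(x',y')$ forces at least one of $d_X(x,x'),d_Y(y,y')$ to be strictly positive). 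Compactness of $(Z,d_Z)$ follows from compactness of $R$ inside $(X\times Y,\max(d_X,d_Y))$ since $d_Z\le\max(d_X,d_Y)$ on $R\times R$, so any sequence in $Z$ admits a convergent subsequence under the max metric and hence under $d_Z$.

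The main computation is to bound $\dgh(X,Z)$ and $\dgh(Y,Z)$. I would introduce the canonical correspondences
\[R_{X,Z}\coloneqq\{(x,(x,y))\colon (x,y)\in R\}\in\mathcal{R}(X,Z),\qquad R_{Y,Z}\coloneqq\{(y,(x,y))\colon(x,y)\in R\}\in\mathcal{R}(Y,Z),\]
(surjectivity in both coordinates is immediate from $R\in\mathcal{R}(X,Y)$). For $R_{X,Z}$ the distortion computation is
\[\bigl|d_X(x,x')-d_Z((x,y),(x',y'))\bigr|=\bigl|\tfrac{1}{2}d_X(x,x')-\tfrac{1}{2}d_Y(y,y')\bigr|\le\tfrac{1}{2}\dis(R),\]
so $\dis(R_{X,Z})\le\tfrac{1}{2}\dis(R)$, and by \Cref{thm:dgh-dual} this gives $\dgh(X,Z)\le\tfrac{1}{4}\dis(R)=\tfrac{1}{2}\dgh(X,Y)$. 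The analogous argument produces $\dgh(Y,Z)\le\tfrac{1}{2}\dgh(X,Y)$.

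To finish, I apply the triangle inequality $\dgh(X,Y)\le\dgh(X,Z)+\dgh(Z,Y)$; combined with the two upper bounds, both inequalities must be equalities, so $Z$ is a mid-point. Completeness of $(\ms,\dgh)$, already recorded after \Cref{def:dGH}, together with the mid-point criterion, yields that $(\ms,\dgh)$ is geodesic. The only real subtlety in the argument is ensuring that $Z$ is compact (not merely totally bounded pre-completion), which is why the existence of a \emph{compact} optimal correspondence from \Cref{lm:dgh_hausdorff-realizable} is essential; everything else reduces to the two short distortion estimates above.
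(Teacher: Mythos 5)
Your proof is correct, but it takes a genuinely different route from the one in the paper. You prove the theorem via the mid-point criterion (\Cref{thm:mid-pt-geo}): starting from a compact optimal correspondence $R\in\mathcal{R}^{\mathrm{opt}}(X,Y)$, you equip $R$ with the averaged metric $\tfrac12(d_X+d_Y)$ --- which is exactly the space $R_{1/2}$ of the straight-line geodesic of \Cref{thm:str-line-geo} --- and verify by two short distortion estimates that it is a mid-point; completeness of $(\ms,\dgh)$ then finishes the argument. All the steps check out: the compactness of $(R,d_Z)$ from $d_Z\le\max(d_X,d_Y)$, the bound $\dis(R_{X,Z})\le\tfrac12\dis(R)$, and the forcing of equality by the triangle inequality are each fine. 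This is essentially the strategy of \cite{ivanov2016gromov} combined with the explicit mid-point implicit in \cite{chowdhury2018explicit}, whereas the paper's stated purpose here is to give a \emph{new} proof that avoids both: it embeds $X$ and $Y$ into a common compact $Z$ realizing $\dgh(X,Y)$ as a Hausdorff distance (\Cref{lm:dgh_hausdorff-realizable}), replaces $Z$ by its geodesic extension $\mathcal{W}_1(Z)$ (\Cref{thm:W-geodesic}), uses that $\mathcal{H}(Z)$ is then geodesic (\Cref{thm:hgeo}), and pushes a Hausdorff geodesic down to a Gromov--Hausdorff geodesic via \Cref{lm:hgeo-to-dghgeo}. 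The trade-off: your argument is more elementary and self-contained (it needs only the duality formula and the existence of a compact optimal correspondence), but it produces the geodesic only through the limiting procedure hidden in the mid-point criterion; the paper's argument leans on heavier machinery about metric extensors but yields an explicit Hausdorff-realized geodesic, which is precisely the structure the rest of the paper is built around.
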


In {\cite[Theorem 1.2]{chowdhury2018explicit}}, the authors proved the existence of optimal correspondences which they used to give an  explicit construction of Gromov-Hausdorff geodesics and, as a consequence provided, an alternative proof of \Cref{thm:GH-geo}:

\begin{theorem}[Straight-line $\dgh$ geodesic \cite{chowdhury2018explicit}]\label{thm:str-line-geo}
For $X,Y\in\ms$ and any $R\in\mathcal{R}^\mathrm{opt}\left(X,Y\right)$, the curve $\gamma_R:[0,1]\rightarrow\ms$ defined as follows is a geodesic:
$$\gamma_R\left(0\right)=\left(X,d_X\right),\gamma_R\left(1\right)=\left(Y,d_Y\right)\text{ and }\gamma_R\left(t\right)=\left(R,d_{R_t}\right)\text{ for }t\in\left(0,1\right), $$
where $d_{R_t}\left(\left(x,y\right),\left(x',y'\right)\right)\coloneqq\left(1-t\right)\,d_X\left(x,x'\right)+t\,d_Y\left(y,y'\right).$

\end{theorem}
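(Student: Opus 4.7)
The plan is to verify that the curve $\gamma_R$ satisfies the geodesic inequality
\[
\dgh\bigl(\gamma_R(s),\gamma_R(t)\bigr)\leq |t-s|\cdot \dgh(X,Y)
\qquad\text{for all }s,t\in[0,1],
\]
by exhibiting, for every pair $(s,t)$, an explicit correspondence whose distortion is at most $2|t-s|\cdot \dgh(X,Y)$, and then invoking the triangle inequality to force each of these bounds to be an equality. Preliminarily, one checks that $d_{R_t}$ is a pseudometric on $R$ (nonnegativity and symmetry are immediate, and the triangle inequality follows by taking the convex combination, in $t$, of the triangle inequalities for $d_X$ and $d_Y$); one then passes to the metric quotient so that $\gamma_R(t)\in \ms$. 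At $t=0$ (resp. $t=1$) this quotient is canonically isometric to $X$ (resp. $Y$) via the projection $\pi_X$ (resp. $\pi_Y$), which is consistent with the definition.

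For the interior case $s,t\in(0,1)$, I would use the \emph{diagonal} correspondence $R_{s,t}\coloneqq\{((x,y),(x,y)):(x,y)\in R\}\subseteq R\times R$ between the two copies of $R$ equipped with $d_{R_s}$ and $d_{R_t}$, respectively. Its distortion simplifies by linearity of $d_{R_t}$ in $t$:
\[
\bigl|d_{R_s}((x,y),(x',y'))-d_{R_t}((x,y),(x',y'))\bigr|
=|t-s|\cdot\bigl|d_X(x,x')-d_Y(y,y')\bigr|,
\]
so $\dis(R_{s,t})=|t-s|\cdot\dis(R)=2|t-s|\cdot\dgh(X,Y)$, and \Cref{thm:dgh-dual} gives the desired bound. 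For endpoints I would use analogous correspondences: $R_t^0\coloneqq\{(x,(x,y)):(x,y)\in R\}\subseteq X\times R$ (it is a correspondence since $\pi_X(R)=X$) has distortion $t\cdot\dis(R)$, so $\dgh(X,\gamma_R(t))\leq t\cdot\dgh(X,Y)$; symmetrically $R_t^1\coloneqq\{(y,(x,y)):(x,y)\in R\}$ gives $\dgh(\gamma_R(t),Y)\leq (1-t)\cdot\dgh(X,Y)$.

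With the three bounds in hand, the triangle inequality pinches everything to an equality: for any $0\leq s\leq t\leq 1$,
\[
\dgh(X,Y)\leq \dgh(X,\gamma_R(s))+\dgh(\gamma_R(s),\gamma_R(t))+\dgh(\gamma_R(t),Y)\leq \bigl(s+(t-s)+(1-t)\bigr)\dgh(X,Y)=\dgh(X,Y),
\]
so each term must attain its upper bound. In particular $\dgh(\gamma_R(s),\gamma_R(t))=|t-s|\cdot \dgh(X,Y)$, which is the geodesic identity; as a by-product, $\gamma_R$ is continuous, so it really is a curve.

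The argument is essentially routine once the right correspondences are chosen, and I do not expect a serious obstacle. The only mildly delicate points are bookkeeping ones: (i) ensuring that the pseudometric-to-metric quotient at each $t$ does not affect distortion computations (it does not, since distortion depends only on the values $d_{R_t}((x,y),(x',y'))$, which descend to the quotient); and (ii) treating the endpoints $t=0,1$ uniformly with the interior case, which is handled by the auxiliary correspondences $R_t^0$ and $R_t^1$ above. Everything else reduces to the elementary linear-in-$t$ identity for $d_{R_t}$ and a single application of \Cref{thm:dgh-dual}.
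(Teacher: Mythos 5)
Your argument is correct, but note that the paper does not actually prove this theorem: it is imported verbatim from \cite{chowdhury2018explicit}, so there is no in-paper proof to compare against. What you wrote is essentially the standard argument from that reference — the diagonal correspondence between $(R,d_{R_s})$ and $(R,d_{R_t})$, the projection correspondences $\{(x,(x,y))\}$ and $\{(y,(x,y))\}$ at the endpoints, the linear-in-$t$ identity $|d_{R_s}-d_{R_t}|=|t-s|\,|d_X-d_Y|$, and the triangle-inequality pinch — together with \Cref{thm:dgh-dual} (where the paper's $\inf$ in the definition of $\dis$ is a typo for $\sup$). Two small remarks: for $t\in(0,1)$ no metric quotient is needed, since both coefficients $(1-t)$ and $t$ are positive and hence $d_{R_t}$ is already a genuine metric on $R$; and for $\gamma_R(t)$ to land in $\ms$ one should take $R$ closed in $(X\times Y,\max(d_X,d_Y))$ so that $(R,d_{R_t})$ is compact, which is harmless since the paper records that a compact optimal correspondence always exists and passing to the closure does not change the distortion.
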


We will henceforth use the notation: $R_t\coloneqq \gamma_R(t)$ for $t\in[0,1]$.

\paragraph{Convergence.} Recall that for $\eps>0$ and $X\in \ms$, the covering number $\mathrm{cov}_\eps\left(X\right)$ is the least number of $\eps$-balls\footnote{An $\eps$-ball is a closed ball in $X$ with radius $\eps$.} required to cover the whole space $X$.

\begin{definition}[Uniformly totally bounded class]\label{def:CND}
We say a class $\mathcal{K}$ of compact metric spaces is \emph{uniformly totally bounded}, if there exist a bounded function $Q:\left(0,\infty\right)\rightarrow\mathbb{N}$ and $D>0$ such that each $X\in\mathcal{K}$ satisfies the following: 
\begin{enumerate}
    \item $\diam\left(X\right)\leq D$,
    \item for any $\eps>0$, $\mathrm{cov}_\eps\left(X\right)\leq Q\left(\eps\right)$.
\end{enumerate}
We denote by $\mathcal{K}\left(Q,D\right)$ the uniformly totally bounded class consisting of all $X\in\ms$ satisfying the conditions above.
\end{definition}

\begin{theorem}[Gromov's pre-compactness theorem]\label{thm:pre-compact}
For any given bounded function $Q:\left(0,\infty\right)\rightarrow\mathbb{N}$ and $D>0$, the class $\mathcal{K}\left(Q,D\right)$ is pre-compact in $\left(\ms,\dgh\right)$, i.e., any sequence in $\mathcal{K}\left(Q,D\right)$ has a convergent subsequence.
\end{theorem}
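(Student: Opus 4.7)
The plan is a diagonal extraction on finite nets followed by a completion argument. Let $\{X_n\}$ be a sequence in $\mathcal{K}(Q,D)$ and write $N_k := Q(1/k)$. Since $Q$ is bounded, all $N_k$ lie in a fixed finite range, but we only need that each $N_k$ is finite.

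First, for each $n$ and each $k$, I would choose a $1/k$-net $S_n^k\subseteq X_n$ of cardinality at most $N_k$. By padding with repeats and passing to nested unions $S_n^1\subseteq S_n^2\subseteq \cdots$, I may index the chosen net points in an $n$-independent way as $\{x_{n,i}\}_{i\in\mathbb{N}}$, with a sequence $M_k$ (depending only on $k$) such that $\{x_{n,1},\ldots,x_{n,M_k}\}$ is a $1/k$-net of $X_n$ for every $n$. The labels $i$ are abstract tokens identifying the $i$-th net point across different $n$. This bookkeeping step is the main obstacle; once in place, the rest is fairly mechanical.

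Second, I diagonalize. For each pair $(i,j)$, the sequence $n\mapsto d_{X_n}(x_{n,i},x_{n,j})$ takes values in $[0,D]$ and hence has a convergent subsequence. Enumerating $\mathbb{N}^2$ and extracting diagonally, I pass to a subsequence (still denoted $X_n$) for which $d_\infty(i,j):=\lim_n d_{X_n}(x_{n,i},x_{n,j})$ exists for every $i,j$. Passing to the limit preserves symmetry and the triangle inequality, so $(\mathbb{N},d_\infty)$ is a pseudo-metric space; quotienting by the equivalence $i\sim j\Leftrightarrow d_\infty(i,j)=0$ and completing produces a metric space $X_\infty$. Writing $y_i\in X_\infty$ for the image of $i$, a pigeonhole argument shows that for every $j$ there exists $i\leq M_k$ with $d_\infty(i,j)\leq 1/k$, so $\{y_1,\ldots,y_{M_k}\}$ is a $1/k$-net in the dense image and hence a $1/k$-net in $X_\infty$. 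Thus $X_\infty$ is totally bounded and complete, hence compact.

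Third, I verify $X_n\to X_\infty$ in $\dgh$ using the correspondence formulation of \Cref{thm:dgh-dual}. Fix $\epsilon>0$ and choose $k$ with $4/k<\epsilon$. Define
\[
R_n:=\bigl\{(x,y)\in X_n\times X_\infty:\exists\, i\leq M_k,\ d_{X_n}(x,x_{n,i})\leq 1/k,\ d_{X_\infty}(y,y_i)\leq 1/k\bigr\}.
\]
Because both $\{x_{n,i}\}_{i\leq M_k}$ and $\{y_i\}_{i\leq M_k}$ are $1/k$-nets, $R_n$ projects onto $X_n$ and $X_\infty$, i.e., $R_n\in\mathcal{R}(X_n,X_\infty)$. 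For $(x,y),(x',y')\in R_n$ with witnesses $i,j$, two triangle inequalities yield
\[
\bigl|d_{X_n}(x,x')-d_{X_\infty}(y,y')\bigr|\;\leq\;\tfrac{4}{k}+\bigl|d_{X_n}(x_{n,i},x_{n,j})-d_\infty(i,j)\bigr|.
\]
Since $M_k$ is finite, the second term converges to $0$ as $n\to\infty$ uniformly over $i,j\leq M_k$, so $\limsup_n \dis(R_n)\leq 4/k$ and \Cref{thm:dgh-dual} gives $\limsup_n \dgh(X_n,X_\infty)\leq 2/k<\epsilon$. Since $\epsilon$ was arbitrary, $X_n\to X_\infty$, completing the proof.
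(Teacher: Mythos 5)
Your proof is correct and is essentially the standard argument the paper defers to (it cites \cite{burago2001course} rather than proving the theorem itself): extract uniformly sized $1/k$-nets, diagonalize over the countably many pairwise distances, complete the limiting pseudo-metric space, and verify Gromov--Hausdorff convergence via correspondences built from the nets. All the steps, including the pigeonhole argument for the net property of $X_\infty$ and the distortion estimate $\limsup_n \dis(R_n)\leq 4/k$, check out.
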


Interested readers are referred to \cite[Section 7.4.2]{burago2001course} for a proof.

\subsection{Sturm's Gromov-Wasserstein distance}\label{sec:gw-detail}
\paragraph{Wasserstein distance.}Given a metric space $X$ and any $p\in[1,\infty]$, there exists a natural distance $d_{\mathcal{W},p}^X$, the \emph{$\ell^p$-Wasserstein distance}, comparing certain Borel probability measures on $X$.

\begin{definition}[$\ell^p$-Wasserstein distance]\label{def:p-w-dist}
For a metric space $X$ (not necessarily compact) and $p\in[1,\infty)$, let $\mathcal{P}_p\left(X\right)$ denote the collection of all Borel probability measures $\alpha$ on $X$ such that 
$$\forall x_0\in X,\quad\int_{X}d_X^p\left(x,x_0\right)d\alpha\left(x\right)<\infty.$$ 
For $\alpha,\beta\in\mathcal{P}_p\left(X\right)$, the $\ell^p$-Wasserstein distance between $\alpha$ and $\beta$ is defined as follows:
$$d_{\mathcal{W},p}^X\left(\alpha,\beta\right)\coloneqq\inf_{\mu\in\mathcal{C}\left(\alpha,\beta\right)}\left(\int_{X\times X}d_X^p\left(x_1,x_2\right)\,d\mu\left(x_1,x_2\right)\right)^\frac{1}{p}, $$
where $\mathcal{C}\left(\alpha,\beta\right)$ denotes the set of measure couplings between $\alpha$ and $\beta$.

For $p=\infty$, let $\mathcal{P}_\infty\left(X\right)$ denote the collection of all Borel probability measures on $X$ with bounded support. We define the $\ell^\infty$-Wasserstein distance between $\alpha,\beta\in\mathcal{P}_\infty\left(X\right)$ by
$$d_{\mathcal{W},\infty}^X\left(\alpha,\beta\right)\coloneqq\inf_{\mu\in\mathcal{C}\left(\alpha,\beta\right)}\sup_{\left(x_1,x_2\right)\in\supp\left(\mu\right)}d_X\left(x_1,x_2\right). $$
\end{definition}

\begin{lemma}[{\cite[Theorem 4.1]{villani2008optimal}}]
Fix $p\in[1,\infty)$. For a compact metric space $X$ and $\alpha,\beta\in\mathcal{P}_p(X)$, there exists $\mu\in\mathcal{C}(\alpha,\beta)$ such that 
$$d_{\mathcal{W},p}^X\left(\alpha,\beta\right)=\left(\int_{X\times X}d_X^p\left(x_1,x_2\right)\,d\mu\left(x_1,x_2\right)\right)^\frac{1}{p}. $$
We call such $\mu$ an \emph{optimal transference plan} between $\alpha$ and $\beta$ (with respect to $d_{\mathcal{W},p}^X$) and denote by $\mathcal{C}^\mathrm{opt}_p(\alpha,\beta)$ the collection of all optimal transference plans.
\end{lemma}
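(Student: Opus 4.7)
The plan is to apply the direct method of the calculus of variations: take a minimizing sequence of couplings, extract a weak limit by compactness of the space of probability measures on $X \times X$, and verify that this limit is both a coupling of $\alpha,\beta$ and achieves the infimum. Concretely, I would begin with a sequence $\{\mu_n\}_{n=1}^\infty \subseteq \mathcal{C}(\alpha,\beta)$ such that
\[
\int_{X \times X} d_X^p(x_1,x_2) \, d\mu_n(x_1,x_2) \longrightarrow \bigl(d_{\mathcal{W},p}^X(\alpha,\beta)\bigr)^p.
\]

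Since $X$ is compact, so is $X \times X$, and therefore the space $\mathcal{P}(X \times X)$ of Borel probability measures equipped with the weak topology is itself compact (any sequence there is automatically tight, so Prokhorov's theorem applies). Passing to a subsequence, I may thus assume $\mu_n \rightharpoonup \mu$ for some $\mu \in \mathcal{P}(X \times X)$. The next step is to check that $\mu \in \mathcal{C}(\alpha,\beta)$: for any $f \in C(X)$, the function $f \circ \pi_1$ lies in $C(X \times X)$, so weak convergence gives
\[
\int f \, d\bigl((\pi_1)_\# \mu\bigr) = \int (f \circ \pi_1) \, d\mu = \lim_{n \to \infty} \int (f \circ \pi_1) \, d\mu_n = \int f \, d\alpha,
\]
and by the Riesz representation theorem $(\pi_1)_\# \mu = \alpha$; analogously $(\pi_2)_\# \mu = \beta$.

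To conclude, I would use that $d_X^p$ is a bounded continuous function on the compact space $X \times X$, so the defining property of weak convergence yields
\[
\int d_X^p \, d\mu = \lim_{n \to \infty} \int d_X^p \, d\mu_n = \bigl(d_{\mathcal{W},p}^X(\alpha,\beta)\bigr)^p,
\]
showing that $\mu$ realizes the infimum and hence belongs to $\mathcal{C}^\mathrm{opt}_p(\alpha,\beta)$. I do not anticipate a serious obstacle: the only potentially delicate step, namely the verification that the limiting marginals equal $\alpha$ and $\beta$, is immediate from testing weak convergence against $C(X)$ functions composed with the projections. In a non-compact Polish setting one would instead need to prove tightness of $\{\mu_n\}$ (which still follows from tightness of the marginals by a standard argument) and then only lower semicontinuity of the cost functional, but both ingredients are automatic here thanks to compactness of $X$ and continuity and boundedness of $d_X^p$.
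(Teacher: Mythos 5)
Your argument is correct and complete: the direct method with Prokhorov compactness of $\mathcal{P}(X\times X)$, marginal preservation via testing against $f\circ\pi_i$ for $f\in C(X)$, and passage to the limit in the cost using that $d_X^p$ is bounded and continuous on the compact space $X\times X$. The paper gives no proof of its own here — it simply cites Theorem 4.1 of Villani — and your argument is precisely the standard proof from that reference, specialized (and correctly simplified) to the compact case where tightness and continuity of the cost are automatic.
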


\paragraph{Sturm's Gromov-Wasserstein distance.} A metric measure space is a triple $\mathcal{X}=(X,d_X,\mu_X)$ where $(X,d_X)$ is a metric space and $\mu_X$ is a Borel probability measure on $(X,d_X)$. We use script letters such as $\mathcal{X}$ to denote a metric measure space $\mathcal{X}=(X,d_X,\mu_X)$.

\begin{definition}[Isomorphism of metric measure spaces]\label{def:isomorphism}
Given two metric measure spaces $\X$ and $\Y$, we say that they are \emph{isomorphic}, if there exists an isometry $\varphi:X\rightarrow Y$ such that $\mu_Y=\varphi_\#\mu_X$, where $\varphi_\#$ denotes the pushforward map under $\varphi$. Whenever $\mathcal{X}$ is isomorphic to $\mathcal{Y}$, we write $\mathcal{X}\cong_w\mathcal{Y}$.
\end{definition}

Now, we provide the definition of the Gromov-Wasserstein distance given by Sturm in \cite{sturm2006geometry,sturm2012space}.
\begin{definition}[Gromov-Wasserstein distance]\label{def:dGW}
Let $p\in[1,\infty]$ and let $\mathcal{X}=(X,d_X,\mu_X)$ and $\mathcal{Y}=(Y,d_Y,\mu_Y)$ be two compact metric measure spaces with full support. The $\ell^p$-Gromov-Wasserstein distance $\dgws{p}$ between $\mathcal{X}$ and $\mathcal{Y}$ is defined by
$$\dgws{p}\left(\mathcal{X},\mathcal{Y}\right)\coloneqq\inf_{Z}\dW{p}^Z\left((\varphi_X)_\#\mu_X,(\varphi_Y)_\#\mu_Y\right), $$
where the infimum is taken over all metric spaces $Z$ and isometric embeddings $\varphi_X:X\hookrightarrow Z$ and $\varphi_Y:Y\hookrightarrow Z$. 
\end{definition}

For notational simplicity, we sometimes identify $(\varphi_X)_\#\mu_X$ with $\mu_X$ and simply write $\dW{p}^Z\left(\mu_X,\mu_Y\right)$ to avoid carrying heavy notations of pushforward maps from isometric embeddings.

Let $p\in[1,\infty]$ and let $\mathcal{X}=(X,d_X,\mu_X)$ and $\mathcal{Y}=(Y,d_Y,\mu_Y)$ be two compact metric measure spaces with full support. Then, $\dgws{p}(\mathcal{X},\mathcal{Y})=0$ if and only if $\mathcal{X}$ and $\mathcal{Y}$ are isomorphic to each other. {The case when $p\in[1,\infty)$ was mentioned in \cite[Proposition 2.4]{sturm2012space} whereas the case $p=\infty$ can be obviously derived from \cite[Theorem 5.1 (a) and (g)]{memoli2011gromov}.} Let $\ws$ denote the collection of all isomorphism classes of compact metric measure spaces with full support. Then, for each $p\in[1,\infty]$, $\left(\ws,\dgws p\right)$ is a metric space.

\noindent\textbf{Note}: although $\ws$ is a set of isomorphism classes, by a slight abuse of notation, we write $\X\in \ws$ to refer to an individual metric measure space $\X$ instead of its isomorphism class.

The following is a useful alternative formulation of the Gromov-Wasserstein distance:

{\begin{remark}[Metric coupling formulation]\label{rmk:metric coupling}
Let $\mathcal{D}(d_X,d_Y)$ denote the set of all metrics $d:X\sqcup Y\times X\sqcup Y\rightarrow\mathbb R_{\geq 0}$ such that $d|_{X\times X}=d_X$ and $d|_{Y\times Y}=d_Y$. We call each element in $\mathcal{D}(d_X,d_Y)$ a \emph{metric coupling} between $d_X$ and $d_Y$. Then, it is easy to check that 
$$\dgws{p}\left(\mathcal{X},\mathcal{Y}\right)=\inf_{d\in\mathcal{D}(d_X,d_Y)}\dW{p}^{(X\sqcup Y,d)}\left(\mu_X,\mu_Y\right). $$
\end{remark}}

{The following result is analogous to \Cref{lm:dgh_hausdorff-realizable} for the Gromov-Hausdorff distance.}

\begin{lemma}\label{lm:dgw_w-realizable}
Let $\mathcal{X}=(X,\mu_X),\mathcal{Y}=(Y,\mu_Y)\in\ws$ and $p\in[1,\infty)$. Then, there exists a compact metric space $Z$ and isometric embeddings $\varphi_X:X\hookrightarrow Z$ and $\varphi_Y:Y\hookrightarrow Z$ such that
$$\dgws{p}\left(\mathcal{X},\mathcal{Y}\right)= \dW{p}^Z\left((\varphi_X)_\#\mu_X,(\varphi_Y)_\#\mu_Y\right). $$
\end{lemma}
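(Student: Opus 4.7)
The plan is to mirror the proof of \Cref{lm:dgh_hausdorff-realizable} but, instead of writing down an explicit ambient metric via a single correspondence, to build one from a limit of an optimizing sequence of metric couplings. Setting $D\coloneqq\dgws{p}\lc\X,\Y\rc$ and invoking \Cref{rmk:metric coupling}, choose a minimizing sequence $\{d_n\}\subseteq\mathcal{D}\lc d_X,d_Y\rc$ with $\dW{p}^{\lc X\sqcup Y,d_n\rc}\lc\mu_X,\mu_Y\rc\to D$. The ambient space $Z$ will be extracted as a quotient of $\lc X\sqcup Y,d^\ast\rc$ where $d^\ast$ is a subsequential limit of the $d_n$.

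First I would establish uniform control on the cross distances $d_n\lc x,y\rc$ for $x\in X$, $y\in Y$. Picking $\mu_n\in\mathcal{C}^\mathrm{opt}_p\lc\mu_X,\mu_Y\rc$ in $\lc X\sqcup Y,d_n\rc$ and using $\int d_n^p\,d\mu_n\to D^p$, a Markov-inequality argument yields, for large $n$, some $\lc x_0^n,y_0^n\rc\in\supp\lc\mu_n\rc$ with $d_n\lc x_0^n,y_0^n\rc\leq 2\lc D+1\rc$. The triangle inequality then gives the uniform bound $d_n\lc x,y\rc\leq C\coloneqq\diam\lc X\rc+\diam\lc Y\rc+2\lc D+1\rc$ for every $\lc x,y\rc\in X\times Y$. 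Moreover, a second application of the triangle inequality in $\lc X\sqcup Y,d_n\rc$ shows $|d_n\lc x,y\rc-d_n\lc x',y'\rc|\leq d_X\lc x,x'\rc+d_Y\lc y,y'\rc$, i.e.\ each $d_n$ is $1$-Lipschitz on the compact product $X\times Y$ equipped with the $\ell^1$-sum metric. The Arzel\`a-Ascoli theorem then produces a subsequence converging uniformly to some continuous $d^\ast:X\times Y\to[0,C]$.

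Extending $d^\ast$ by $d_X$ on $X\times X$, by $d_Y$ on $Y\times Y$, and symmetrically across, the metric axioms pass to the uniform limit and $d^\ast$ is a pseudo-metric on $X\sqcup Y$. Defining $Z$ as the quotient of $\lc X\sqcup Y,d^\ast\rc$ under $p\sim q\iff d^\ast\lc p,q\rc=0$ produces a compact metric space; since $d_X$ and $d_Y$ themselves separate points, the inclusions descend to isometric embeddings $\varphi_X:X\hookrightarrow Z$ and $\varphi_Y:Y\hookrightarrow Z$.

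It remains to check that $\dW{p}^Z\lc\lc\varphi_X\rc_\#\mu_X,\lc\varphi_Y\rc_\#\mu_Y\rc=D$. The inequality ``$\geq$'' is immediate from \Cref{def:dGW}. For ``$\leq$'', Prokhorov's theorem yields a further subsequence along which $\mu_n\rightharpoonup\mu^\ast\in\mathcal{C}\lc\mu_X,\mu_Y\rc$, and combining uniform convergence $d_n^p\to\lc d^\ast\rc^p$ on $X\times Y$ with weak convergence of $\mu_n$ gives $\int d_n^p\,d\mu_n\to\int\lc d^\ast\rc^p\,d\mu^\ast$, so the pushforward of $\mu^\ast$ to $Z$ realizes Wasserstein cost $D$. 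The main obstacle is securing the uniform bound on the cross distances in the first step; without it, the Arzel\`a-Ascoli compactness fails and the limiting pseudo-metric may blow up or collapse. Once that bound is in place, the remainder is a routine combination of Arzel\`a-Ascoli, Prokhorov, and the continuity of the Wasserstein cost under uniform convergence of the underlying metric.
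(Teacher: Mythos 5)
Your proof is correct, but it takes a genuinely different route from the paper's. The paper disposes of this lemma in two lines: it cites Sturm's result (\cite[Proposition 2.4]{sturm2012space}) that the infimum defining $\dgws{p}$ is \emph{attained} in some ambient space $\hat Z$, and then simply replaces $\hat Z$ by the compact subspace $Z\coloneqq\varphi_X(X)\cup\varphi_Y(Y)$, observing that the Wasserstein distance is unaffected since both measures are supported there. You instead reprove the attainment from scratch: Arzel\`a--Ascoli applied to a minimizing sequence of metric couplings (using the uniform cross-distance bound and the $1$-Lipschitz estimate $|d_n(x,y)-d_n(x',y')|\leq d_X(x,x')+d_Y(y,y')$, both of which you justify correctly), followed by Prokhorov for the measure couplings and passage to the limit in the cost via uniform convergence of $d_n^p$ plus weak convergence of $\mu_n$. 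This is essentially the content hidden inside the cited proposition, so your argument buys self-containedness at the cost of length; note also that the quotient trick you use at the end is the same device the paper uses in \Cref{rmk:quotient-str-line}, and that compactness of your $Z$ deserves the one-line remark (any sequence has a subsequence lying entirely in $X$ or entirely in $Y$) that the paper makes explicitly in the proof of \Cref{lm:dgh_hausdorff-realizable}. The one point worth tightening is your claim that the $\geq$ inequality is ``immediate from \Cref{def:dGW}'': it is, but only because $Z$ is itself an admissible ambient space in that infimum, which is exactly what the construction guarantees.
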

\begin{proof}
It is proved in \cite[Proposition 2.4]{sturm2012space} that there exists a metric space $\hat{Z}$ and isometric embeddings ${\varphi}_X:X\hookrightarrow \hat{Z}$ and ${\varphi}_Y:Y\hookrightarrow \hat{Z}$ such that $\dgws{p}\left(\mathcal{X},\mathcal{Y}\right)= \dW{p}^{\hat{Z}}\left(({\varphi}_X)_\#\mu_X,({\varphi}_Y)_\#\mu_Y\right). $ Now let $Z\coloneqq{\varphi}_X(X)\cup{\varphi}_Y(Y)$. Then, $Z$ is compact. Since $\mathrm{im}(\varphi_X),\mathrm{im}(\varphi_Y)\subseteq Z$, both $\varphi_X$ and $\varphi_Y$ are actually isometric embeddings ${\varphi}_X:X\hookrightarrow Z$ and ${\varphi}_Y:Y\hookrightarrow Z$, respectively. Then, it is easy to see that 
$$\dgws{p}\left(\mathcal{X},\mathcal{Y}\right)=\dW{p}^{\hat{Z}}\left((\varphi_X)_\#\mu_X,(\varphi_Y)_\#\mu_Y\right)= \dW{p}^Z\left((\varphi_X)_\#\mu_X,(\varphi_Y)_\#\mu_Y\right). $$
\end{proof}


\section{Metric extensions}\label{sec:metric-ext}
For any two metric spaces $X$ and $Y$, if there exists an isometric embedding $X\hookrightarrow Y$, then we call $Y$ a \emph{metric extension} of $X$. A \emph{metric extensor} is any map $\mathcal{F}$ taking a compact metric space $X$ to another metric space $\mathcal{F}\left(X\right)$ such that $\mathcal{F}(X)$ is a metric extension of $X$. In this section, we examine three standard models of metric extensions, namely, the Hausdorff hyperspace, the Wasserstein hyperspace and the Urysohn universal metric space. Properties of these metric extensions and their corresponding metric extensors are essential for proving our main results.
\subsection{Hausdorff hyperspaces}
Given a metric space $X$, the \emph{Hausdorff hyperspace} $\mathcal{H}\left(X\right)$ of $X$ is composed of all nonempty bounded closed subsets of $X$ and is endowed with the Hausdorff distance $\dH^X$ as its metric.
\begin{theorem}\label{thm:hyper-complete}
If $X$ is a complete metric space, then $\left(\mathcal{H}\left(X\right),\dH^X\right)$ is also complete.
\end{theorem}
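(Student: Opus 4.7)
The plan is to invoke the standard strategy for completeness of hyperspaces: take a Cauchy sequence $\{A_n\}_{n=1}^\infty\subseteq\mathcal{H}(X)$, construct an explicit candidate limit $A\subseteq X$, verify that $A\in\mathcal{H}(X)$, and then show $d_\mathcal{H}^X(A_n,A)\to 0$.

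First, I would reduce to a rapidly Cauchy subsequence: extract $\{A_{n_k}\}_{k=1}^\infty$ with $d_\mathcal{H}^X(A_{n_k},A_{n_{k+1}})<2^{-k}$. Since the original sequence is Cauchy, convergence of this subsequence to some $A$ will force convergence of the entire sequence to $A$ by the usual triangle-inequality argument. Next, I would define the candidate limit
\[
A\coloneqq\bigl\{x\in X:\ x=\lim_{k\to\infty}x_k\text{ for some sequence }x_k\in A_{n_k}\bigr\}.
\]

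The heart of the proof is showing $A\in\mathcal{H}(X)$ and that $A_{n_k}\to A$. For \emph{nonemptiness}, I would pick any $x_1\in A_{n_1}$ and inductively pick $x_{k+1}\in A_{n_{k+1}}$ with $d_X(x_k,x_{k+1})<2^{-k}$ (possible because $A_{n_k}\subseteq A_{n_{k+1}}^{2^{-k}}$); this gives a Cauchy sequence in $X$, which converges by completeness of $X$, so its limit lies in $A$. \emph{Closedness} follows by a standard diagonal argument: if $y^{(j)}\in A$ with $y^{(j)}\to y$, then for each $j$ pick $x_k^{(j)}\in A_{n_k}$ converging to $y^{(j)}$, and extract a diagonal subsequence converging to $y$ with one term from each $A_{n_k}$. \emph{Boundedness} follows since the Hausdorff Cauchy condition implies $\sup_k\diam(A_{n_k})<\infty$, and if $x_k\in A_{n_k}$ tends to $x\in A$ then $A\subseteq$ a ball of comparable radius around $x$.

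For the Hausdorff convergence $d_\mathcal{H}^X(A_{n_k},A)\to 0$, I would prove the two inclusions $A\subseteq A_{n_k}^{2^{-(k-1)}}$ and $A_{n_k}\subseteq A^{2^{-(k-1)}}$. The first is easy: any $x\in A$ is a limit of $x_j\in A_{n_j}$, and comparing $x_j$ to an element of $A_{n_k}$ via the telescoping estimate $\sum_{i\ge k}2^{-i}=2^{-(k-1)}$ gives the bound. The second direction is the main obstacle: given $y\in A_{n_k}$, I need to produce an element of $A$ within distance $2^{-(k-1)}$ of $y$. I would do this by iteratively choosing $y_{k+1}\in A_{n_{k+1}}$ with $d_X(y,y_{k+1})<2^{-k}$, then $y_{k+2}\in A_{n_{k+2}}$ with $d_X(y_{k+1},y_{k+2})<2^{-(k+1)}$, and so on, obtaining a Cauchy sequence whose limit lies in $A$ and is within $\sum_{i\ge k}2^{-i}=2^{-(k-1)}$ of $y$. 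Finally, since $\{A_n\}$ is Cauchy and a subsequence converges to $A$, the full sequence converges to $A$, so $\mathcal{H}(X)$ is complete.
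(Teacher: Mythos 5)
Your proof is correct and is the standard completeness argument for the Hausdorff hyperspace (rapidly Cauchy subsequence, candidate limit set of selection-sequence limits, two thickening inclusions via telescoping). The paper does not give its own proof of this theorem but defers to \cite[Section 7.3]{burago2001course}, which uses essentially this same argument, so there is nothing further to compare.
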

\begin{theorem}[Blaschke's theorem]\label{thm:blaschke}
If $X$ is a compact metric space, then $\left(\mathcal{H}\left(X\right),\dH^X\right)$ is also compact.
\end{theorem}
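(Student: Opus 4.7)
The plan is to reduce the statement to the standard criterion that a metric space is compact if and only if it is complete and totally bounded. Completeness of $\mathcal{H}(X)$ follows directly from \Cref{thm:hyper-complete} (since compact implies complete), so the whole argument reduces to exhibiting, for every $\epsilon > 0$, a finite $\epsilon$-net in $\mathcal{H}(X)$ with respect to $\dH^X$.

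First I would fix $\epsilon > 0$ and use compactness of $X$ to choose a finite $\epsilon$-net $N \subseteq X$, so that every point of $X$ lies within distance $\epsilon$ of some element of $N$. My candidate finite $\epsilon$-net in $\mathcal{H}(X)$ is the finite collection
\[
\mathcal{N} \coloneqq \{S \subseteq N : S \neq \emptyset\},
\]
each $S$ viewed as a (finite, hence closed and nonempty) element of $\mathcal{H}(X)$. The key step is then to show that for every $A \in \mathcal{H}(X)$ there is some $S \in \mathcal{N}$ with $\dH^X(A,S) \leq \epsilon$. The natural choice is
\[
S(A) \coloneqq \{x \in N : d_X(x,A) \leq \epsilon\},
\]
which is nonempty precisely because every point of $A$ has some point of $N$ within $\epsilon$. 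By construction $S(A) \subseteq A^\epsilon$, and conversely, for any $a \in A$ one picks $x \in N$ with $d_X(a,x) \leq \epsilon$, whence $x \in S(A)$ and $a \in S(A)^\epsilon$. This gives $\dH^X(A,S(A)) \leq \epsilon$, so $\mathcal{N}$ is a finite $\epsilon$-net. Combining this total boundedness with the completeness from \Cref{thm:hyper-complete} yields compactness.

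The argument has no serious obstacle; the only point requiring care is ensuring $S(A)$ is nonempty and that the Hausdorff bound works in both directions, which is handled above. An alternative, more hands-on route would be a direct diagonal extraction: given a sequence $\{A_n\} \subseteq \mathcal{H}(X)$, one could for each $k$ pick a finite $\tfrac{1}{k}$-net $N_k$, record which elements of $N_k$ are within $\tfrac{1}{k}$ of $A_n$, pass to a subsequence on which these finite ``fingerprints'' stabilize for every $k$, and check the resulting subsequence is Cauchy in $\dH^X$; however, once \Cref{thm:hyper-complete} is in hand, the total-boundedness route above is strictly cleaner and is what I would present.
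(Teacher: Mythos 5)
Your proof is correct and is essentially the standard argument found in the reference the paper cites for this result (Burago--Burago--Ivanov, Section 7.3): completeness of $\mathcal{H}(X)$ via \Cref{thm:hyper-complete}, plus total boundedness witnessed by the finite family of nonempty subsets of a finite $\epsilon$-net of $X$. The paper itself gives no independent proof, so there is nothing further to compare; your handling of the nonemptiness of $S(A)$ and of both inclusions in the Hausdorff bound is exactly what is needed.
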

See \cite[Section 7.3]{burago2001course} for proofs of the above two results. Note that $\mathcal{H}$ mapping $X$ to $\mathcal{H}(X)$ is then a map from $\ms$ to $\ms$. The map sending $x\in X$ to the singleton $\{x\}\in \mathcal{H}\left(X\right)$ for each $x\in X$ is an isometric embedding from $X$ to $\mathcal{H}\left(X\right)$. This implies that $\mathcal{H}:\ms\rightarrow\ms$ is a metric extensor, which we call the \emph{Hausdorff extensor}. One interesting aspect of $\mathcal{H}$ as a map is the stability. In fact, it is proved in \cite{mikhailov2018hausdorff} that $\mathcal{H}$ is a 1-Lispchitz map:

\begin{theorem}[{\cite[Theorem 2]{mikhailov2018hausdorff}}]\label{thm:H-stb}
For any $X,Y\in\ms$, we have
$$\dgh\left(\mathcal{H}\left(X\right),\mathcal{H}\left(Y\right)\right)\leq \dgh\left(X,Y\right). $$
\end{theorem}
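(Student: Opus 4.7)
The natural approach is to leverage \Cref{def:dGH} directly: reduce to a common ambient space and then upgrade the Hausdorff bound in that ambient space to a Hausdorff bound in the hyperspace. Concretely, by \Cref{lm:dgh_hausdorff-realizable} I can choose a compact metric space $Z$ and isometric embeddings $\varphi_X:X\hookrightarrow Z$ and $\varphi_Y:Y\hookrightarrow Z$ with $\dH^Z(\varphi_X(X),\varphi_Y(Y))=\dgh(X,Y)$; identifying $X$ and $Y$ with their images in $Z$, the goal reduces to exhibiting isometric embeddings $\mathcal{H}(X)\hookrightarrow\mathcal{H}(Z)$ and $\mathcal{H}(Y)\hookrightarrow\mathcal{H}(Z)$ and bounding the Hausdorff distance between their images inside $\mathcal{H}(Z)$ by $\dH^Z(X,Y)$.

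The embedding $\mathcal{H}(X)\hookrightarrow\mathcal{H}(Z)$, $A\mapsto A$, is isometric by \Cref{lm:dH under embedding} (each $A\in\mathcal{H}(X)$ is closed in the compact $X$ and hence closed in $Z$), and analogously for $Y$; thus
\[\dgh(\mathcal{H}(X),\mathcal{H}(Y))\leq \dH^{\mathcal{H}(Z)}(\mathcal{H}(X),\mathcal{H}(Y)).\]
Setting $r\coloneqq\dH^Z(X,Y)$, the heart of the argument is to show that for every $A\in\mathcal{H}(X)$ there exists $B\in\mathcal{H}(Y)$ with $\dH^Z(A,B)\leq r$ (and symmetrically). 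The candidate I would take is
\[B\coloneqq\{y\in Y:\,d_Z(y,A)\leq r\}.\]
Then $B$ is closed in $Y$ (continuity of $y\mapsto d_Z(y,A)$), bounded (inside bounded $Y$), and nonempty: for any $a\in A\subseteq X$ one has $d_Z(a,Y)\leq r$, so compactness of $Y$ produces $y^\ast\in Y$ with $d_Z(a,y^\ast)\leq r$, whence $y^\ast\in B$. The estimate $\dH^Z(A,B)\leq r$ is then immediate: by definition of $B$, $\sup_{y\in B} d_Z(y,A)\leq r$, and the point $y^\ast$ shows $d_Z(a,B)\leq r$ for each $a\in A$. A symmetric construction handles an arbitrary $B'\in\mathcal{H}(Y)$, so
\[\dH^{\mathcal{H}(Z)}(\mathcal{H}(X),\mathcal{H}(Y))\leq r=\dgh(X,Y),\]
completing the proof.

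The main delicate point is designing the set $B$ and verifying nonemptiness; everything else is bookkeeping. I want to emphasize that compactness is used three times: once through \Cref{lm:dgh_hausdorff-realizable} to realize $\dgh(X,Y)$ by a genuine Hausdorff distance in a compact ambient space, once to ensure that closed subsets of $X$ and $Y$ are compact (so that they embed isometrically into $\mathcal{H}(Z)$ via \Cref{lm:dH under embedding}), and once via \Cref{thm:blaschke} to guarantee that $\mathcal{H}(X),\mathcal{H}(Y),\mathcal{H}(Z)\in\ms$ so that $\dgh(\mathcal{H}(X),\mathcal{H}(Y))$ is defined on the right hand side. An alternative route, via the correspondence formulation of \Cref{thm:dgh-dual}, would try to lift an optimal $R\in\mathcal{R}^\mathrm{opt}(X,Y)$ to a correspondence between $\mathcal{H}(X)$ and $\mathcal{H}(Y)$; this works but is less transparent than the ambient-space argument sketched above.
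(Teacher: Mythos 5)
Your argument is correct, and it is worth noting how it relates to the paper's. The paper obtains \Cref{thm:H-stb} as an immediate corollary of the cited equality \Cref{thm:H-equal}: for \emph{any} ambient $Z$ and isometric embeddings of $X$ and $Y$, the Hausdorff distance in $\mathcal{H}(Z)$ between the embedded hyperspaces \emph{equals} $\dH^Z(\varphi_X(X),\varphi_Y(Y))$, so one just takes infima over $Z$. You instead fix a single optimal compact $Z$ via \Cref{lm:dgh_hausdorff-realizable} and then prove from scratch the one inequality actually needed, $\dH^{\mathcal{H}(Z)}(\mathcal{H}(X),\mathcal{H}(Y))\leq \dH^Z(X,Y)$, by the explicit construction $B=\{y\in Y:\,d_Z(y,A)\leq r\}$; this is in effect a self-contained proof of the ``$\leq$'' half of \Cref{thm:H-equal}, which the paper only cites. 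Your route buys independence from the external reference (and the verification of nonemptiness, closedness, and the two containments $B\subseteq A^r$, $A\subseteq B^r$ is carried out correctly), at the cost of not recovering the full equality statement; the paper's route is shorter but leans entirely on \cite{mikhailov2018hausdorff}. One small remark: invoking \Cref{lm:dgh_hausdorff-realizable} is a convenience rather than a necessity here, since running your argument in an arbitrary ambient $Z$ and taking the infimum over $Z$ would give the same bound, but using the optimal $Z$ does make the bookkeeping cleaner and all three of your uses of compactness are accounted for.
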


Given an isometric embedding $\varphi:X\hookrightarrow Z$, for any closed subset $A\subseteq X$, the image $\varphi(A)$ is a closed subset of $Z$. This induces an isometric embedding $\varphi_*:(\mathcal{H}(X),\dH^X)\hookrightarrow (\mathcal{H}(Z),\dH^Z)$ mapping $A\in \mathcal{H}(X)$ to $\varphi(A)\in \mathcal{H}(Z)$. Then, \Cref{thm:H-stb} is a direct consequence of the following interesting result: 
\begin{theorem}[{\cite[Theorem 1]{mikhailov2018hausdorff}}]\label{thm:H-equal}
Given two compact metric spaces $\left(X,d_X\right),\left(Y,d_Y\right)$, suppose there exist a metric space $Z$ (not necessarily compact) and isometric embeddings $\varphi_X:X\hookrightarrow Z$ and $\varphi_Y:Y\hookrightarrow Z$. Then, we have that 
$$\dH^{\mathcal{H}(Z)}\big((\varphi_X)_*(\mathcal{H}(X)),(\varphi_Y)_*(\mathcal{H}(Y))\big)=\dH^Z(\varphi_X(X),\varphi_Y(Y)). $$
\end{theorem}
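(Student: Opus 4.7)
The plan is to prove the two inequalities $\leq$ and $\geq$ separately, abbreviating $\rho \coloneqq \dH^Z(\varphi_X(X),\varphi_Y(Y))$ (which is finite since $X,Y$ are compact) and $D \coloneqq \dH^{\mathcal{H}(Z)}\big((\varphi_X)_*(\mathcal{H}(X)),(\varphi_Y)_*(\mathcal{H}(Y))\big)$. Throughout I would work with the alternative formulation in Remark \ref{rmk:alternative dH}, which expresses $D$ as the maximum of the two one-sided suprema $\sup_{A \in \mathcal{H}(X)} \inf_{B \in \mathcal{H}(Y)} \dH^Z(\varphi_X(A),\varphi_Y(B))$ and its dual.

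For the upper bound $D \leq \rho$, I would fix an arbitrary $A \in \mathcal{H}(X)$ and define
\[
B \coloneqq \{y \in Y : d_Z(\varphi_Y(y),\varphi_X(A)) \leq \rho\} \subseteq Y.
\]
This $B$ is closed in $Y$ as the preimage of $[0,\rho]$ under a continuous function, and nonempty because, $Y$ being compact, for every $a \in A$ the infimum $\inf_{y \in Y} d_Z(\varphi_X(a), \varphi_Y(y)) \leq \rho$ is attained by some $y^{\ast} \in Y$, which then automatically lies in $B$. The same witness shows $\varphi_X(A) \subseteq \varphi_Y(B)^\rho$, while the defining inequality gives $\varphi_Y(B) \subseteq \varphi_X(A)^\rho$ directly; hence $\dH^Z(\varphi_X(A), \varphi_Y(B)) \leq \rho$. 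Running the mirror construction starting from an arbitrary $B \in \mathcal{H}(Y)$ would settle the dual one-sided supremum, yielding $D \leq \rho$.

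For the lower bound $D \geq \rho$, I would probe with the largest elements of the hyperspaces, namely $X \in \mathcal{H}(X)$ and $Y \in \mathcal{H}(Y)$ themselves. Writing $\rho = \max(\alpha,\beta)$ where $\alpha \coloneqq \sup_{x \in X} d_Z(\varphi_X(x), \varphi_Y(Y))$ and $\beta \coloneqq \sup_{y \in Y} d_Z(\varphi_Y(y), \varphi_X(X))$, for any $B \in \mathcal{H}(Y)$ the inclusion $\varphi_Y(B) \subseteq \varphi_Y(Y)$ forces $d_Z(\varphi_X(x), \varphi_Y(B)) \geq d_Z(\varphi_X(x), \varphi_Y(Y))$ for every $x \in X$, so $\dH^Z(\varphi_X(X), \varphi_Y(B)) \geq \alpha$; hence $\inf_{B} \dH^Z(\varphi_X(X),\varphi_Y(B)) \geq \alpha$, and symmetrically $\inf_{A} \dH^Z(\varphi_X(A),\varphi_Y(Y)) \geq \beta$. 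Taking the max as prescribed by Remark \ref{rmk:alternative dH} gives $D \geq \max(\alpha,\beta) = \rho$.

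The only nontrivial point will be confirming that $B$ is a legitimate element of $\mathcal{H}(Y)$, which reduces to the compactness of $Y$ forcing the infima defining the Hausdorff distance to be attained. Everything else is a short symmetric manipulation, and notably the argument never invokes compactness of the ambient space $Z$, consistent with the hypothesis that $Z$ is only assumed to be a metric space.
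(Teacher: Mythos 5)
Your proof is correct. Note that the paper does not actually prove \Cref{thm:H-equal}; it imports the statement verbatim from \cite[Theorem 1]{mikhailov2018hausdorff}, so there is no in-paper argument to compare against. Your two-sided argument is a valid self-contained proof: the set $B=\{y\in Y:\,d_Z(\varphi_Y(y),\varphi_X(A))\leq\rho\}$ is indeed a nonempty closed subset of the compact space $Y$ (nonemptiness using that $y\mapsto d_Z(\varphi_X(a),\varphi_Y(y))$ attains its infimum, which is at most $\rho$), and the two inclusions $\varphi_X(A)\subseteq\varphi_Y(B)^\rho$ and $\varphi_Y(B)\subseteq\varphi_X(A)^\rho$ follow exactly as you say; the lower bound by testing against the maximal elements $X\in\mathcal{H}(X)$ and $Y\in\mathcal{H}(Y)$ and using monotonicity of $d_Z(\cdot,\varphi_Y(B))$ under $B\subseteq Y$ is also sound. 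It is worth observing that your strategy closely parallels the proof the authors do give for the Wasserstein analogue (\Cref{thm:W-equal}, proved in \Cref{app:ot}): there the upper bound is obtained by pushing a measure forward along an approximate nearest-point map $\xi:X\rightarrow Y$ built from an $\eps$-net (the measure-theoretic counterpart of your set $B$), and the lower bound by testing against a Dirac mass at an extremal point (the counterpart of your probing with the full spaces). Your Hausdorff version is in fact cleaner, since the nearest-point set $B$ can be taken exactly rather than up to an $\eps$ of slack.
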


\paragraph{Geodesics in Hausdorff hyperspaces.} One interesting fact about $\mathcal{H}\left(X\right)$ is that it preserves the geodesic property of $X$:

\begin{theorem}\label{thm:hgeo}
Given $X\in \ms$, if $X$ is geodesic, then so is $\mathcal{H}\left(X\right)$.
\end{theorem}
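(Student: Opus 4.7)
The plan is to invoke the mid-point criterion (\Cref{thm:mid-pt-geo}). Since $X$ is compact, Blaschke's theorem (\Cref{thm:blaschke}) guarantees that $\mathcal{H}(X)$ is compact, hence complete. It therefore suffices to produce, for any $A,B\in\mathcal{H}(X)$ with $\rho\coloneqq\dH^X(A,B)>0$, a mid-point $M\in\mathcal{H}(X)$ satisfying $\dH^X(A,M)=\dH^X(B,M)=\rho/2$.

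The mid-point is constructed by averaging pointwise along geodesics. For each $a\in A$, the condition $A\subseteq B^\rho$ yields some $b_a\in B$ with $d_X(a,b_a)\leq\rho$ (existence is guaranteed by compactness of $B$, which makes $d_X(a,\cdot)$ attain its infimum on $B$). Since $X$ is geodesic, I pick a geodesic from $a$ to $b_a$ and let $m_a$ be its mid-point, so $d_X(a,m_a)=d_X(b_a,m_a)=d_X(a,b_a)/2\leq\rho/2$. Symmetrically, for each $b\in B$ I select $a_b\in A$ with $d_X(a_b,b)\leq\rho$ and mid-point $m^b$ of a geodesic between them. Define
\[
M\coloneqq\overline{\{m_a:a\in A\}\cup\{m^b:b\in B\}}\subseteq X,
\]
which is a nonempty closed subset of the compact space $X$, hence belongs to $\mathcal{H}(X)$.

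Next I would verify the two Hausdorff-distance estimates. Every $m_a$ lies within $\rho/2$ of the point $a\in A$, and every $m^b$ lies within $\rho/2$ of $a_b\in A$; since the $\rho/2$-thickening $A^{\rho/2}$ is closed, taking closure preserves this inclusion, so $M\subseteq A^{\rho/2}$. Conversely, for each $a\in A$ the element $m_a\in M$ certifies $a\in M^{\rho/2}$, giving $A\subseteq M^{\rho/2}$. Combining these, $\dH^X(A,M)\leq\rho/2$, and the symmetric argument yields $\dH^X(B,M)\leq\rho/2$. The triangle inequality $\rho=\dH^X(A,B)\leq\dH^X(A,M)+\dH^X(M,B)$ then forces both inequalities to be equalities, producing the required mid-point.

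The main subtlety is purely book-keeping: I must ensure that the selected companion points $b_a$ and $a_b$ exist (which follows from compactness of $B$ and $A$), and that the closure operation that defines $M$ does not enlarge it beyond $A^{\rho/2}\cap B^{\rho/2}$ (which follows from closedness of the thickenings). No choice principle beyond making one selection of companion point and one geodesic per pair is required, and no further obstacle is expected.
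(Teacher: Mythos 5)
Your proof is correct, and it takes a genuinely different route from the ones the paper writes out in detail. The paper attributes the mid-point-criterion strategy to the original proof of Bryant but does not reproduce it; what it actually proves is the explicit construction $\gamma(t)=A^{t\rho}\cap B^{(1-t)\rho}$ (\Cref{thm:haus-geo-cons}, with a corrected argument in \Cref{app:geo-hyp}), and, later, a second proof via the Hausdorff displacement interpolation (\Cref{thm:hyper-geod-iff}), where the geodesic is $t\mapsto e_t\left(\mathfrak{L}(A,B)\right)$. Your argument is shorter and more elementary: the closure $M$ of the set of geodesic mid-points of the selected pairs $(a,b_a)$ and $(a_b,b)$ visibly satisfies $M\subseteq A^{\rho/2}\cap B^{\rho/2}$ together with $A\subseteq M^{\rho/2}$ and $B\subseteq M^{\rho/2}$, the triangle inequality upgrades both bounds to equalities, and compactness of $\mathcal{H}(X)$ from \Cref{thm:blaschke} supplies the completeness needed for \Cref{thm:mid-pt-geo}. (All the selections you worry about are unproblematic: $d_X(a,\cdot)$ attains its infimum on the compact set $B$, the thickenings $A^{r}$ are closed because $d_X(\cdot,A)$ is continuous, and the per-point choices are the usual harmless applications of choice.) This route also sidesteps entirely the pitfall in Serra's published proof (the false \Cref{claim:false claim}). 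What it does not buy is an explicit description of the geodesic: the mid-point criterion yields existence only, whereas the paper's two constructions produce the concrete formula $A^{t\rho}\cap B^{(1-t)\rho}$ and its dynamic (Lipschitz-curve) description, both of which are exploited downstream (e.g., in \Cref{ex:geod-sphere}, \Cref{thm:main-dyn-hausdorff}, and \Cref{thm:exist infinite geod}). Note also that your mid-point $M$ is in general a proper subset of $A^{\rho/2}\cap B^{\rho/2}$, so iterating your construction need not recover the paper's canonical geodesic.
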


The above theorem was first proved in \cite{bryant1970convexity} using the mid-point criterion (cf. \Cref{thm:mid-pt-geo}) and was later reproved in \cite{serra1998hausdorff} via the following explicit construction:
\begin{theorem}\label{thm:haus-geo-cons}
Let $X\in\ms$ be a geodesic space. Let $A,B$ be two closed subsets of $X$. Let $\rho\coloneqq\dH^X\left(A,B\right)$. Then, $\gamma:[0,1]\rightarrow \mathcal{H}\left(X\right)$ defined by $\gamma\left(t\right)\coloneqq A^{t\,\rho}\cap B^{\left(1-t\right)\,\rho}$ is a Hausdorff geodesic connecting $A$ and $B$, where for any $r\geq 0$, $A^r\coloneqq\{x\in X:\,\exists a\in A \text{ such that }d_X\left(a,x\right)\leq r\}$.
\end{theorem}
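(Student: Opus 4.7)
The plan is to show two things: (i) the endpoint conditions $\gamma(0)=A$ and $\gamma(1)=B$, and (ii) the $\rho$-Lipschitz bound $\dH^X(\gamma(s),\gamma(t))\leq|t-s|\,\rho$ for all $s,t\in[0,1]$. These combined already force $\gamma$ to be a geodesic in $\mathcal{H}(X)$, because the triangle inequality gives the reverse inequality: $\rho=\dH^X(\gamma(0),\gamma(1))\leq s\rho+\dH^X(\gamma(s),\gamma(t))+(1-t)\rho$, so $\dH^X(\gamma(s),\gamma(t))\geq(t-s)\rho$, matching the Lipschitz upper bound.

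\textbf{Easy reductions.} The endpoint identities are almost immediate: since $\dH^X(A,B)=\rho$, we have $A\subseteq B^\rho$ and hence $\gamma(0)=A^0\cap B^\rho = A\cap B^\rho = A$ (using that $A$ is closed so $A^0=A$); similarly $\gamma(1)=B$. Each $\gamma(t)$ is closed as an intersection of two closed thickenings, and I would verify $\gamma(t)\neq\emptyset$ by picking any $a\in A$, choosing $b\in B$ with $d_X(a,b)\leq\rho$ (which exists because $B$ is compact and $d_X(a,B)\leq\rho$), and observing that the point $\eta(t)$ at parameter $t$ along a geodesic $\eta$ from $a$ to $b$ lies in $A^{t\rho}\cap B^{(1-t)\rho}$.

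\textbf{The main step: the Lipschitz bound.} Fix $s<t$ in $[0,1]$ and $x\in\gamma(s)$; I need to produce $y\in\gamma(t)$ with $d_X(x,y)\leq(t-s)\rho$. Since $x\in B^{(1-s)\rho}$ and $B$ is compact, there exists $b\in B$ with $d_X(x,b)=d_X(x,B)\leq(1-s)\rho$. Take a geodesic $\eta:[0,d_X(x,b)]\to X$ from $x$ to $b$ (parametrized by arc length) and split into two cases:
\begin{itemize}
    \item If $d_X(x,b)\geq(t-s)\rho$, let $y\coloneqq\eta((t-s)\rho)$. Then $d_X(x,y)=(t-s)\rho$, $d_X(y,A)\leq d_X(y,x)+d_X(x,A)\leq(t-s)\rho+s\rho=t\rho$, and $d_X(y,B)\leq d_X(y,b)=d_X(x,b)-(t-s)\rho\leq(1-t)\rho$, so $y\in\gamma(t)$.
    \item If $d_X(x,b)<(t-s)\rho$, let $y\coloneqq b$. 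Then $d_X(x,y)<(t-s)\rho$, $d_X(y,B)=0$, and $d_X(y,A)\leq d_X(y,x)+d_X(x,A)<(t-s)\rho+s\rho\leq t\rho$, so again $y\in\gamma(t)$.
\end{itemize}
Applying the symmetric argument (swap the roles of $A$ and $B$, equivalently reverse the parametrization $t\mapsto 1-t$) to any $x'\in\gamma(t)$ yields a point in $\gamma(s)$ within distance $(t-s)\rho$. This establishes $\dH^X(\gamma(s),\gamma(t))\leq(t-s)\rho$.

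\textbf{Main obstacle.} The only subtle point is the second case of the construction above: when the nearest point of $B$ to $x$ happens to be closer than the required travel distance $(t-s)\rho$, one cannot simply move along the geodesic for time $(t-s)\rho$ as that would overshoot $B$. Handling this by snapping $y$ to $b$ itself (and then re-checking that $b$ still satisfies $d_X(b,A)\leq t\rho$) is what keeps the argument clean. The compactness of $X$ (hence of $A$ and $B$) is essential here to realize the infimum $d_X(x,B)$ as a genuine minimum, which allows one to avoid an $\eps$-approximation followed by an Arzel\`a--Ascoli style extraction.
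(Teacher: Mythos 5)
Your proof is correct. Both the endpoint identification and the nonemptiness argument match what the paper does (its Lemma~\ref{lm:geo-int-nonempty} is exactly your geodesic-midpoint construction), and your reduction of "geodesic" to "Lipschitz with the right endpoints" via the triangle inequality is the standard one. Where you genuinely diverge is in how the containments $\gamma(s)\subseteq\left(\gamma(t)\right)^{|t-s|\rho}$ and $\gamma(t)\subseteq\left(\gamma(s)\right)^{|t-s|\rho}$ are obtained. The paper factors the argument through two lemmas: the thickening identity $(A^{r_1})^{r_2}=A^{r_1+r_2}$ (this is where geodesicity enters) and a purely metric inclusion $E^{r_0}\cap F^{r-r_0}\subseteq\left(E\cap F^r\right)^{r_0}$, which it then instantiates with $E=A^{s\rho}$, $F=B$ to get $A^{t\rho}\cap B^{(1-t)\rho}\subseteq\left(A^{s\rho}\cap B^{(1-s)\rho}\right)^{(t-s)\rho}$ and symmetrically. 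You instead construct the witness point directly: from $x\in\gamma(s)$ you walk distance $(t-s)\rho$ along a geodesic toward a nearest point of $B$, with a case split to avoid overshooting, and verify membership in $\gamma(t)$ by hand. Your route is more constructive and self-contained (it exhibits the actual transport of points, which foreshadows the Lipschitz-curve selections used later in \Cref{lm:geo-char-hausdorff}), at the cost of the overshoot case analysis and an explicit appeal to compactness to realize nearest points; the paper's route packages the geometry into reusable set-algebra identities and avoids any case distinction. Both are complete proofs.
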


Though the construction is correct, the proof of \Cref{thm:haus-geo-cons} given in \cite{serra1998hausdorff} is based on the following false claim:

\begin{claim}[False claim in the proof of {\cite[Theorem 1]{serra1998hausdorff}}]\label{claim:false claim}
Given $X\in\ms$ and a map $\gamma:[0,1]\rightarrow X$, if $d_X\left(\gamma\left(0\right),\gamma\left(t\right)\right)=t\cdot d_X(\gamma(0),\gamma(1))$ and $d_X\left(\gamma\left(t\right),\gamma\left(1\right)\right)=(1-t)\cdot d_X(\gamma(0),\gamma(1))$ hold for all $t\in[0,1]$, then $\gamma$ is a geodesic.
\end{claim}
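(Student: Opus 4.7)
The plan is to disprove \Cref{claim:false claim} by exhibiting an explicit counterexample: a compact metric space $X$ together with a map $\gamma:[0,1]\rightarrow X$ satisfying both prescribed endpoint-distance identities at every $t\in[0,1]$ yet failing to be a geodesic in the sense of \Cref{sec:geo} (indeed, it will fail to even be continuous). The underlying observation is that the two hypotheses only pin down the endpoint distances $d_X(\gamma(0),\gamma(t))$ and $d_X(\gamma(t),\gamma(1))$, and say nothing about the mutual distances $d_X(\gamma(s),\gamma(t))$ for intermediate $s<t$; so whenever several distinct points of $X$ share the same pair of endpoint distances, we can let $\gamma$ arbitrarily permute among these candidates across $t$ in a discontinuous manner.

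The concrete construction I would use is the ``theta graph'': let $X$ consist of two points $p,q$ joined by two disjoint arcs $\alpha,\beta:[0,1]\rightarrow X$ parametrized by arc length, each of total length $1$, equipped with the induced length metric, so that $d_X(p,q)=1$. Define $\gamma(0)\coloneqq p$, $\gamma(1)\coloneqq q$, $\gamma(t)\coloneqq \alpha(t)$ for $t\in(0,\tfrac12]$, and $\gamma(t)\coloneqq \beta(t)$ for $t\in(\tfrac12,1)$. Any point lying at arc-length $t$ from $p$ on either arc is automatically at distance $t$ from $p$ and at distance $1-t$ from $q$, so both distance identities in the claim hold for every $t\in[0,1]$.

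To show that $\gamma$ is not a geodesic I need only inspect one mutual distance at the jump. For small $\eps>0$, any path in $X$ from $\alpha(\tfrac12)$ to $\beta(\tfrac12+\eps)$ must traverse either $p$ or $q$: through $p$ the length is $\tfrac12+(\tfrac12+\eps)=1+\eps$, while through $q$ it is $\tfrac12+(\tfrac12-\eps)=1-\eps$. Consequently $d_X(\gamma(\tfrac12),\gamma(\tfrac12+\eps))=1-\eps$, which for $\eps<\tfrac12$ exceeds the required Lipschitz bound $\eps\cdot d_X(\gamma(0),\gamma(1))=\eps$. This violates the geodesic inequality and, simultaneously, demonstrates that $\gamma$ is discontinuous at $t=\tfrac12$, hence not even a curve in the sense of \Cref{sec:geo}.

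There is no real obstacle in the disproof itself; the subtle point, and presumably the reason the error in \cite{serra1998hausdorff} went unnoticed, is that the claim \emph{is} correct in uniquely geodesic spaces, where $\gamma(t)$ is forced to coincide with the unique geodesic interpolant and the conclusion becomes tautological. A separate and more substantive task, not needed to disprove the claim, is to repair the proof of \Cref{thm:haus-geo-cons}: one must verify the Hausdorff Lipschitz inequality $\dH^X(\gamma(s),\gamma(t))\le|s-t|\,\rho$ directly for the hyperspace curve $\gamma(t)=A^{t\rho}\cap B^{(1-t)\rho}$, rather than attempt to deduce it from endpoint-distance information alone.
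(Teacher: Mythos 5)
Your counterexample is valid against \Cref{claim:false claim} as literally stated, and your diagnosis --- that the hypotheses constrain only the distances to the two endpoints and say nothing about $d_X(\gamma(s),\gamma(t))$ for intermediate $s,t$ --- is exactly the right one. But your route differs from the paper's in a way that matters. The paper's counterexample (\Cref{ex:counter}) is a quotient of the rectangle $[0,3]\times[0,1]$ in which there exist \emph{continuous} curves satisfying both distance identities that fail to be geodesics (e.g.\ $t\mapsto(3t,|\sin(3t\pi)|)$ pushed to the quotient). Your theta graph, by contrast, only furnishes a \emph{discontinuous} map: for $0<t<1$ the set of admissible points is the two-element set $\{\alpha(t),\beta(t)\}$, and since $\{t\in(0,1):\gamma(t)=\alpha(t)\}$ and $\{t\in(0,1):\gamma(t)=\beta(t)\}$ are disjoint closed subsets covering the connected set $(0,1)$, any \emph{continuous} selection must stay on a single arc and is therefore a genuine geodesic. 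So your example refutes the claim only because the word ``map'' is read literally; it would not survive the natural repair of adding continuity as a hypothesis, whereas the paper's example shows the claim fails even for continuous curves --- which is the failure mode actually relevant to the erroneous step in \cite{serra1998hausdorff}, since the hyperspace curve $t\mapsto A^{t\rho}\cap B^{(1-t)\rho}$ there is continuous. What your approach buys is simplicity and a very transparent violation of the Lipschitz bound at a single pair of parameters; what the paper's buys is robustness of the refutation. Your closing remarks (validity of the claim in uniquely geodesic spaces, and that repairing \Cref{thm:haus-geo-cons} requires verifying $\dH^X(\gamma(s),\gamma(t))\le|s-t|\,\rho$ directly, as the paper does in \Cref{app:geo-hyp}) are both correct.
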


A simple counterexample goes as follows: let $Y\coloneqq[0,3]\times [0,1]\subseteq\mathbb R^2$ endowed with the usual Euclidean metric and let $X$ be the quotient space of $Y$ obtained by collapsing both $\{0\}\times[0,1]$ and $\{3\}\times[0,1]$ to points, respectively. Then, any ``reasonable'' curve connecting these two points will satisfy the condition in the claim while not necessary being a geodesic. See \Cref{app:geo-hyp} for details and a correct proof of \Cref{thm:haus-geo-cons} which still follows the main idea in \cite{serra1998hausdorff}.

{It is worth noting that based on a new technique which we introduce later, i.e., the Hausdorff displacement interpolation, we are able to provide efficient alternative proofs for both \Cref{thm:hgeo} and \Cref{thm:haus-geo-cons} in \Cref{sec:hausdorff displacement interpolation}. In particular, regarding  \Cref{thm:hgeo} we prove a stronger result which provides both necessary and sufficient conditions instead of just a one way implication.}

The following is a simple observation which we will use heavily in the sequel for transforming a Hausdorff geodesic to a Gromov-Hausdorff geodesic.

\begin{lemma}\label{lm:hgeo-to-dghgeo}
Let $X,Y,Z\in\ms$. Suppose that $Z$ is geodesic and there exist $\varphi_X:X\hookrightarrow Z$ and $\varphi_Y:Y\hookrightarrow Z$ such that $\dH^Z\left(\varphi_X\left(X\right),\varphi_Y\left(Y\right)\right)=\dgh\left(X,Y\right)$. Then, any Hausdorff geodesic $\gamma:[0,1]\rightarrow \mathcal{H}\left(Z\right)$ such that $\gamma\left(0\right)=X$ and $\gamma\left(1\right)=Y$ is actually a Gromov-Hausdorff geodesic, i.e., for any $s,t\in[0,1]$
\[\dgh\left(\gamma\left(s\right),\gamma\left(t\right)\right)=|s-t|\,\dgh\left(X,Y\right). \]
\end{lemma}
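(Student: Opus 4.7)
The strategy will be to sandwich the quantity $\dgh\lc\gamma\lc s\rc,\gamma\lc t\rc\rc$ between two expressions both equal to $|s-t|\cdot\dgh\lc X,Y\rc$. First I note that, under the stated abuse of notation, we may regard $X=\varphi_X\lc X\rc$ and $Y=\varphi_Y\lc Y\rc$ as closed subsets of $Z$, so the hypothesis reads $\dH^Z\lc\gamma\lc 0\rc,\gamma\lc 1\rc\rc=\dgh\lc X,Y\rc$. Since $Z$ is compact, every value $\gamma\lc t\rc\in\mathcal{H}\lc Z\rc$ is itself a compact subset of $Z$, hence a compact metric space in its own right with the induced metric.

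Next I would obtain the upper bound for every $s,t\in[0,1]$. Because the identity inclusion provides isometric embeddings of $\gamma\lc s\rc$ and $\gamma\lc t\rc$ into the common ambient space $Z$, the definition of $\dgh$ gives $\dgh\lc\gamma\lc s\rc,\gamma\lc t\rc\rc\leq \dH^Z\lc\gamma\lc s\rc,\gamma\lc t\rc\rc$. Since $\gamma$ is a Hausdorff geodesic in $\mathcal{H}\lc Z\rc$, the right-hand side equals $|s-t|\cdot\dH^Z\lc\gamma\lc 0\rc,\gamma\lc 1\rc\rc$, which by the realization hypothesis equals $|s-t|\cdot\dgh\lc X,Y\rc$. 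So
\[\dgh\lc\gamma\lc s\rc,\gamma\lc t\rc\rc\leq |s-t|\cdot\dgh\lc X,Y\rc.\]

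For the matching lower bound I would invoke the triangle inequality for $\dgh$. Assume without loss of generality that $s\leq t$. Applying the upper bound just established to the pairs $\lc 0,s\rc$ and $\lc t,1\rc$ yields $\dgh\lc X,\gamma\lc s\rc\rc\leq s\cdot\dgh\lc X,Y\rc$ and $\dgh\lc \gamma\lc t\rc,Y\rc\leq \lc 1-t\rc\cdot\dgh\lc X,Y\rc$. Hence
\[\dgh\lc X,Y\rc\leq \dgh\lc X,\gamma\lc s\rc\rc+\dgh\lc\gamma\lc s\rc,\gamma\lc t\rc\rc+\dgh\lc\gamma\lc t\rc,Y\rc\leq \lc s+\lc 1-t\rc\rc\cdot\dgh\lc X,Y\rc+\dgh\lc\gamma\lc s\rc,\gamma\lc t\rc\rc,\]
from which I can rearrange to conclude $\dgh\lc\gamma\lc s\rc,\gamma\lc t\rc\rc\geq \lc t-s\rc\cdot\dgh\lc X,Y\rc=|s-t|\cdot\dgh\lc X,Y\rc$.

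Combining the two inequalities gives the desired equality. I would not expect any real obstacle here; the argument is entirely formal once one recognizes that the realization hypothesis upgrades the trivial bound $\dgh\leq\dH^Z$ to an equality at the endpoints, and that the triangle inequality then forces equality at every intermediate pair. The geodesic assumption on $Z$ is not used directly in the argument and is presumably carried along only to guarantee, via \Cref{thm:hgeo}, that Hausdorff geodesics in $\mathcal{H}\lc Z\rc$ between $X$ and $Y$ actually exist.
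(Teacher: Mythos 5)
Your proof is correct and is essentially the same argument as the paper's: both use the trivial bound $\dgh\leq\dH^Z$ together with the Hausdorff geodesic property for the upper bound, and the triangle inequality for $\dgh$ between the endpoints to force the matching lower bound (the paper phrases this as a chain of inequalities that must all be equalities, while you rearrange explicitly, but the content is identical). Your side remark that the geodesic hypothesis on $Z$ is not used in the argument itself is also accurate.
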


\begin{proof}
We only need to show that $\dH^Z\left(\gamma\left(s\right),\gamma\left(t\right)\right)=\dgh\left(\gamma\left(s\right),\gamma\left(t\right)\right)$ for any $s,t\in[0,1]$. By \Cref{def:dGH}, we have $\dH^Z\left(\gamma\left(s\right),\gamma\left(t\right)\right)\geq \dgh\left(\gamma\left(s\right),\gamma\left(t\right)\right)$. Without loss of generality, we assume that $s\leq t$. Since $\gamma$ is a Hausdorff geodesic, we have
\begin{align*}
    \dgh\left(X,Y\right)&=\dH^Z\left(\gamma\left(0\right),\gamma\left(1\right)\right)\\
    &=\dH^Z\left(\gamma\left(0\right),\gamma\left(s\right)\right)+\dH^Z\left(\gamma\left(s\right),\gamma\left(t\right)\right)+\dH^Z\left(\gamma\left(t\right),\gamma\left(1\right)\right)\\
    &\geq \dgh\left(\gamma\left(0\right),\gamma\left(s\right)\right)+\dgh\left(\gamma\left(s\right),\gamma\left(t\right)\right)+\dgh\left(\gamma\left(t\right),\gamma\left(1\right)\right)\\
    &\geq \dgh\left(\gamma\left(0\right),\gamma\left(1\right)\right)\\
    &=\dgh\left(X,Y\right).
\end{align*}
Therefore, every equality holds. In particular, $\dH^Z\left(\gamma\left(s\right),\gamma\left(t\right)\right)=\dgh\left(\gamma\left(s\right),\gamma\left(t\right)\right)$.
\end{proof}

\begin{example}[$\dgh$ geodesic connecting $\mathbb S^0$ and $\mathbb S^n$]\label{ex:geod-sphere}
In \cite{chowdhury2018explicit} the authors constructed  explicit Gromov-Hausdorff geodesics between the spheres $\mathbb S^0$ and $\mathbb S^n$ with the canonical geodesic distance,  for each $n\in\mathbb N$. We recover their construction via the techniques introduced in this section as follows. Note that if we identify $\mathbb S^0$ with any pair of antipodal points, e.g., the north and south poles, in $\mathbb S^n$, then $\dH^{\mathbb S^n}(\mathbb S^0,\mathbb S^n)=\frac{\pi}{2}$. By \cite[Proposition 1.2]{chowdhury2018explicit}, $\dgh(\mathbb S^0,\mathbb S^n)=\frac{\pi}{2}=\dH^{\mathbb S^n}(\mathbb S^0,\mathbb S^n).$ Then, by \Cref{thm:haus-geo-cons} and \Cref{lm:hgeo-to-dghgeo}, the Hausdorff geodesic $\gamma:[0,1]\rightarrow \mathcal{H}(\mathbb S^n)$ defined by $t\mapsto (\mathbb S^0)^{t\cdot\frac{\pi}{2}}\cap(\mathbb S^n)^{(1-t)\cdot\frac{\pi}{2}}=(\mathbb S^0)^{t\cdot\frac{\pi}{2}}$ for $t\in[0,1]$ is a Gromov Hausdorff geodesic connecting $\mathbb S^0$ and $\mathbb S^n$. Note that $\gamma$ is exactly the same geodesic connecting $\mathbb S^0$ and $\mathbb S^n$ constructed in \cite[Proposition 1.3]{chowdhury2018explicit}. See also \Cref{fig:geod-circle} for an illustrative representation of $\gamma$.
\end{example}

\begin{figure}[htb]
	\centering		\includegraphics[width=0.3\textwidth]{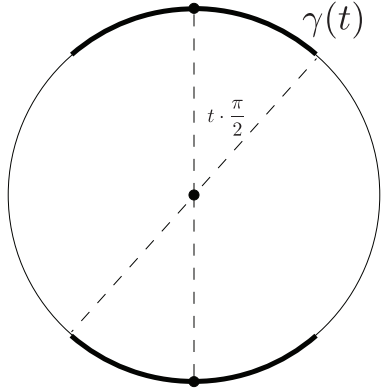}
	\caption{\textbf{Illustration of \Cref{ex:geod-sphere}.} In the figure we identify $\mathbb S^0$ with the north and south poles of $\mathbb S^1$ and illustrate $\gamma(t)$ for some $t\in(0,1)$ as the thickened subset of $\mathbb{S}^1$.} \label{fig:geod-circle}
\end{figure}

\subsection{Wasserstein hyperspaces}\label{sec:W-geo}
Given a metric space $X$, let $\mathcal{W}_p\left(X\right)\coloneqq\left(\mathcal{P}_p\left(X\right),d_{\mathcal{W},p}\right)$ (cf. \Cref{def:p-w-dist}). We call $\mathcal{W}_p\left(X\right)$ the \emph{$\ell^p$-Wasserstein hyperspace} of $X$. Note that when $X$ is compact, $\mathcal{P}_p\left(X\right)=\mathcal{P}\left(X\right)$ for any $p\in[1,\infty]$, where $\mathcal{P}(X)$ denotes the collection of all Borel probability measures on $X$. The following two theorems are standard results about Wasserstein hyperspaces and see for example \cite[Section 6]{villani2008optimal} for proofs.

\begin{theorem}\label{thm:complete-w}
For $p\in[1,\infty)$, if $X$ is Polish, then $\mathcal{W}_p\left(X\right)$ is also Polish.
\end{theorem}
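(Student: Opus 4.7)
The plan is to prove separability and completeness of $\mathcal{W}_p(X)$ as two independent facts, following the standard optimal-transport playbook.

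For separability, I would start from a countable dense subset $D\subseteq X$ (which exists since $X$ is Polish) and consider the collection $\mathcal{D}_p$ of all finitely supported probability measures of the form $\sum_{i=1}^N q_i\,\delta_{x_i}$ with $x_i\in D$ and $q_i\in\mathbb{Q}_{\geq 0}$ summing to $1$. This set is clearly countable. To show density, fix $\mu\in\mathcal{P}_p(X)$ and $\eps>0$, pick a reference point $x_0\in X$, and truncate: since $\int d_X^p(x,x_0)\,d\mu(x)<\infty$, a sufficiently large ball $B_R(x_0)$ carries all but a tiny tail mass whose contribution to the $p$-th moment is less than $\eps^p$. Partition $B_R(x_0)$ into finitely many Borel pieces of small diameter, replace the mass in each piece by a Dirac at a nearby point of $D$, rationalize the weights, and control the result via the self-coupling that keeps mass inside each piece. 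The $p$-th Wasserstein cost of this transfer is bounded by (diameter of pieces) plus the tail contribution, which can be made arbitrarily small.

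For completeness, let $\{\mu_n\}\subseteq\mathcal{W}_p(X)$ be a Cauchy sequence. First I would argue tightness: fixing $x_0\in X$, the sequence $\{\mu_n\}$ has uniformly bounded $p$-th moments (since $\mu\mapsto\left(\int d_X^p(x,x_0)\,d\mu(x)\right)^{1/p}$ is $1$-Lipschitz with respect to $d_{\mathcal{W},p}^X$ by the triangle inequality applied to the coupling with $\delta_{x_0}$), so Markov's inequality combined with tightness of each individual $\mu_n$ (automatic on Polish spaces) gives uniform tightness. By Prokhorov's theorem, a subsequence $\mu_{n_k}$ converges weakly to some $\mu\in\mathcal{P}(X)$. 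Next I would show $\mu\in\mathcal{P}_p(X)$ and that $d_{\mathcal{W},p}^X(\mu_{n_k},\mu)\to 0$: the key is uniform $p$-integrability of the tails, namely $\lim_{R\to\infty}\sup_n\int_{d_X(x,x_0)>R}d_X^p(x,x_0)\,d\mu_n(x)=0$, which follows from the Cauchy property (any single $\mu_{n_0}$ has integrable tail, and for $n\geq n_0$ the tail is controlled by $\mu_{n_0}$'s tail plus a term bounded by $d_{\mathcal{W},p}^X(\mu_n,\mu_{n_0})$). This upgrades weak convergence to convergence in $\mathcal{W}_p$. Finally, since $\{\mu_n\}$ is Cauchy and a subsequence converges to $\mu$, the full sequence converges to $\mu$.

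The main obstacle is the upgrade from weak convergence to $\mathcal{W}_p$-convergence; one must carefully establish the uniform integrability of $d_X^p(\cdot,x_0)$ with respect to the whole Cauchy sequence and then either invoke a standard criterion (weak convergence plus convergence of $p$-th moments implies $\mathcal{W}_p$ convergence on Polish spaces) or construct explicit near-optimal couplings by splitting each $\mu_n$ into a ``core'' and a ``tail'' and bounding each contribution separately. The separability argument and the tightness step are essentially bookkeeping; the integrability upgrade is the one place where the assumption $p<\infty$ is used in an essential way (allowing the tail contribution to be made arbitrarily small, which fails for $p=\infty$), and I would take care to present this step in detail.
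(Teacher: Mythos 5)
The paper itself gives no proof of this theorem; it defers to \cite[Section 6]{villani2008optimal}, and your proposal follows exactly that standard route (a countable dense set of rationally-weighted, finitely supported measures on a countable dense subset for separability; Prokhorov plus a uniform-integrability upgrade for completeness). The separability argument and the weak-to-$\mathcal{W}_p$ upgrade at the end are fine as sketched, and you correctly isolate where $p<\infty$ is used.

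There is, however, one step whose stated justification would fail: you claim that uniform tightness of the Cauchy sequence follows from ``Markov's inequality combined with tightness of each individual $\mu_n$.'' On a general Polish space, closed balls need not be precompact, so Markov's inequality only controls the mass escaping large balls, not the mass escaping compact sets; and individual tightness of each $\mu_n$ together with a uniform moment bound does not imply uniform tightness. (Take $\mu_n=\delta_{e_n}$ for an orthonormal sequence $(e_n)$ in a separable Hilbert space: each $\mu_n$ is tight, the $p$-th moments about the origin all equal $1$, yet no compact set carries most of the mass of every $\mu_n$. Of course this family is not Cauchy --- which is precisely the point: the Cauchy property must enter here, not only later.) The correct argument is: given $\eps>0$, choose $N$ with $d_{\mathcal{W},p}^X\left(\mu_n,\mu_N\right)<\eps^2$ for all $n\geq N$; the finite family $\mu_1,\ldots,\mu_N$ is tight, so there is a compact $K$ with $\mu_j(K)\geq 1-\eps$ for $j\leq N$; a near-optimal coupling of $\mu_n$ with $\mu_N$ plus Markov's inequality then gives $\mu_n\left(K^{\eps}\right)\geq 1-C\eps$ for every $n$, where $K^{\eps}$ is the $\eps$-thickening of $K$; finally one intersects countably many such thickened sets (over $\eps=2^{-k}$) to produce a single totally bounded closed set of uniformly large measure. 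This transport-of-compact-sets step is exactly where the cited proof in Villani spends its effort and is the one idea missing from your outline; the remainder of your completeness argument (uniform tail integrability from the Cauchy property, then weak convergence plus moment control upgraded to $\mathcal{W}_p$-convergence, then convergence of the full sequence) is correct.
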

\begin{theorem}\label{thm:compact-w}
For $p\in[1,\infty)$, if $X$ is compact, then $\mathcal{W}_p\left(X\right)$ is also compact.
\end{theorem}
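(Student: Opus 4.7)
The plan is to reduce the statement to two classical facts and a short upgrading argument. First, compactness of $X$ yields $\mathrm{diam}(X)<\infty$, so for every $\alpha\in\mathcal{P}(X)$ and any $x_0\in X$ we have $\int d_X^p(x,x_0)\,d\alpha(x)\le \mathrm{diam}(X)^p<\infty$; hence $\mathcal{P}_p(X)=\mathcal{P}(X)$, and the task reduces to showing that $(\mathcal{P}(X),d_{\mathcal{W},p})$ is sequentially compact.

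Given a sequence $\{\mu_n\}\subseteq\mathcal{P}(X)$, I would first extract a weakly convergent subsequence via Prokhorov's theorem: since $X$ is compact, the family $\mathcal{P}(X)$ is trivially tight (with $X$ itself as witness), so it is sequentially relatively compact in the weak topology. Passing to a subsequence, $\mu_n\rightharpoonup\mu$ for some $\mu\in\mathcal{P}(X)$. I would then upgrade this weak convergence to convergence in $d_{\mathcal{W},p}$ by a discretization argument. Fix $\eps>0$; using the fact that for any finite Borel measure on a metric space only countably many concentric spheres about a given point can have positive measure, I would build a finite Borel partition $\{A_i\}_{i=1}^N$ of $X$ with $\mathrm{diam}(A_i)<\eps$ and $\mu(\partial A_i)=0$ for each $i$. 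Choosing $x_i\in A_i$ and letting $\phi:X\to\{x_1,\ldots,x_N\}$ map each $A_i$ to $x_i$, the pushforwards $\tilde\mu_n:=\phi_\#\mu_n$ and $\tilde\mu:=\phi_\#\mu$ satisfy $d_{\mathcal{W},p}(\mu_n,\tilde\mu_n)\le\eps$ and $d_{\mathcal{W},p}(\mu,\tilde\mu)\le\eps$, as witnessed by the coupling $(\mathrm{id}\times\phi)_\#\mu_n$ (respectively $\mu$). Both $\tilde\mu_n$ and $\tilde\mu$ are supported on the finite set $\{x_1,\ldots,x_N\}$, and the Portmanteau theorem applied to each $A_i$ (whose boundary is $\mu$-null) gives $\tilde\mu_n(\{x_i\})\to\tilde\mu(\{x_i\})$. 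A greedy matching coupling that pairs equal masses at each $x_i$ and moves the residue by at most $\mathrm{diam}(X)$ then shows
$$d_{\mathcal{W},p}(\tilde\mu_n,\tilde\mu)^p \le \mathrm{diam}(X)^p\cdot\tfrac{1}{2}\sum_{i=1}^N\left|\tilde\mu_n(\{x_i\})-\tilde\mu(\{x_i\})\right|\longrightarrow 0.$$
The triangle inequality then yields $\limsup_n d_{\mathcal{W},p}(\mu_n,\mu)\le 2\eps$, and letting $\eps\downarrow 0$ finishes the proof.

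The argument is essentially classical and no deep obstacle is anticipated. The only measure-theoretic care point is constructing the partition with $\mu$-null boundary: fix a finite $\eps/2$-net $\{p_1,\ldots,p_N\}$ of $X$ and, for each $i$, pick a radius $r_i\in[\eps/2,\eps]$ from the co-countable set of radii with $\mu(\partial B(p_i,r_i))=0$; the partition $A_1:=B(p_1,r_1)$ and $A_i:=B(p_i,r_i)\setminus\bigcup_{j<i}A_j$ for $i\ge 2$ then has all the required properties. One could alternatively appeal directly to the classical statement that $d_{\mathcal{W},p}$ metrizes weak convergence on a compact metric space, but the discretization argument is self-contained and transparent enough to include.
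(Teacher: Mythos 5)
Your proof is correct; the paper does not prove this statement itself but defers to \cite{villani2008optimal}, and your argument (Prokhorov compactness in the weak topology on the compact space $X$, followed by a discretization showing that $d_{\mathcal{W},p}$ metrizes weak convergence there) is exactly the standard proof found in that reference. The only cosmetic slip is that with radii $r_i\in[\eps/2,\eps]$ your cells satisfy $\mathrm{diam}(A_i)\le 2\eps$ rather than $<\eps$, which merely changes the final estimate to $\limsup_n d_{\mathcal{W},p}(\mu_n,\mu)\le 4\eps$ and is harmless since $\eps$ is arbitrary.
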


Note that $\mathcal{W}_p$ sending $X\in\ms$ to $\mathcal{W}_p(X)\in\ms$ then defines a map from $\ms$ to $\ms$ analogously to the case of $\mathcal{H}$. Moreover, the map sending $x$ to the Dirac measure $\delta_x\in\mathcal{P}(X)$ is an isometric embedding from $X$ into $\mathcal{P}(X)$. Therefore, $\mathcal{W}_p:\ms\rightarrow\ms$ is a metric extensor, which we call the ($\ell^p$-)\emph{Wasserstein extensor} in the sequel.

Inspired by \Cref{thm:H-equal}, we establish the following result:

\begin{theorem}\label{thm:W-equal}
Given two compact metric spaces $\left(X,d_X\right)$ and $\left(Y,d_Y\right)$, suppose there exist a (not necessarily compact) metric space $\left(Z,d_Z\right)$ and isometric embeddings $\varphi_X:X\hookrightarrow Z$ and $\varphi_Y:Y\hookrightarrow Z$. Then, for $p\in[1,\infty]$, we have that both $(\varphi_X)_\#:\mathcal{W}_p(X)\rightarrow\mathcal{W}_p(Z)$ and $(\varphi_Y)_\#:\mathcal{W}_p(Y)\rightarrow\mathcal{W}_p(Z)$ are isometric embeddings. Moreover, we have that
\[\dH^{\mathcal{W}_p\left(Z\right)}\big((\varphi_X)_\#\lc \mathcal{W}_p\left(X\right)\rc,(\varphi_Y)_\#\lc \mathcal{W}_p\left(Y\right)\rc\big)= \dH^Z\left(\varphi_X(X),\varphi_Y(Y)\right).\]
\end{theorem}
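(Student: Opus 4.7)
The plan is to treat the two assertions of the theorem in turn, and for the main identity to establish the two inequalities separately, following the strategy of the proof of \Cref{thm:H-equal} but with couplings in place of witness pairs of closed sets.

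For the isometric-embedding claim, I would first observe that $\varphi_X(X)$ and $\varphi_Y(Y)$ are compact, hence closed in $Z$. Consequently, any coupling of two probability measures supported on $\varphi_X(X)$ is itself supported on $\varphi_X(X)\times\varphi_X(X)$, and pushing forward and pulling back via $\varphi_X$ (which is a bimeasurable bijection onto its image) sets up a cost-preserving bijection between $\mathcal{C}((\varphi_X)_\#\mu_1,(\varphi_X)_\#\mu_2)$ and $\mathcal{C}(\mu_1,\mu_2)$; the same argument works for $p=\infty$ via the support formulation, and symmetrically for $\varphi_Y$. This immediately gives that $(\varphi_X)_\#$ and $(\varphi_Y)_\#$ are isometric embeddings.

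Write $\rho\coloneqq\dH^Z(\varphi_X(X),\varphi_Y(Y))$ and $\hat\rho$ for the left-hand side of the displayed equation. For $\hat\rho\geq\rho$, I would plug in Dirac masses. Since $(\varphi_X)_\#\delta_x=\delta_{\varphi_X(x)}$ and $(\varphi_Y)_\#\nu$ is supported in $\varphi_Y(Y)$, a direct computation (the only coupling with a Dirac marginal is the product) gives
\[\inf_{\nu\in\mathcal{W}_p(Y)} d_{\mathcal{W},p}^Z\bigl(\delta_{\varphi_X(x)},(\varphi_Y)_\#\nu\bigr)=d_Z(\varphi_X(x),\varphi_Y(Y)),\]
with the infimum attained by $\nu=\delta_{y^\ast}$ for any nearest $y^\ast\in Y$ (which exists by compactness). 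Taking the supremum over $x\in X$, combined with the symmetric statement over $y\in Y$ and the alternative formula for $\dH^Z$ from \Cref{rmk:alternative dH}, yields $\hat\rho\geq\rho$.

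For the reverse inequality, given $\mu\in\mathcal{W}_p(X)$ I need to exhibit $\nu\in\mathcal{W}_p(Y)$ with $d_{\mathcal{W},p}^Z((\varphi_X)_\#\mu,(\varphi_Y)_\#\nu)\leq\rho$. The plan is to construct a Borel map $T\colon\varphi_X(X)\to\varphi_Y(Y)$ with $d_Z(z,T(z))\leq\rho$ for every $z$, then set $\nu\coloneqq(\varphi_Y^{-1}\circ T\circ\varphi_X)_\#\mu$ and use $(\mathrm{id}\times T)_\#(\varphi_X)_\#\mu$ as coupling: it is supported in the graph of $T$, so its $L^p$-cost (or its supremum over support, when $p=\infty$) is bounded pointwise by $\rho$. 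The main technical hurdle is producing such a $T$: since $\varphi_X(X)\subseteq\varphi_Y(Y)^{\rho}$ (by definition of $\rho$ together with closedness of the $\rho$-thickening), the multifunction $z\mapsto\{w\in\varphi_Y(Y):d_Z(z,w)\leq\rho\}$ has nonempty compact values and closed graph in the Polish space $\varphi_Y(Y)$, so the Kuratowski--Ryll-Nardzewski selection theorem yields a Borel selector. The symmetric construction starting from $\mathcal{W}_p(Y)$ gives the other side and completes the proof.
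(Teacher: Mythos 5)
Your proof is correct and follows the same three-part decomposition as the paper's proof (given in \Cref{app:ot}): the isometric-embedding claim via a cost-preserving correspondence of couplings using closedness of $\varphi_X(X)$ in $Z$, the inequality $\geq$ via Dirac masses, and the inequality $\leq$ via a measurable transport map from $X$ to $Y$ that moves each point by at most the Hausdorff distance. The one genuine difference is in how that transport map is produced. The paper builds it by hand from a finite $\eps$-net of $X$ and the associated (disjointified) Voronoi cells, obtaining only $d_Z(x,\xi(x))\leq \rho+\eps$ and then letting $\eps\to 0$ at the end; you instead invoke the Kuratowski--Ryll-Nardzewski selection theorem on the multifunction $z\mapsto\{w\in\varphi_Y(Y):\,d_Z(z,w)\leq\rho\}$ to get an exact Borel selector in one step. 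Your route avoids the $\eps$-limit but leans on a heavier (though standard) tool and requires the small extra check that this multifunction is weakly measurable --- closed graph together with compactness of $\varphi_Y(Y)$ gives upper semicontinuity, hence Borel measurability --- whereas the Voronoi construction is entirely elementary and self-contained; both handle $p=\infty$ by the same support-of-the-graph argument. Your lower-bound argument is, if anything, slightly more complete than the paper's, which fixes a single extremal pair $(x_0,y_0)$ after a without-loss-of-generality reduction, while you bound both terms of the $\max$ in the formula of \Cref{rmk:alternative dH} directly.
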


See \Cref{app:ot} for a proof. As a direct yet unexpected consequence, we obtain the following 1-Lipschitz property of the Wasserstein extensor $\mathcal{W}_p:\ms\rightarrow\ms$:

\begin{theorem}\label{thm:W-1-lip}
Given $X,Y\in\ms$ and any $p\in[1,\infty]$, we have
$$\dgh\left(\mathcal{W}_p\left(X\right),\mathcal{W}_p\left(Y\right)\right)\leq\dgh\left(X,Y\right). $$
\end{theorem}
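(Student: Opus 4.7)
The plan is to observe that Theorem \ref{thm:W-1-lip} is essentially a direct corollary of Theorem \ref{thm:W-equal} combined with Lemma \ref{lm:dgh_hausdorff-realizable}, mirroring how Theorem \ref{thm:H-stb} follows from Theorem \ref{thm:H-equal} for the Hausdorff extensor. The strategy is to realize the Gromov-Hausdorff distance between $X$ and $Y$ as an actual Hausdorff distance inside a single ambient compact metric space, then transport this realization to the Wasserstein hyperspaces via pushforward.

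First, I would apply Lemma \ref{lm:dgh_hausdorff-realizable} to produce a compact metric space $Z$ together with isometric embeddings $\varphi_X:X\hookrightarrow Z$ and $\varphi_Y:Y\hookrightarrow Z$ satisfying
\[\dH^Z\left(\varphi_X(X),\varphi_Y(Y)\right)=\dgh(X,Y).\]
Next, I would invoke Theorem \ref{thm:W-equal} to conclude that the pushforward maps $(\varphi_X)_\#:\mathcal{W}_p(X)\hookrightarrow \mathcal{W}_p(Z)$ and $(\varphi_Y)_\#:\mathcal{W}_p(Y)\hookrightarrow \mathcal{W}_p(Z)$ are themselves isometric embeddings and that
\[\dH^{\mathcal{W}_p(Z)}\bigl((\varphi_X)_\#\lc\mathcal{W}_p(X)\rc,(\varphi_Y)_\#\lc\mathcal{W}_p(Y)\rc\bigr)=\dH^Z\left(\varphi_X(X),\varphi_Y(Y)\right).\]

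Finally, Definition \ref{def:dGH} applied to the triple $\bigl(\mathcal{W}_p(X),\mathcal{W}_p(Y),\mathcal{W}_p(Z)\bigr)$ yields
\[\dgh\lc\mathcal{W}_p(X),\mathcal{W}_p(Y)\rc\leq \dH^{\mathcal{W}_p(Z)}\bigl((\varphi_X)_\#\lc\mathcal{W}_p(X)\rc,(\varphi_Y)_\#\lc\mathcal{W}_p(Y)\rc\bigr)=\dgh(X,Y),\]
which is the desired inequality. There is essentially no obstacle remaining at this stage; the substantive content has already been absorbed into Theorem \ref{thm:W-equal}. A small bookkeeping point worth noting is that for $p=\infty$ the hyperspace $\mathcal{W}_\infty(Z)$ need not be compact, but this is harmless: Definition \ref{def:dGH} only requires isometric embeddings into a common metric space, and $\mathcal{W}_p(X)$ is in any case compact for $p\in[1,\infty)$ by Theorem \ref{thm:compact-w}, while for $p=\infty$ the Gromov-Hausdorff distance is still well-defined between $\mathcal{W}_\infty(X)$ and $\mathcal{W}_\infty(Y)$ via Definition \ref{def:dGH} and the displayed Hausdorff upper bound controls it.
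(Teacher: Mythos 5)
Your proposal is correct and matches the paper's route: the paper presents Theorem \ref{thm:W-1-lip} precisely as a direct consequence of Theorem \ref{thm:W-equal}, and your argument (realize $\dgh(X,Y)$ via Lemma \ref{lm:dgh_hausdorff-realizable}, push forward, apply Theorem \ref{thm:W-equal}, then bound by Definition \ref{def:dGH}) is exactly that derivation spelled out. The remark about $p=\infty$ is a sensible bookkeeping point and does not affect the argument, since Definition \ref{def:dGH} only needs isometric embeddings into a common (bounded) ambient space.
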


\begin{remark}[Comparison with related results]
In the literature, there are studies about the stability of $\mathcal{W}_2$ on $\ms$. For example, {one can derive from \cite[Proposition 4.1]{lott2009ricci} that 
$$\dgh\left(\mathcal{W}_2\left(X\right),\mathcal{W}_2\left(Y\right)\right)\leq f_{XY}\lc\dgh\left(X,Y\right)\rc$$
for all $X,Y\in\ms$ and for some function $f_{XY}$ depending on the diameters of $X$ and $Y$. \Cref{thm:W-1-lip} is novel in that we not only proved stability of the Wasserstein extensor $\mathcal{W}_p$ for \emph{all} $p\in[1,\infty]$, but we also obtained the stronger result that $\mathcal{W}_p$ is 1-Lipschitz.}
\end{remark}

\paragraph{Geodesics in $\mathcal{W}_p\left(X\right)$.} We now state some results regarding geodesics in Wasserstein hyperspaces.

\begin{theorem}[{\cite[Corollary 7.22]{villani2008optimal}}]\label{thm:wp-geo}
If $X$ is a geodesic metric space, then $\mathcal{W}_p(X)$ is a geodesic metric space for all $p\in[1,\infty)$.
\end{theorem}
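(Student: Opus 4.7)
The plan is to adapt the classical \emph{displacement interpolation} construction from optimal transport. Given $\alpha,\beta\in\mathcal{W}_p(X)$, I would first invoke the fact that $\mathcal{C}^\mathrm{opt}_p(\alpha,\beta)$ is nonempty (which is available as long as the ambient space admits optimal plans; in the compact case this is stated earlier in the paper, and for general Polish geodesic $X$ this is classical and cited in Villani). Pick $\mu\in\mathcal{C}^\mathrm{opt}_p(\alpha,\beta)$. Let $\Gamma([0,1],X)$ denote the set of geodesics in $X$, endowed with the uniform metric, and let $e_t:\Gamma([0,1],X)\to X$ be evaluation at time $t$. Since $X$ is a geodesic space, the map assigning to each pair $(x,y)\in X\times X$ the set $\mathrm{Geo}(x,y)\subseteq\Gamma([0,1],X)$ of geodesics from $x$ to $y$ is nonempty-valued.

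The next step is to select, in a measurable fashion, one geodesic per pair: produce a Borel map $\Phi:X\times X\to\Gamma([0,1],X)$ with $\Phi(x,y)\in\mathrm{Geo}(x,y)$, via the Kuratowski--Ryll-Nardzewski measurable selection theorem (or the Aumann selection theorem). Push forward: set $\Pi\coloneqq\Phi_\#\mu\in\mathcal{P}(\Gamma([0,1],X))$, and define
\[
\alpha_t\coloneqq (e_t)_\#\Pi,\qquad t\in[0,1].
\]
By construction $\alpha_0=\alpha$ and $\alpha_1=\beta$, since $\Phi(x,y)$ starts at $x$ and ends at $y$ so $(e_0)_\#\Pi=(\pi_X)_\#\mu=\alpha$ and similarly $(e_1)_\#\Pi=\beta$.

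To verify the geodesic property, for any $s,t\in[0,1]$ observe that $\pi_{s,t}\coloneqq(e_s,e_t)_\#\Pi$ is a coupling of $\alpha_s$ and $\alpha_t$. Since each curve $\gamma$ in the support of $\Pi$ is a geodesic in $X$, we have $d_X(\gamma(s),\gamma(t))=|s-t|\,d_X(\gamma(0),\gamma(1))$, hence
\[
d_{\mathcal{W},p}^X(\alpha_s,\alpha_t)^p\le \int d_X(e_s(\gamma),e_t(\gamma))^p\,d\Pi(\gamma) =|s-t|^p\int d_X(e_0(\gamma),e_1(\gamma))^p\,d\Pi(\gamma)=|s-t|^p\,d_{\mathcal{W},p}^X(\alpha,\beta)^p,
\]
the last equality because $(e_0,e_1)_\#\Pi=\mu$ is optimal. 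Combining these inequalities with the triangle inequality
\[
d_{\mathcal{W},p}^X(\alpha,\beta)\le d_{\mathcal{W},p}^X(\alpha,\alpha_t)+d_{\mathcal{W},p}^X(\alpha_t,\beta)\le (t+(1-t))\,d_{\mathcal{W},p}^X(\alpha,\beta)
\]
forces equality throughout, so $t\mapsto\alpha_t$ is a $d_{\mathcal{W},p}^X(\alpha,\beta)$-Lipschitz curve saturating the triangle inequality, i.e., a geodesic.

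The main technical obstacle is the \emph{measurable selection} step: one must show that $\mathrm{Geo}(x,y)$ is a closed (hence Borel) subset of $\Gamma([0,1],X)$ varying in a measurable way with $(x,y)$, so that a Borel selector $\Phi$ exists. Closedness of $\mathrm{Geo}(x,y)$ follows from the Arzel\`a--Ascoli theorem (\Cref{thm:AA}) together with continuity of $e_0,e_1$. Measurability of the multifunction $(x,y)\mapsto\mathrm{Geo}(x,y)$ reduces to checking that $\{(x,y):\mathrm{Geo}(x,y)\cap U\neq\emptyset\}$ is Borel for open $U\subseteq\Gamma([0,1],X)$, which is standard once $X$ is Polish. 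An alternative route, which avoids selection theory, is a midpoint construction: measurably choose a midpoint map $m:X\times X\to X$, define $\alpha_{1/2}\coloneqq m_\#\mu$, iterate on dyadics to obtain a $d_{\mathcal{W},p}^X(\alpha,\beta)$-Lipschitz assignment $t\mapsto\alpha_t$ on dyadic $t$, and then extend by uniform continuity using completeness of $\mathcal{W}_p(X)$ (\Cref{thm:complete-w}) after first passing to a completion of $X$.
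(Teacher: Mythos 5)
The paper does not prove this statement itself---it imports it as \cite[Corollary 7.22]{villani2008optimal}---and your argument is precisely the standard displacement-interpolation proof underlying that citation (optimal plan, measurable selection of connecting geodesics, pushforward under $e_t$, then forcing equality via the triangle inequality), which is also the machinery the paper later recalls as \Cref{thm:dis-int}; so your proposal is correct and takes essentially the same route as the cited proof. The one caveat worth noting is that both the measurable-selection step and the existence of an optimal plan require $X$ to be Polish (complete and separable), not merely geodesic as in the bare statement---this matches Villani's actual hypotheses and every use of the theorem in this paper, where $X$ is compact.
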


The case when $p=1$ is special in that $\mathcal{W}_1(X)$ is geodesic regardless of whether $X$ is itself geodesic:

\begin{theorem}\label{thm:W-geodesic}
For \emph{any} Polish metric space $X$, $\mathcal{W}_1\left(X\right)$ is geodesic.
\end{theorem}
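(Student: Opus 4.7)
The plan is to exhibit an explicit geodesic between any two measures $\alpha,\beta\in\mathcal{W}_1(X)$ by \emph{linear interpolation of measures}, i.e.\
\[ \gamma(t)\coloneqq (1-t)\alpha+t\beta,\qquad t\in[0,1].\]
Observe first that $\gamma(t)$ is indeed a Borel probability measure and that $\int d_X(x,x_0)\,d\gamma(t)(x)=(1-t)\int d_X(x,x_0)\,d\alpha+t\int d_X(x,x_0)\,d\beta<\infty$ for any $x_0\in X$, so $\gamma(t)\in\mathcal{P}_1(X)=\mathcal{W}_1(X)$. This interpolation is the $p=1$ analogue of the displacement interpolation; it fails to be a geodesic for $p>1$ precisely because the cost function $d_X^p$ is strictly convex, but for $p=1$ mass ``teleportation'' is allowed in the cost.

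The main step is to produce, for $0\leq s\leq t\leq 1$, an explicit coupling $\nu_{s,t}\in\mathcal{C}(\gamma(s),\gamma(t))$ whose cost is bounded by $(t-s)\,d_{\mathcal{W},1}^X(\alpha,\beta)$. Given any coupling $\mu\in\mathcal{C}(\alpha,\beta)$, set
\[\nu_{s,t}\coloneqq (1-t)\,(\mathrm{id},\mathrm{id})_\#\alpha \;+\; s\,(\mathrm{id},\mathrm{id})_\#\beta \;+\; (t-s)\,\mu.\]
A direct marginal computation shows that the first marginal is $(1-t)\alpha+(t-s)\alpha+s\beta=(1-s)\alpha+s\beta=\gamma(s)$ and the second is $(1-t)\alpha+s\beta+(t-s)\beta=\gamma(t)$, so $\nu_{s,t}$ is a valid coupling. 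The first two diagonal pieces contribute zero transportation cost, so
\[\int_{X\times X} d_X(x,y)\,d\nu_{s,t}(x,y) \;=\; (t-s)\int_{X\times X} d_X(x,y)\,d\mu(x,y).\]
For Polish $X$ and $\alpha,\beta\in\mathcal{P}_1(X)$, optimal $\ell^1$-transference plans exist (this is standard, via tightness and lower semicontinuity of the cost functional; alternatively one can simply pick, for every $\varepsilon>0$, an $\varepsilon$-optimal $\mu_\varepsilon$ and let $\varepsilon\to 0$). Choosing $\mu$ optimal (or passing to the limit) gives
\[d_{\mathcal{W},1}^X(\gamma(s),\gamma(t)) \;\leq\; (t-s)\,d_{\mathcal{W},1}^X(\alpha,\beta).\]

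Finally, continuity of $\gamma$ follows since the inequality just proved (applied with $s=0$ or $t=1$ and taking $|t-s|\to 0$) shows $\gamma$ is $d_{\mathcal{W},1}^X(\alpha,\beta)$-Lipschitz. To upgrade the Lipschitz inequality to the equality defining a geodesic, invoke the triangle inequality
\[d_{\mathcal{W},1}^X(\alpha,\beta) \leq d_{\mathcal{W},1}^X(\gamma(0),\gamma(s))+d_{\mathcal{W},1}^X(\gamma(s),\gamma(t))+d_{\mathcal{W},1}^X(\gamma(t),\gamma(1))\leq d_{\mathcal{W},1}^X(\alpha,\beta),\]
which forces all three Lipschitz bounds to be saturated, yielding $d_{\mathcal{W},1}^X(\gamma(s),\gamma(t))=(t-s)\,d_{\mathcal{W},1}^X(\alpha,\beta)$. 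Hence $\gamma$ is a geodesic, and $\mathcal{W}_1(X)$ is a geodesic space.

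The only subtlety that might be viewed as an obstacle is the existence (or approximate existence) of an optimal coupling when $X$ is merely Polish rather than compact; this is handled by Prokhorov tightness plus lower semicontinuity, or bypassed entirely by the $\varepsilon$-optimal approximation argument above. The genuinely novel input beyond the compact case is nothing more than this; the whole point is that the ``mass-splitting'' interpolation $(1-t)\alpha+t\beta$ makes no use of any geodesic structure of $X$, which is exactly why $\mathcal{W}_1(X)$ is geodesic for every Polish $X$.
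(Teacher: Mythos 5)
Your proposal is correct and is essentially the same argument as the paper's: the paper proves this theorem via Lemma~\ref{lm:W-geodesic} (linear interpolation), and its proof in \Cref{app:ot} uses exactly your coupling $(1-t)\,\iota_\#\alpha+s\,\iota_\#\beta+(t-s)\,\mu$ with an optimal $\mu$, followed by the same triangle-inequality saturation. The only cosmetic difference is that you also sketch the $\varepsilon$-optimal fallback for existence of optimal plans, which the paper handles by citation.
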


{In fact, this theorem follows directly from the following explicit construction:
\begin{lemma}[{\cite[Theorem 5.1]{bottou2018geometrical}}]\label{lm:W-geodesic}
Let $X$ be a Polish metric space and let $\alpha,\beta\in\mathcal{P}_1\left(X\right)$. We define $\gamma:[0,1]\rightarrow\mathcal{P}(X)$ as follows: for each $t\in[0,1]$, let $\gamma(t)\coloneqq \left(1-t\right)\alpha+t\beta$. Then, for each $t\in[0,1]$, $\gamma(t)\in\mathcal{P}_1(X)$ and $\gamma$ is an $\ell^1$-Wasserstein geodesic connecting $\alpha$ and $\beta$. We call $\gamma$ the \emph{linear interpolation} between $\alpha$ and $\beta$.
\end{lemma}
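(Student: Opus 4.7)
The plan is to first verify $\gamma(t)\in\mathcal{P}_1(X)$ and then establish the Lipschitz bound $d_{\mathcal{W},1}^X(\gamma(s),\gamma(t))\leq |s-t|\,d_{\mathcal{W},1}^X(\alpha,\beta)$ for all $s,t\in[0,1]$, which by the discussion immediately following the definition of geodesic (the triangle inequality argument) will force this inequality to be an equality and hence make $\gamma$ a geodesic. Continuity is automatic from the Lipschitz estimate.

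First I would observe that since $\alpha,\beta\in\mathcal{P}_1(X)$, for every $x_0\in X$,
\[\int_X d_X(x,x_0)\,d\gamma(t)(x)=(1-t)\int_X d_X(x,x_0)\,d\alpha(x)+t\int_X d_X(x,x_0)\,d\beta(x)<\infty,\]
so $\gamma(t)\in\mathcal{P}_1(X)$ for every $t\in[0,1]$. Next, fix $0\leq s\leq t\leq 1$ and let $\mu\in\mathcal{C}(\alpha,\beta)$ be any coupling. Consider the candidate coupling on $X\times X$
\[\pi\coloneqq (1-t)\,(\mathrm{id}\times\mathrm{id})_\#\alpha+s\,(\mathrm{id}\times\mathrm{id})_\#\beta+(t-s)\,\mu.\]
I would verify directly that the first marginal of $\pi$ is $(1-t)\alpha+s\beta+(t-s)\alpha=(1-s)\alpha+s\beta=\gamma(s)$ and the second marginal is $(1-t)\alpha+s\beta+(t-s)\beta=(1-t)\alpha+t\beta=\gamma(t)$, so $\pi\in\mathcal{C}(\gamma(s),\gamma(t))$. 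The two ``diagonal'' terms contribute $0$ to the cost because $d_X(x,x)=0$, hence
\[\int_{X\times X}d_X(x,y)\,d\pi(x,y)=(t-s)\int_{X\times X}d_X(x,y)\,d\mu(x,y).\]
Taking the infimum over $\mu\in\mathcal{C}(\alpha,\beta)$ (or, to avoid appealing to existence of an optimal plan in the Polish setting, taking $\mu$ to be $\varepsilon$-optimal and letting $\varepsilon\to 0$) yields $d_{\mathcal{W},1}^X(\gamma(s),\gamma(t))\leq (t-s)\,d_{\mathcal{W},1}^X(\alpha,\beta)$.

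Finally, to conclude that $\gamma$ is a geodesic (i.e., Lipschitz with constant $d_{\mathcal{W},1}^X(\alpha,\beta)=d_{\mathcal{W},1}^X(\gamma(0),\gamma(1))$), I would note that the inequality just proved is exactly the defining Lipschitz condition, so nothing further is required; the stated equality $d_{\mathcal{W},1}^X(\gamma(s),\gamma(t))=|s-t|\,d_{\mathcal{W},1}^X(\alpha,\beta)$ then follows from the triangle inequality argument given right after the definition of geodesic in the paper. There is no genuine obstacle here; the only subtlety worth double-checking is the existence of (or, avoiding) an optimal coupling for $p=1$ on a Polish space, which is handled cleanly by the $\varepsilon$-optimality device described above. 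The key insight to emphasize is the explicit three-part coupling $\pi$: the two diagonal pieces carry mass at zero cost, while only the ``shared'' mass $(t-s)\mu$ needs to be transported, giving the sharp factor $(t-s)$.
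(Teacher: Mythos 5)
Your proposal is correct and follows essentially the same route as the paper's proof: the same moment computation for $\gamma(t)\in\mathcal{P}_1(X)$, the same three-part coupling $(1-t)\,\iota_\#\alpha+s\,\iota_\#\beta+(t-s)\,\mu$ with the diagonal pieces contributing zero cost, and the same triangle-inequality conclusion. The only (harmless) difference is that you take an infimum over arbitrary couplings (or use $\varepsilon$-optimal ones) where the paper simply invokes the existence of an optimal coupling on a Polish space.
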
}

{\Cref{lm:W-geodesic} was first mentioned and proved in \cite{bottou2018geometrical} via Kantorovich duality. In \Cref{app:ot}, we provide an alternative proof which proceeds by calculating $\dW 1^X(\gamma(s),\gamma(t))$ for all $s,t\in[0,1]$ via an explicit construction of an optimal coupling between $\gamma(s)$ and $\gamma(t)$.}

\begin{remark}
For $p\neq 1$, a statement similar to that of \Cref{lm:W-geodesic} for $\mathcal{W}_p\left(X\right)$ is not necessarily true. For example, consider the {two point space $X=\{0,1\}$ with interpoint distance $1$}. Then, for $p\in[1,\infty)$, one can easily verify that $\mathcal{W}_p\left(X\right)\cong\left([0,1],d^\frac{1}{p}\right)$ where $d$ denotes the Euclidean metric on $[0,1]$. In this case, $\mathcal{W}_p\left(X\right)$ is geodesic if and only if $p=1$.
\end{remark}

The next lemma is a counterpart to \Cref{lm:hgeo-to-dghgeo} in the setting of metric measure spaces.

\begin{lemma}\label{lm:wgeo-to-dgwgeo}
Let $\mathcal{X}=(X,d_X,\mu_X),\mathcal{Y}=(Y,d_Y,\mu_Y)\in\ws$ and let $Z\in\ms$. Fix $p\in[1,\infty)$. Suppose there exist $\varphi_X:X\hookrightarrow Z$ and $\varphi_Y:Y\hookrightarrow Z$ such that 
$$\dW{p}^Z\left((\varphi_X)_\#\mu_X,(\varphi_Y)_\#\mu_Y\right)=\dgws{p}\left(\mathcal{X},\mathcal{Y}\right).$$ 
Then, any $\ell^p$-Wasserstein geodesic $\gamma:[0,1]\rightarrow \mathcal{W}_p\left(Z\right)$ such that $\gamma\left(0\right)=(\varphi_X)_\#\mu_X$ and $\gamma\left(1\right)=(\varphi_Y)_\#\mu_Y$ is actually an $\ell^p$-Gromov-Wasserstein geodesic. More precisely, if for each $t\in[0,1]$ we let $X_t\coloneqq\supp(\gamma(t))\subseteq Z$ and denote by $\tilde{\gamma}(t)$ the metric measure space $\left(X_t,d_Z|_{X_t\times X_t},\gamma(t)\right)$, then $\tilde{\gamma}:[0,1]\rightarrow\ws$ is a geodesic, i.e.,
$$\dgws{p}\left(\tilde{\gamma}\left(s\right),\tilde{\gamma}\left(t\right)\right)=|s-t|\cdot\dgws{p}\left(\mathcal{X},\mathcal{Y}\right) \quad\forall s,t\in[0,1].$$
\end{lemma}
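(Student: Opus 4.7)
The plan is to mirror the argument of \Cref{lm:hgeo-to-dghgeo} almost verbatim, replacing the role of the Hausdorff distance with the $\ell^p$-Wasserstein distance. The two ingredients needed are: an inequality $\dgws{p}(\tilde\gamma(s),\tilde\gamma(t))\le \dW{p}^Z(\gamma(s),\gamma(t))$ that follows directly from \Cref{def:dGW}, and a triangle-inequality ``squeeze'' which, combined with the Wasserstein geodesic identity on $Z$ and the realizability hypothesis $\dW{p}^Z((\varphi_X)_\#\mu_X,(\varphi_Y)_\#\mu_Y)=\dgws{p}(\X,\Y)$, forces every intermediate inequality to collapse to an equality.

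First I would verify that $\tilde\gamma(t)\in\ws$ for every $t\in[0,1]$: the set $X_t=\supp(\gamma(t))$ is closed in the compact space $Z$ and hence compact, and when regarded as a measure on $X_t$ the probability $\gamma(t)$ has full support by construction. I would also record the endpoint identifications $\tilde\gamma(0)\cong_w\X$ and $\tilde\gamma(1)\cong_w\Y$: since $\mu_X$ has full support on $X$, $\supp((\varphi_X)_\#\mu_X)=\varphi_X(X)$, and $\varphi_X$ restricts to a surjective isometry $X\to X_0$ pushing $\mu_X$ onto $\gamma(0)$; the argument for $\Y$ is symmetric.

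For the main step, fix $0\le s\le t\le 1$. The inclusions $X_s\hookrightarrow Z$ and $X_t\hookrightarrow Z$ are isometric embeddings, so by \Cref{def:dGW} we have $\dgws{p}(\tilde\gamma(s),\tilde\gamma(t))\le \dW{p}^Z(\gamma(s),\gamma(t))$. Since $\gamma$ is an $\ell^p$-Wasserstein geodesic in $\mathcal{W}_p(Z)$, one has $\dW{p}^Z(\gamma(0),\gamma(1))=\dW{p}^Z(\gamma(0),\gamma(s))+\dW{p}^Z(\gamma(s),\gamma(t))+\dW{p}^Z(\gamma(t),\gamma(1))$. Combining this with the above inequality applied to the three subintervals, the triangle inequality for $\dgws{p}$, and the realizability hypothesis yields
\[\dgws{p}(\X,\Y)=\dW{p}^Z(\gamma(0),\gamma(1))\ge \dgws{p}(\tilde\gamma(0),\tilde\gamma(s))+\dgws{p}(\tilde\gamma(s),\tilde\gamma(t))+\dgws{p}(\tilde\gamma(t),\tilde\gamma(1))\ge \dgws{p}(\X,\Y),\]
so every inequality must be an equality; in particular $\dgws{p}(\tilde\gamma(s),\tilde\gamma(t))=\dW{p}^Z(\gamma(s),\gamma(t))=|s-t|\,\dgws{p}(\X,\Y)$.

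The only subtlety I anticipate is a purely notational one: the measure $\gamma(t)$ must simultaneously be regarded as an element of $\mathcal{W}_p(Z)$ (to apply the Wasserstein geodesic identity) and as a fully supported measure on $X_t$ (to land in $\ws$). This is the same kind of identification handled implicitly in \Cref{lm:dgw_w-realizable}, and requires no new idea beyond careful bookkeeping; no analogue of \Cref{thm:W-equal} is needed here because the ambient compact space $Z$ and the equality $\dgws{p}(\X,\Y)=\dW{p}^Z(\gamma(0),\gamma(1))$ are given in the hypotheses.
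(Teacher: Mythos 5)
Your proposal is correct and matches the paper's intent exactly: the paper omits the proof, stating only that it is ``essentially the same as the one for \Cref{lm:hgeo-to-dghgeo},'' and your adaptation — the inequality $\dgws{p}(\tilde\gamma(s),\tilde\gamma(t))\le \dW{p}^Z(\gamma(s),\gamma(t))$ from \Cref{def:dGW} followed by the triangle-inequality squeeze forced by the realizability hypothesis — is precisely that argument. The additional bookkeeping you include (compactness and full support of $\tilde\gamma(t)$, the endpoint isomorphisms) is a welcome level of care that the paper leaves implicit.
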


The proof is essentially the same as the one for \Cref{lm:hgeo-to-dghgeo} and we omit details.

\paragraph{A new proof of \Cref{thm:GH-geo}.}
As mentioned in the introduction, $\left(\ms,\dgh\right)$ is a geodesic metric space. {This was proved in \cite{ivanov2016gromov,chowdhury2018explicit} respectively via the mid-point criterion and via an explicit construction of geodesics}. Now, we end this section by presenting a novel and succinct proof of this fact based on geodesic properties of $\mathcal{W}_1$ and $\mathcal{H}$.

\thmgeoGH*
\begin{proof}\label{succinct proof}
Given two $X,Y\in\ms$, let $\eta\coloneqq\dgh \left(X,Y\right).$ Then, there exists $Z\in\ms$ and isometric embeddings $\varphi_X:X\hookrightarrow Z$ and $\varphi_Y:Y\hookrightarrow Z$ such that $\dH^Z\left(\varphi_X(X),\varphi_Y(Y)\right)=\eta$ (cf. \Cref{lm:dgh_hausdorff-realizable}). Without loss of generality, we assume that $Z$ is geodesic (otherwise we replace $Z$ with one of its extensions $\mathcal{W}_1\left(Z\right)$, which is geodesic by \Cref{thm:W-geodesic}). Then, $\mathcal{H}\left(Z\right)$ is geodesic by \Cref{thm:hgeo}. Consequently, there exists a Hausdorff geodesic $\gamma:[0,1]\rightarrow \mathcal{H}\left(Z\right)$ such that $\gamma\left(0\right)=X$ and $\gamma\left(1\right)=Y$ (we regard $X$ and $Y$ as subsets of $Z$). Then, by \Cref{lm:hgeo-to-dghgeo}, one concludes that $\gamma$ is a geodesic in $\ms$ connecting $X$ and $Y$. Therefore, $\ms$ is itself geodesic.
\end{proof}

\subsection{Urysohn universal metric space}\label{sec:urysohn}
In this section we introduce the Urysohn universal metric space which is a remarkable construction by Urysohn \cite{urysohn1927espace}. This metric space can be regarded as an ambient space inside which one can isometrically embed every Polish metric space. It has some natural connections with the Gromov-Hausdorff distance and the Gromov-Wasserstein distance which we will elucidate. These connections will serve an important role in the proof of our main theorems.

\begin{theorem}[Urysohn universal metric space]\label{thm:urysohn}
There exists a unique (up to isometry) Polish space $\left(\mathbb{U},d_{\mathbb{U}}\right)$, {which we call the \emph{Urysohn universal metric space}, satisfying the following two properties:}
\begin{enumerate}
    \item (Universality) For any separable metric space $X$, there exists an isometric embedding $X\hookrightarrow \mathbb{U}$.
    \item (Homogeneity) For any isometry $\varphi$ between two finite subsets $A,A'\subseteq\mathbb{U}$, there exists an isometry $\tilde{\varphi}:\mathbb{U}\rightarrow\mathbb{U}$ such that $\tilde{\varphi}|_A=\varphi$.
\end{enumerate}
\end{theorem}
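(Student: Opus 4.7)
The plan is to establish existence by a classical inductive construction and uniqueness by a back-and-forth argument. The technical notion driving everything is the \emph{one-point extension property} (OPEP): a metric space $X$ has the OPEP if for every finite $A\subseteq X$ and every one-point metric extension $(A\cup\{\ast\},d)$ of $A$, there is some $x\in X$ with $d_X(a,x)=d(a,\ast)$ for all $a\in A$. The strategy is: build a countable metric space with a rational version of the OPEP, take its completion to produce a candidate $\mathbb{U}$, upgrade rational OPEP to full OPEP on the completion, and then derive both universality and homogeneity from the OPEP by inductive constructions.

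\paragraph{Existence.}
First I would construct a countable metric space $\mathbb{U}_0$ with the \emph{rational} OPEP, meaning OPEP restricted to extensions with rational distance data. Start with any countable seed (e.g.\ a single point) and enumerate in a fair (diagonal) bookkeeping all pairs $(A,d)$ where $A$ ranges over the finite subsets of the space built so far and $d$ is a rational one-point extension of $A$; at each stage adjoin one new point realising the next unfilled extension. This preserves countability at each stage, and the union $\mathbb{U}_0$ inherits the rational OPEP by the bookkeeping. Next, let $\mathbb{U}$ be the metric completion of $\mathbb{U}_0$; by construction it is Polish. The delicate point is to upgrade from rational to real OPEP. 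Given a finite $A\subseteq\mathbb{U}$ and an arbitrary one-point extension $(A\cup\{\ast\},d)$, approximate $A$ by finite sets $A^{(k)}\subseteq\mathbb{U}_0$ with controlled error $\eps_k\downarrow 0$ and rationalise the corresponding one-point extension data to get rational extensions of $A^{(k)}$; use rational OPEP in $\mathbb{U}_0$ to produce points $x_k\in\mathbb{U}_0$ realising them. A straightforward estimate shows that the $x_k$ form a Cauchy sequence, and its limit in $\mathbb{U}$ realises the original extension. This step is where I expect the main technical obstacle: one must coordinate the rationalisations so that the resulting sequence is genuinely Cauchy, which is why the incremental sizes $\eps_k$ and the rational approximations have to be chosen with geometric-type control.

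\paragraph{Universality and homogeneity.}
Given a separable metric space $X$ with countable dense subset $\{x_n\}_{n\geq 1}$, I would build an isometric embedding $f:\{x_n\}_{n\geq 1}\to\mathbb{U}$ by induction: having defined $f$ on $\{x_1,\ldots,x_n\}$, apply OPEP in $\mathbb{U}$ to the finite set $\{f(x_1),\ldots,f(x_n)\}$ with the extension data $a\mapsto d_X(f^{-1}(a),x_{n+1})$ to obtain $f(x_{n+1})$. Since $f$ is an isometry on a dense subset of $X$ into the complete space $\mathbb{U}$, it extends uniquely by continuity to an isometric embedding $X\hookrightarrow\mathbb{U}$, proving universality. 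For homogeneity, given an isometry $\varphi$ between two finite subsets $A,A'\subseteq\mathbb{U}$, fix a countable dense sequence $\{u_n\}$ of $\mathbb{U}$ and run a back-and-forth: at odd stages extend the current finite partial isometry to have $u_n$ in its domain by invoking OPEP in the target, and at even stages extend so that $u_n$ lies in the range by invoking OPEP in the source. The result is an isometry between two countable dense subsets of $\mathbb{U}$, which extends by continuity to an isometry $\tilde\varphi:\mathbb{U}\to\mathbb{U}$ with $\tilde\varphi|_A=\varphi$.

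\paragraph{Uniqueness.}
Finally, suppose $(\mathbb{U}',d')$ is another Polish space satisfying universality and homogeneity. Then $\mathbb{U}'$ has the OPEP as well: given a finite $A'\subseteq\mathbb{U}'$ and an extension $A'\cup\{\ast\}$, use universality to embed this abstract finite metric space into $\mathbb{U}'$, then use homogeneity of $\mathbb{U}'$ to move the image of $A'$ onto $A'$ itself, carrying $\ast$ to the required new point. With OPEP available in both $\mathbb{U}$ and $\mathbb{U}'$, a back-and-forth argument identical to the one used for homogeneity, but run between countable dense subsets of $\mathbb{U}$ and $\mathbb{U}'$, produces an isometry of countable dense subsets that extends by completeness to an isometry $\mathbb{U}\to\mathbb{U}'$. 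This proves uniqueness up to isometry and completes the theorem.
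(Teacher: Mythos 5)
The paper does not prove this theorem: it is quoted as a classical result and attributed to Urysohn's original 1927 construction, so there is no in-paper argument to compare against. Your proposal is the standard modern proof (rational one-point extension property on a countable space, completion, back-and-forth), and as a plan it is essentially sound: the derivation of the one-point extension property of $\mathbb{U}'$ from universality plus homogeneity in the uniqueness step is exactly right, and the back-and-forth extensions to dense subsets followed by continuous extension are correct. Two places deserve more care than your sketch gives them. First, in the inductive construction of $\mathbb{U}_0$, when you adjoin a point realising a one-point extension of a finite subset $A$, you must also assign consistent distances from the new point to \emph{every} point of the current countable space, not just to $A$; this requires the standard amalgamation lemma that any one-point metric extension of a subspace extends to a one-point metric extension of the whole space (e.g.\ via $d(x,\ast)\coloneqq\inf_{a\in A}\left(d(x,a)+d(a,\ast)\right)$, whose triangle inequality must be checked). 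Second, the upgrade from rational OPEP on $\mathbb{U}_0$ to full OPEP on the completion is not just a matter of choosing $\eps_k$ geometrically: to make the approximating points $x_k$ Cauchy one must include $x_{k-1}$ in the finite set to which rational OPEP is applied at stage $k$, prescribing a small distance from the new point to $x_{k-1}$; without this chaining the $x_k$ need not converge. You flag the difficulty but do not name its resolution. With those two points filled in, the proof is complete and is the standard one found in the literature on the Urysohn space.
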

The universality property of $\mathbb{U}$ makes the constant map taking each compact metric space $X$ to $\mathbb{U}$ a metric extensor.

The homogeneity property in \Cref{thm:urysohn} above can be generalized to compact metric spaces:

\begin{theorem}[\cite{huhunaivsvili1955property}]\label{thm:u-compact-homo}
Given two \emph{compact} subsets $A,A'\subseteq\mathbb{U}$ and an isometry $\varphi:A\rightarrow A'$, there exists an isometry $\tilde{\varphi}:\mathbb{U}\rightarrow\mathbb{U}$ such that $\tilde{\varphi}|_A=\varphi$.
\end{theorem}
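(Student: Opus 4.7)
The plan is to bootstrap compact homogeneity from the finite homogeneity property of $\mathbb{U}$ via a back-and-forth construction whose crucial ingredient is a \emph{compact Kat\v etov extension lemma}: for every compact $K\subseteq\mathbb{U}$ and every function $f:K\to\mathbb{R}_{\geq 0}$ with $|f(x)-f(y)|\leq d_{\mathbb{U}}(x,y)\leq f(x)+f(y)$ for all $x,y\in K$, there exists $v\in\mathbb{U}$ with $d_{\mathbb{U}}(v,x)=f(x)$ for all $x\in K$. First I would derive the \emph{finite} case of the lemma from \Cref{thm:urysohn}: given a finite subset $F\subseteq\mathbb{U}$ and a Kat\v etov function on $F$, the identity isometry on $F$ extends by finite homogeneity to $\tilde\psi:\mathbb{U}\to\mathbb{U}$, and by choosing any auxiliary point $u\in\mathbb{U}$ realizing the prescribed distances (obtained itself by embedding an abstract one-point extension of $F$ into $\mathbb{U}$ via universality and then applying finite homogeneity to identify the copy of $F$), the image $\tilde\psi(u)$ provides the desired $v$. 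For the compact version, I would fix nested finite $\epsilon_n$-nets $K_n\subseteq K_{n+1}\subseteq K$ with $\sum \epsilon_n<\infty$ and inductively choose $v_n\in\mathbb{U}$ satisfying $d_{\mathbb{U}}(v_n,x)=f(x)$ for all $x\in K_n$ and $d_{\mathbb{U}}(v_n,v_{n-1})\leq 2\epsilon_{n-1}$. The feasibility of the extra constraint follows since $|f(x)-d_{\mathbb{U}}(v_{n-1},x)|\leq 2\epsilon_{n-1}$ on $K_n$ (it vanishes on $K_{n-1}$, and each $x\in K_n\setminus K_{n-1}$ lies within $\epsilon_{n-1}$ of $K_{n-1}$). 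The resulting sequence is Cauchy, and completeness of $\mathbb{U}$ yields the desired $v$.

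Equipped with this lemma, I would complete the proof by a back-and-forth argument. Enumerate dense sequences $\{a_n\}\subseteq A$ and $\{u_n\}\subseteq \mathbb{U}$, set $a_n'\coloneqq\varphi(a_n)$, and inductively build finite sets $D_n,D_n'\subseteq\mathbb{U}$ and isometric bijections $\psi_n:A\cup D_n\to A'\cup D_n'$ such that $\psi_n|_A=\varphi$, $\psi_n$ extends $\psi_{n-1}$, $u_n\in D_n$ at odd steps and $u_n\in D_n'$ at even steps. To add a new point $u$ to the domain, note that $y'\mapsto d_{\mathbb{U}}(u,\psi_{n-1}^{-1}(y'))$ is a Kat\v etov function on the compact set $A'\cup D_{n-1}'\subseteq\mathbb{U}$, so the lemma supplies an image $v\in\mathbb{U}$ realizing these distances; set $\psi_n(u)\coloneqq v$. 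The symmetric operation adds a point to the codomain. Taking the union $\psi_\infty=\bigcup_n\psi_n$ produces an isometry on a dense subset of $\mathbb{U}$ whose image is dense in $\mathbb{U}$; by uniform continuity and completeness of $\mathbb{U}$, it extends uniquely to an isometry $\tilde\varphi:\mathbb{U}\to\mathbb{U}$ (the extension is surjective because its image is dense and complete). Since $\tilde\varphi(a_n)=\varphi(a_n)$ on the dense subset $\{a_n\}$ of $A$, continuity gives $\tilde\varphi|_A=\varphi$.

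The main obstacle is the compact Kat\v etov extension lemma, and specifically the guarantee that the iterative construction of $\{v_n\}$ can be arranged as a Cauchy sequence. This hinges on being able to impose the augmented finite Kat\v etov condition on $K_n\cup\{v_{n-1}\}$ with the extra constraint $d_{\mathbb{U}}(v_n,v_{n-1})\leq 2\epsilon_{n-1}$; once this structural refinement of finite homogeneity is in hand, the rest of the argument is a routine back-and-forth adaptation of the finite case to the compact setting.
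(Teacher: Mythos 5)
The paper offers no proof of this statement: it is quoted directly from \cite{huhunaivsvili1955property}, so there is no internal argument to compare yours against. Your proposal is essentially the classical proof of Huhunai\v{s}vili's theorem, and it is sound: the finite Kat\v{e}tov extension lemma does follow from universality plus finite homogeneity as you describe, the bootstrap to compact $K$ via nested finite nets and a Cauchy sequence of approximate realizations is the standard route to compact injectivity of $\mathbb{U}$, and the back-and-forth over a dense sequence of $A$ and a dense sequence of $\mathbb{U}$, followed by extension by uniform continuity with the density-plus-completeness argument for surjectivity, finishes the job. Two points need a little more care in a full write-up, though neither is a gap in the idea. First, in the inductive step you must exhibit an admissible prescribed distance $r=d_{\mathbb{U}}(v_n,v_{n-1})$: the one-point extension of $K_n\cup\{v_{n-1}\}$ is metrically consistent precisely when $r$ lies in $\left[\sup_{x\in K_n}|f(x)-d_{\mathbb{U}}(x,v_{n-1})|,\ \inf_{x\in K_n}\big(f(x)+d_{\mathbb{U}}(x,v_{n-1})\big)\right]$; a triangle-inequality computation shows this interval is nonempty, and your estimate shows its left endpoint is at most $2\epsilon_{n-1}$, so that endpoint is the right choice of $r$ — taking $r=2\epsilon_{n-1}$ outright can violate $|f(x)-r|\le d_{\mathbb{U}}(x,v_{n-1})$ when $f(x)$ and $d_{\mathbb{U}}(x,v_{n-1})$ are both small. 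Second, the bookkeeping ``$u_n\in D_n$ at odd steps and $u_n\in D_n'$ at even steps'' leaves only the odd-indexed terms of the dense sequence in the domain and only the even-indexed ones in the codomain, and a subsequence of a dense sequence need not be dense; the standard fix is to handle $u_k$ at steps $2k-1$ and $2k$ so that every $u_k$ enters both sides. With these routine repairs your argument is complete.
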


\paragraph{Connection with the Gromov-Hausdorff distance.} For any $X,Y\in\ms$, due to universality, $\mathbb{U}$ is a common ambient space. This allows us to consider the Hausdorff distance $\dH^\mathbb U$ between $X$ and $Y$ which implies a connection between $\mathbb U$ and $\dgh$. The following relation between the Gromov-Hausdorff distance and the Urysohn universal metric space was pointed out in \cite{berestovskii1992manifolds,antonyan2020gromov}:

\begin{proposition}
For any $X,Y\in\ms$, we have 
$$\dgh\left(X,Y\right)=\inf_{\varphi_X,\varphi_Y} \dH^\mathbb{U}\left(\varphi_X\left(X\right),\varphi_Y\left(Y\right)\right), $$
where the infimum is taken over all isometric embeddings $\varphi_X:X\hookrightarrow \mathbb{U}$ and $\varphi_Y:Y\hookrightarrow \mathbb{U}$.
\end{proposition}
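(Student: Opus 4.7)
The plan is to establish the two inequalities separately. The direction $\dgh(X,Y)\leq \inf_{\varphi_X,\varphi_Y}\dH^{\mathbb U}(\varphi_X(X),\varphi_Y(Y))$ is immediate from \Cref{def:dGH}: the right-hand side is the infimum of exactly the same quantity that defines $\dgh(X,Y)$, but restricted to the single ambient space $Z=\mathbb U$, so it can only be at least as large.

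For the reverse inequality, I would take an arbitrary candidate triple $(Z,\varphi_X,\varphi_Y)$ appearing in \Cref{def:dGH} and show that the corresponding Hausdorff distance can be faithfully realized inside $\mathbb U$. Since $X$ and $Y$ are compact, $W\coloneqq\varphi_X(X)\cup\varphi_Y(Y)\subseteq Z$ is compact (as a union of two compact sets) and therefore separable. By the universality property of \Cref{thm:urysohn}, there exists an isometric embedding $\iota:W\hookrightarrow\mathbb U$. Then $\iota\circ\varphi_X:X\hookrightarrow\mathbb U$ and $\iota\circ\varphi_Y:Y\hookrightarrow\mathbb U$ are isometric embeddings into $\mathbb U$, and I claim
\[\dH^{\mathbb U}(\iota\circ\varphi_X(X),\iota\circ\varphi_Y(Y))=\dH^W(\varphi_X(X),\varphi_Y(Y))=\dH^Z(\varphi_X(X),\varphi_Y(Y)).\]
The second equality follows from \Cref{lm:dH under embedding} applied to the inclusion $W\hookrightarrow Z$ (both compact). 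Taking the infimum over all $(Z,\varphi_X,\varphi_Y)$ then gives the reverse inequality.

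The only mild technical point is that \Cref{lm:dH under embedding} is stated for compact ambient spaces, while $\mathbb U$ is not compact, so one cannot directly invoke it for the first equality above. This is readily bypassed by the observation that the Hausdorff distance between two compact sets depends only on the interpoint distances within their union, via the formula recorded in \Cref{rmk:alternative dH}; this formula is manifestly preserved under any isometric embedding of the ambient space, regardless of whether the target is compact. Hence the whole argument rests on two straightforward ingredients, universality of $\mathbb U$ and the isometric invariance of Hausdorff distance between compact subsets, and there is no substantive obstacle.
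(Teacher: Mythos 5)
Your proof is correct and uses the same device the paper relies on: embedding a realizing ambient space into $\mathbb{U}$ via universality and noting that the Hausdorff distance is preserved under isometric embeddings (the paper states this proposition with a citation rather than a proof, and establishes the sharper \Cref{lm:u-dgh} by exactly this route, starting from the optimal compact $Z$ supplied by \Cref{lm:dgh_hausdorff-realizable}). The only blemish is the parenthetical ``both compact'' when you apply \Cref{lm:dH under embedding} to the inclusion $W\hookrightarrow Z$ — the competitor $Z$ in \Cref{def:dGH} need not be compact — but your own observation that, by \Cref{rmk:alternative dH}, the Hausdorff distance between two compact sets depends only on the interpoint distances within their union disposes of this step just as it does for the embedding into $\mathbb{U}$, so nothing is lost.
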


We further improve this proposition through the following lemma:

\begin{lemma}\label{lm:u-dgh}
For any $X,Y\in\ms$, there exist isometric embeddings $\varphi_X:X\hookrightarrow \mathbb{U}$ and $\varphi_Y:Y\hookrightarrow \mathbb{U}$ such that 
$$\dgh\left(X,Y\right)=\dH^\mathbb{U}\left(\varphi_X\left(X\right),\varphi_Y\left(Y\right)\right). $$
\end{lemma}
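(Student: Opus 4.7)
The plan is to combine the Hausdorff-realizability of $\dgh$ for compact spaces (\Cref{lm:dgh_hausdorff-realizable}) with the universality of the Urysohn space (\Cref{thm:urysohn}), using the fact that Hausdorff distance is preserved under isometric embeddings (\Cref{lm:dH under embedding}). In other words, I first realize $\dgh(X,Y)$ as a Hausdorff distance in \emph{some} compact ambient space, then transport that ambient space into $\mathbb{U}$.

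More concretely, I would proceed as follows. First, apply \Cref{lm:dgh_hausdorff-realizable} to obtain a compact metric space $Z$ together with isometric embeddings $\psi_X:X\hookrightarrow Z$ and $\psi_Y:Y\hookrightarrow Z$ satisfying
\[
\dH^Z\!\left(\psi_X(X),\psi_Y(Y)\right)=\dgh(X,Y).
\]
Since $Z$ is compact, it is in particular separable, so by the universality property in \Cref{thm:urysohn} there exists an isometric embedding $\iota:Z\hookrightarrow\mathbb{U}$. Define $\varphi_X\coloneqq \iota\circ\psi_X$ and $\varphi_Y\coloneqq\iota\circ\psi_Y$; these are isometric embeddings of $X$ and $Y$ into $\mathbb{U}$. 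Now \Cref{lm:dH under embedding} applied to $\iota$ (with compact subsets $\psi_X(X),\psi_Y(Y)\subseteq Z$) yields
\[
\dH^{\mathbb{U}}\!\left(\varphi_X(X),\varphi_Y(Y)\right)=\dH^{Z}\!\left(\psi_X(X),\psi_Y(Y)\right)=\dgh(X,Y),
\]
which is exactly the desired equality. Note the inequality $\dgh(X,Y)\leq \dH^{\mathbb{U}}(\varphi_X(X),\varphi_Y(Y))$ for any choice of isometric embeddings into $\mathbb{U}$ is immediate from \Cref{def:dGH}, so realizing equality is the only substantive content.

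There is essentially no obstacle here: the lemma is a straightforward consequence of results already established in the excerpt, and the role of $\mathbb{U}$ is purely to serve as a single common ambient space. The only minor point to verify is that \Cref{lm:dH under embedding} applies in this setting even though $\mathbb{U}$ is not compact; however, inspecting its proof, compactness of the ambient target is not used — the formula from \Cref{rmk:alternative dH} for the Hausdorff distance only requires $\psi_X(X)$ and $\psi_Y(Y)$ to be nonempty closed subsets, which they are since $X$ and $Y$ are compact and $\iota$ is a (continuous) isometric embedding.
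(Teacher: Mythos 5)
Your proof is correct and follows exactly the paper's own argument: realize $\dgh(X,Y)$ as a Hausdorff distance in a compact ambient space $Z$ via \Cref{lm:dgh_hausdorff-realizable}, embed $Z$ into $\mathbb{U}$ by universality, and invoke \Cref{lm:dH under embedding}. Your closing observation that the Hausdorff distance computation does not require compactness of the target is also consistent with how the paper applies that lemma.
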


\begin{proof}
By \Cref{lm:dgh_hausdorff-realizable}, there exist $Z\in\ms$ and isometric embeddings $\psi_X:X\hookrightarrow Z$ and $\psi_Y:Y\hookrightarrow Z$ such that $\dH^Z\left(\psi_X\left(X\right),\psi_Y\left(Y\right)\right)=\dgh\left(X,Y\right)$. Then, since $Z$ is compact, there exists an isometric embedding $\varphi:Z\hookrightarrow\mathbb{U}$ (cf. \Cref{thm:urysohn}). Let $\varphi_X=\varphi|_{\psi_X(X)}\circ\psi_X$ and $\varphi_Y=\varphi|_{\psi_Y(Y)}\circ\psi_Y$. Then, by \Cref{lm:dH under embedding} we have that
$$\dgh\left(X,Y\right)= \dH^Z\left(\psi_X\left(X\right),\psi_Y\left(Y\right)\right)=\dH^{\varphi\left(Z\right)}\left(\varphi_X\left(X\right),\varphi_Y\left(Y\right)\right)=\dH^\mathbb{U}\left(\varphi_X\left(X\right),\varphi_Y\left(Y\right)\right).$$
\end{proof}

Then, combining with \Cref{thm:u-compact-homo}, we derive the following lemma:
\begin{lemma}\label{lm:u-dgh-any}
For any $X,Y\in\ms$, let $\varphi_X:X\hookrightarrow\mathbb{U}$ be an isometric embedding. Then, there exists an isometric embedding $\varphi_Y:Y\hookrightarrow \mathbb{U}$ such that 
$$\dgh\left(X,Y\right)=\dH^\mathbb{U}\left(\varphi_X\left(X\right),\varphi_Y\left(Y\right)\right). $$
\end{lemma}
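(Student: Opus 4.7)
The plan is to combine \Cref{lm:u-dgh} (which gives existence of \emph{some} pair of embeddings realizing $\dgh$ inside $\mathbb U$) with the strong homogeneity property \Cref{thm:u-compact-homo} to transport the first embedding onto the pre-specified $\varphi_X$. The key observation is that the isometry group of $\mathbb U$ acts transitively on isometric copies of any fixed compact subset, so we can ``rotate'' an optimal configuration inside $\mathbb U$ to make it fit the given $\varphi_X$.

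More concretely, first I would invoke \Cref{lm:u-dgh} to produce isometric embeddings $\psi_X:X\hookrightarrow\mathbb U$ and $\psi_Y:Y\hookrightarrow\mathbb U$ with
\[
\dH^\mathbb U\lc\psi_X\left(X\right),\psi_Y\left(Y\right)\rc=\dgh\left(X,Y\right).
\]
Since $\varphi_X(X)$ and $\psi_X(X)$ are compact subsets of $\mathbb U$ and both are isometric to $X$, the composition $\psi_X\circ\varphi_X^{-1}:\varphi_X(X)\rightarrow\psi_X(X)$ is a well-defined isometry between compact subsets of $\mathbb U$. Applying \Cref{thm:u-compact-homo}, I extend this isometry to a global isometry $\tilde\varphi:\mathbb U\rightarrow\mathbb U$ satisfying $\tilde\varphi|_{\varphi_X(X)}=\psi_X\circ\varphi_X^{-1}$; in particular $\tilde\varphi\lc\varphi_X(X)\rc=\psi_X(X)$.

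Finally I would define $\varphi_Y\coloneqq\tilde\varphi^{-1}\circ\psi_Y:Y\hookrightarrow\mathbb U$, which is an isometric embedding as a composition of isometric embeddings. Since $\tilde\varphi^{-1}$ is a global isometry of $\mathbb U$, it preserves the Hausdorff distance between compact subsets (by \Cref{lm:dH under embedding} applied to $\tilde\varphi^{-1}$), so
\[
\dH^\mathbb U\lc\varphi_X(X),\varphi_Y(Y)\rc=\dH^\mathbb U\lc\tilde\varphi^{-1}(\psi_X(X)),\tilde\varphi^{-1}(\psi_Y(Y))\rc=\dH^\mathbb U\lc\psi_X(X),\psi_Y(Y)\rc=\dgh(X,Y),
\]
which is the desired equality.

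There is essentially no real obstacle here: once \Cref{lm:u-dgh} and \Cref{thm:u-compact-homo} are in hand, the argument is a bookkeeping exercise tracking which isometry maps which set where. The only subtle point worth double-checking is that \Cref{thm:u-compact-homo} applies with the correct orientation (i.e., that we really can match $\varphi_X(X)$ to $\psi_X(X)$ and not vice versa), which is immediate since the statement is symmetric and produces a bijective isometry of $\mathbb U$, so the inverse $\tilde\varphi^{-1}$ is also a global isometry and the Hausdorff distance is unchanged.
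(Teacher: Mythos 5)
Your proof is correct and takes essentially the same route as the paper: both combine \Cref{lm:u-dgh} with the compact homogeneity of $\mathbb{U}$ (\Cref{thm:u-compact-homo}) to transport an optimal pair of embeddings onto the prescribed $\varphi_X$, then use the fact that a global isometry preserves Hausdorff distance. The only cosmetic difference is that you extend the isometry in the direction $\varphi_X(X)\rightarrow\psi_X(X)$ and push $\psi_Y$ through $\tilde\varphi^{-1}$, whereas the paper extends $\psi_X(X)\rightarrow\varphi_X(X)$ and composes directly; these are the same argument.
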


\begin{proof}
By \Cref{lm:u-dgh}, there exist isometric embeddings $\psi_X:X\hookrightarrow\mathbb{U}$ and $\psi_Y:Y\hookrightarrow\mathbb{U}$ such that $\dgh\left(X,Y\right)=\dH^\mathbb{U}\left(\psi_X\left(X\right),\psi_Y\left(Y\right)\right).$ Now, both $\varphi_X\left(X\right)$ and $\psi_X\left(X\right)$ are compact subsets of $\mathbb{U}$ and $\tau\coloneqq\varphi_X\circ\psi_X^{-1}:\psi_X\left(X\right)\rightarrow\varphi_X\left(X\right)$ is an isometry. By \Cref{thm:u-compact-homo}, there exists an isometry $\tilde{\tau}:\mathbb{U}\rightarrow\mathbb{U}$ such that $\tilde{\tau}|_{\psi_X\left(X\right)}=\tau$. Let $\varphi_Y\coloneqq\tilde{\tau}|_{\psi_Y(Y)}\circ\psi_Y:Y\rightarrow\mathbb{U}$. It is clear that $\varphi_Y$ is an isometric embedding and thus
$$\dH^\mathbb{U}\left(\varphi_X\left(X\right),\varphi_Y\left(Y\right)\right)= \dH^\mathbb{U}\left(\tilde{\tau}^{-1}\circ\varphi_X\left(X\right),\tilde{\tau}^{-1}\circ\varphi_Y\left(Y\right)\right)=\dH^\mathbb{U}\left(\psi_X\left(X\right),\psi_Y\left(Y\right)\right)=\dgh\left(X,Y\right).$$
\end{proof}

See \Cref{fig:u-dgh} for an illustration of the proof for \Cref{lm:u-dgh-any}.

\begin{figure}[htb]
	\centering		\includegraphics[width=0.8\textwidth]{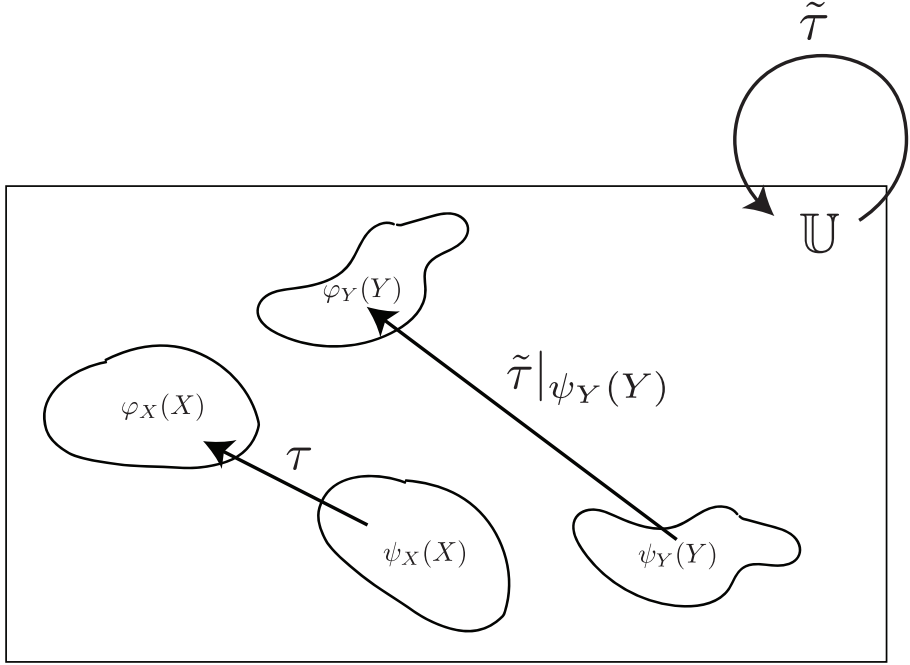}
	\caption{\textbf{Illustration of the proof of \Cref{lm:u-dgh-any}}} \label{fig:u-dgh}
\end{figure}

\Cref{lm:u-dgh-any} then leads us to the following key observation which is instrumental in proving \Cref{thm:main-h-realizable}.

\begin{lemma}
For any Gromov-Hausdorff geodesic $\gamma:[0,1]\rightarrow \mathcal{M}$, let $\rho\coloneqq \dgh\left(\gamma\left(0\right),\gamma\left(1\right)\right)$. Then, for any finite sequence $0\leq t_0<t_1<\ldots<t_n\leq 1$, there exist for all $i=0,\ldots,n$ isometric embeddings $\varphi_i:\gamma\left(t_i\right)\hookrightarrow \mathbb{U}$ such that 
$$\dH^\mathbb{U}\big(\varphi_i\left(\gamma\left(t_i\right)\right),\varphi_j\left(\gamma\left(t_j\right)\right)\big)=|t_i-t_j|\,\rho,\quad\forall0\leq i,j\leq n.$$
\end{lemma}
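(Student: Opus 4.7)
The plan is to prove the lemma by a sequential construction of the embeddings $\varphi_0,\ldots,\varphi_n$, applying \Cref{lm:u-dgh-any} to consecutive pairs, and then leveraging the geodesic property of $\gamma$ together with the triangle inequality to automatically upgrade ``correct distance between adjacent embeddings'' to ``correct distance between all pairs.''

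The construction is as follows. First I would pick any isometric embedding $\varphi_0:\gamma(t_0)\hookrightarrow\mathbb{U}$, which exists by the universality property of $\mathbb{U}$ (\Cref{thm:urysohn}). Then, inductively, given $\varphi_{k-1}:\gamma(t_{k-1})\hookrightarrow\mathbb{U}$, I would apply \Cref{lm:u-dgh-any} with $X=\gamma(t_{k-1})$ and $Y=\gamma(t_k)$, using $\varphi_{k-1}$ as the already-fixed embedding of $X$, to produce an isometric embedding $\varphi_k:\gamma(t_k)\hookrightarrow\mathbb{U}$ satisfying
\[\dH^{\mathbb{U}}\bigl(\varphi_{k-1}(\gamma(t_{k-1})),\varphi_k(\gamma(t_k))\bigr)=\dgh(\gamma(t_{k-1}),\gamma(t_k))=(t_k-t_{k-1})\rho,\]
where the last equality uses that $\gamma$ is a $\dgh$-geodesic.

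Now the key observation is that the required equality for arbitrary pairs $i<j$ follows from a triangle-inequality sandwich. By telescoping along the chain $\varphi_i,\varphi_{i+1},\ldots,\varphi_j$, I get
\[\dH^{\mathbb{U}}\bigl(\varphi_i(\gamma(t_i)),\varphi_j(\gamma(t_j))\bigr)\leq\sum_{k=i+1}^{j}\dH^{\mathbb{U}}\bigl(\varphi_{k-1}(\gamma(t_{k-1})),\varphi_k(\gamma(t_k))\bigr)=\sum_{k=i+1}^{j}(t_k-t_{k-1})\rho=(t_j-t_i)\rho.\]
Conversely, since $\dgh$ is defined as an infimum of Hausdorff distances over all ambient spaces (\Cref{def:dGH}), the Hausdorff distance in $\mathbb{U}$ is always an upper bound for $\dgh$ of the preimages, which combined with the geodesic property yields
\[\dH^{\mathbb{U}}\bigl(\varphi_i(\gamma(t_i)),\varphi_j(\gamma(t_j))\bigr)\geq\dgh(\gamma(t_i),\gamma(t_j))=(t_j-t_i)\rho.\]
Sandwiching the two bounds gives the required equality.

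There is no substantive obstacle in this argument; the entire lemma is essentially a corollary of \Cref{lm:u-dgh-any} plus the observation that the geodesic hypothesis makes the triangle inequality tight. The mildly delicate point to flag is that \Cref{lm:u-dgh-any} only lets us optimize one new embedding against one previously fixed embedding at a time, so a priori it is not obvious that a single chain of optimizations along consecutive indices suffices to realize all $\binom{n+1}{2}$ pairwise distances simultaneously; the geodesic structure of $\gamma$ is precisely what resolves this, and is the only place where the hypothesis is used.
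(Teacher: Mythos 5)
Your proof is correct and follows essentially the same route as the paper's: both construct the embeddings one at a time via \Cref{lm:u-dgh-any} applied to consecutive pairs, and both then recover all pairwise distances by sandwiching the triangle-inequality upper bound against the lower bound $\dH^{\mathbb{U}}\geq\dgh$ supplied by the geodesic property. The only difference is organizational (you telescope along the whole chain at the end, while the paper folds this into an induction on $n$), and your closing remark correctly identifies the one delicate point and how the geodesic hypothesis resolves it.
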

\begin{proof}
We prove the lemma by induction on $n$. When $n=0$, the statement holds true trivially. Now, suppose the statement holds for $n\geq 0$. Consider a sequence $0\leq t_0<t_1<\ldots<t_n<t_{n+1}\leq 1$. By the induction assumption, there exist $\varphi_i:\gamma\left(t_i\right)\hookrightarrow\mathbb{U}$ for $0\leq i\leq n$ such that 
\[\dH^\mathbb{U}\left(\varphi_i\left(\gamma\left(t_i\right)\right),\varphi_j\left(\gamma\left(t_j\right)\right)\right)=|t_i-t_j|\rho,\,\forall 0\leq i,j\leq n.\]
By \Cref{lm:u-dgh-any}, there exists an isometric embedding $\varphi_{n+1}:\gamma\left(t_{n+1}\right)\hookrightarrow\mathbb{U}$ such that \[\dH^\mathbb{U}\big(\varphi_n\left(\gamma\left(t_n\right)\right),\varphi_{n+1}\left(\gamma\left(t_{n+1}\right)\right)\big)=|t_n-t_{n+1}|\rho.\]
Then, for any $i<n$, we have
\begin{align*}
    \dH^\mathbb{U}\big(\varphi_i\left(\gamma\left(t_i\right)\right),\varphi_{n+1}\left(\gamma\left(t_{n+1}\right)\right)\big)&\leq \dH^\mathbb{U}\big(\varphi_i\left(\gamma\left(t_i\right)\right),\varphi_{n}\left(\gamma\left(t_{n}\right)\right)\big)+\dH^\mathbb{U}\big(\varphi_n\left(\gamma\left(t_n\right)\right),\varphi_{n+1}\left(\gamma\left(t_{n+1}\right)\right)\big)\\
    &\leq \left(t_n-t_i\right)\rho+\left(t_{n+1}-t_n\right)\rho\\
    &=\left(t_{n+1}-t_i\right)\rho.
\end{align*}
Since $\dH^\mathbb{U}\big(\varphi_i\left(\gamma\left(t_i\right)\right),\varphi_{n+1}\left(\gamma\left(t_{n+1}\right)\right)\big)\geq \dgh\big(\varphi_i\left(\gamma\left(t_i\right)\right),\varphi_{n+1}\left(\gamma\left(t_{n+1}\right)\right)\big)=\left(t_{n+1}-t_i\right)\rho$, we have that $\dH^\mathbb{U}\big(\varphi_i\left(\gamma\left(t_i\right)\right),\varphi_{n+1}\left(\gamma\left(t_{n+1}\right)\right)\big)=\left(t_{n+1}-t_i\right)\rho$ for all $i<n$. This concludes the induction step.
\end{proof}

\begin{coro}\label{coro:finite-seq-h-dgh}
Given any Gromov-Hausdorff geodesic $\gamma:[0,1]\rightarrow \mathcal{M}$, we let $\rho\coloneqq \dgh\left(\gamma\left(0\right),\gamma\left(1\right)\right)$. Then, for any finite sequence $0\leq t_0<t_1<\ldots<t_n\leq 1$, there exist $X\in \ms$ and isometric embeddings $\varphi_i:\gamma\left(t_i\right)\hookrightarrow X$ for $i=0,\ldots,n$ such that
$$\dH^X\big(\varphi_i\left(\gamma\left(t_i\right)\right),\varphi_j\left(\gamma\left(t_j\right)\right)\big)=|t_i-t_j|\,\rho,\quad\forall0\leq i,j\leq n.$$
\end{coro}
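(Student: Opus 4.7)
The plan is to derive this corollary as a direct consequence of the preceding lemma by compactifying the ambient space. The preceding lemma already produces isometric embeddings $\varphi_i:\gamma(t_i)\hookrightarrow\mathbb{U}$ satisfying the required Hausdorff-distance equalities, but $\mathbb{U}$ is not compact and hence is not an element of $\mathcal{M}$. The only thing missing, therefore, is to replace $\mathbb{U}$ with a compact ambient space without disturbing the embeddings or the pairwise Hausdorff distances.

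First I would invoke the preceding lemma to obtain isometric embeddings $\varphi_i:\gamma(t_i)\hookrightarrow\mathbb{U}$ for $i=0,\dots,n$ with
\[
\dH^{\mathbb{U}}\bigl(\varphi_i(\gamma(t_i)),\varphi_j(\gamma(t_j))\bigr)=|t_i-t_j|\,\rho\quad\text{for all }0\le i,j\le n.
\]
Next I would set
\[
X\coloneqq\bigcup_{i=0}^n\varphi_i(\gamma(t_i))\subseteq\mathbb{U},
\]
equipped with the restriction of $d_\mathbb{U}$. Since each $\gamma(t_i)$ is compact and isometric embeddings preserve compactness, each $\varphi_i(\gamma(t_i))$ is a compact subset of $\mathbb{U}$; a finite union of compact sets is compact, so $X\in\mathcal{M}$. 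Moreover, each $\varphi_i$ factors through $X$, so we may regard $\varphi_i$ as an isometric embedding $\varphi_i:\gamma(t_i)\hookrightarrow X$.

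Finally, I would apply \Cref{lm:dH under embedding} to the inclusion $X\hookrightarrow\mathbb{U}$: it gives $\dH^X(A,B)=\dH^{\mathbb{U}}(A,B)$ for any nonempty closed subsets $A,B\subseteq X$, and in particular
\[
\dH^X\bigl(\varphi_i(\gamma(t_i)),\varphi_j(\gamma(t_j))\bigr)=\dH^{\mathbb{U}}\bigl(\varphi_i(\gamma(t_i)),\varphi_j(\gamma(t_j))\bigr)=|t_i-t_j|\,\rho
\]
for all $0\le i,j\le n$, which is the conclusion of the corollary. There is no substantive obstacle here; the entire content is the passage from the universal ambient space $\mathbb{U}$ to the compact ambient space $X$, and this is essentially a bookkeeping step enabled by \Cref{lm:dH under embedding}.
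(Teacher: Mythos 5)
Your proposal is correct and coincides with the paper's own proof: both invoke the preceding lemma to get embeddings into $\mathbb{U}$, take $X$ to be the finite (hence compact) union of the images, and observe that the pairwise Hausdorff distances are unchanged when computed in $X$ rather than $\mathbb{U}$. No further comment is needed.
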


\begin{proof}
By the previous lemma, there exist isometric embeddings $\varphi_i:\gamma\left(t_i\right)\hookrightarrow \mathbb{U}$ such that 
$$\dH^\mathbb{U}\big(\varphi_i\left(\gamma\left(t_i\right)\right),\varphi_j\left(\gamma\left(t_j\right)\right)\big)=|t_i-t_j|\rho.$$
Let $X\coloneqq\cup_{i=0}^n\varphi_i\left(\gamma_i\left(t_i\right)\right)\subseteq\mathbb{U}$. Then, since each $\varphi_i\left(\gamma_i\left(t_i\right)\right)$ is compact, we have that $X$ is compact and thus
$$\dH^X\big(\varphi_i\left(\gamma\left(t_i\right)\right),\varphi_j\left(\gamma\left(t_j\right)\right)\big)=\dH^\mathbb{U}\big(\varphi_i\left(\gamma\left(t_i\right)\right),\varphi_j\left(\gamma\left(t_j\right)\right)\big)=|t_i-t_j|\rho,\,\forall 0\leq i,j\leq n.$$
\end{proof}

\paragraph{Gromov-Wasserstein counterparts.} All the previous results in this section have counterparts for the Gromov-Wasserstein distance. We will list the most useful ones for later use and we delay the proofs to the end of this section. {Recall from \Cref{sec:gw-detail} that we use script letters such as $\mathcal{X}$ to denote metric measure spaces $\mathcal{X}=(X,d_X,\mu_X)$.}

\begin{lemma}\label{lm:u-dgw}
For any $p\in[1,\infty)$ and any $\mathcal{X},\mathcal{Y}\in\ws$, there exist isometric embeddings $\varphi_X:X\hookrightarrow \mathbb{U}$ and $\varphi_Y:Y\hookrightarrow \mathbb{U}$ such that 
$$\dgws{p}\left(\mathcal{X},\mathcal{Y}\right)=\dW{p}^\mathbb{U}\left((\varphi_X)_\#\mu_X,(\varphi_Y)_\#\mu_Y\right). $$
\end{lemma}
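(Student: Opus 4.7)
The plan is to mirror the strategy used for \Cref{lm:u-dgh}, replacing the Hausdorff realizability step by its Wasserstein counterpart and then transporting everything into $\mathbb{U}$ via universality.

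First I would invoke \Cref{lm:dgw_w-realizable}: given $\mathcal{X},\mathcal{Y}\in\ws$, there exist a compact metric space $Z$ and isometric embeddings $\psi_X:X\hookrightarrow Z$ and $\psi_Y:Y\hookrightarrow Z$ such that
\[\dgws{p}\left(\mathcal{X},\mathcal{Y}\right)=\dW{p}^Z\left((\psi_X)_\#\mu_X,(\psi_Y)_\#\mu_Y\right).\]
Since $Z$ is compact and therefore separable, the universality property in \Cref{thm:urysohn} supplies an isometric embedding $\varphi:Z\hookrightarrow\mathbb{U}$. I would then define $\varphi_X\coloneqq\varphi\circ\psi_X$ and $\varphi_Y\coloneqq\varphi\circ\psi_Y$, which are isometric embeddings of $X$ and $Y$ into $\mathbb{U}$.

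The remaining step is the Wasserstein analogue of \Cref{lm:dH under embedding}: for any isometric embedding $\iota:W\hookrightarrow W'$ between metric spaces and any $\alpha,\beta\in\mathcal{P}_p(W)$, one has $\dW{p}^W(\alpha,\beta)=\dW{p}^{W'}(\iota_\#\alpha,\iota_\#\beta)$. This is straightforward: pushforward under $\iota$ gives a bijection between $\mathcal{C}(\alpha,\beta)$ and $\mathcal{C}(\iota_\#\alpha,\iota_\#\beta)$, because any coupling of $\iota_\#\alpha$ and $\iota_\#\beta$ must be concentrated on $\iota(W)\times\iota(W)$ (its marginals are supported there), and the cost functional $d^p$ is preserved since $\iota$ is isometric. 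Applying this fact to $\iota=\varphi:Z\hookrightarrow\mathbb{U}$ with $\alpha=(\psi_X)_\#\mu_X$ and $\beta=(\psi_Y)_\#\mu_Y$, and noting that $(\varphi_X)_\#\mu_X=\varphi_\#(\psi_X)_\#\mu_X$ and analogously for $Y$, yields
\[\dgws{p}\left(\mathcal{X},\mathcal{Y}\right)=\dW{p}^Z\left((\psi_X)_\#\mu_X,(\psi_Y)_\#\mu_Y\right)=\dW{p}^{\mathbb{U}}\left((\varphi_X)_\#\mu_X,(\varphi_Y)_\#\mu_Y\right),\]
which is what we want.

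There is essentially no obstacle here: every ingredient has already been established in the paper or is a well-known fact about optimal transport. The only item not made explicit earlier is the invariance of $d_{\mathcal{W},p}$ under isometric embeddings, and this is an easy bijection-of-couplings argument, completely analogous to (and in fact slightly easier than) the argument used for $\dH$ in \Cref{lm:dH under embedding}. The restriction $p<\infty$ enters only because \Cref{lm:dgw_w-realizable} is stated for that range; the universality and embedding-invariance steps are valid for any $p$.
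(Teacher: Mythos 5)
Your proposal is correct and follows essentially the same route as the paper: invoke \Cref{lm:dgw_w-realizable} to get a compact realizing space $Z$, embed $Z$ into $\mathbb{U}$ by universality, and conclude via the invariance of $\dW{p}$ under isometric embeddings (which the paper establishes separately in \Cref{lm:proof-lm-1} in the appendix). No gaps.
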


\begin{lemma}\label{lm:u-dgw-any}
For any $p\in[1,\infty)$ and any $\mathcal{X},\mathcal{Y}\in\ws$, let $\varphi_X:X\hookrightarrow\mathbb{U}$ be an isometric embedding. Then, there exists an isometric embedding $\varphi_Y:Y\hookrightarrow \mathbb{U}$ such that 
$$\dgws{p}\left(\mathcal{X},\mathcal{Y}\right)=\dW{p}^\mathbb{U}\left((\varphi_X)_\#\mu_X,(\varphi_Y)_\#\mu_Y\right). $$
\end{lemma}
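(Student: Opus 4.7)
The plan is to mimic the proof of Lemma \ref{lm:u-dgh-any} almost verbatim, replacing the Hausdorff distance with the $\ell^p$-Wasserstein distance and using the fact that pushforward under a global isometry of $\mathbb{U}$ preserves Wasserstein distance. The only nontrivial ingredients are Lemma \ref{lm:u-dgw} (the Wasserstein analogue of Lemma \ref{lm:u-dgh}) and the compact homogeneity of $\mathbb{U}$ (Theorem \ref{thm:u-compact-homo}).

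First I would invoke Lemma \ref{lm:u-dgw} to obtain isometric embeddings $\psi_X:X\hookrightarrow\mathbb{U}$ and $\psi_Y:Y\hookrightarrow\mathbb{U}$ satisfying
\[
\dgws{p}(\mathcal{X},\mathcal{Y})=\dW{p}^{\mathbb{U}}\bigl((\psi_X)_\#\mu_X,(\psi_Y)_\#\mu_Y\bigr).
\]
Since $X$ is compact, both $\psi_X(X)$ and $\varphi_X(X)$ are compact subsets of $\mathbb{U}$, and the composition $\tau\coloneqq\varphi_X\circ\psi_X^{-1}:\psi_X(X)\to\varphi_X(X)$ is an isometry between them. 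By Theorem \ref{thm:u-compact-homo}, $\tau$ extends to a global isometry $\tilde\tau:\mathbb{U}\to\mathbb{U}$ with $\tilde\tau|_{\psi_X(X)}=\tau$.

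Next I would set $\varphi_Y\coloneqq \tilde\tau\circ\psi_Y$, which is clearly an isometric embedding $Y\hookrightarrow\mathbb{U}$. By construction $\tilde\tau\circ\psi_X=\varphi_X$ on $X$, so
\[
(\varphi_X)_\#\mu_X=\tilde\tau_\#\bigl((\psi_X)_\#\mu_X\bigr),\qquad (\varphi_Y)_\#\mu_Y=\tilde\tau_\#\bigl((\psi_Y)_\#\mu_Y\bigr).
\]
The Wasserstein distance is invariant under pushforward by a global isometry (this is immediate from Definition \ref{def:p-w-dist}, since an isometry induces a bijection on couplings that preserves the cost functional), hence
\[
\dW{p}^{\mathbb{U}}\bigl((\varphi_X)_\#\mu_X,(\varphi_Y)_\#\mu_Y\bigr)=\dW{p}^{\mathbb{U}}\bigl((\psi_X)_\#\mu_X,(\psi_Y)_\#\mu_Y\bigr)=\dgws{p}(\mathcal{X},\mathcal{Y}),
\]
which is the desired equality.

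The only step requiring care is the isometry-invariance of $\dW{p}^{\mathbb{U}}$ under $\tilde\tau_\#$, but this is a standard and entirely routine fact and does not constitute a real obstacle; the argument is otherwise a direct translation of the Hausdorff proof, with Lemma \ref{lm:u-dgw} playing the role of Lemma \ref{lm:u-dgh}.
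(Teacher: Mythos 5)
Your proposal is correct and follows essentially the same route as the paper's own proof: invoke Lemma \ref{lm:u-dgw} to get embeddings $\psi_X,\psi_Y$ realizing $\dgws{p}$, extend $\varphi_X\circ\psi_X^{-1}$ to a global isometry $\tilde\tau$ of $\mathbb{U}$ via Theorem \ref{thm:u-compact-homo}, set $\varphi_Y=\tilde\tau\circ\psi_Y$, and conclude by isometry-invariance of $\dW{p}^{\mathbb{U}}$. The paper writes the last step using $(\tilde\tau^{-1})_\#$ rather than $\tilde\tau_\#$, but this is the same routine invariance you identify.
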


\begin{lemma}\label{lm:pgw-geo-finite-embedding-u}
For any $p\in[1,\infty)$, let $\gamma:[0,1]\rightarrow \ws$ be an $\ell^p$-Gromov-Wasserstein geodesic. Let $\rho\coloneqq \dgws{p}\left(\gamma\left(0\right),\gamma\left(1\right)\right)$ and write $\gamma(t)\coloneqq(X_t,d_t,\mu_t)$ for each $t\in[0,1]$. Then, for any finite sequence $0\leq t_0<t_1<\ldots<t_n\leq 1$, there exist isometric embeddings $\varphi_i:X_{t_i}\hookrightarrow \mathbb{U}$ such that
$$\dW{p}^\mathbb{U}\left((\varphi_i)_\#\left(\mu_{t_i}\right),(\varphi_j)_\#\left(\mu_{t_j}\right)\right)=|t_i-t_j|\,\rho,\quad\forall0\leq i,j\leq n.$$
\end{lemma}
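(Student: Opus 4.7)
The plan is to mirror the two-step argument used for the Gromov-Hausdorff case: first prove the statement with $\mathbb{U}$ in place of $X$ (as in the unnamed lemma preceding \Cref{coro:finite-seq-h-dgh}), then note that the finite union of compact images lives inside $\mathbb U$. Since the statement already targets $\mathbb U$ directly, only the first step is needed, and it is a straightforward induction on $n$ using \Cref{lm:u-dgw-any} in place of \Cref{lm:u-dgh-any}.

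For the induction, the base case $n=0$ is immediate: pick any isometric embedding $\varphi_0:X_{t_0}\hookrightarrow\mathbb U$ guaranteed by the universality of $\mathbb U$ (\Cref{thm:urysohn}). Assume that for a sequence $0\leq t_0<\cdots<t_n\leq 1$ we have produced isometric embeddings $\varphi_i:X_{t_i}\hookrightarrow\mathbb U$ for $i=0,\dots,n$ satisfying
\[
\dW{p}^{\mathbb U}\bigl((\varphi_i)_\#\mu_{t_i},(\varphi_j)_\#\mu_{t_j}\bigr)=|t_i-t_j|\,\rho\qquad\forall\,0\leq i,j\leq n.
\]
Given $t_{n+1}\in(t_n,1]$, apply \Cref{lm:u-dgw-any} to the pair $(\gamma(t_n),\gamma(t_{n+1}))$ with the fixed embedding $\varphi_n$: this yields an isometric embedding $\varphi_{n+1}:X_{t_{n+1}}\hookrightarrow\mathbb U$ with
\[
\dW p^{\mathbb U}\bigl((\varphi_n)_\#\mu_{t_n},(\varphi_{n+1})_\#\mu_{t_{n+1}}\bigr)=\dgws p\bigl(\gamma(t_n),\gamma(t_{n+1})\bigr)=(t_{n+1}-t_n)\,\rho,
\]
where the last equality uses that $\gamma$ is a $\dgws p$ geodesic.

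It remains to check that the new embedding is compatible with all previous ones. For any $i<n$, the triangle inequality for $\dW p^{\mathbb U}$ together with the inductive hypothesis and the displayed identity gives
\[
\dW p^{\mathbb U}\bigl((\varphi_i)_\#\mu_{t_i},(\varphi_{n+1})_\#\mu_{t_{n+1}}\bigr)\leq (t_n-t_i)\rho+(t_{n+1}-t_n)\rho=(t_{n+1}-t_i)\rho.
\]
The reverse inequality is automatic: by \Cref{def:dGW} and the geodesic property of $\gamma$,
\[
\dW p^{\mathbb U}\bigl((\varphi_i)_\#\mu_{t_i},(\varphi_{n+1})_\#\mu_{t_{n+1}}\bigr)\geq \dgws p\bigl(\gamma(t_i),\gamma(t_{n+1})\bigr)=(t_{n+1}-t_i)\rho.
\]
Hence equality holds, completing the induction. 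The main (modest) obstacle is really the existence input \Cref{lm:u-dgw-any}, namely the freedom to \emph{keep} a previously chosen embedding into $\mathbb U$ when realizing $\dgws p$; once that is in hand, the argument is a verbatim translation of the proof of \Cref{coro:finite-seq-h-dgh} with $\dH^{\mathbb U}$ replaced by $\dW p^{\mathbb U}$.
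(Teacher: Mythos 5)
Your proposal is correct and follows essentially the same route as the paper: induction on $n$, invoking \Cref{lm:u-dgw-any} to extend the family by one embedding while keeping $\varphi_n$ fixed, then combining the triangle inequality for $\dW{p}^{\mathbb U}$ with the lower bound $\dW{p}^{\mathbb U}\geq\dgws{p}$ to force equality for all $i<n$. No gaps.
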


\begin{coro}\label{coro:finite-seq-w-dgw}
Assume the same conditions as in \Cref{lm:pgw-geo-finite-embedding-u}. Then, for any finite sequence $0\leq t_0<t_1<\ldots<t_n\leq 1$, there exist $X\in \ms$ and isometric embeddings $\varphi_i:X_{t_i}\hookrightarrow X$ for $i=0,\ldots,n$ such that 
$$\dW{p}^X\left((\varphi_i)_\#\mu_{t_i},(\varphi_j)_\#\mu_{t_j}\right)=|t_i-t_j|\,\rho,\quad\forall0\leq i,j\leq n.$$
\end{coro}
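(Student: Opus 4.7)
The plan is to mirror the proof of \Cref{coro:finite-seq-h-dgh} for the Hausdorff case, replacing the Hausdorff distance with the $\ell^p$-Wasserstein distance and invoking \Cref{lm:pgw-geo-finite-embedding-u} in place of the corresponding finite-embedding lemma for $\dH^\mathbb{U}$. First I would apply \Cref{lm:pgw-geo-finite-embedding-u} to the given finite sequence $0\leq t_0<t_1<\ldots<t_n\leq 1$ to produce isometric embeddings $\varphi_i:X_{t_i}\hookrightarrow \mathbb{U}$ such that
\[\dW{p}^\mathbb{U}\left((\varphi_i)_\#\mu_{t_i},(\varphi_j)_\#\mu_{t_j}\right)=|t_i-t_j|\,\rho\]
for all $0\leq i,j\leq n$.

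Next, I would set $X\coloneqq\bigcup_{i=0}^n \varphi_i(X_{t_i})\subseteq \mathbb{U}$ and equip it with the subspace metric inherited from $\mathbb{U}$. Each $\varphi_i(X_{t_i})$ is compact (as the image of the compact space $X_{t_i}$ under the continuous map $\varphi_i$), so $X$ is compact as a finite union of compact sets, hence $X\in\ms$. Each $\varphi_i$ factors through $X$ as an isometric embedding $\varphi_i:X_{t_i}\hookrightarrow X$, and the pushforward measures $(\varphi_i)_\#\mu_{t_i}$ are supported on $\varphi_i(X_{t_i})\subseteq X$.

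The only thing that remains to check is the Wasserstein analogue of \Cref{lm:dH under embedding}: namely that
\[\dW{p}^X\left((\varphi_i)_\#\mu_{t_i},(\varphi_j)_\#\mu_{t_j}\right)=\dW{p}^\mathbb{U}\left((\varphi_i)_\#\mu_{t_i},(\varphi_j)_\#\mu_{t_j}\right).\]
Since both measures are supported on $X$, any coupling $\mu\in\mathcal{C}((\varphi_i)_\#\mu_{t_i},(\varphi_j)_\#\mu_{t_j})$ viewed on $\mathbb{U}\times\mathbb{U}$ is automatically concentrated on $\varphi_i(X_{t_i})\times\varphi_j(X_{t_j})\subseteq X\times X$, and conversely every coupling on $X\times X$ extends by zero to a coupling on $\mathbb{U}\times\mathbb{U}$. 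Because $d_\mathbb{U}$ restricted to $X\times X$ agrees with $d_X$, the associated cost functional $\int d^p\, d\mu$ takes identical values in both settings, and so the two infima defining the Wasserstein distances coincide.

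I do not expect any real obstacle here: the argument is essentially the verbatim analogue of \Cref{coro:finite-seq-h-dgh}, and the only new ingredient is the straightforward observation about isometric invariance of the Wasserstein distance when the measures live on the subspace. If anything, the mildest subtlety is phrasing the preservation of couplings cleanly, which is standard from optimal transport.
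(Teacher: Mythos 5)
Your proposal is correct and follows essentially the same route as the paper: apply \Cref{lm:pgw-geo-finite-embedding-u}, take $X$ to be the (compact) union of the images in $\mathbb{U}$, and observe that the $\ell^p$-Wasserstein distance is unchanged when computed in the subspace $X$ versus in $\mathbb{U}$. The only difference is that the paper states this last invariance without comment (it is the content of \Cref{lm:proof-lm-1} in the appendix on isometric embeddings of Wasserstein hyperspaces), whereas you spell out the coupling argument explicitly, which is fine.
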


\paragraph{Relegated proofs.}
\begin{proof}[Proof of \Cref{lm:u-dgw}]
By \Cref{lm:dgw_w-realizable}, there exist $Z\in\ms$ and isometric embeddings $\psi_X:X\hookrightarrow Z$ and $\psi_Y:Y\hookrightarrow Z$ such that $\dW{p}^Z\left((\psi_X)_\#\mu_X,(\psi_Y)_\#\mu_Y\right)=\dgws{p}\left(\mathcal{X},\mathcal{Y}\right)$. Then, since $Z$ is compact, there exists an isometric embedding $\varphi:Z\hookrightarrow\mathbb{U}$ (cf. \Cref{thm:urysohn}). Let $\varphi_X=\varphi|_{\psi_X(X)}\circ\psi_X$ and $\varphi_Y=\varphi|_{\psi_Y(Y)}\circ\psi_Y$. Then,
\begin{align*}
    \dgws{p}\left(\mathcal{X},\mathcal{Y}\right)&= \dW{p}^Z\left((\psi_X)_\#\mu_X,(\psi_Y)_\#\mu_Y\right)\\
    &=\dW{p}^{\varphi\left(Z\right)}\left((\varphi_X)_\#\mu_X,(\varphi_Y)_\#\mu_Y\right)\\
    &=\dW{p}^\mathbb{U}\left((\varphi_X)_\#\mu_X,(\varphi_Y)_\#\mu_Y\right).
\end{align*}
\end{proof}

\begin{proof}[Proof of \Cref{lm:u-dgw-any}]
By \Cref{lm:u-dgw}, there exist isometric embeddings $\psi_X:X\hookrightarrow\mathbb{U}$ and $\psi_Y:Y\hookrightarrow\mathbb{U}$ such that $\dgws{p}\left(\mathcal{X},\mathcal{Y}\right)=\dW{p}^\mathbb{U}\left((\psi_X)_\#\mu_X,(\psi_Y)_\#\mu_Y\right).$ Now, both $\varphi_X\left(X\right)$ and $\psi_X\left(X\right)$ are compact subsets of $\mathbb{U}$ and $\tau\coloneqq\varphi_X\circ\psi_X^{-1}:\psi_X\left(X\right)\rightarrow\varphi_X\left(X\right)$ is an isometry. By \Cref{thm:u-compact-homo}, there exists an isometry $\tilde{\tau}:\mathbb{U}\rightarrow\mathbb{U}$ such that $\tilde{\tau}|_{\psi_X\left(X\right)}=\tau$. Let $\varphi_Y\coloneqq\tilde{\tau}\circ\psi_Y:Y\rightarrow\mathbb{U}$. It is clear that $\varphi_Y$ is an isometric embedding and thus
\begin{align*}
    \dW{p}^\mathbb{U}\left((\varphi_X)_\#\mu_X,(\varphi_Y)_\#\mu_Y\right)&= \dW{p}^\mathbb{U}\left((\tilde{\tau}^{-1})_\#\circ(\varphi_X)_\#\mu_X,(\tilde{\tau}^{-1})_\#\circ(\varphi_Y)_\#\mu_Y\right)\\
    &=\dW{p}^\mathbb{U}\left((\psi_X)_\#\mu_X,(\psi_Y)_\#\mu_Y\right)\\
    &=\dgws{p}\left(X,Y\right).
\end{align*}
\end{proof}

\begin{proof}[Proof of \Cref{lm:pgw-geo-finite-embedding-u}]
We prove the lemma by induction on $n$. When $n=0$, the statement holds true trivially. Now, suppose the statement holds for $n\geq 0$. Consider a sequence $0\leq t_0<t_1<\ldots<t_n<t_{n+1}\leq 1$. By the induction assumption, there exist isometric embeddings $\varphi_i:X_{t_i}\hookrightarrow\mathbb{U}$ for $0\leq i\leq n$ such that $\dW{p}^\mathbb{U}\left((\varphi_i)_\#\mu_{t_i},(\varphi_j)_\#\mu_{t_j}\right)=|t_i-t_j|\rho,\,\forall 0\leq i,j\leq n$. By \Cref{lm:u-dgw-any}, there exists an isometric embedding $\varphi_{n+1}:X_{t_{n+1}}\hookrightarrow\mathbb{U}$ such that $\dW{p}^\mathbb{U}\left((\varphi_n)_\#\mu_{t_n},(\varphi_{n+1})_\#\mu_{t_{n+1}}\right)=|t_n-t_{n+1}|\rho.$ Then, for any $i<n$, we have
\begin{align*}
    &\dW{p}^\mathbb{U}\left((\varphi_i)_\#\mu_{t_i},(\varphi_{n+1})_\#\mu_{t_{n+1}}\right)\\
    \leq &\dW{p}^\mathbb{U}\left((\varphi_i)_\#\mu_{t_i},(\varphi_{n})_\#\mu_{t_{n}}\right)+\dW{p}^\mathbb{U}\left((\varphi_n)_\#\mu_{t_n},(\varphi_{n+1})_\#\mu_{t_{n+1}}\right)\\
    \leq &\left(t_n-t_i\right)\rho+\left(t_{n+1}-t_n\right)\rho\\
    =&\left(t_{n+1}-t_i\right)\rho.
\end{align*}
Since $\dW{p}^\mathbb{U}\left((\varphi_i)_\#\mu_{t_i},(\varphi_{n+1})_\#\mu_{t_{n+1}}\right)\geq \dgws{p}\left((\varphi_i)_\#\mu_{t_i},(\varphi_{n+1})_\#\mu_{t_{n+1}}\right)=\left(t_{n+1}-t_i\right)\rho$, we have that $\dW{p}^\mathbb{U}\left((\varphi_i)_\#\mu_{t_i},(\varphi_{n+1})_\#\mu_{t_{n+1}}\right)=\left(t_{n+1}-t_i\right)\rho$ for all $i<n$. This concludes the induction step.
\end{proof}

\begin{proof}[Proof of \Cref{coro:finite-seq-w-dgw}]
By \Cref{lm:pgw-geo-finite-embedding-u}, there exist isometric embeddings $\varphi_i:X_{t_i}\hookrightarrow \mathbb{U}$ for $i=0,\ldots,n$ such that $\dW{p}^X\left((\varphi_i)_\#\mu_{t_i},(\varphi_j)_\#\mu_{t_j}\right)=|t_i-t_j|\rho.$ Let $X\coloneqq\cup_{i=0}^n\varphi_i\left(X_{t_i}\right)\subseteq\mathbb{U}$. Then, since each $\varphi_i\left(X_{t_i}\right)$ is compact, we have that $X$ is compact and thus for all $0\leq i,j\leq n$ we have
$$\dW{p}^X\left((\varphi_i)_\#\mu_{t_i},(\varphi_j)_\#\mu_{t_j}\right)=\dW{p}^\mathbb{U}\left((\varphi_i)_\#\mu_{t_i},(\varphi_j)_\#\mu_{t_j}\right)=|t_i-t_j|\rho.$$
\end{proof}

\section{Hausdorff and Wasserstein-realizable geodesics}\label{sec:real-geo}
In this section, we study both Hausdorff and Wasserstein-realizable geodesics and prove \Cref{thm:main-h-realizable} and \Cref{thm:bdd-geo-w-real}. Both proofs rely on convergence results of Lipschitz curves under certain metric extensors and we first study such convergence results in \Cref{sec:convergence}.
\subsection{Convergence}\label{sec:convergence}
Given a metric space $X$ and a Hausdorff convergent sequence\footnote{A Hausdorff convergent sequence in a metric space $X$ is a sequence of compact subsets of $X$ converging under the Hausdorff distance $\dH^X$.} of subsets $\{A_i\}_{i=0}^\infty$ with limit $A\subseteq X$, then $\lim_{i\rightarrow\infty}\dgh(A_i,A)=0$, since $\dgh(A_i,A)\leq \dH^X(A_i,A)$ for $i=0,\ldots$. Conversely, a Gromov-Hausdorff convergent sequence\footnote{A Gromov-Hausdorff convergent sequence is a sequence of compact metric spaces converging under the Gromov-Hausdorff distance $\dgh$.} of compact metric spaces $\{X_i\}_{i=0}^\infty$ with limit $X\in\ms$ can be realized as a Hausdorff convergent sequence in some ambient space. A similar statement was mentioned in \cite[Chapter 10]{petersen2006riemannian} whose proof operates by passing to a subsequence. We provide a proof for our statement which involves the Urysohn universal metric space (cf. \Cref{sec:urysohn}).

\begin{lemma}\label{lm:ghconv=hconv}
Let $\{X_i\}_{i=0}^\infty$ be a convergent sequence in $(\ms,\dgh)$ with limit $X\in\ms$. Then, there exist a \emph{Polish} metric space $Z$ and isometric embeddings $\varphi:X\hookrightarrow Z$ and $\varphi_i:X_i\hookrightarrow Z$ for $i=0,\ldots$ such that $\lim_{i\rightarrow\infty}\dH^Z(\varphi_i(X_i),\varphi(X))=0$.
\end{lemma}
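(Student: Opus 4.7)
The plan is to use the Urysohn universal metric space $\mathbb{U}$ from Section \ref{sec:urysohn} as the ambient space $Z$. By Theorem \ref{thm:urysohn}, $\mathbb{U}$ is itself Polish, so if we can realize each $\varphi_i(X_i)$ and $\varphi(X)$ as subsets of $\mathbb{U}$ with $\dH^\mathbb{U}\lc\varphi_i(X_i),\varphi(X)\rc \to 0$, we are done.

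First, since $X$ is compact and hence separable, the universality property in Theorem \ref{thm:urysohn} gives an isometric embedding $\varphi \colon X \hookrightarrow \mathbb{U}$. I would fix this $\varphi$ once and for all. Then, for each index $i$, I would apply Lemma \ref{lm:u-dgh-any} with $(X,Y) = (X, X_i)$: this yields an isometric embedding $\varphi_i \colon X_i \hookrightarrow \mathbb{U}$ such that
\[
\dH^\mathbb{U}\lc\varphi_i(X_i),\varphi(X)\rc = \dgh\lc X_i, X\rc.
\]
The crucial point here is that Lemma \ref{lm:u-dgh-any} lets us keep the embedding of $X$ fixed while only adjusting the embedding of each $X_i$, so that all the $\varphi_i(X_i)$ and $\varphi(X)$ live in the \emph{same} copy of $\mathbb{U}$.

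Taking $Z \coloneqq \mathbb{U}$, which is Polish by Theorem \ref{thm:urysohn}, the convergence $\dgh(X_i, X) \to 0$ in the hypothesis immediately translates to $\dH^Z\lc\varphi_i(X_i),\varphi(X)\rc \to 0$, giving the desired conclusion. I do not see any substantive obstacle: the only nontrivial ingredient is the homogeneity/extension Lemma \ref{lm:u-dgh-any}, which has already been established, and the fact that $\mathbb{U}$ is Polish, which is built into its definition. If one wished to produce a smaller $Z$ rather than all of $\mathbb{U}$, one could instead take $Z$ to be the closure in $\mathbb{U}$ of $\varphi(X) \cup \bigcup_{i} \varphi_i(X_i)$, which is still Polish as a closed subspace of a Polish space; but this refinement is not required by the statement.
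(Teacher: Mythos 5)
Your proposal is correct and follows essentially the same route as the paper: take $Z=\mathbb{U}$, fix an embedding $\varphi:X\hookrightarrow\mathbb{U}$ by universality, and invoke \Cref{lm:u-dgh-any} to embed each $X_i$ so that $\dH^\mathbb{U}(\varphi_i(X_i),\varphi(X))=\dgh(X_i,X)\to 0$. No gaps.
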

\begin{proof}
Let $Z=\mathbb{U}$, the Urysohn universal metric space. Then, $Z$ is Polish. By universality (cf. \Cref{thm:urysohn}), there exists an isometric embedding $\varphi:X\hookrightarrow \mathbb{U}$. By \Cref{lm:u-dgh-any}, there exist isometric embeddings $\varphi_i:X_i\hookrightarrow \mathbb{U}$ such that $\dH^\mathbb{U}(\varphi(X),\varphi_i(X_i))=\dgh(X,X_i)$ for $i=0,\ldots$. Therefore, $\lim_{i\rightarrow\infty}\dH^Z(\varphi_i(X_i),\varphi(X))=0$.
\end{proof}

\paragraph{A generalized Arzel\`a-Ascoli theorem.} The version of Arzel\`a-Ascoli theorem in \Cref{thm:AA} requires a fixed range $X$ for all curves $\gamma_i:[0,1]\rightarrow X$. This can be generalized to curves $\gamma_i:[0,1]\rightarrow X_i$ with convergent ranges, i.e., $X_i$ converges (in a suitable sense) to $X\in\ms$ as $i$ approaches $\infty$. 

\begin{theorem}[Generalized Arzel\`a-Ascoli theorem]\label{thm:general-AA}
Let $(Z,d_Z)$ be a complete metric space and let $\{X_i\}_{i=0}^\infty$ be a Hausdorff convergent sequence of compact subsets of $Z$. Let $X\in \mathcal{H}(Z)$ be the limit of $\{X_i\}_{i=0}^\infty$ under $\dH^Z$. Let $\{\gamma_i:[0,1]\rightarrow X_i\}_{i=0}^\infty$ be a sequence of $C$-Lipschitz curves for some $C>0$ fixed. Then, there is a uniformly convergent (in the sense of $d_Z$) subsequence of $\{\gamma_i\}_{i=0}^\infty$ with a $C$-Lipschitz limit $\gamma:[0,1]\rightarrow X$.
\end{theorem}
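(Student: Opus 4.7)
The plan is to reduce the statement to the classical Arzel\`a-Ascoli theorem (\Cref{thm:AA}) by producing a single compact subset of $Z$ that contains the images of all the curves $\gamma_i$, after which the standard diagonal argument applies.

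First I would show that $K \coloneqq X \cup \bigcup_{i=0}^\infty X_i$ has compact closure in $Z$. Fix $\eps>0$. Hausdorff convergence yields $N$ such that $X_i\subseteq X^\eps$ for all $i\geq N$. Since $X$ is compact, $X^\eps$ is totally bounded, and so are the finitely many $X_0,\ldots,X_{N-1}$; therefore $K$ is a finite union of totally bounded sets and itself totally bounded. Since $(Z,d_Z)$ is complete, $\overline{K}$ is compact.

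Next, enumerate a countable dense subset $Q=\{q_1,q_2,\ldots\}\subseteq [0,1]$ containing $0$ and $1$. Using compactness of $\overline{K}$, perform a standard diagonal extraction: successively choose nested subsequences $\{\gamma_{i,k}\}_i$ of $\{\gamma_i\}$ so that $\gamma_{i,k}(q_j)$ converges in $\overline K$ as $i\to\infty$ for every $j\leq k$, and set $\eta_i\coloneqq \gamma_{i,i}$. Then $\gamma(q)\coloneqq \lim_i \eta_i(q)$ exists for every $q\in Q$, and because each $\eta_i$ is $C$-Lipschitz, the limit satisfies $d_Z(\gamma(q),\gamma(q'))\leq C\,|q-q'|$ for all $q,q'\in Q$. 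Consequently $\gamma$ extends uniquely by density to a $C$-Lipschitz map $\gamma\colon [0,1]\to \overline{K}$. Uniform convergence of $\{\eta_i\}$ to $\gamma$ follows from the usual three-epsilon argument: given $\eps>0$, pick a finite $\frac{\eps}{3C}$-net $q_{k_1}',\ldots,q_{k_M}'$ in $Q$; for $i$ large the values $\eta_i(q_{k_j}')$ are within $\eps/3$ of $\gamma(q_{k_j}')$, and then the $C$-Lipschitz property of $\eta_i$ and $\gamma$ together with the triangle inequality give $d_Z(\eta_i(t),\gamma(t))<\eps$ for all $t\in[0,1]$.

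The final step, which is the only place where the generalization beyond the classical theorem really intervenes, is to verify that $\gamma([0,1])\subseteq X$ rather than merely $\subseteq \overline{K}$. Fix $t\in[0,1]$ and $\delta>0$. For $i$ large, both $d_Z(\eta_i(t),\gamma(t))<\delta$ and $\dH^Z(X_{\sigma(i)},X)<\delta$ hold, where $\sigma(i)$ is the index of $\eta_i$ in the original sequence. Since $\eta_i(t)\in X_{\sigma(i)}$, some point of $X$ lies within $\delta$ of $\eta_i(t)$, hence within $2\delta$ of $\gamma(t)$. As $X$ is closed in $Z$ and $\delta$ was arbitrary, $\gamma(t)\in X$, finishing the proof. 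The main obstacle is really just assembling the common ambient space $\overline K$; once that is in hand, the rest is a routine adaptation of the classical argument combined with the closedness of $X$ under $\dH^Z$-convergence.
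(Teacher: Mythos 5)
Your proposal is correct. The skeleton is the same as the paper's (countable dense subset of $[0,1]$, diagonal extraction, Lipschitz extension by density and completeness, three-epsilon argument for uniform convergence), but you organize the handling of the varying codomains differently. The paper never builds a common compact ambient set: for each $t_n$ it replaces $\gamma_i(t_n)\in X_i$ by a nearby point $x_i^n\in X$ at distance at most $\dH^Z(X_i,X)$, extracts convergent subsequences of these surrogate points using compactness of $X$ itself, and so obtains limits that lie in $X$ from the outset. You instead prove that $\overline{X\cup\bigcup_i X_i}$ is compact (totally bounded because the tail of the sequence sits inside $X^\eps$ and the head is a finite union of compacta, then complete in $Z$), run the classical Arzel\`a--Ascoli argument there, and only at the end verify via $\dH^Z(X_{\sigma(i)},X)\to 0$ and closedness of $X$ that the limit curve actually maps into $X$. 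Your route buys a cleaner reduction to \Cref{thm:AA} at the cost of the extra compactness lemma for the union; the paper's route avoids that lemma but has to interleave the projection-to-$X$ step into the diagonalization. Both are complete and correct; all the estimates you cite (total boundedness of $X^\eps$, the $2\delta$ bound in the final membership check) go through as stated.
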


\begin{proof}
Let $T\coloneqq\{t_n\}_{n=0}^\infty$ be a countable dense subset of $[0,1]$. Let $\rho_i\coloneqq\dH^Z(X_i,X)$ for $i=0,\ldots$. Then, for each $\gamma_i(t_0)\in X_i$, there exists $x_i^0\in X$ such that $d_Z(\gamma_i(t_0),x_i^0)\leq\rho_i$. Since $X$ is compact, there exists a subsequence of $\{x_i^0\}_{i=0}^\infty$, still denoted by $\{x_i^0\}_{i=0}^\infty$, converging to a point $x^0\in X$. Then, since $\rho_i\rightarrow0$ as $i\rightarrow\infty$, we have
$$0\leq\lim_{i\rightarrow\infty}d_Z(\gamma_i(t_0),x^0)\leq\lim_{i\rightarrow\infty}d_Z(\gamma_i(t_0),x_i^0)+ \lim_{i\rightarrow\infty}d_Z(x_i^0,x^0)=0,$$ 
and consequently, $\lim_{i\rightarrow\infty}d_Z(\gamma_i(t_0),x^0)=0$. We similarly consider $t_1, t_2$ and so on and construct $x^n\in X$ for $n=1,2,\ldots$ in a manner similar to the construction of $x^0$. Then, by a standard diagonal argument, there exist a subsequence of $\{X_i\}_{i=0}^\infty$ (still denoted by $\{X_i\}_{i=0}^\infty$) and points $x^n\in X$ for $n=0,\ldots$ such that $\lim_{i\rightarrow\infty}d_Z(\gamma_i(t_n),x^n)=0$ for $n=0,\ldots$. Then, for $m,n\in\mathbb{N}$, we have
\begin{equation}\label{eq:gamma-T-C-Lip}
    d_Z(x^m,x^n)=\lim_{i\rightarrow\infty}d_Z(\gamma_i(t_m),\gamma_i(t_n))\leq C\cdot |t_m-t_n|.
\end{equation}

Now, we define $\gamma:[0,1]\rightarrow X$ as follows:
$$\gamma(t) \coloneqq \begin{cases} x^n,&t=t_n\in T\\
\lim_{j\rightarrow\infty} x^{n_j},&t\in [0,1]\backslash T\text{ and }\{t_{n_j}\}_{j=0}^\infty\text{ is a subsequence of }T\text{ converging to }t
\end{cases}.$$
The existence of the limit $\lim_{j\rightarrow\infty} x^{n_j}$ is due to completeness of $Z$ and \Cref{eq:gamma-T-C-Lip}. It is obvious that $\gamma(t)$ is well-defined, i.e., its image is independent of the choice of $\{t_{n_j}\}_{j=0}^\infty$. It is easy to check that $\gamma$ is also $C$-Lipschitz. Now, it remains to prove that $\{\gamma_i\}_{i=0}^\infty$ uniformly converges to $\gamma$. For any $\eps>0$, pick a finite subsequence $T_N\coloneqq\{t_0,t_1,\ldots, t_N\}$ of $T$ (possibly relabeled) such that $T_N$ is an $\frac{\eps}{3C}$-net of $[0,1]$. By definition of $\gamma$ on $T$, there exists $M>0$ such that for all $i>M$ and for all $n=0,\ldots,N$, we have $d_Z(\gamma(t_n),\gamma_i(t_n))\leq\frac{\eps}{3}$. Then, for any $t\in [0,1]$, there exists $t_n\in T_N$ such that $|t-t_n|\leq \frac{\eps}{3C}$ and that for $i>M$
\begin{align*}
    d_Z(\gamma(t),\gamma_i(t))&\leq d_Z(\gamma(t),\gamma(t_n))+d_Z(\gamma(t_n),\gamma_i(t_n))+d_Z(\gamma_i(t_n),\gamma_i(t))\\
    &\leq C\cdot |t-t_n|+\frac{\eps}{3} + C\cdot |t-t_n|\\
    &\leq \eps.
\end{align*}
This implies that $\{\gamma_i\}_{i=0}^\infty$ converges to $\gamma$ uniformly. 
\end{proof}

\subsection{Hausdorff-realizable geodesics}\label{sec:haus-real-geo}
We first define Hausdorff-realizable geodesics as follows:
\begin{definition}[Hausdorff-realizable geodesic]
A geodesic $\gamma:[0,1]\rightarrow\ms$ is called \emph{Hausdorff-realizable}, if there exist $X\in\ms$ and for each $t\in[0,1]$ isometric embedding $\varphi_t:\gamma\left(t\right)\hookrightarrow X$ such that
$$\dH^X\big(\varphi_s\left(\gamma\left(s\right)\right),\varphi_t\left(\gamma\left(t\right)\right)\big)=\dgh\left(\gamma\left(s\right),\gamma\left(t\right)\right),\quad\forall s,t\in[0,1].$$ 
In this case, we say that $\gamma$ is \emph{$X$-Hausdorff-realizable}.
\end{definition}

\begin{remark}[Hausdorff-realizable geodesics are Hausdorff geodesics]
{Suppose that $\gamma:[0,1]\rightarrow\ms$ is a $X$-Hausdorff-realizable Gromov-Hausdorff geodesic via the family of isometric embeddings 
\[\{\varphi_t:\gamma(t)\hookrightarrow X\}_{t\in[0,1]}.\]
Then, obviously the curve defined by $t\mapsto\varphi_t(\gamma(t))$ for $t\in[0,1]$ is a geodesic in the Hausdorff hyperspace $\mathcal{H}(X)$ of $X$. This is the converse of \Cref{lm:hgeo-to-dghgeo}. In short, we emphasize that \emph{a Hausdorff-realizable Gromov-Hausdorff geodesic is the same as a Hausdorff geodesic with respect to some underlying ambient space}.}
\end{remark}

\begin{example}[Trivial Gromov-Hausdorff geodesics are Hausdorff-realizable]
Let $\gamma:[0,1]\rightarrow\ms$ be a ``trivial" Gromov-Hausdorff geodesic, i.e., there exists $X\in\ms$ such that $\gamma(t)\cong X$ for all $t\in[0,1]$. Then, it is obvious that $\gamma$ is $X$-Hausdorff-realizable.
\end{example}

In \cite{ivanov2019hausdorff}, the authors show that any straight-line $\dgh$ geodesic can be Hausdorff-realized in a metric space:

\begin{proposition}[{\cite[Corollary 3.1]{ivanov2019hausdorff}}]\label{prop:hausdorff-geodesic-straight-line}
Let $X,Y\in\ms$ and $R\in\mathcal{R}^\mathrm{opt}\left(X,Y\right)$ and let $\rho\coloneqq\dgh\left(X,Y\right)$. Let $\gamma_R$ be the straight-line $\dgh$ geodesic connecting $X$ and $Y$ based on $R$ (cf. \Cref{thm:str-line-geo}). Let $Z\coloneqq R\times[0,1]$ and define $d_Z:Z\times Z\rightarrow\mathbb{R}_+$ by
$$d_Z\left(\left(\left(x,y\right),t\right),\left(\left(x',y'\right),t'\right)\right)\coloneqq\inf_{\left(x'',y''\right)\in R}\left(d_{R_t}\left(\left(x,y\right),\left(x'',y''\right)\right)+d_{R_{t'}}\left(\left(x',y'\right),\left(x'',y''\right)\right)\right)+\rho\,|t-t'|,$$
for any $\left(x,y\right),\left(x',y'\right)\in R$ and $t,t'\in [0,1]$. Then, by canonically identifying $R_t$ with $R\times\{t\}\subseteq Z$, we have $\dH^Z\left(R_s,R_t\right)=\dgh\left(R_s,R_t\right)$.
\end{proposition}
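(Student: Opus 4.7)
The plan is to split the proposition into three interlocking claims and verify them in order: (i) the prescription $d_Z$ is a genuine metric on $Z = R \times [0,1]$; (ii) under the canonical identification $R_t \leftrightarrow R \times \{t\}$, each inclusion $R_t \hookrightarrow Z$ is an isometric embedding; and (iii) $\dH^Z(R_s, R_t) = |s - t|\,\rho$.

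I would begin with (ii), since it coincides with the $t = t'$ case of the identification implicit in (i). Setting $t = t'$ in the defining formula reduces $d_Z$ to $\inf_{(x'', y'')\in R}\bigl( d_{R_t}((x,y),(x'',y'')) + d_{R_t}((x',y'),(x'',y'')) \bigr)$. The witness $(x'',y'') = (x',y')$ gives the upper bound $d_{R_t}((x,y),(x',y'))$, and the triangle inequality inside the (already-known) metric $d_{R_t}$ shows that no other witness produces anything smaller. Hence $d_Z|_{R \times \{t\}} = d_{R_t}$, which simultaneously says that $R_t \hookrightarrow Z$ is an isometric embedding and that $d_Z$ is non-degenerate within any single time slice.

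Granting (i)--(ii), step (iii) is short. For the upper bound, I would pair each $((x,y), s) \in R_s$ with $((x,y), t) \in R_t$: choosing $(x'',y'') = (x,y)$ in the defining infimum annihilates both slice terms, leaving $d_Z = \rho\,|s-t|$. Since the same works in the opposite direction, $\dH^Z(R_s, R_t) \leq \rho\,|s-t|$. The matching lower bound uses (ii): because $R_s$ and $R_t$ sit isometrically inside $Z$, the definition of $\dgh$ yields $\dH^Z(R_s, R_t) \geq \dgh(R_s, R_t)$, and \Cref{thm:str-line-geo} identifies the right-hand side with $|s - t|\,\rho$.

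The main obstacle is (i), specifically the triangle inequality for $d_Z$. Symmetry and non-negativity are transparent from the formula, and non-degeneracy across distinct time slices is forced by the $\rho\,|t-t'|$ term combined with (ii). The delicate part requires combining three tools: the triangle inequality in each slice metric $d_{R_t}$, the one-dimensional triangle for the time penalty, and the Lipschitz-in-$t$ stability estimate
\[
\bigl| d_{R_s}(c, c') - d_{R_t}(c, c') \bigr| \le \dis(R) \cdot |s - t| = 2\rho\,|s-t| \qquad \text{for all } c, c' \in R,
\]
which follows from the linear interpolation $d_{R_t} = (1-t)d_X + t\,d_Y$ together with the optimality $\dis(R) = 2\,\dgh(X,Y) = 2\rho$. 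Given three points $p_i = ((x_i, y_i), t_i)$ and near-optimal witnesses $c_1, c_2 \in R$ for $d_Z(p_1, p_2)$ and $d_Z(p_2, p_3)$, the strategy is to plug a carefully chosen witness (built from $c_1, c_2$ and $a_2 = (x_2, y_2)$) into the infimum defining $d_Z(p_1, p_3)$ and combine the slice triangle inequalities with the above stability bound. The precise matching between the factor $2\rho$ in the stability estimate and the factor $\rho$ multiplying $|t - t'|$ in the time penalty is what allows the cross-slice variation of $d_{R_t}$ to be absorbed into the time-penalty budget. Executing this bookkeeping is the technical heart of the argument and the step I expect to be most delicate.
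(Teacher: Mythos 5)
The paper itself offers no proof of this proposition --- it is imported verbatim from \cite[Corollary 3.1]{ivanov2019hausdorff} --- so your argument can only be judged on its own terms. Your steps (ii) and (iii) are correct and are exactly how the statement is meant to be used: the witness $(x'',y'')=(x',y')$ together with the triangle inequality in $d_{R_t}$ shows $d_Z|_{R\times\{t\}}=d_{R_t}$; the diagonal pairing $((x,y),s)\mapsto((x,y),t)$ gives $\dH^Z(R_s,R_t)\le\rho|s-t|$; and the reverse inequality is $\dH^Z\ge\dgh$ plus \Cref{thm:str-line-geo}. One correction to (i)--(ii): $d_Z$ is only a \emph{pseudo}-metric, since $d_{R_0}=d_X\circ(\pi_X\times\pi_X)$ vanishes on distinct pairs $(x,y),(x,y')\in R$ (and similarly at $t=1$); your claim of non-degeneracy within every time slice is false at the endpoints. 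The paper acknowledges this in \Cref{rmk:quotient-str-line} and passes to the metric quotient; it does not affect the Hausdorff-distance computation.

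The genuine gap is in your mechanism for the triangle inequality. Write $p_i=(c_i,t_i)$ and take near-optimal witnesses $a$ for $d_Z(p_1,p_2)$ and $b$ for $d_Z(p_2,p_3)$. The time-penalty ``budget'' available to absorb errors is $\rho\,(|t_1-t_2|+|t_2-t_3|-|t_1-t_3|)$, which is \emph{exactly zero} whenever $t_2$ lies between $t_1$ and $t_3$ --- the generic configuration --- whereas the stability estimate $|d_{R_s}(a,b)-d_{R_t}(a,b)|\le 2\rho|s-t|$ contributes a strictly positive cost as soon as $d_X(\pi_Xa,\pi_Xb)\ne d_Y(\pi_Ya,\pi_Yb)$. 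So the cross-slice variation cannot be ``absorbed into the time-penalty budget'' in that case, and the argument as you describe it fails. What saves the proof is a different observation: $t\mapsto d_{R_t}(a,b)$ is \emph{affine} with slope $d_Y(\pi_Ya,\pi_Yb)-d_X(\pi_Xa,\pi_Xb)$. For $t_1\le t_2\le t_3$ one chooses the witness $w=a$ when this slope is $\le 0$, so that $d_{R_{t_3}}(a,b)\le d_{R_{t_2}}(a,b)\le d_{R_{t_2}}(a,c_2)+d_{R_{t_2}}(c_2,b)$, and $w=b$ when the slope is $\ge 0$, so that $d_{R_{t_1}}(a,b)\le d_{R_{t_2}}(a,b)$; in either case no budget is needed and the distortion bound is never invoked. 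Your distortion-versus-budget trade-off is only what is required in the remaining configuration where $t_2$ lies outside the interval spanned by $t_1$ and $t_3$: there the budget equals $2\rho\min(|t_1-t_2|,|t_2-t_3|)$ and exactly covers the stability cost of moving the comparison of $a$ and $b$ from slice $t_2$ to the nearer of $t_1,t_3$. Without the sign-dependent choice of witness, the ``technical heart'' of your outline does not close.
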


\begin{remark}\label{rmk:quotient-str-line}
In fact, $Z$ is a pseudo-metric space since for any $\left(x,y\right),\left(x,y'\right)\in R$, we have at $t=0$ that $d_Z\big(\left(\left(x,y\right),0\right),\left(\left(x,y'\right),0\right)\big)=d_X\left(x,x\right)=0$. A similar result holds for $t=1$. By identifying points at zero distance, we obtain a new metric space $\tilde{Z}$. It is obvious that the result in \Cref{prop:hausdorff-geodesic-straight-line} still holds by replacing $Z$ with $\tilde{Z}$.
\end{remark}

Now, we show that the space constructed in \Cref{prop:hausdorff-geodesic-straight-line} (or more precisely the quotient space discussed in the remark above) is compact for compact correspondences and thus show that straight-line $\dgh$ geodesics corresponding to compact correspondences are Hausdorff-realizable.
\begin{proposition}\label{prop:strline-hausdorff}
Assuming the same notation as in \Cref{prop:hausdorff-geodesic-straight-line}, if $R$ is compact in the product space $(X\times Y,d_{X\times Y}\coloneqq\max(d_X,d_Y))$, then
$\gamma_R$ is Hausdorff-realizable.
\end{proposition}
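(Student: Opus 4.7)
The plan is to show that the quotient metric space $\tilde{Z}$ constructed in \Cref{rmk:quotient-str-line} is compact whenever $R$ is compact in $(X\times Y, \max(d_X,d_Y))$. Since \Cref{prop:hausdorff-geodesic-straight-line} (together with the remark) already provides isometric embeddings $\iota_t : R_t \hookrightarrow \tilde Z$, $(x,y)\mapsto [((x,y),t)]$, satisfying
\[\dH^{\tilde Z}\bigl(\iota_s(R_s),\iota_t(R_t)\bigr) = \dgh(R_s,R_t)\]
for all $s,t\in[0,1]$, compactness of $\tilde Z$ is precisely what is needed to upgrade this realization to a Hausdorff-realization in the sense of our definition (which requires the ambient space to lie in $\ms$).

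To prove $\tilde Z$ is compact, it suffices to prove sequential compactness. Let $\{z_n\}_{n=0}^\infty \subseteq Z$ with $z_n = ((x_n,y_n),t_n)$. Since $R$ is compact in $(X\times Y, \max(d_X,d_Y))$ and $[0,1]$ is compact, by passing to a subsequence we may assume $(x_n,y_n) \to (x,y) \in R$ in the max-metric and $t_n \to t \in [0,1]$. Let $z_\infty \coloneqq ((x,y),t)$. Using the explicit formula for $d_Z$ and choosing $(x'',y'') = (x,y)$ as a feasible element in the infimum, we estimate
\begin{align*}
d_Z(z_n, z_\infty) &\leq d_{R_{t_n}}\bigl((x_n,y_n),(x,y)\bigr) + d_{R_t}\bigl((x,y),(x,y)\bigr) + \rho\,|t_n - t|\\
&= (1-t_n)\,d_X(x_n,x) + t_n\,d_Y(y_n,y) + \rho\,|t_n-t|.
\end{align*}
Each term on the right tends to $0$ as $n\to\infty$, so $d_Z(z_n,z_\infty)\to 0$. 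This shows that the image $[z_n]$ in $\tilde Z$ converges to $[z_\infty]$, establishing sequential compactness of $\tilde Z$.

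Having compactness, it remains only to observe that the embeddings $\iota_t:R_t\hookrightarrow\tilde Z$ are well-defined isometric embeddings (a direct check from the formula of $d_Z$ at equal times $t=t'$) and that the identity $\dH^{\tilde Z}(\iota_s(R_s),\iota_t(R_t)) = \dgh(R_s,R_t)$ persists on the compact quotient (the Hausdorff distance is insensitive to the quotient, and the value from \Cref{prop:hausdorff-geodesic-straight-line} is unchanged by passing to $\tilde Z$). Together, these provide the family of isometric embeddings into a compact ambient space witnessing Hausdorff-realizability of $\gamma_R$. The only nontrivial step is the compactness argument above; the rest is bookkeeping following \Cref{prop:hausdorff-geodesic-straight-line} and \Cref{rmk:quotient-str-line}.
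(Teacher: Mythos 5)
Your proposal is correct and follows essentially the same route as the paper: both reduce the claim to sequential compactness of the ambient space, extract a convergent subsequence using compactness of $R$ and $[0,1]$, and bound $d_Z(z_n,z_\infty)$ by plugging a single feasible point into the infimum defining $d_Z$ (you pick $(x,y)$ where the paper picks $(x_n,y_n)$, which is immaterial). The remaining bookkeeping about the quotient and the persistence of the Hausdorff-distance identity matches \Cref{rmk:quotient-str-line} as in the paper.
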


\begin{proof}
We only need to show that the construction $Z$ in \Cref{prop:hausdorff-geodesic-straight-line} is sequentially compact. Then, the metric space $\tilde{Z}$ in \Cref{rmk:quotient-str-line} is also sequentially compact and thus compact.

For any sequence $\{\left(\left(x_i,y_i\right),t_i\right)\}_{i=1}^\infty$ in $Z$, by compactness of $[0,1]$ and $R$, there exists a subsequence, still denoted by $\{\left(\left(x_i,y_i\right),t_i\right)\}_{i=1}^\infty$, such that $\{t_i\}_{i=1}^\infty$ converges to some $t\in[0,1]$ and that $\{\left(x_i,y_i\right)\}_{i=1}^\infty$ converges to some $\left(x,y\right)\in R$ under $d_{X\times Y}\coloneqq\max\left(d_X,d_Y\right)$. 

Now, we show that $\lim_{i\rightarrow\infty}d_Z\big(\left(\left(x_i,y_i\right),t_i\right),\left(\left(x,y\right),t\right)\big)=0$. Indeed,
\begin{align*}
0\leq d_Z\big(\left(\left(x,y\right),t\right),\left(\left(x_i,y_i\right),t_i\right)\big)&=\inf_{\left(x',y'\right)\in R}\left(d_{R_t}\left(\left(x,y\right),\left(x',y'\right)\right)+d_{R_{t_i}}\left(\left(x',y'\right),\left(x_i,y_i\right)\right)\right)+\rho\,|t-t_i|\\
&\leq d_{R_t}\left(\left(x,y\right),\left(x_i,y_i\right)\right)+\rho\,|t-t_i|\\
&=\left(1-t\right)\,d_X\left(x,x_i\right)+t\,d_Y\left(y,y_i\right)+\rho\,|t-t_i|\\
&\leq d_{X\times Y}\left(\left(x,y\right),\left(x_i,y_i\right)\right)+\rho\,|t-t_i|.
\end{align*}
Then, by assumptions on the sequence $\{((x_i,y_i),t_i)\}_{i=1}^\infty$, 
$$\lim_{i\rightarrow\infty}d_Z\big(\left(\left(x_i,y_i\right),t_i\right),\left(\left(x,y\right),t\right)\big)=0.$$ 
As a result, $Z$ is sequentially compact and hence we conclude the proof.
\end{proof}

The following lemma provides an interesting description of Hausdorff-realizable geodesics: for any $X$-Hausdorff-realizable geodesic $\gamma$, there is a smallest closed subset $\mathcal{G}_X\subseteq X$ which Hausdorff-realizes $\gamma$.

\begin{lemma}\label{lm:union-geo-closed}
Given a compact metric space $X$ and a Hausdorff geodesic $\gamma:[0,1]\rightarrow \mathcal{H}\left(X\right)$, the union $\mathcal{G}_X\coloneqq\cup_{t\in[0,1]}\gamma\left(t\right)$ is a closed (and thus compact) subset of $X$.
\end{lemma}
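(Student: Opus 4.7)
The plan is to prove sequential closedness directly by picking a sequence in $\mathcal{G}_X$ that converges in $X$ and exhibiting a parameter $t\in[0,1]$ such that the limit lies in $\gamma(t)$. The key ingredients will be compactness of $[0,1]$, compactness of each $\gamma(t)\in\mathcal{H}(X)$, and continuity of $\gamma$ as a map into $(\mathcal{H}(X),\dH^X)$ (which is immediate because a Hausdorff geodesic is in particular Lipschitz).

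First I would take any sequence $\{x_n\}_{n=0}^\infty\subseteq\mathcal{G}_X$ with $x_n\to x\in X$, and for each $n$ choose $t_n\in[0,1]$ with $x_n\in\gamma(t_n)$. By compactness of $[0,1]$, I would pass to a subsequence (not relabeled) so that $t_n\to t$ for some $t\in[0,1]$. Since $\gamma$ is $\rho$-Lipschitz with $\rho=\dH^X(\gamma(0),\gamma(1))$, one has $\dH^X(\gamma(t_n),\gamma(t))\leq \rho\,|t_n-t|\to 0$.

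Next I would exploit the fact that $\gamma(t)$ is a compact subset of $X$ (as an element of $\mathcal{H}(X)$): for each $n$, the infimum in $\inf_{y\in\gamma(t)}d_X(x_n,y)$ is attained, yielding $y_n\in\gamma(t)$ with
\[
d_X(x_n,y_n)=d_X(x_n,\gamma(t))\leq \dH^X(\gamma(t_n),\gamma(t)),
\]
where the inequality uses the definition of Hausdorff distance together with $x_n\in\gamma(t_n)$. Since the right-hand side tends to $0$ and $x_n\to x$, the triangle inequality gives $y_n\to x$. Because $\gamma(t)$ is closed, $x\in\gamma(t)\subseteq\mathcal{G}_X$, which shows that $\mathcal{G}_X$ is closed, and hence compact as a closed subset of the compact space $X$.

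There is no real obstacle here; the only subtle point is that the proof uses essentially nothing about $\gamma$ beyond continuity (in particular the word "geodesic" enters only through the Lipschitz bound giving $\dH^X(\gamma(t_n),\gamma(t))\to 0$), so strictly speaking the lemma holds for any continuous curve $\gamma:[0,1]\to\mathcal{H}(X)$. I would include a brief remark to this effect, since later arguments may wish to apply the statement to families of Lipschitz curves rather than geodesics themselves.
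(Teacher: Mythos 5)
Your proof is correct, and it takes a genuinely different (and somewhat more direct) route than the paper's. The paper fixes an arbitrary point $x\in X$, proves that the function $f_x(t)\coloneqq d_X\left(x,\gamma(t)\right)$ is continuous on $[0,1]$, and then argues by contradiction: if $x$ is a limit of points $x_i\in\gamma(t_i)$ but $x\notin\mathcal{G}_X$, then $f_x$ would be bounded below by a positive constant on the compact interval $[0,1]$, contradicting $f_x(t_i)\leq d_X(x_i,x)\rightarrow 0$. Notably, the paper never extracts a convergent subsequence of the parameters $t_i$. You instead use compactness of $[0,1]$ to pass to $t_n\rightarrow t$, push the points $x_n$ into the fixed compact set $\gamma(t)$ via the Hausdorff bound $d_X\left(x_n,\gamma(t)\right)\leq\dH^X\left(\gamma(t_n),\gamma(t)\right)\leq\rho\,|t_n-t|$, and conclude by closedness of $\gamma(t)$. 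Both arguments are sound and of comparable length; yours identifies the witnessing parameter $t$ explicitly, while the paper's establishes the continuity of $f_x$, a fact it does not reuse elsewhere. Your closing observation is also accurate for both proofs: only continuity of $\gamma$ as a map into $\left(\mathcal{H}(X),\dH^X\right)$ is used, so the lemma holds verbatim for any continuous (in particular, any Lipschitz) curve in $\mathcal{H}(X)$, not just geodesics.
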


\begin{proof}
Let $\rho\coloneqq\dH^X\left(\gamma\left(0\right),\gamma\left(1\right)\right)$. If $\rho=0$, then $\gamma(t)=\gamma(0)$ for all $t\in[0,1]$. Thus $\mathcal{G}_X=\gamma(0)$ is closed. 

Now, we assume that $\rho>0$. Fix an arbitrary $x\in X$. Define $f_x:[0,1]\rightarrow\mathbb{R}$ by taking $t\in[0,1]$ to $d_X\left(x,\gamma\left(t\right)\right)\coloneqq\inf\{d_X\left(x,x_t\right):\,x_t\in\gamma\left(t\right)\}$. We first show that $f_x$ is continuous. Fix $t_0\in[0,1]$. Since $\gamma\left(t_0\right)$ is compact, there exists $x_{t_0}\in\gamma\left(t_0\right)$ such that $f_x\left(t_0\right)=d_X\left(x,x_{t_0}\right)$. For each $\eps>0$, let $\delta=\frac{\eps}{\rho}>0$. For any $t\in[0,1]$ such that $|t-t_0|<\delta$, there exists $x_t\in \gamma\left(t\right)$ such that \[d_X\left(x_t,x_{t_0}\right)\leq \dH^X\left(\gamma\left(t\right),\gamma\left(t_0\right)\right)=|t-t_0|\rho<\delta\rho=\eps.\]
Then,
$$f_x\left(t\right)\leq d_X\left(x,x_t\right)\leq d_X\left(x,x_{t_0}\right)+d_X\left(x_{t_0},x_t\right)< f_x\left(t_0\right)+\eps.$$
Now, assume $x_t'\in\gamma\left(t\right)$ is such that $f_x\left(t\right)=d_X\left(x,x_t'\right)$. Let $x_{t_0}'\in\gamma\left(t_0\right)$ be such that
\[d_X\left(x_{t_0}',x_t'\right)\leq\dH^X\left(\gamma\left(t_0\right),\gamma\left(t\right)\right)<\eps.\]
Then, 
$$f_x\left(t\right)= d_X\left(x,x_t'\right)\geq d_X\left(x,x_{t_0}'\right)-d_X\left(x_{t_0}',x_t'\right)> f_x\left(t_0\right)-\eps.$$
Therefore, $|f_x\left(t\right)-f_x\left(t_0\right)|<\eps$ for any $|t-t_0|<\delta$. This implies the continuity of $f_x$.

Let $\{x_i\}_{i=0}^\infty$ be a convergent sequence in $X$ such that $x_i\in\gamma\left(t_i\right)$ for some $t_i\in[0,1]$ and  $i=0,\ldots$. Suppose $x\in X$ is its limit. Assume that $x\not\in\mathcal{G}_X$ and thus $f_x\left(t\right)>0$ for each $t\in[0,1]$. Then, by continuity of $f_x$, there exists a constant $c>0$, such that $f>c$ on $[0,1]$. But $f_x\left(t_i\right)\leq d_X\left(x_i,x\right)$ and the right hand side approaches 0 as $i\rightarrow\infty$, a contradiction. Hence, there exists $t\in[0,1]$ such that $f\left(t\right)=0$ and thus $x\in\gamma\left(t\right)\subseteq\mathcal{G}_X$. This proves that $\mathcal{G}_X$ is closed in $X$. 
\end{proof}

Let $\Gamma$ be the collection of all Gromov-Hausdorff geodesics $\gamma:[0,1]\rightarrow\ms$. Let $d_\infty$ be the uniform metric on $\Gamma$, i.e., $d_\infty\left(\gamma_1,\gamma_2\right)\coloneqq\sup_{t\in[0,1]}\dgh\left(\gamma_1\left(t\right),\gamma_2\left(t\right)\right)$ for any $\gamma_1,\gamma_2\in\Gamma$. Let $\Gamma_\mathcal{H}$ denote the subset of $\Gamma$ consisting of all Hausdorff-realizable geodesics in $\ms$. Then, \Cref{thm:main-h-realizable} is equivalent to saying that $\Gamma_\mathcal{H}=\Gamma$. Before proving \Cref{thm:main-h-realizable}, we apply the properties developed in \Cref{sec:W-geo} towards proving the following {preliminary} result:

\begin{proposition}\label{prop:main-h-dense}
$\Gamma_\mathcal{H}$ is a dense subset of $\Gamma$.
\end{proposition}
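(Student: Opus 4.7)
The plan is to approximate an arbitrary Gromov-Hausdorff geodesic $\gamma\in\Gamma$ by Hausdorff-realizable geodesics that agree with $\gamma$ at progressively finer finite sets of times. Let $\rho\coloneqq\dgh(\gamma(0),\gamma(1))$; if $\rho=0$ then $\gamma$ is constant and trivially Hausdorff-realizable, so assume $\rho>0$. Fix a partition $0=t_0<t_1<\cdots<t_n=1$ with mesh $\delta_n$. By \Cref{coro:finite-seq-h-dgh}, there exist a compact metric space $X_n$ and isometric embeddings $\varphi_i:\gamma(t_i)\hookrightarrow X_n$ with
\[\dH^{X_n}\bigl(\varphi_i(\gamma(t_i)),\varphi_j(\gamma(t_j))\bigr)=|t_i-t_j|\,\rho\qquad\forall\,0\leq i,j\leq n.\]
Replace $X_n$ by $Z_n\coloneqq\mathcal{W}_1(X_n)$, which is compact by \Cref{thm:compact-w} and geodesic by \Cref{thm:W-geodesic}; moreover, by \Cref{thm:W-equal} the canonical isometric embedding $X_n\hookrightarrow Z_n$ preserves Hausdorff distances, so the above identities still hold in $Z_n$ after identifying each $\gamma(t_i)$ with its image.

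Since $Z_n$ is a compact geodesic space, \Cref{thm:hgeo} ensures that $\mathcal{H}(Z_n)$ is geodesic. For each $i=0,\dots,n-1$ choose a Hausdorff geodesic $\sigma_i:[0,1]\to\mathcal{H}(Z_n)$ from $\gamma(t_i)$ to $\gamma(t_{i+1})$. Applying \Cref{prop:geo-concatenate} to the family $\{\sigma_i\}$ with the parameters $\rho_i=(t_{i+1}-t_i)\rho$, we obtain a $\rho$-Lipschitz curve $\gamma_n:[0,1]\to\mathcal{H}(Z_n)$ such that $\gamma_n(t_i)=\gamma(t_i)$ for each $i$. Because the total Hausdorff length is $\sum_i(t_{i+1}-t_i)\rho=\rho=\dH^{Z_n}(\gamma(0),\gamma(1))$, \Cref{prop:geo-concatenate} yields that $\gamma_n$ is a Hausdorff geodesic in $\mathcal{H}(Z_n)$. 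By \Cref{lm:hgeo-to-dghgeo}, $\gamma_n$ is a Gromov-Hausdorff geodesic, and tautologically it is $Z_n$-Hausdorff-realizable via the inclusion maps $\gamma_n(t)\hookrightarrow Z_n$. Hence $\gamma_n\in\Gamma_\mathcal{H}$.

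It remains to verify that $d_\infty(\gamma,\gamma_n)\to 0$ along a sequence of partitions with $\delta_n\to 0$. For any $t\in[0,1]$ pick $t_i$ in the partition with $|t-t_i|\leq\delta_n$. Since both $\gamma$ and $\gamma_n$ are Gromov-Hausdorff geodesics of speed $\rho$ passing through $\gamma(t_i)$,
\[\dgh\bigl(\gamma(t),\gamma_n(t)\bigr)\leq \dgh\bigl(\gamma(t),\gamma(t_i)\bigr)+\dgh\bigl(\gamma_n(t_i),\gamma_n(t)\bigr)=2|t-t_i|\,\rho\leq 2\delta_n\,\rho.\]
Taking the supremum over $t$ gives $d_\infty(\gamma,\gamma_n)\leq 2\delta_n\rho\to 0$, so $\gamma_n\to\gamma$ in $d_\infty$. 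This shows that every $\gamma\in\Gamma$ is the uniform limit of elements of $\Gamma_\mathcal{H}$, i.e., $\Gamma_\mathcal{H}$ is dense in $\Gamma$.

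The main obstacle is conceptual rather than technical: we must produce, from purely Gromov-Hausdorff data, a single compact ambient space that simultaneously witnesses all pairwise distances between $\gamma(t_i)$'s. This is exactly what \Cref{coro:finite-seq-h-dgh} provides (via the homogeneity of the Urysohn universal metric space), and the passage $X_n\mapsto\mathcal{W}_1(X_n)$ is what allows us to convert the finite-sample witness into a genuine Hausdorff-geodesic interpolation. The remaining work is bookkeeping with the triangle inequality.
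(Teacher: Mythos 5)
Your proof is correct and follows essentially the same route as the paper's: invoke \Cref{coro:finite-seq-h-dgh} on a finite partition, pass to $\mathcal{W}_1$ of the resulting compact space to gain geodesicity, concatenate Hausdorff geodesics between consecutive samples via \Cref{prop:geo-concatenate}, and convert back to a Gromov-Hausdorff geodesic with \Cref{lm:hgeo-to-dghgeo}. The only quibble is a citation: the preservation of Hausdorff distances under the canonical embedding $X_n\hookrightarrow\mathcal{W}_1(X_n)$ follows from \Cref{lm:dH under embedding} applied to an isometric embedding, not from \Cref{thm:W-equal}, but this is immaterial to the argument.
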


\begin{proof}
Fix any Gromov-Hausdorff geodesic $\gamma:[0,1]\rightarrow\ms$ with $\rho\coloneqq\dgh\left(\gamma\left(0\right),\gamma\left(1\right)\right)>0$ and $\eps>0$. Let $0=t_0<\ldots<t_n=1$ be a sequence such that $t_{i+1}-t_i<\frac{\eps}{2\rho}$ for $i=0,\ldots,n-1$. Then, by \Cref{coro:finite-seq-h-dgh}, there exist $X\in\ms$ and isometric embeddings $\varphi_i:\gamma\left(t_i\right)\hookrightarrow X$ such that $\dH^X\big(\varphi_i(\gamma\left(t_i\right)),\varphi_j\lc\gamma\left(t_j\right)\rc\big)=|t_i-t_j|\rho.$ Let $Z\coloneqq \mathcal{W}_1\left(X\right)$ and still denote by $\varphi_i$ the composition of $\varphi_i:\gamma(t_i)\hookrightarrow X$ and the canonical embedding $X\hookrightarrow\mathcal{W}_1(X)=Z$ for $i=0,\ldots,n$. Then, we still have $\dH^Z\big(\varphi_i(\gamma\left(t_i\right)),\varphi_j(\gamma\left(t_j\right))\big)=|t_i-t_j|\rho.$ Since $Z$ is compact and geodesic (cf. \Cref{thm:compact-w} and \Cref{thm:W-geodesic}), $\mathcal{H}(Z)$ is geodesic (cf. \Cref{thm:hgeo}). Hence, there exist Hausdorff geodesics $\gamma_i:[0,1]\rightarrow \mathcal{H}\left(Z\right)$ such that $\gamma_i\left(0\right)=\varphi_i(\gamma\left(t_i\right))$ and $\gamma_i\left(1\right)=\varphi_{i+1}(\gamma\left(t_{i+1}\right))$ for $i=0,\ldots,n-1$. Then, $\gamma_i(1)=\gamma_{i+1}(0)$ for $i=0,\ldots,n-1$ and
\begin{align*}
    \dH^Z(\gamma_0(0),\gamma_{n-1}(1))&=\dH^Z\big(\varphi_0(\gamma(t_0)),\varphi_n(\gamma(t_n))\big)\\
    &=\sum_{i=0}^{n-1}\dH^Z\big(\varphi_i(\gamma(t_i)),\varphi_{i+1}(\gamma(t_{i+1}))\big)\\
    &=\sum_{i=0}^{n-1}\dH^Z\big(\gamma_i(0),\gamma_{i}(1))\big).
\end{align*}

Therefore, we can concatenate (cf. \Cref{prop:geo-concatenate}) all the $\gamma_i$s to obtain a new geodesic $\tilde{\gamma}:[0,1]\rightarrow \mathcal{H}\left(Z\right)$ such that $\tilde{\gamma}(t_i)=\varphi_i(\gamma(t_i))$ for each $i=0,\ldots,n$. By \Cref{lm:hgeo-to-dghgeo}, $\tilde{\gamma}$ is a Gromov-Hausdorff geodesic and by construction, $\tilde{\gamma}\in\Gamma_\mathcal{H}$. Now, for any $t\in[0,1]$, suppose $t\in[t_i,t_{i+1}]$ for some $i\in\{0,\ldots,n-1\}$. Then, 
\begin{align*}
    \dgh\left(\gamma\left(t\right),\tilde{\gamma}\left(t\right)\right)&\leq\dgh\left(\gamma\left(t\right),{\gamma}\left(t_i\right)\right)+\dgh\left(\gamma\left(t_i\right),\tilde{\gamma}\left(t_i\right)\right)+\dgh\left(\tilde{\gamma}\left(t_i\right),\tilde{\gamma}\left(t\right)\right) \\
    &=|t-t_i|\,\rho+0+|t-t_i|\,\rho\leq 2\cdot \frac{\eps}{2\rho}\cdot\rho =\eps.
\end{align*}
So, $d_\infty(\gamma,\tilde{\gamma})\leq\eps$. Therefore, we conclude that $\Gamma_\mathcal{H}$ is dense in $\Gamma$.
\end{proof}

\paragraph{Proof of \Cref{thm:main-h-realizable}.} Now, we obtain a proof of \Cref{thm:main-h-realizable} by showing that $\Gamma_\mathcal{H}$ is closed in $\Gamma$. {The proof is an intricate application of the generalized Arzel\`a-Ascoli theorem (\Cref{thm:general-AA}). In order to meet the conditions in \Cref{thm:general-AA}, one needs to exploit the stability of the Hausdorff extensor (\Cref{thm:H-equal}) and carefully leverage Gromov's pre-compactness theorem (\Cref{thm:pre-compact}).}

\thmhreal*

\begin{proof}
By \Cref{prop:main-h-dense}, we only need to show that $\Gamma_\mathcal{H}$ is closed in $\Gamma$.
Let $\{\gamma_i:[0,1]\rightarrow\ms\}_{i=0}^\infty$ be a Cauchy sequence in $\Gamma_\mathcal{H}$ with a limit $\gamma:[0,1]\rightarrow \mathcal{M}$ in $\Gamma$, i.e., $\lim_{i\rightarrow\infty}d_\infty(\gamma_i,\gamma)=0$. Moreover, we let $\rho\coloneqq \dgh\left(\gamma\left(0\right),\gamma\left(1\right)\right)$. 
\begin{claim}\label{clm:key-in-main}
There exist $X_i\in\ms$ such that $\gamma_i$ is $X_i$-Hausdorff-realizable for $i=0,\ldots$ and moreover $\{X_i\}_{i=0}^\infty$ has a $\dgh$-convergent subsequence.
\end{claim}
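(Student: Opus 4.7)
The plan is to reduce each ambient realizer $X_i$ to its minimal subset carrying $\gamma_i$ and then apply Gromov's pre-compactness theorem (\Cref{thm:pre-compact}) to that trimmed sequence. Concretely, for each $i$ fix isometric embeddings $\varphi_t^i:\gamma_i(t)\hookrightarrow X_i$ witnessing Hausdorff-realizability and set
\[
\mathcal{G}_i\coloneqq\bigcup_{t\in[0,1]}\varphi_t^i\lc\gamma_i(t)\rc\subseteq X_i.
\]
By \Cref{lm:union-geo-closed} applied to the Hausdorff geodesic $t\mapsto\varphi_t^i(\gamma_i(t))$ in $\mathcal{H}(X_i)$, each $\mathcal{G}_i$ is compact, and \Cref{lm:dH under embedding} guarantees that the same embeddings, now viewed as maps into $\mathcal{G}_i$, still Hausdorff-realize $\gamma_i$. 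So I may replace $X_i$ by $\mathcal{G}_i$ without loss of generality.

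Next, I would verify that $\{\mathcal{G}_i\}_{i=0}^\infty$ satisfies the hypotheses of Gromov's pre-compactness theorem. For the diameter bound, let $\rho_i\coloneqq\dgh(\gamma_i(0),\gamma_i(1))$; for any $x,y\in\mathcal{G}_i$ with $x\in\varphi_s^i(\gamma_i(s))$ and $y\in\varphi_t^i(\gamma_i(t))$, the Hausdorff condition provides $x_0,y_0\in\varphi_0^i(\gamma_i(0))$ with $d_{\mathcal{G}_i}(x,x_0)\leq s\rho_i$ and $d_{\mathcal{G}_i}(y,y_0)\leq t\rho_i$, hence
\[
\diam(\mathcal{G}_i)\leq \diam(\gamma_i(0))+2\rho_i.
\]
Since $\gamma_i\to\gamma$ uniformly, $\rho_i\to\rho$ and $\diam(\gamma_i(0))\to\diam(\gamma(0))$, so this quantity is uniformly bounded.

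For the covering-number bound, I would first show that the family
\[
L\coloneqq \gamma([0,1])\cup\bigcup_{i=0}^\infty\gamma_i([0,1])\subseteq\ms
\]
is pre-compact in $(\ms,\dgh)$: any sequence in $L$ either clusters in finitely many of the compact curves $\gamma_i([0,1])$ (and so has a convergent subsequence) or is of the form $\gamma_{i_k}(t_k)$ with $i_k\to\infty$, in which case passing to a subsequence with $t_k\to t$ and using $\dgh(\gamma_{i_k}(t_k),\gamma(t))\leq\rho_{i_k}|t_k-t|+d_\infty(\gamma_{i_k},\gamma)\to 0$ gives convergence to $\gamma(t)$. By \Cref{thm:pre-compact}, $\overline{L}\in\mathcal{K}(Q,D)$ for some uniform bound $Q,D$. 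Then, given $\eps>0$, I choose a partition $0=t_0<\cdots<t_n=1$ with $n\sim 2D/\eps$ so that $\rho_i/n\leq\eps/2$ uniformly in $i$. The Hausdorff Lipschitz condition ensures every point of $\mathcal{G}_i$ is within $\eps/2$ of some $\varphi_{t_j}^i(\gamma_i(t_j))$, which in turn can be covered by $Q(\eps/2)$ balls of radius $\eps/2$ since $\gamma_i(t_j)\in L$. Thus $\mathrm{cov}_\eps(\mathcal{G}_i)\leq n\cdot Q(\eps/2)$, uniformly in $i$.

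Having established both bounds, \Cref{thm:pre-compact} yields a $\dgh$-convergent subsequence of $\{\mathcal{G}_i\}_{i=0}^\infty$, completing the claim. The main obstacle I anticipate is getting \emph{uniform} control of the covering numbers of the ambient spaces $\mathcal{G}_i$: although each individual $\gamma_i(t)$ is compact, a priori nothing prevents the $X_i$ from being pathological, so the key insight is that the minimal realization $\mathcal{G}_i$ has covering numbers controlled by those of the slices $\gamma_i(t)$ through the Hausdorff Lipschitz bound, and that the slices themselves inherit uniform control from the pre-compact envelope $L$ produced by the uniform convergence $\gamma_i\to\gamma$.
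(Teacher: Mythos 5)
Your proposal follows essentially the same architecture as the paper's proof: you trim each realizer to the union of slices $\mathcal{G}_i=\cup_{t}\varphi_t^i(\gamma_i(t))$ via \Cref{lm:union-geo-closed}, bound the diameters by $\diam(\gamma_i(0))+2\rho_i$ exactly as the paper does, and then feed a uniform covering-number bound into Gromov's pre-compactness theorem. The one place where you diverge is the covering-number step, and it is also the one place where your argument leans on something the paper does not provide. You deduce that the family $L=\gamma([0,1])\cup\bigcup_i\gamma_i([0,1])$ of slices is pre-compact (correct, and your argument for that is fine), and then conclude ``by \Cref{thm:pre-compact}, $\overline{L}\in\mathcal{K}(Q,D)$.'' But \Cref{thm:pre-compact} as stated gives only the implication \emph{uniformly totally bounded $\Rightarrow$ pre-compact}; you are invoking the converse. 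The converse is true (it is the other half of Gromov's compactness theorem: $|\diam(X)-\diam(Y)|\leq 2\dgh(X,Y)$ handles diameters, and $\mathrm{cov}_\eps(X)\leq\mathrm{cov}_{\eps/2}(Y)$ whenever $\dgh(X,Y)$ is small enough handles covering numbers), but proving it requires transferring $\eps$-nets through near-optimal correspondences — which is precisely the argument the paper carries out explicitly: it fixes a finite partition $\{t_n\}$, takes $\frac{\eps}{4}$-nets $S_n$ of the limit slices $\gamma(t_n)$, pushes them through correspondences $R_n^i$ of distortion at most $\frac{\eps}{4}$ (available once $d_\infty(\gamma_i,\gamma)\leq\frac{\eps}{8}$) to get $\frac{\eps}{2}$-nets of $\gamma_i(t_n)$, and absorbs the finitely many initial indices into a max. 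So your plan is correct and would compile into a complete proof, but as written the covering bound rests on an unstated converse; once you supply it, you have reproduced the paper's argument in slightly repackaged form rather than found a genuinely shorter route.
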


Assuming the claim for now, suppose $X\in\ms$ is such that $\lim_{i\rightarrow\infty}\dgh(X_i,X)=0$ (after possibly passing to a subsequence), then by \Cref{lm:ghconv=hconv}, there exist a Polish metric space $Z$ and isometric embeddings $\varphi:X\hookrightarrow Z$ and $\varphi_i:X_i\hookrightarrow Z$ for $i=0,\ldots$ such that $\lim_{i\rightarrow\infty}\dH^Z(\varphi_i(X_i),\varphi(X))=0$. By stability of the Hausdorff extensor $\mathcal{H}$ (cf. \Cref{thm:H-equal}), 
$$\lim_{i\rightarrow\infty}\dH^{\mathcal{H}(Z)}\big((\varphi_i)_*(\mathcal{H}(X_i)),\varphi_*(\mathcal{H}(X))\big)=\lim_{i\rightarrow\infty}\dH^Z(\varphi_i(X_i),\varphi(X))=0.$$ 
For any $\eps>0$, there exists $K(\eps)>0$ such that $\gamma_i$ is $(\rho+\eps)$-Lipschitz for $i\geq K(\eps)$, since $\gamma_i$ is $\dgh(\gamma_i(0),\gamma_i(1))$-Lipschitz and $\dgh(\gamma_i(0),\gamma_i(1))\rightarrow\rho$ as $i\rightarrow\infty$. For each $i=0,\ldots$, since $\gamma_i$ is $X_i$-Hausdorff-realizable, we can view $\gamma_i:[0,1]\rightarrow\ms$ as a Hausdorff geodesic $\gamma_i:[0,1]\rightarrow \mathcal{H}(X_i)$. Moreover, by \Cref{thm:hyper-complete}, $\mathcal{H}(Z)$ is complete. Then, by the generalized Arzel\`a-Ascoli theorem (\Cref{thm:general-AA}), the sequence $\{\gamma_i:[0,1]\rightarrow \mathcal{H}(X_i)\}_{i=K(\eps)}^\infty$ uniformly converges to a $(\rho+\eps)$-Lipschitz curve $\tilde{\gamma}:[0,1]\rightarrow \mathcal{H}(X)$, where we identify $\mathcal{H}(X_i)$ with $\varphi_i(\mathcal{H}(X_i))$ and $\mathcal{H}(X)$ with $\varphi(\mathcal{H}(X))$. Obviously, for $0<\eps'<\eps$, the subsequence $\{\gamma_i\}_{i=K(\eps')}^\infty$ uniformly converges to the same curve $\tilde{\gamma}$ and thus $\tilde{\gamma}$ is $(\rho+\eps')$-Lipschitz. Since $\eps'$ is arbitrary, $\tilde{\gamma}$ is $\rho$-Lipschitz. By uniform convergence, we have that for each $t\in[0,1]$, $\lim_{i\rightarrow\infty}\dH^{Z}(\tilde{\gamma}(t),\gamma_i(t))=0$. We know that $\gamma(t)$ is the Gromov-Hausdorff limit of $\{\gamma_i(t)\}_{i=0}^\infty$, thus $\tilde{\gamma}(t)\cong\gamma(t)$ for all $t\in[0,1]$. Since $\tilde{\gamma}$ is $\rho$-Lipschitz, we have that for each $s,t\in[0,1]$
$$\dH^X(\tilde{\gamma}(s),\tilde{\gamma}(t))\leq |s-t|\rho=\dgh(\gamma(s),\gamma(t))\leq \dH^X(\tilde{\gamma}(s),\tilde{\gamma}(t)).$$
Therefore, $\dH^X(\tilde{\gamma}(s),\tilde{\gamma}(t))=\dgh(\gamma(s),\gamma(t)) $ for $s,t\in[0,1]$ and thus $\gamma\in\Gamma_\mathcal{H}$. The structure of the argument above is also captured in \Cref{fig:proof-thm1}.

Now, we finish by proving \Cref{clm:key-in-main}
\begin{proof}[Proof of \Cref{clm:key-in-main}]
Since $\gamma_i\in\Gamma_\mathcal{H}$, there exists $Y_i\in\ms$ such that $\gamma_i$ is $Y_i$-Hausdorff-realizable. Then, let $X_i\coloneqq\mathcal{G}_{Y_i}=\cup_{t\in[0,1]}\gamma_i(t)$ as in \Cref{lm:union-geo-closed}. Here $\gamma_i(t)$ also denotes the isometric copy of itself into $Y_i$ and thus one can view $\gamma_i(t)$ as an element of $\mathcal{H}(Y_i)$. It is obvious that $\gamma_i$ is also $X_i$-Hausdorff-realizable. Now, we prove that $\{{X_i}\}_{i=0}^\infty$ has a convergent subsequence via Gromov's pre-compactness theorem (cf. \Cref{thm:pre-compact}). Let $\rho_i\coloneqq\dgh(\gamma_i(0),\gamma_i(1))$ for $i=0,1,\ldots$.

\begin{enumerate}
    \item Fix $i\in\mathbb{N}$ and $t\in[0,1]$. Then,
    $$\dH^{Y_i}(\gamma_i(t),\gamma_i(0))=\dgh(\gamma_i(t),\gamma_i(0))=t\rho_i\leq\rho_i.$$
    Therefore, for any $x_t\in\gamma_i(t)$, there exists $x_0\in\gamma_i(0)$ such that $d_{Y_i}(x_t,x_0)\leq\rho_i$. This implies that $\gamma_i(t)\subseteq(\gamma_i(0))^{\rho_i}\subseteq Y_i$. Since $t$ is arbitrary, we have that $X_i=\mathcal{G}_{Y_i}=\cup_{t\in[0,1]}\gamma_i(t)\subseteq \left(\gamma_i(0)\right)^{\rho_i}$. Therefore, $\diam\left(X_i\right)\leq 2\rho_i+\diam\left(\gamma_i\left(0\right)\right)$ for any $i=0,\ldots$. Since $\{\diam(\gamma_i(0))\}_{i=0}^\infty$ approaches $\diam(\gamma(0))$ and $\{\rho_i\}_{i=0}^\infty$ approaches $\rho$ as $i\rightarrow\infty$, there exists $\delta>0$ such that $\rho_i\leq \delta$ and $\diam(\gamma_i(0))\leq\delta$ for all $i=0,\ldots$. Therefore, $\{X_i\}_{i=0}^\infty$ is uniformly bounded by $3\delta$.
    
    \item For any $\eps>0$, pick $0=t_0<t_1<\ldots<t_N=1$ such that $t_{n+1}-t_n<\frac{\eps}{2\delta}$ for $n=0,\ldots,N-1$. Let $S_n\coloneqq\{s_n(k):\,k=0,\ldots,k_n\}$ be an $\frac{\eps}{4}$-net of $\gamma(t_n)$ for $n=0,\ldots,N$. Let $M>0$ be a positive integer such that $d_\infty(\gamma_i,\gamma)\leq \frac{\eps}{8}$ for all $i>M$. For $n\in\{0,\ldots,N\}$ and $i>M$, let $R_n^i\in\mathcal{R}^\mathrm{opt}\lc\gamma_i(t_n),\gamma(t_n)\rc$ be an optimal correspondence. Then,
    \[\dis(R_n^i)=2\dgh(\gamma_i(t_n),\gamma(t_n))\leq 2d_\infty(\gamma_i,\gamma)\leq \frac{\eps}{4}.\]
    For each $s_n(k)\in S_n$, choose $s_n^i(k)\in\gamma_i(t_n)$ such that $(s_n^i(k),s_n(k))\in R_n^i$. Then, we have that $S_n^i\coloneqq\{s_n^i(k)\}_{k=0}^{k_n}$ is an $\frac{\eps}{2}$-net of $\gamma_i(t_n)$. Indeed, for any $x_n^i\in \gamma_i(t_n)$, there exists $x_n\in\gamma(t_n)$ such that $(x_n^i,x_n)\in R_n^i$. Let $s_n(k)\in S_n$ be such that $d_{\gamma(t_n)}(x_n,s_n(k))\leq\frac{\eps}{4}$. Then, 
    $$d_{X_i}\lc x_n^i, s_n^i(k)\rc=d_{\gamma_i(t_n)}\lc x_n^i, s_n^i(k)\rc\leq \dis(R_n^i)+d_{\gamma(t_n)}(x_n,s_n(k))\leq \frac{\eps}{4}+\frac{\eps}{4}= \frac{\eps}{2}. $$
    Furthermore, we prove that $\cup_{n=0}^N S_n^i$ is an $\eps$-net of $X_i$. For each $t\in[0,1]$, suppose $t\in[t_n,t_{n+1}]$ for some $n\in\{0,\ldots,N-1\}$. For any $x_t^i\in\gamma_i(t)$, since 
    \[\dH^{X_i}(\gamma_i(t),\gamma_i(t_n))\leq |t-t_n|\rho_i\leq \frac{\eps}{2\delta}\cdot \delta=\frac{\eps}{2},\] 
    there exists $x_n^i\in\gamma_i(t_n)$ such that $d_{X_i}\lc x_n^i,x_t^i\rc\leq \frac{\eps}{2}$. Then, there exists $s_n^i(k)\in S_n^i$ such that $d_{X_i}\lc x_n^i,s_n^i(k)\rc\leq \frac{\eps}{2}$ and thus 
    $$d_{X_i}\lc x_t^i,s_n^i(k)\rc\leq d_{X_i}\lc x_t^i,x_n^i\rc+d_{X_i}\lc x_n^i,s_n^i(k)\rc\leq \eps.$$ 
    
    Now, note that $\left|\cup_{n=0}^NS_n^i\right|\leq \sum_{n=0}^Nk_n$ for each $i>M$. Let
    \[Q\left(\eps\right)\coloneqq\max\left(\max\{\mathrm{cov}_\eps\left(X_i\right):\,i=1,\ldots,M\},\sum_{n=0}^Nk_n\right),\]
    then we have $\mathrm{cov}_\eps\left(X_i\right)\leq Q\left(\eps\right)$ for all $i=0,\ldots$.
\end{enumerate}
Therefore, $\{X_i\}_{i=0}^\infty\subseteq \mathcal{K}\left(Q,3\delta\right)$ (cf. \Cref{def:CND}). By Gromov's pre-compactness theorem (cf. \Cref{thm:pre-compact}), $\{X_i\}_{i=0}^\infty$ has a convergent subsequence.
\end{proof}
\end{proof}

\begin{figure}[htb]
	\centering		\includegraphics[width=0.8\textwidth]{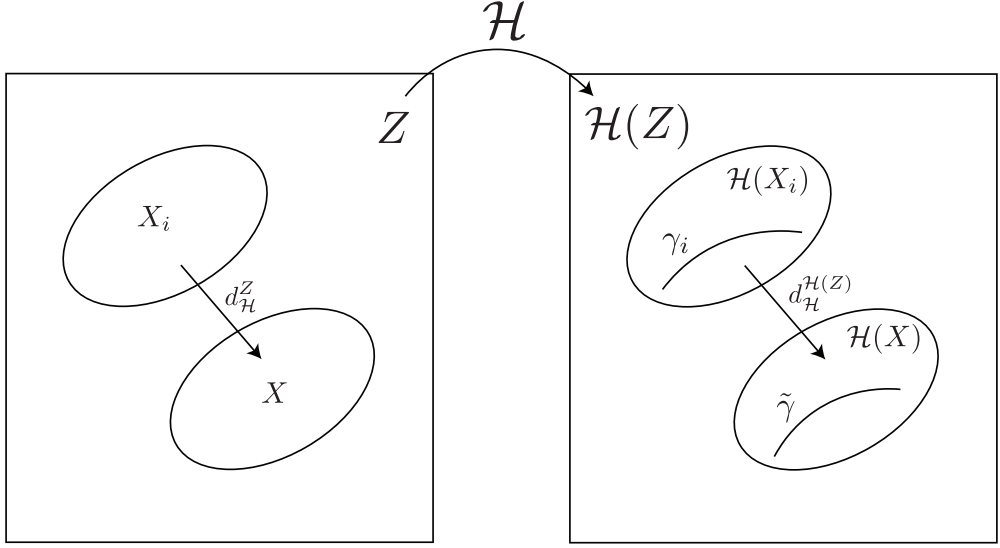}
	\caption{\textbf{Illustration of the proof of \Cref{thm:main-h-realizable}.} In this figure, we identify $X$ with $\varphi(X)$ and $\mathcal{H}(X)$ with $\varphi_*(\mathcal{H}(X))$ and similarly for $X_i$ and $\mathcal{H}(X_i)$. The figure illustrates our main strategy for proving \Cref{thm:main-h-realizable} as follows: we transform the $\dH^Z$ convergent sequence $\{X_i\}_{i=0}^\infty$ to a $\dH^{\mathcal{H}(Z)}$ convergent sequence $\{\mathcal{H}(X_i)\}_{i=0}^\infty$; then we use the generalized Arzel\`a-Ascoli theorem to establish a limit $\tilde{\gamma}:[0,1]\rightarrow\mathcal{H}(X)$ for the Lipschitz curves $\{\gamma_i:[0,1]\rightarrow\mathcal{H}(X_i)\}_{i=0}^\infty$; finally, we show that $\tilde{\gamma}$ coincides with $\gamma$.} \label{fig:proof-thm1}
\end{figure}

\subsection{Wasserstein-realizable geodesics}\label{sec:W-reall-geo}
We first specify the definition of Wasserstein-realizable geodesics as follows.
\begin{definition}[Wasserstein-realizable geodesic]\label{def:w-real-geo}
For $p\in[1,\infty)$, an $\ell^p$-Gromov-Wasserstein geodesic $\gamma:[0,1]\rightarrow\left(\ws,\dgws{p}\right)$ where $\gamma(t)\coloneqq(X_t,d_t,\mu_t)$ for $t\in[0,1]$ is called $\ell^p$-\emph{Wasserstein-realizable} (or simply Wasserstein-realizable), if there exist $X\in\ms$ and for each $t\in[0,1]$ isometric embedding $\varphi_t:X_t\hookrightarrow X$ such that
$$\dW{p}^X\left((\varphi_s)_\#\mu_s,(\varphi_t)_\#\mu_t\right)=\dgws{p}\left(\gamma\left(s\right),\gamma\left(t\right)\right),\quad\forall s,t\in[0,1].$$ 
In this case, we say that $\gamma$ is \emph{$X$-Wasserstein-realizable}.
\end{definition}

\begin{remark}[Wasserstein-realizable geodesics are Wasserstein geodesics]
{Suppose that an $\ell^p$-Gromov-Wasserstein geodesic $\gamma:[0,1]\rightarrow(\ws,\dgws{p})$ is $X$-Wasserstein-realizable via the following family of isometric embeddings 
\[\{\varphi_t:\gamma(t)\hookrightarrow X\}_{t\in[0,1]}.\]
Denote $\gamma(t)=(X_t,d_t,\mu_t)$ for each $t\in[0,1]$. Then, obviously $t\mapsto(\varphi_t)_\#\mu_t$ for $t\in[0,1]$ is a geodesic in the Wasserstein hyperspace $\mathcal{W}_p(X)$ of $X$. This is the converse of \Cref{lm:wgeo-to-dgwgeo}. In words, \emph{a Wasserstein-realizable Gromov-Wasserstein geodesic is a Wasserstein geodesic.}}
\end{remark}

\begin{example}[Trivial Gromov-Wasserstein geodesics are Wasserstein-realizable]
Let $\gamma:[0,1]\rightarrow\ws$ be a ``trivial" Gromov-Wasserstein geodesic, i.e., there exists $\mathcal{X}=(X,d_X,\mu_X)\in\ws$ such that $\gamma(t)\cong_w\mathcal{X}$ for all $t\in[0,1]$. Then, it is obvious that $\gamma$ is $X$-Wasserstein-realizable.
\end{example}

Now, we recall some notation. Let $\Gamma^p$ be the collection of all $\ell^p$-Gromov-Wasserstein geodesics. Let $d_{\infty,p}$ be the uniform metric on $\Gamma^p$, i.e., for any $\gamma_1,\gamma_2\in\Gamma^p$
$$d_{\infty,p}\left(\gamma_1,\gamma_2\right)\coloneqq\sup_{t\in[0,1]}\dgws{p}\left(\gamma_1\left(t\right),\gamma_2\left(t\right)\right).$$ 
Let $\Gamma_\mathcal{W}^p$ denote the subset of $\Gamma^p$ consisting of all Wasserstein-realizable geodesics in $\ws$.

\begin{restatable}{proposition}{propwrealdense}\label{prop:main-w-real-dense}
For $p\in[1,\infty)$, $\Gamma_\mathcal{W}^p$ is a dense subset of $\Gamma^p$.
\end{restatable}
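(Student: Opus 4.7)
The plan is to mirror the argument used for \Cref{prop:main-h-dense}, replacing the Hausdorff extensor $\mathcal{H}$ with the $\ell^p$-Wasserstein extensor $\mathcal{W}_p$, and using the Urysohn-based distance realization tool \Cref{coro:finite-seq-w-dgw} in place of \Cref{coro:finite-seq-h-dgh}. Fix an arbitrary $\gamma\in\Gamma^p$ with $\gamma(t)=(X_t,d_t,\mu_t)$ and $\rho\coloneqq\dgws{p}(\gamma(0),\gamma(1))$. The case $\rho=0$ is trivial (the constant curve is already Wasserstein-realizable), so assume $\rho>0$ and fix $\eps>0$. Choose a partition $0=t_0<t_1<\cdots<t_n=1$ with mesh strictly less than $\frac{\eps}{2\rho}$.

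I would first invoke \Cref{coro:finite-seq-w-dgw} to obtain a compact metric space $X$ and isometric embeddings $\varphi_i:X_{t_i}\hookrightarrow X$ satisfying $\dW{p}^X((\varphi_i)_\#\mu_{t_i},(\varphi_j)_\#\mu_{t_j})=|t_i-t_j|\,\rho$ for all $i,j$. To make Wasserstein geodesics available, I would then compose with the canonical isometric embedding $X\hookrightarrow Z\coloneqq\mathcal{W}_1(X)$; by \Cref{thm:compact-w} and \Cref{thm:W-geodesic}, $Z$ is compact and geodesic, and by \Cref{thm:W-equal} the distances among the measures $\nu_i\coloneqq(\varphi_i)_\#\mu_{t_i}\in\mathcal{W}_p(Z)$ remain $|t_i-t_j|\,\rho$. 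Since $Z$ is geodesic, \Cref{thm:wp-geo} yields Wasserstein geodesics $\gamma_i:[0,1]\rightarrow\mathcal{W}_p(Z)$ joining $\nu_i$ to $\nu_{i+1}$ for each $i=0,\ldots,n-1$.

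Next, because $\sum_{i=0}^{n-1}\dW{p}^Z(\nu_i,\nu_{i+1})=\rho=\dW{p}^Z(\nu_0,\nu_n)$, the concatenation lemma \Cref{prop:geo-concatenate} glues the $\gamma_i$ into a single Wasserstein geodesic $\tilde\gamma:[0,1]\rightarrow\mathcal{W}_p(Z)$ with $\tilde\gamma(t_i)=\nu_i$. Invoking \Cref{lm:wgeo-to-dgwgeo}, the induced curve $t\mapsto(\supp(\tilde\gamma(t)),d_Z,\tilde\gamma(t))$ is a Gromov-Wasserstein geodesic in $\ws$, and by construction it lives inside the single ambient space $Z$, hence belongs to $\Gamma_\mathcal{W}^p$. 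For closeness, given any $t\in[0,1]$ pick $i$ with $t\in[t_i,t_{i+1}]$; since $\tilde\gamma(t_i)\cong_w\gamma(t_i)$ (because $\varphi_i$ is an isometric embedding and so $\nu_i$ and $\mu_{t_i}$ represent the same element of $\ws$), the triangle inequality gives
\[
\dgws{p}(\gamma(t),\tilde\gamma(t))\leq\dgws{p}(\gamma(t),\gamma(t_i))+\dgws{p}(\tilde\gamma(t_i),\tilde\gamma(t))\leq 2|t-t_i|\,\rho\leq\eps,
\]
yielding $d_{\infty,p}(\gamma,\tilde\gamma)\leq\eps$ and hence density.

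The main obstacle I anticipate is a bookkeeping one rather than a conceptual one: one must verify that every tool used in the Hausdorff proof has a Wasserstein analogue with the same interface. The three non-trivial ingredients are (i) distance realizability at finitely many sampled times (supplied by \Cref{coro:finite-seq-w-dgw} via the Urysohn space), (ii) existence of a geodesic, compact extension that intertwines nicely with the extensor (supplied by $Z=\mathcal{W}_1(X)$ together with the isometric embedding property of $\mathcal{W}_p$ given by \Cref{thm:W-equal}), and (iii) the transfer from ambient Wasserstein geodesics to geodesics in $\ws$ (supplied by \Cref{lm:wgeo-to-dgwgeo}). With these in hand the proof parallels that of \Cref{prop:main-h-dense} verbatim; unlike \Cref{thm:main-h-realizable}, proving closedness of $\Gamma_\mathcal{W}^p$ is not claimed here and indeed is left as the open problem discussed after the statement.
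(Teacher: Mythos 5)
Your proposal is correct and follows essentially the same route as the paper's own proof: sample the geodesic at a fine partition, realize the pairwise distances in a common compact space via \Cref{coro:finite-seq-w-dgw}, pass to $Z=\mathcal{W}_1(X)$ to gain geodesicity, concatenate ambient $\ell^p$-Wasserstein geodesics with \Cref{prop:geo-concatenate}, and transfer back with \Cref{lm:wgeo-to-dgwgeo}. The only (harmless) differences are that you explicitly dispose of the $\rho=0$ case and cite \Cref{thm:W-equal} for the preservation of distances under the canonical embedding, which the paper leaves implicit.
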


\begin{proof}
Fix any $\ell^p$-Gromov-Wasserstein geodesic $\gamma:[0,1]\rightarrow\ws$. Let $\rho\coloneqq\dgws{p}\left(\gamma\left(0\right),\gamma\left(1\right)\right)$ and assume that $\rho>0$. For any $\eps>0$, let $0=t_0<t_1<\ldots<t_n=1$ be a sequence such that $t_{i+1}-t_i<\frac{\eps}{2\rho}$ for $i=0,\ldots,n-1$. Then, by \Cref{coro:finite-seq-w-dgw}, there exist $X\in\ms$ and isometric embeddings $\varphi_i:X_{t_i}\hookrightarrow X$ such that $\dW{p}^X\left((\varphi_i)_\#\mu_{t_i},\lc\varphi_j\rc_\#\mu_{t_j}\right)=|t_i-t_j|\rho.$ Let $Z\coloneqq \mathcal{W}_1\left(X\right)$ and for each $i=0,\ldots,n$ still denote by $\varphi_i$ the composition of $\varphi_i:X_{t_i}\hookrightarrow X$ and the canonical embedding $X\hookrightarrow\mathcal{W}_1(X)=Z$. Then, we still have $\dW{p}^Z\left((\varphi_i)_\#\mu_{t_i},\lc\varphi_j\rc_\#\mu_{t_j}\right)=|t_i-t_j|\rho.$ Since $Z$ is compact and geodesic (cf. \Cref{thm:compact-w} and \Cref{thm:W-geodesic}), $\mathcal{W}_{p}(Z)$ is geodesic (cf. \Cref{thm:wp-geo}). Hence, for each $i=0,\ldots,n-1$ there {exists an} $\ell^p$-Wasserstein geodesic $\gamma_i:[0,1]\rightarrow \mathcal{W}_p\left(Z\right)$ such that $\gamma_i\left(0\right)=(\varphi_i)_\#\mu_{t_i}$ and $\gamma_i\left(1\right)=(\varphi_{i+1})_\#\mu_{t_{i+1}}$. Then, $\gamma_i(1)=\gamma_{i+1}(0)$ for $i=0,\ldots,n-1$ and
\begin{align*}
    \dW p^Z(\gamma_0(0),\gamma_{n-1}(1))&=\dW p^Z\left((\varphi_0)_\#\mu_{t_0},(\varphi_n)_\#\mu_{t_n}\right)\\
    &=\sum_{i=0}^{n-1}\dW p^Z((\varphi_{i})_\#\mu_{t_{i}},(\varphi_{i+1})_\#\mu_{t_{i+1}})\\
    &=\sum_{i=0}^{n-1}\dW p^Z(\gamma_i(0),\gamma_{i}(1))).
\end{align*}

Therefore, we can concatenate all the $\gamma_i$s via \Cref{prop:geo-concatenate} to obtain a new $\ell^p$-Wasserstein geodesic $\tilde{\gamma}:[0,1]\rightarrow \mathcal{W}_p\left(Z\right)$ such that $\tilde{\gamma}(t_i)=(\varphi_{i})_\#\mu_{t_{i}}$ for each $i=0,\ldots,n$. Then, by \Cref{lm:wgeo-to-dgwgeo}, $\tilde{\gamma}$ is a Gromov-Wasserstein geodesic and thus $\tilde{\gamma}\in\Gamma_\mathcal{W}^p$. Now, for any $t\in[0,1]$, suppose $t\in[t_i,t_{i+1}]$ for some $i\in\{0,\ldots,n-1\}$. Then, 
\begin{align*}
    \dgws{p}\left(\gamma\left(t\right),\tilde{\gamma}\left(t\right)\right)&\leq\dgws{p}\left(\gamma\left(t\right),{\gamma}\left(t_i\right)\right)+\dgws{p}\left(\gamma\left(t_i\right),\tilde{\gamma}\left(t_i\right)\right)+\dgws{p}\left(\tilde{\gamma}\left(t_i\right),\tilde{\gamma}\left(t\right)\right) \\
    &=|t-t_i|\,\rho+0+|t-t_i|\,\rho\leq 2\cdot \frac{\eps}{2\rho}\cdot\rho =\eps.
\end{align*}
So, $d_{\infty,p}(\gamma,\tilde{\gamma})\leq\eps$. Therefore, we conclude that $\Gamma_\mathcal{W}^p$ is dense in $\Gamma^p$.
\end{proof}

\paragraph{Hausdorff-boundedness.} Now, we introduce the Hausdorff-boundedness condition mentioned in the introduction with the purpose of identifying a certain family of Wasserstein-realizable geodesics. 

{First recall from \Cref{sec:real function} that for any function $f:I\rightarrow J$ where $I$ and $J$ are intervals in $\overline{\mathbb
R}\coloneqq[0,\infty]$ containing 0, we say that $f$ is \emph{proper} if both $f(0)=0$ and $f$ is continuous at $0$ (cf. \Cref{def:proper-function}).}

\begin{definition}[Hausdorff-bounded families]\label{def:hausdorff-bdd}
Given $p\in[1,\infty)$ and a family $\mathcal{F}$ of metric measure spaces, we say that $\mathcal{F}$ is ($\ell^p$-)\emph{Hausdorff-bounded}, if there exists an \emph{increasing and proper} function $f:[0,\infty)\rightarrow[0,\infty)$ such that for any $\mathcal{X}=(X,d_X,\mu_X),\mathcal{Y}=(Y,d_Y,\mu_Y)\in\mathcal{F}$ and isometric embeddings $\varphi_X:X\hookrightarrow Z$ and $\varphi_Y:Y\hookrightarrow Z$,
$$\dH^Z(X,Y)\leq f\left(\dW{p}^Z((\varphi_X)_\#\mu_X,(\varphi_Y)_\#\mu_Y)\right). $$
After specifying such an $f$, we say that $\mathcal{F}$ is \emph{$f$-Hausdorff-bounded}.
\end{definition}
{The remark below provides a first glimpse into the motivation for the definition of Hausdorff-bounded families.}

\begin{remark}\label{rmk:f-h-bdd to gh bound}
Given an $f$-Hausdorff-bounded family $\mathcal{F}$, for any $\mathcal{X},\mathcal{Y}\in \mathcal{F}$, we have that  
$$\dgh(X,Y)\leq f\left(\dgws{p}(\mathcal{X},\mathcal{Y})\right).$$
\end{remark}

{The definition of Hausdorff-boundedness is not superfluous. For example, the whole class $\ws$ is not a Hausdorff-bounded family for any increasing and proper function $f$ (see also \Cref{ex:non-h-b-gw}).}

{The Hausdorff-boundedness property is not easy to  verify directly. We therefore  seek   conditions which imply it. To this end, we introduce the notion of  \emph{$h$-boundedness} for metric measure spaces which will turn out to imply Hausdorff-boundedness (cf. \Cref{prop:ex-f-bdd-family}).}

\noindent{\textbf{Note:} given a metric space $X$ and $\eps\geq 0$, we will henceforth use the symbol $B_\varepsilon^X(x)$ to denote the \emph{closed ball} $B^X_\eps(x)\coloneqq\{x'\in X:\,d_X(x,x')\leq \eps\}$ centered at $x$ with radius $\eps$. We abbreviate $B_\eps^X(x)$ to $B_\eps(x)$ whenever the underlying space $X$ is clear from the context.}

\begin{definition}[$h$-bounded metric measure spaces]\label{def:h-bdd-mms}
Let $h:[0,\infty)\rightarrow[0,1]$ be a \emph{strictly increasing and proper} function. For any given $\mathcal{X}=(X,d_X,\mu_X)\in\ws$, we say $\mathcal{X}$ is \emph{$h$-bounded}, if for any $x\in X$ and $\eps\geq 0$, $\mu_X\lc B^X_\eps(x)\rc\geq h(\eps)$.
\end{definition}

\begin{remark}[Influence of diameter on $h$-boundedness]
Since $\mu_X(B_0(x))\geq0=h(0)$ and $\mu_X(B_{D}(x))=1\geq h(D)$ for any $D\geq\diam(X)$ always hold, in the above definition one can restrict $\eps$ to the interval $(0,\diam(X)]$. 
\end{remark}

{Below we show that the doubling condition essentially implies $h$-boundedness.}

\begin{example}[Examples of $h$-bounded metric measure spaces]\label{ex:f-bdd-mms}
In this example, we present some common types of $h$-bounded metric measure spaces together with explicit constructions of $h$. Fix a metric measure space $\mathcal{X}=(X,d_X,\mu_X)$ with $\diam(X)\leq D$.
\begin{itemize}
    \item  We say $\mathcal{X}$ is \emph{$C$-doubling} for a constant $C>1$ if for any $x\in X$ and $\eps\geq 0$, we have $$\mu_X(B_{2\eps}(x))\leq C\cdot \mu_X(B_\eps(x)).$$ 
    Then, for any $x\in X$ and $0\leq \eps\leq D$,
    $$\mu_X(B_\eps(x))\geq C^{-1}\mu_X(B_{2\eps}(x))\geq\ldots\geq C^{-\log_2\left(\frac{D}{\eps}\right)-1}\mu_X\left(B_{2^{\log_2\left(\frac{D}{\eps}\right)+1}\eps}(x)\right)= C^{-\log_2\left(\frac{D}{\eps}\right)-1}.$$
    {The function $C^{-\log_2\left(\frac{D}{\cdot}\right)-1}:[0,D]\rightarrow[0,C^{-1}]$ is strictly increasing and proper. Since $C>1$, $C^{-\log_2\left(\frac{D}{\cdot}\right)-1}$ can be extended to a strictly increasing and proper function $h_\mathcal{X}:[0,\infty)\rightarrow[0,1]$. Then, $\mathcal{X}$ is $h_\mathcal{X}$-bounded.}
    
    \item Suppose $X$ is a finite set and let $\delta_\mathcal{X}\coloneqq\min\{\mu_X(x):\,x\in X\}>0$. Let $h_\mathcal{X}:[0,\infty)\rightarrow[0,1]$ be any strictly increasing and proper function such that $h_\mathcal{X}(\eps)\leq \delta_\mathcal{X}$ for all $\eps\in[0,D]$. Then, $\mathcal{X}$ is $h_\mathcal{X}$-bounded.
\end{itemize}
\end{example}

Given a compact metric measure space $\mathcal{X}$, if we define $h_\mathcal{X}^\mathrm{inf}:[0,\infty)\rightarrow[0,1]$ by $\eps\mapsto \inf_{x\in X}\mu_X(B_\eps(x))$ for each $\eps\in[0,\infty)$, then obviously we have for any $x\in X$ and $\eps\geq 0$ that $\mu_X(B_\eps(x))\geq h_\mathcal{X}^\mathrm{inf}(\eps)$. We of course cannot conclude directly that $\mathcal{X}$ is $h_\mathcal{X}^\mathrm{inf}$-bounded since $h_\mathcal{X}^\mathrm{inf}$ is not necessarily strictly increasing and proper. However, it turns out that a slightly modification of the construction of $h_\mathcal{X}^\mathrm{inf}$ gives rise to the following result:

\begin{lemma}\label{lm:compact-surjective}
For any $\mathcal{X}\in\ws$, there exists a strictly increasing and proper function $h_\mathcal{X}:[0,\infty)\rightarrow[0,1]$ such that $\mathcal{X}$ is $h_\mathcal{X}$-bounded.
\end{lemma}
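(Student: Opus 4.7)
The plan is to explicitly construct $h_\mathcal{X}$ by modifying the naive candidate $m(\eps) := h_\mathcal{X}^{\mathrm{inf}}(\eps) = \inf_{x\in X} \mu_X(B_\eps^X(x))$, which is increasing but in general fails to be strictly increasing and need not be continuous at $0$ (since $\mu_X$ may have atoms). The approach has two stages: first show that $m(\eps) > 0$ for every $\eps > 0$, and then perturb $m$ by a strictly increasing, continuous auxiliary factor that kills any flatness of $m$ as well as any jump of $m$ at $0$, while preserving the lower-bound property.

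For the positivity of $m$, I would argue by contradiction. Fix $\eps > 0$ and suppose $m(\eps) = 0$. Then there is a sequence $\{x_n\} \subseteq X$ with $\mu_X(B_\eps^X(x_n)) \to 0$. By compactness of $X$, we may pass to a subsequence with $x_n \to x^\ast \in X$. For $n$ sufficiently large, $d_X(x_n, x^\ast) < \eps/2$, and the triangle inequality yields the inclusion
$$\{y \in X : d_X(y, x^\ast) < \eps/2\} \subseteq B_\eps^X(x_n).$$
The set on the left is a nonempty open subset of $X$, so by full support of $\mu_X$ it has strictly positive measure, contradicting $\mu_X(B_\eps^X(x_n)) \to 0$. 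Hence $m(\eps) > 0$ whenever $\eps > 0$.

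For the construction of $h_\mathcal{X}$, define
$$h_\mathcal{X}(\eps) := \frac{\eps}{1+\eps} \cdot m\!\left(\frac{\eps}{2}\right), \qquad \eps \geq 0.$$
This is the product of the strictly increasing continuous factor $\eps/(1+\eps) \in [0,1)$ that vanishes at $0$, and the increasing factor $m(\eps/2) \in [0,1]$ that is strictly positive for $\eps > 0$. If $0 < \eps_1 < \eps_2$ then $\frac{\eps_1}{1+\eps_1} < \frac{\eps_2}{1+\eps_2}$ and $0 < m(\eps_1/2) \leq m(\eps_2/2)$, so multiplying gives $h_\mathcal{X}(\eps_1) < h_\mathcal{X}(\eps_2)$; combined with $h_\mathcal{X}(0) = 0$ this shows strict increase on $[0,\infty)$. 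Continuity at $0$ follows from $h_\mathcal{X}(\eps) \leq \eps/(1+\eps) \to 0$, so $h_\mathcal{X}$ is proper. Since $m(\eps/2) \leq m(\eps)$ and $\eps/(1+\eps) < 1$, one has $h_\mathcal{X}(\eps) \leq m(\eps) \leq \mu_X(B_\eps^X(x))$ for every $x \in X$, and clearly $h_\mathcal{X}(\eps) \in [0,1]$. Thus $\mathcal{X}$ is $h_\mathcal{X}$-bounded.

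The main obstacle is the positivity of $m$: without compactness, the sequence $\{x_n\}$ could escape to infinity with $\mu_X(B_\eps^X(x_n)) \to 0$, and without full support the limit point $x^\ast$ would not automatically produce a ball of positive measure. Once that single lemma is established, the multiplicative perturbation by $\eps/(1+\eps)$ is a routine device to simultaneously enforce strict monotonicity and continuity at the origin.
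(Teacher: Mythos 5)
Your proof is correct, but it realizes the lemma by a different route than the paper. The paper's proof sidesteps the infimum function $m(\eps)=\inf_{x\in X}\mu_X(B_\eps^X(x))$ (which it introduces as motivation under the name $h_\mathcal{X}^{\mathrm{inf}}$ but never analyzes directly): instead it fixes a nested sequence of finite $\tfrac{1}{2n}$-nets $X_n$, observes that $\eta_n\coloneqq\min_{x_n\in X_n}\mu_X\bigl(B_{1/(2n)}(x_n)\bigr)>0$ because the minimum is over finitely many balls each of positive measure by full support, notes that any ball $B_{1/n}(x)$ contains some net ball $B_{1/(2n)}(x_n)$, and then manufactures a strictly increasing proper $h_\mathcal{X}$ by piecewise-linear interpolation of a strictly decreasing sequence $\zeta_n\le\eta_n$ tending to $0$. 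You instead prove directly that $m(\eps)>0$ for every $\eps>0$ via sequential compactness and full support (the contradiction argument with $x_n\to x^\ast$ and the open $\tfrac{\eps}{2}$-ball around $x^\ast$ is sound, since $d(y,x^\ast)<\eps/2$ and $d(x_n,x^\ast)<\eps/2$ give $y\in B_\eps(x_n)$), and then repair the possible flatness and jump at $0$ of $m$ by the multiplicative factor $\eps/(1+\eps)$. Both arguments use exactly the same two hypotheses (compactness and full support), but yours is the more direct implementation of the paper's own stated motivation, and the monotonicity/properness fix by multiplication is cleaner than the paper's interpolation of a hand-picked sequence; the paper's net-based version has the mild advantage of only ever needing positivity of finitely many ball measures at each scale rather than a uniform infimum. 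One cosmetic remark: the factor $m(\eps/2)$ could be replaced by $m(\eps)$ without affecting anything, since $h_\mathcal{X}(\eps)\le m(\eps)\le\mu_X(B_\eps^X(x))$ already holds; the halving is harmless but unnecessary.
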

\begin{proof}
Without loss of generality, we assume that $\diam(X)=1$. For each $n\in\mathbb{N}$, let $X_n$ be a finite $\frac{1}{2n}$-net of $X$ such that $X_1\subseteq X_2\subseteq\ldots.$ Let $\eta_n\coloneqq\inf_{x_n\in X_n}\mu_X\left(B_\frac{1}{2n}(x_n)\right)$. Since $X_n$ is finite, the infimum is obtained and $\eta_n>0$ for $n= 1,\ldots.$ Obviously, $1\geq\eta_1\geq \eta_2\geq \ldots>0$. Then, we choose any strictly decreasing positive sequence $1\geq\zeta_1>\zeta_2>\ldots$ such that $\lim_{n\rightarrow\infty}\zeta_n=0$ and $\zeta_n\leq \eta_n$. Such a sequence obviously exists.

Define a function $f:\left\{\frac{1}{n}:n=1,2\ldots\right\}\rightarrow\mathbb{R}$ by mapping $\frac{1}{n}$ to $\zeta_{n+1}$ for $n=1,\ldots$. We extend $f$ to a new function $g:[0,1]\rightarrow[0,\zeta_2]$ by linearly interpolating $f$ inside the intervals $\left[\frac{1}{n+1},\frac{1}{n}\right]$ for $n\in\mathbb{N}$ and by letting $g(0)\coloneqq 0$. Now, let $\hat g:[1,\infty)\rightarrow[\zeta_2,1]$ be any strictly increasing function. Then, we extend $g$ to $h_\mathcal{X}:[0,\infty)\rightarrow[0,1]$ as follows:
$$h_\mathcal{X}(\eps) = \begin{cases} g(\eps),&\eps\in[0,1]\\
\hat g(\eps),&\eps\in(1,\infty)
\end{cases}.$$
Then, it is easy to check that $h_\mathcal{X}$ is strictly increasing and proper. Moreover,
$$h_\mathcal{X}\left(\eps\right)\leq \zeta_{n+1}\quad\forall n\in\mathbb{N},\forall \eps\in\left(\frac{1}{n+1},\frac{1}{n}\right].$$

Now, for any $x\in X$, there exists $x_n\in X_n$ such that $d_X(x,x_n)\leq \frac{1}{2n}$. Then, $B_\frac{1}{2n}(x_n)\subseteq B_\frac{1}{n}(x)$. Thus, $\mu_X\left(B_\frac{1}{n}(x)\right)\geq \mu_X\left(B_\frac{1}{2n}(x_n)\right)\geq \zeta_n$. Since for any $\eps\in(0,1]$, there exists $n\in\mathbb{N}$ such that $\frac{1}{n+1}< \eps\leq \frac{1}{n}$, we obtain
$$\mu_X\left(B_\eps(x)\right)\geq \mu_X\left(B_\frac{1}{n+1}(x)\right)\geq \zeta_{n+1}\geq h_\mathcal{X}\left(\eps\right).$$ 

Therefore, $\mathcal{X}$ is $h_\mathcal{X}$-bounded.
\end{proof}

We say that a family $\mathcal{F}\subseteq\ws$ of compact metric measure spaces is \emph{uniformly $h$-bounded} for some strictly increasing and proper $h:[0,\infty)\rightarrow[0,1]$ if every $\mathcal{X}\in\mathcal{F}$ is $h$-bounded. 

Since each $\mathcal{X}\in\ws$ is $h_\mathcal{X}$-bounded for some strictly increasing and proper $h_\mathcal{X}$, the one-element family $\mathcal{F}\coloneqq\{\mathcal{X}\}$ is obviously uniformly $h_\mathcal{X}$-bounded. Moreover, any finite family $\mathcal{F}$ is uniformly $h$-bounded where $h\coloneqq\min_{\mathcal{X}\in \mathcal{F}} h_\mathcal{X}$. However for an infinite family $\mathcal{F}$, it may not be true that one can find a uniform strictly increasing and proper $h$ for $\mathcal{F}$; see the example below:

\begin{example}[An example of non-uniformly $h$-bounded family]
For $n\in\mathbb{N}$, denote by $\Delta_n$ the $n$-point space with interpoint distance 1. Endow $\Delta_n$ with uniform probability measure (denoted by $\mu_n$) and denote the corresponding metric measure space by $\tilde{\Delta}_n=(\Delta_n,d_n,\mu_n)$. Let $\mathcal{F}=\{\tilde{\Delta}_n:\,n\in\mathbb N\}$. Then, there is no strictly increasing and proper function $h$ such that $\mathcal{F}$ is uniformly $h$-bounded. Indeed, we otherwise assume that $\mathcal{F}$ is uniformly $h$-bounded for some strictly increasing and proper function $h$. Then, for each $\tilde{\Delta}_n$, we pick $x_n\in \Delta_n$ and $\eps=\frac{1}{2}$. Since $\tilde{\Delta}_n$ is $h$-bounded, we have that
$$\mu_n\lc B_\frac{1}{2}(x_n)\rc=\mu_n(\{x_n\})=\frac{1}{n}\geq h\lc\frac{1}{2}\rc>0.$$
Then, this implies that $\frac{1}{n}>h\lc\frac{1}{2}\rc>0$ holds for all $n\in\mathbb N$, which is impossible.
\end{example}

The following result reveals a connection between uniform $h$-boundedness and Hausdorff-boundedness.

\begin{proposition}\label{prop:ex-f-bdd-family}
{Fix $p\in[1,\infty)$. Let $h:[0,\infty)\rightarrow[0,1]$ be a strictly increasing and proper function and let $\mathcal{F}$ be a family of $h$-bounded metric measure spaces. Then, $\mathcal{F}$ is $\tilde{h}^{-1}$-Hausdorff-bounded, where $\tilde{h}:[0,\infty)\rightarrow[0,\infty)$ is defined by $t\mapsto \frac{t}{2}\cdot h^\frac{1}{p}\lc\frac{t}{2}\rc$ for each $t\in[0,\infty)$.}
\end{proposition}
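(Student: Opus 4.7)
The plan is to extract a single point $x_0\in X$ that witnesses the Hausdorff distance from $X$ to $Y$, use the $h$-boundedness of $\mathcal X$ to guarantee a nontrivial ball of $\mu_X$-mass around it, observe that this ball is automatically ``far'' from $Y$ by the triangle inequality, and then read off a lower bound on any Wasserstein coupling. Concretely, fix $\mathcal X,\mathcal Y\in\mathcal F$ with isometric embeddings $\varphi_X:X\hookrightarrow Z$ and $\varphi_Y:Y\hookrightarrow Z$, set $R\coloneqq\dH^Z(\varphi_X(X),\varphi_Y(Y))$ and $W\coloneqq\dW{p}^Z((\varphi_X)_\#\mu_X,(\varphi_Y)_\#\mu_Y)$, and aim to prove $R\leq\tilde h^{-1}(W)$. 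The case $R=0$ is trivial, so assume $R>0$. Identifying $X$ and $Y$ with their images in $Z$, compactness guarantees that the sups in $\dH^Z(X,Y)=\max(\sup_{x\in X}d_Z(x,Y),\sup_{y\in Y}d_Z(y,X))$ are attained, and by symmetry (swapping the roles of $\mathcal X$ and $\mathcal Y$ if necessary) I may choose $x_0\in X$ with $d_Z(x_0,Y)=R$.

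With this setup, the main step is a one-line transport estimate. First, any $z\in B_{R/2}^Z(x_0)$ satisfies $d_Z(z,Y)\geq R-R/2=R/2$ by the triangle inequality, since $d_Z(x_0,y)\geq R$ for every $y\in Y$. Second, because $\varphi_X$ is an isometric embedding, $(\varphi_X)_\#\mu_X(B_{R/2}^Z(x_0))=\mu_X(B_{R/2}^X(x_0))\geq h(R/2)$ by $h$-boundedness. Hence, for any coupling $\pi\in\mathcal C((\varphi_X)_\#\mu_X,(\varphi_Y)_\#\mu_Y)$, whose second marginal is concentrated on $\varphi_Y(Y)$,
\[
\int_{Z\times Z} d_Z^p(z_1,z_2)\,d\pi \;\geq\; \int_{B_{R/2}^Z(x_0)\times Z} d_Z^p(z_1,z_2)\,d\pi \;\geq\; \Bigl(\tfrac{R}{2}\Bigr)^{\!p} h\Bigl(\tfrac{R}{2}\Bigr).
\]
Taking the $p$-th root and then the infimum over $\pi$ yields $W\geq (R/2)\,h^{1/p}(R/2)=\tilde h(R)$.

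To finish I would check that $\tilde h$ has the properties needed to invert it via \Cref{prop:increasing-prop}. Strict monotonicity follows because $\tilde h$ is a product of two nonnegative strictly increasing functions of $t$ that both vanish at $t=0$; properness follows from $\tilde h(0)=0$ together with the continuity of $h$ at $0$; and $\tilde h$ is unbounded because $h^{1/p}(t/2)$ is bounded below away from $0$ for large $t$ while $t/2\to\infty$. Then \Cref{prop:increasing-prop}(2) ensures $\tilde h^{-1}$ is finite-valued on $[0,\infty)$, and \Cref{prop:increasing-prop}(4) converts the bound $\tilde h(R)\leq W$ into $R\leq\tilde h^{-1}(W)$, which is precisely $\tilde h^{-1}$-Hausdorff-boundedness of $\mathcal F$.

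The main obstacle is organizational rather than technical: one must carefully track the identifications under $\varphi_X$ and $\varphi_Y$ so that the intrinsic ball $B_{R/2}^X(x_0)\subseteq X$ on which $h$-boundedness is stated pushes forward to the ambient ball $B_{R/2}^Z(x_0)\subseteq Z$ that appears in the Wasserstein estimate, and verify that $\tilde h$ really is strictly increasing (so that part (4) of \Cref{prop:increasing-prop}, not merely part (3), applies). The geometric content itself is short: one witnessing point, one ball, one triangle inequality.
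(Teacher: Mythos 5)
Your proposal is correct and follows essentially the same route as the paper's proof: pick a witnessing point $x_0$ for the Hausdorff distance, use $h$-boundedness to lower-bound the mass of the ball $B_{\eta/2}(x_0)$, note that this ball is at distance at least $\eta/2$ from $Y$, and integrate to get $\dW{p}^Z\geq\tilde h(\eta)$, then invert via \Cref{prop:increasing-prop}. The only cosmetic differences are that the paper integrates an optimal coupling over $B_{\eta/2}^X(x_0)\times Y$ rather than bounding an arbitrary coupling over $B_{\eta/2}^Z(x_0)\times Z$ and taking an infimum, and that you spell out the verification that $\tilde h$ is strictly increasing, proper, and unbounded, which the paper largely asserts.
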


\begin{proof}
Let $\mathcal{X},\mathcal{Y}\in \mathcal{F}$. Suppose that $Z\in\ms$ and that there exist isometric embeddings $X\hookrightarrow Z$ and $Y\hookrightarrow Z$. For notational simplicity, we still denote by $\mu_X$ and $\mu_Y$ their respective pushforwards under the respective isometric embeddings. Let $\rho \coloneqq \dW p^Z(\mu_X,\mu_Y)$ and $\eta\coloneqq\dH^{Z}(X,Y)$. Assume that $\eta>0$ since the case $\eta=0$ is trivial. Then, by compactness of $X$ and $Y$, there exists $x_0\in X$ and $y_0\in Y$ such that $d_Z(x_0,y_0)=\dH^Z(X,Y)=\eta.$ Without loss of generality, we assume that 
$$d_Z(x_0,y_0)=d_Z(x_0,Y)\coloneqq\inf\{d_Z(x_0,y):\,y\in Y\}. $$
Then, consider the closed ball $B_{\frac{\eta}{2}}^X(x_0)$ in $X$, and let $x\in B_{\frac{\eta}{2}}^X(x_0)$. By the triangle inequality, for any $y\in Y$ we have that
$$d_Z(x,y)\geq d_Z(x_0,y) - d_Z(x_0, x)\geq d_Z(x_0,y_0)-d_X(x_0,x)\geq \eta-\frac{\eta}{2}=\frac{\eta}{2}. $$
Let $\mu\in\mathcal{C}^\mathrm{opt}_p(\mu_X,\mu_Y)$ be an optimal coupling, then we have that
\begin{align*}
    \delta &= \dW p^Z(\mu_X,\mu_Y)\\
    &=\lc\int_{X\times Y}\lc d_Z(x,y)\rc^pd\mu(x,y)\rc^\frac{1}{p}\\
    &\geq \lc\int_{B_{\frac{\eta}{2}}^X(x_0)\times Y}\lc d_Z(x,y)\rc^pd\mu(x,y)\rc^\frac{1}{p}\\
    &\geq \lc\int_{B_{\frac{\eta}{2}}^X(x_0)\times Y}\lc \frac{\eta}{2}\rc^pd\mu(x,y)\rc^\frac{1}{p}\\
    &=\frac{\eta}{2}\cdot\lc\mu\lc B_{\frac{\eta}{2}}^X(x_0)\times Y\rc\rc^\frac{1}{p}\\
    &=\frac{\eta}{2}\cdot\lc\mu_X\lc B_{\frac{\eta}{2}}^X(x_0)\rc\rc^\frac{1}{p}\\
    &\geq \frac{\eta}{2}\cdot h^\frac{1}{p}\lc\frac{\eta}{2}\rc\\
    &=\tilde{h}(\eta)
\end{align*}

Obviously, the function $\tilde{h}:[0,\infty)\rightarrow[0,\infty)$ defined by $t\mapsto \frac{t}{2}\cdot h^\frac{1}{p}\lc\frac{t}{2}\rc$ is strictly increasing and proper. By {item 4} of \Cref{prop:increasing-prop}, we have that
$$\dH^Z(X,Y)=\eta\leq \tilde{h}^{-1}(\delta)=\tilde{h}^{-1}\lc\dW p^Z(\mu_X,\mu_Y)\rc. $$

By {items 1 and 5} of \Cref{prop:increasing-prop} we have that $\tilde{h}^{-1}$ is increasing and proper. Moreover, it is easy to see that $\lim_{t\rightarrow\infty}\tilde{h}(t)=\infty$. This implies that $\tilde{h}^{-1}$ is a finite function $\tilde{h}^{-1}:[0,\infty)\rightarrow[0,\infty)$ (cf. {item 2} of \Cref{prop:increasing-prop}).  
Thus, $\mathcal{F}$ is $\tilde{h}^{-1}$-Hausdorff-bounded.
\end{proof}

For an $\ell^p$-Gromov-Wasserstein geodesic $\gamma:[0,1]\rightarrow\ws$, we say that $\gamma$ is Hausdorff-bounded if the family $\{\gamma(t)\}_{t\in[0,1]}$ is Hausdorff-bounded. The family of Hausdorff-bounded Gromov-Wasserstein geodesics is rich and we present two examples as follows.

\begin{proposition}[Geodesics consisting of finite spaces]\label{prop:f-bdd-finite}
Fix $p\in[1,\infty)$. Let $\gamma$ be an $\ell^p$-Gromov-Wasserstein geodesic. Assume that there exist a positive integer $N$, a constant $C>0$ and a constant $D>0$ such that for each $t\in[0,1]$, 
\begin{enumerate}
    \item the cardinality of $\gamma(t)$ is bounded above by $N$;
    \item for any $x\in \gamma(t)$, $\mu_t(\{x\})\geq C$;
    \item $\diam(\gamma(t))\leq D$.
\end{enumerate}
Then, $\gamma$ is Hausdorff-bounded.
\end{proposition}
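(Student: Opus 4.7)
The plan is to reduce to \Cref{prop:ex-f-bdd-family} by showing that the family $\{\gamma(t)\}_{t\in[0,1]}$ is uniformly $h$-bounded for some strictly increasing and proper function $h:[0,\infty)\to[0,1]$. Note that condition 1 is actually a consequence of condition 2, since $\mu_t(\{x\})\geq C$ for every $x\in \gamma(t)$ forces $|\gamma(t)|\leq 1/C$; so the real content comes from the uniform lower bound $C$ on point masses together with the uniform diameter bound $D$. These two uniform bounds will make it easy to produce a single $h$ that works simultaneously for every $t\in[0,1]$.

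The first step is to construct $h$. I would pick any strictly increasing and proper function $h:[0,\infty)\to[0,1]$ satisfying $h(\eps)\leq C$ for all $\eps\in[0,D]$; for instance, let $\lambda\coloneqq -\log(1-C)/D$ and define
\[
h(\eps)\coloneqq 1-e^{-\lambda \eps},\qquad \eps\in[0,\infty).
\]
Then $h$ is continuous, strictly increasing, $h(0)=0$, $h(D)=C$, and $h(\eps)<1$ for all $\eps\in[0,\infty)$, so in particular $h$ takes values in $[0,1]$ and is proper in the sense of \Cref{def:proper-function}.

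The second step is to verify uniform $h$-boundedness. Fix $t\in[0,1]$, $x\in X_t$, and $\eps\geq 0$. If $\eps\leq D$, then condition 2 yields
\[
\mu_t\bigl(B_\eps^{X_t}(x)\bigr)\geq \mu_t(\{x\})\geq C\geq h(\eps).
\]
If $\eps>D$, then condition 3 gives $B_\eps^{X_t}(x)=X_t$, so $\mu_t(B_\eps^{X_t}(x))=1\geq h(\eps)$. Hence each $\gamma(t)$ is $h$-bounded in the sense of \Cref{def:h-bdd-mms}, and since the same $h$ works for every $t$, the family $\{\gamma(t)\}_{t\in[0,1]}$ is uniformly $h$-bounded.

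The final step is to invoke \Cref{prop:ex-f-bdd-family}, which says that a uniformly $h$-bounded family is $\tilde{h}^{-1}$-Hausdorff-bounded with $\tilde{h}(t)=\tfrac{t}{2}\,h^{1/p}(t/2)$. Applying this to the $h$ constructed above yields a proper increasing function $f\coloneqq \tilde{h}^{-1}$ certifying that $\gamma$ is Hausdorff-bounded, completing the proof. There is no serious obstacle here: the statement is essentially a corollary of \Cref{prop:ex-f-bdd-family} once one observes that the combinatorial hypotheses translate into a uniform $h$-bound, with the only mild care needed being to ensure that $h$ is globally strictly increasing and proper on $[0,\infty)$ rather than only on $[0,D]$.
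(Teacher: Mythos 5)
Your proof is correct and follows essentially the same route as the paper: the paper likewise observes (via the finite-space case of \Cref{ex:f-bdd-mms}) that the uniform mass lower bound $C$ and diameter bound $D$ yield a single strictly increasing proper $h$ with $h(\eps)\leq C$ on $[0,D]$ making every $\gamma(t)$ $h$-bounded, and then invokes \Cref{prop:ex-f-bdd-family}. The only cosmetic caveat is that your explicit formula $\lambda=-\log(1-C)/D$ degenerates in the trivial case $C=1$ (one-point spaces), where any strictly increasing proper $h:[0,\infty)\to[0,1]$ works.
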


\begin{proof}
As mentioned earlier in \Cref{ex:f-bdd-mms}, there exists a strictly increasing and proper function $h:[0,\infty)\rightarrow[0,1]$ such that  for each $t\in[0,1]$, $\gamma(t)$ is $h$-bounded. Then, by \Cref{prop:ex-f-bdd-family}, $\{\gamma(t)\}_{t\in[0,1]}$ is Hausdorff-bounded.
\end{proof}

For any two given compact metric measure spaces, Sturm constructed in \cite{sturm2020email} an $\ell^p$-Gromov-Wasserstein geodesic connecting them, which we call a \emph{straight-line $\dgws p$ geodesic}: 

\begin{theorem}[Straight-line $\dgws p$ geodesic \cite{sturm2020email}]\label{thm:straight-line-gw-geo}
Fix $p\in[1,\infty)$ and let $\mathcal{X},\mathcal{Y}\in\ws$. Let $Z\in\ms$ be such that there exist isometric embeddings $X\hookrightarrow Z$ and $Y\hookrightarrow Z$ such that
$$\dgws{p}\left(\mathcal{X},\mathcal{Y}\right)= \dW{p}^Z\left(\mu_X,\mu_Y\right), $$
whose existence is guaranteed by \Cref{lm:dgw_w-realizable}.
Let $\mu\in\mathcal{C}_p^\mathrm{opt}(\mu_X,\mu_Y)$ be any optimal coupling between $\mu_X$ and $\mu_Y$ with respect to $\dW{p}^Z$. Then, the curve $\gamma_\mu:[0,1]\rightarrow\ws$ defined as follows is an $\ell^p$-Gromov-Wasserstein geodesic:
$$\gamma_{\mu}(t)\coloneqq\begin{cases}\mathcal{X},& t=0\\
\left(S,d_{t},\mu\right), & t\in(0,1)\\
\mathcal{Y}, &t=1
\end{cases}$$
where $S\coloneqq\supp(\mu)\subseteq X\times Y $ and $d_{t}\left(\left(x,y\right),\left(x',y'\right)\right)\coloneqq\left(1-t\right)\,d_X\left(x,x'\right)+t\,d_Y\left(y,y'\right)$ for any $(x,y),(x',y')\in S$. We call $\gamma_\mu$ a \emph{straight-line $\dgws p$ geodesic}\footnote{Our definition is slightly different from the one  given in \cite{sturm2020email}: under our notation, for each $t\in[0,1]$, \cite{sturm2020email} defines a metric measure space $\gamma_\mu'(t)\coloneqq(Z\times Z,d_t',\mu)$ where $d_t'$ is defined by $d_t'((z_1,z_2),(z_1',z_2'))\coloneqq(1-t)d_Z(z_1,z_1')+t\,d_Z(z_2,z_2')$ for any $z_i,z_i'\in Z$ where $i=1,2$. Note that our geodesic $\gamma_\mu(t)$ in \Cref{thm:straight-line-gw-geo} can then be obtained by simply restricting $\gamma_\mu'(t)$ to the support of $\mu$.}.
\end{theorem}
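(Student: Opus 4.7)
The plan is to establish the two-sided bound $\dgws{p}(\gamma_\mu(s),\gamma_\mu(t)) = |s-t|\cdot\rho$, where $\rho\coloneqq\dgws{p}(\mathcal{X},\mathcal{Y})$, for all $s,t\in[0,1]$. The $\geq$ direction is a standard telescoping argument: from the triangle inequality for $\dgws{p}$ together with $\dgws{p}(\gamma_\mu(0),\gamma_\mu(1))=\rho$, any upper bound of the form $\dgws{p}(\gamma_\mu(s),\gamma_\mu(t))\leq |s-t|\rho$ is forced to be an equality. Hence the entire proof reduces to (i)~verifying that $\gamma_\mu(t)\in\ws$ for each $t\in[0,1]$, and (ii)~establishing the upper bound. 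Step (i) is routine: for $t\in(0,1)$, $d_t$ is a metric on $S$ because both convex coefficients are strictly positive (so $d_t=0$ forces $d_X(x,x')=0$ and $d_Y(y,y')=0$); $(S,d_t)$ is compact since $S$ is closed in the compact product $(X\times Y,\max(d_X,d_Y))$; and $\mu$ has full support on $S=\supp(\mu)$ by construction. The endpoint values $\gamma_\mu(0)=\mathcal{X}$ and $\gamma_\mu(1)=\mathcal{Y}$ hold by definition.

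For step (ii), I would invoke the metric coupling formulation of $\dgws{p}$ (\Cref{rmk:metric coupling}): it suffices to construct a single metric on the disjoint union $S_s\sqcup S_t$ of two labelled copies of $S$ that restricts to $d_s$ and $d_t$ on the respective copies, together with a coupling of $\mu$ with itself whose $\ell^p$-cost is at most $|s-t|\rho$. For $s,t\in(0,1)$, define
$$D_{s,t}\bigl((x,y)_s,(x',y')_t\bigr) \coloneqq \inf_{(u,v)\in S}\Bigl[d_s\bigl((x,y),(u,v)\bigr) + |s-t|\cdot d_Z(u,v) + d_t\bigl((u,v),(x',y')\bigr)\Bigr],$$
and set $D_{s,t}$ equal to $d_s$ on $S_s\times S_s$ and to $d_t$ on $S_t\times S_t$. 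Taking $(u,v)=(x,y)$ in the infimum yields the crucial diagonal estimate $D_{s,t}((x,y)_s,(x,y)_t)\leq |s-t|\cdot d_Z(x,y)$; consequently the identity coupling $(\mathrm{id},\mathrm{id})_\#\mu$ between the two copies of $\mu$ has $\ell^p$-Wasserstein cost at most
$$\Bigl(\int_S |s-t|^p\,d_Z(x,y)^p\,d\mu(x,y)\Bigr)^{1/p} = |s-t|\cdot\dW{p}^Z(\mu_X,\mu_Y) = |s-t|\cdot\rho,$$
using the optimality of $\mu$. The endpoint cases $s=0$ or $t=1$ are treated by a variant of this construction: to couple $\mathcal{X}$ with $\gamma_\mu(t)$ for $t\in(0,1]$, one pairs $x\in X$ with $(x,y)_t$ via the projection pushforward $(p_X,\mathrm{id})_\#\mu$ (where $p_X:S\to X$ is the canonical projection) and uses the analogous infimum formula, yielding the diagonal estimate $D(x,(x,y)_t)\leq t\cdot d_Z(x,y)$.

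The main technical point is verifying that $D_{s,t}$ is a legitimate (pseudo-)metric on $S_s\sqcup S_t$, which ultimately reduces to a single four-point inequality. All cases of the triangle inequality either follow from the standard ``infimum composition'' intrinsic to the cross-distance formula, or depend on the estimate
$$\bigl|d_X(x,x')-d_Y(y,y')\bigr| \leq d_Z(x,y)+d_Z(x',y'),$$
which is immediate from two applications of the triangle inequality in $Z$ (for instance, $d_X(x,x')=d_Z(x,x')\leq d_Z(x,y)+d_Y(y,y')+d_Z(x',y')$ gives $d_X(x,x')-d_Y(y,y')\leq d_Z(x,y)+d_Z(x',y')$, and the reverse direction is symmetric). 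Any residual nondegeneracy issue from $D_{s,t}$ potentially being only a pseudo-metric can be resolved by passing to its metric quotient; since this operation can only decrease Wasserstein costs, the upper bound on $\dgws{p}(\gamma_\mu(s),\gamma_\mu(t))$ is preserved.
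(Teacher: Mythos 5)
Your proposal is correct. Note that the paper itself supplies no proof of this theorem: it is quoted from Sturm's communication \cite{sturm2020email}, so there is no in-text argument to compare against, and your write-up actually fills that gap. Your route is the natural one and matches the paper's general toolkit: reduce to the upper bound via the telescoping/triangle-inequality argument, then exhibit, for each pair $s,t$, an explicit metric coupling (\Cref{rmk:metric coupling}) on $S_s\sqcup S_t$ together with the identity transport plan $(\mathrm{id},\mathrm{id})_\#\mu$, whose diagonal cost $|s-t|\,d_Z(x,y)$ integrates to exactly $|s-t|\,\dW{p}^Z(\mu_X,\mu_Y)=|s-t|\rho$ by optimality of $\mu$. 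The only genuinely nontrivial point is the triangle inequality for $D_{s,t}$ in the configuration with both endpoints in one copy and the intermediate point in the other; this reduces, exactly as you indicate, to $d_s((u,v),(u',v'))-d_t((u,v),(u',v'))=(t-s)\lc d_X(u,u')-d_Y(v,v')\rc\leq|s-t|\lc d_Z(u,v)+d_Z(u',v')\rc$, which is your two-triangle-inequality estimate in $Z$; the remaining cases follow from infimum composition. Your handling of the endpoints via $(p_X,\mathrm{id})_\#\mu$ and of possible degeneracy by passing to the metric quotient (which identifies no two points within a single copy, since $d_s$ and $d_t$ are genuine metrics there) is also sound. This is essentially the same mechanism the paper uses in \Cref{app:d-b-geodesic} to verify that its deviant and branching curves are $\dgws{p}$ geodesics, so your argument is consistent in style with the rest of the paper.
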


Now, we use \Cref{prop:ex-f-bdd-family} to show the following:
\begin{proposition}\label{prop:strline-gw-h-bdd}
Given $p\in[1,\infty)$, any straight-line $\dgws p$ geodesic is Hausdorff-bounded.
\end{proposition}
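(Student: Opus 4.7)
The plan is to show that the family $\{\gamma_\mu(t)\}_{t\in[0,1]}$ is uniformly $h$-bounded (cf. \Cref{def:h-bdd-mms}) by a single strictly increasing and proper function $h:[0,\infty)\to[0,1]$, and then to invoke \Cref{prop:ex-f-bdd-family} to conclude Hausdorff-boundedness. Strikingly, the same $h$ will cover every $t\in[0,1]$; it will arise from applying \Cref{lm:compact-surjective} to the single fixed metric measure space $(S,d_{X\times Y},\mu)$, where $d_{X\times Y}\coloneqq\max(d_X,d_Y)$ is the $\ell^\infty$ product metric on $S\subseteq X\times Y$.

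The key observation is the pointwise bound $d_t\leq d_{X\times Y}$ on $S\times S$, which follows from the elementary convexity inequality $(1-t)a+tb\leq\max(a,b)$ for $a,b\geq 0$ and $t\in[0,1]$. Hence, for each $t\in(0,1)$, each $(x,y)\in S$, and each $\eps\geq 0$, one has the ball inclusion $B_\eps^{(S,d_t)}(x,y)\supseteq B_\eps^{(S,d_{X\times Y})}(x,y)$, and therefore
\[\mu\bigl(B_\eps^{(S,d_t)}(x,y)\bigr)\geq \mu\bigl(B_\eps^{(S,d_{X\times Y})}(x,y)\bigr)\geq h(\eps),\]
where $h$ is the function supplied by \Cref{lm:compact-surjective} applied to $(S,d_{X\times Y},\mu)$. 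This shows that $\gamma_\mu(t)$ is $h$-bounded for every $t\in(0,1)$.

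For the endpoints we use the very same $h$. Since $\mathcal{X}$ has full support and $\mu_X=(\pi_X)_\#\mu$, compactness of $S$ gives $\pi_X(S)=X$. For any $x\in X$, choose $y\in Y$ with $(x,y)\in S$; the inclusion $\pi_X^{-1}\bigl(B_\eps^X(x)\bigr)\supseteq B_\eps^{(S,d_{X\times Y})}(x,y)$ yields $\mu_X\bigl(B_\eps^X(x)\bigr)\geq h(\eps)$, and analogously $\mu_Y\bigl(B_\eps^Y(y)\bigr)\geq h(\eps)$ using $\pi_Y$. Thus the whole family $\{\gamma_\mu(t)\}_{t\in[0,1]}$ is $h$-bounded and \Cref{prop:ex-f-bdd-family} gives Hausdorff-boundedness. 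I anticipate no serious technical obstacle; the heart of the argument is the convexity bound $d_t\leq d_{X\times Y}$, which reduces the uniform-in-$t$ lower bound on ball measures to a single application of \Cref{lm:compact-surjective} on the fixed auxiliary space $(S,d_{X\times Y},\mu)$.
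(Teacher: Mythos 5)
Your proof is correct and follows essentially the same route as the paper's: both hinge on the inclusion $B_\eps^{(S,d_t)}(z_0)\supseteq B_\eps^{(S,\max(d_X,d_Y))}(z_0)$ (i.e., $d_t\leq\max(d_X,d_Y)$) and on applying \Cref{lm:compact-surjective} to the fixed space $(S,\max(d_X,d_Y),\mu)$, then concluding via \Cref{prop:ex-f-bdd-family}. The only (harmless) difference is at the endpoints: the paper applies \Cref{lm:compact-surjective} separately to $\mathcal{X}$ and $\mathcal{Y}$ and takes $h\coloneqq\min(h_\mathcal{X},h_\mathcal{Y},h_\mathcal{S})$, whereas you derive the endpoint bounds from the same $h_\mathcal{S}$ via the projections $\pi_X,\pi_Y$ and surjectivity of the marginals onto the (full) supports, which is a slightly tidier packaging of the same idea.
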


\begin{proof}
Let $\mathcal{X},\mathcal{Y}\in\ws$ and let $Z\in\ms$ be the ambient space required in \Cref{thm:straight-line-gw-geo}. Let $\mu\in\mathcal{C}_p^\mathrm{opt}(\mu_X,\mu_Y)$ be an optimal coupling for $\dW p^Z$. By \Cref{lm:compact-surjective}, there exist $h_\mathcal{X}$ and $h_\mathcal{Y}$ such that $\mathcal{X}$ is $h_\mathcal{X}$-bounded and $\mathcal{Y}$ is $h_\mathcal{Y}$-bounded. Now, consider $\mathcal{S}\coloneqq(S,d_S\coloneqq\max(d_X,d_Y),\mu)$, where $S\coloneqq\supp(\mu)$. $\mathcal{S}$ is a compact metric measure space and $\mu$ is fully supported. By \Cref{lm:compact-surjective} again, there exists $h_\mathcal{S}$ such that $\mathcal{S}$ is $h_\mathcal{S}$-bounded.

Now, pick any $z_0=(x_0,y_0)\in S=\supp(\mu)$. Then, for any $t\in(0,1)$ and for any $\eps>0$, we have that
\begin{align*}
    B_\eps^{\gamma_\mu(t)}(z_0)&\coloneqq\{(x,y)\in S:\,d_t((x,y),(x_0,y_0))\leq \eps\}\\
    &\supseteq \lc B_\eps^X(x_0)\times B_\eps^Y(y_0)\rc\cap S\\
    &=B_\eps^{(S,d_S)}(z_0)
\end{align*}
Therefore,  
$$\mu\lc B_\eps^{\gamma_\mu(t)}(z_0)\rc\geq \mu\lc B_\eps^{(S,d_S)}(z_0)\rc \geq h_\mathcal{S}(\eps).$$
Let $h\coloneqq\min(h_\mathcal{X},h_\mathcal{Y},h_\mathcal{S})$, then we have that $\gamma_\mu(t)$ is $h$-bounded for any $t\in[0,1]$. Then, by \Cref{prop:ex-f-bdd-family}, we have that $\gamma_\mu$ is Hausdorff-bounded.
\end{proof}

\begin{remark}[Deviant and branching GW geodesics]\label{rmk:deviant and branching GW}
Analogously to the case of Gromov-Hausdorff geodesics (cf. \cite[Section 1.1]{chowdhury2018explicit}), for each $p\in[1,\infty)$ there exist \emph{deviant} geodesics in $\Gamma_\mathcal{W}^p$: that is, geodesics which are not straight-line $\dgws p$ geodesics. Furthermore, there also exist \emph{branching} geodesics. We provide such constructions in \Cref{app:d-b-geodesic} where we also show that these exotic geodesics are Hausdorff-bounded. In analogy with the case of $(\ms,\dgh)$, 
\begin{enumerate}
    \item the existence of branching geodesics shows that $\left(\ws,\dgws p\right)$ is not an Alexandrov space with curvature bounded below \cite[Chapter 10]{burago2001course};
    \item the existence of deviant (non-unique) geodesics shows that $\left(\ws,\dgws p\right)$ is also not a $\mathrm{CAT}(\kappa)$ space with curvature bounded above by some $\kappa\in\mathbb R$ \cite[Chapter 2.1]{bridson2013metric}.
\end{enumerate}
\end{remark}

\paragraph{Proof of \Cref{thm:bdd-geo-w-real}.} We now deal with the proof of \Cref{thm:bdd-geo-w-real}. {The proof consists of two parts: first we carefully approximate a given Hausdorff bounded geodesic $\gamma$ by a convergent sequence of Wasserstein realizable geodesics; then, this carefully chosen convergent sequence allows us to utilize the Hausdorff-boundedness condition on $\gamma$ so that we can follow a strategy analogous to the one used for proving \Cref{thm:main-h-realizable} to now show that this convergent sequence has a Wasserstein realizable limit, which must agree with $\gamma$.}

\thmbddgeo*

\begin{proof}
Let $\gamma:[0,1]\rightarrow\ms^w$ be a Hausdorff-bounded $\ell^p$-Gromov-Wasserstein geodesic. For each $t\in[0,1]$, we write $\gamma(t)=(X_t,d_{t},\mu_t)$. Assume that $\rho\coloneqq\dgws{1}\left(\gamma\left(0\right),\gamma\left(1\right)\right)>0$. Let $T^n=\{t_i^n\coloneqq i\cdot 2^{-n}:i=0,\ldots,2^n\}$ for $n=0,1,\ldots$. Then, by \Cref{coro:finite-seq-w-dgw}, there exist $Z^n\in\ms$ and isometric embeddings $\varphi_i^n:X_{t_i^n}\hookrightarrow Z^n$ for $i=1,\ldots,2^n$ such that $\dW{p}^{Z^n}\left(\lc\varphi_i^n\rc_\#\mu_{t_i^n},\lc\varphi_j^n\rc_\#\mu_{t_j^n}\right)=|t_i^n-t_j^n|\rho.$

For $p=1$, by \Cref{lm:W-geodesic}, we know that $\gamma_i^n:[0,1]\rightarrow \mathcal{W}_1(Z^n)$ defined by 
\[t\mapsto (1-t)\lc\varphi_i^n\rc_\#\mu_{t_i^n}+t\lc\varphi_{i+1}^n\rc_\#\mu_{t_{i+1}^n}\]
is an $\ell^1$-Wasserstein geodesic for each $n=0,\ldots$ and each $i=0,\ldots,2^n-1$. Then,
\begin{align*}
    \dW 1^{Z^n}\left(\gamma_0^n(0),\gamma_{2^{n}-1}^n(1)\right)&=\dW 1^{Z^n}\left((\varphi_0^n)_\#\mu_{t_0^n},\left(\varphi_{2^{n}-1}^n\right)_\#\mu_{t_{2^n}^n}\right)\\
    &=\sum_{i=0}^{2^{n}-1}\dW 1^{Z^n}\left((\varphi_{i}^n)_\#\mu_{t_{i}^n},(\varphi_{i+1}^n)_\#\mu_{t_{i+1}^n}\right)\\
    &=\sum_{i=0}^{2^n-1}\dW 1^{Z^n}(\gamma_i^n(0),\gamma_{i}^n(1)).
\end{align*}

Therefore, we can concatenate all the $\gamma_i^n$s for $i=0,\ldots,2^n-1$ via \Cref{prop:geo-concatenate} to obtain an $\ell^1$-Wasserstein geodesic ${\gamma}^n:[0,1]\rightarrow\mathcal{W}_1(Z^n)$ such that $\gamma^n(t_i^n)=(\varphi_i^n)_\#\mu_{t_i^n}$ for $i=0,\ldots,2^n$. Since $\dW 1^{Z^n}(\gamma^n(0),\gamma^n(1))=\rho=\dgws 1(\gamma(0),\gamma(1))$, by \Cref{lm:wgeo-to-dgwgeo} we have that $\gamma^n$ is actually an $\ell^1$-Gromov-Wasserstein geodesic. We follow the notation from \Cref{lm:wgeo-to-dgwgeo} and denote by $\tilde{\gamma}^n:[0,1]\rightarrow\ws$ the $\ell^1$-Gromov-Wasserstein geodesic corresponding to $\gamma^n$. Then, it is easy to check that $d_{\infty,1}\lc\gamma,\tilde{\gamma}^n\rc\leq 2\cdot\frac{1}{2^n}\cdot\rho=2^{1-n}\rho$ via an argument similar to the one used in the proof of \Cref{prop:main-w-real-dense}. For each $n=0,\ldots,i=0,\ldots,2^n$ and $t\in[0,1]$, we have $\supp(\gamma_i^n(t))\subseteq \varphi_i^n\lc X_{t_i^n}\rc\cup \varphi_{i+1}^n\lc X_{t_{i+1}^n}\rc$. Therefore, $\cup_{t\in[0,1]}\supp({\gamma}^n(t))=\cup_{i=0}^{2^n}\varphi_i^n\lc X_{t_i^n}\rc=:Y^n, $ and thus $\gamma^n$ is actually a geodesic in $\mathcal{W}_1(Y^n)\subseteq \mathcal{W}_1(Z^n)$.

For $p>1$, since $W^n\coloneqq\mathcal{W}_1(Y^n)$ is geodesic (cf. \Cref{thm:W-geodesic}), we have that $\mathcal{W}_p(W^n)$ is geodesic (cf. \Cref{thm:wp-geo}). We still denote $\varphi_i^n$ the composition of $\varphi_i^n:X_{t_i^n}\rightarrow Z^n(\text{or }Y^n)$ and the canonical embedding $Y^n\hookrightarrow W^n$. Then, there exists a geodesic $\gamma_i^n$ connecting $(\varphi_i^n)_\#\mu_{t_i^n}$ and $(\varphi_{i+1}^n)_\#\mu_{t_{i+1}^n}$. Similarly to the case when $p=1$, we concatenate $\gamma_i^n$s to obtain an $\ell^p$-Wasserstein geodesic ${\gamma}^n:[0,1]\rightarrow\mathcal{W}_p(W^n)$ and thus an $\ell^p$-Gromov-Wasserstein geodesic $\tilde{\gamma}^n$ such that $d_{\infty,p}(\gamma,\tilde{\gamma}^n)\leq 2\cdot\frac{1}{2^n}\cdot\rho=2^{1-n}\rho$. Therefore, for each $t\in[0,1]$, $\gamma(t)$ is the $\ell^p$-Gromov-Wasserstein limit of $\{\tilde{\gamma}^n(t)\}_{n=0}^\infty$.

\begin{claim}\label{clm:key-in-main-w}
There exists a $\dgh$-convergent subsequence of $\{Y^n\}_{n=0}^\infty$.
\end{claim}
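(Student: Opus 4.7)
The plan is to invoke Gromov's pre-compactness theorem (\Cref{thm:pre-compact}) on $\{Y^n\}_{n=0}^\infty$, which demands a uniform diameter bound together with, for every $\eps>0$, a uniform bound on the covering numbers $\mathrm{cov}_\eps(Y^n)$. The crucial input is the assumed Hausdorff-boundedness: let $f:[0,\infty)\rightarrow[0,\infty)$ be an increasing proper function witnessing that $\{\gamma(t)\}_{t\in[0,1]}$ is $f$-Hausdorff-bounded. Since by construction $\dW{p}^{Z^n}\big((\varphi_i^n)_\#\mu_{t_i^n},(\varphi_j^n)_\#\mu_{t_j^n}\big)=|t_i^n-t_j^n|\rho$, \Cref{def:hausdorff-bdd} delivers
\[\dH^{Z^n}\big(\varphi_i^n(X_{t_i^n}),\varphi_j^n(X_{t_j^n})\big)\leq f(|t_i^n-t_j^n|\rho)\qquad\forall n,\ \forall 0\leq i,j\leq 2^n.\]

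For the diameter bound, \Cref{rmk:f-h-bdd to gh bound} gives $\dgh(X_0,X_s)\leq f(\rho)$ for every $s\in[0,1]$, so $D_1\coloneqq\sup_{s\in[0,1]}\diam(X_s)\leq \diam(X_0)+2f(\rho)<\infty$. Any pair $y,y'\in Y^n$ lies in respective pieces $\varphi_i^n(X_{t_i^n}),\varphi_j^n(X_{t_j^n})$; the displayed estimate produces some $y''\in\varphi_i^n(X_{t_i^n})$ within $f(\rho)$ of $y'$, whence $d_{Z^n}(y,y')\leq D_1+f(\rho)$, a bound independent of $n$.

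For the covering bound, fix $\eps>0$ and, using the continuity of $f$ at $0$, pick $m\in\mathbb N$ with $f(2^{-m-1}\rho)<\eps/2$. For $n\geq m$ we have $T^m\subseteq T^n$, so the pieces $\varphi_{k\cdot 2^{n-m}}^n(X_{t_k^m})$ for $k=0,\ldots,2^m$ appear among those composing $Y^n$. For any other index $i\in\{0,\ldots,2^n\}$, choose $k(i)\in\{0,\ldots,2^m\}$ with $|t_i^n-t_{k(i)}^m|\leq 2^{-m-1}$; then
\[\dH^{Z^n}\big(\varphi_i^n(X_{t_i^n}),\varphi_{k(i)\cdot 2^{n-m}}^n(X_{t_{k(i)}^m})\big)<\eps/2.\]
Since each compact space $X_{t_k^m}$ admits a finite $\eps/2$-net of cardinality $N_k$, transporting these nets into $Z^n$ through $\varphi_{k\cdot 2^{n-m}}^n$ and taking the union yields an $\eps$-net of $Y^n$ of cardinality at most $N\coloneqq\sum_{k=0}^{2^m}N_k$ for every $n\geq m$. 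The finitely many remaining $n<m$ each give a compact $Y^n$ with its own finite $\eps$-net, so the maximum provides a uniform bound $Q(\eps)$ valid for all $n$.

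With both ingredients in hand, $\{Y^n\}_{n=0}^\infty\subseteq\mathcal{K}(Q,D_1+f(\rho))$ and Gromov's pre-compactness theorem extracts a $\dgh$-convergent subsequence, establishing the claim. The principal technical obstacle is that the ambient spaces $Z^n$ vary with $n$, which precludes the use of a fixed reference space; the covering argument must therefore be executed entirely inside each $Z^n$, leveraging solely the Hausdorff-boundedness estimates between pieces and the natural nesting $T^m\subseteq T^n$ of the dyadic partitions to link different scales together.
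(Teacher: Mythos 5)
Your proof is correct and follows essentially the same route as the paper: a uniform diameter bound plus a uniform covering-number bound obtained from the Hausdorff-boundedness estimate between pieces and the nesting $T^m\subseteq T^n$, followed by Gromov's pre-compactness theorem. The only (immaterial) differences are that you invoke the continuity of $f$ at $0$ directly to choose the scale $m$, where the paper works with the generalized inverse $f^{-1}$ via \Cref{prop:increasing-prop}, and that your diameter bound leans on $f(\rho)$ rather than on the Wasserstein bound $\rho$ combined with the diameters of the pieces.
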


Assume the claim for now and consider first the case when $p=1$. Suppose $X\in\ms$ is such that $\lim_{n\rightarrow\infty}\dgh(Y^n,X)=0$ (after possibly passing to a subsequence), then by \Cref{lm:ghconv=hconv}, there exist a Polish metric space $Z$ and isometric embeddings $\varphi:X\hookrightarrow Z$ and $\varphi^n:Y^n\hookrightarrow Z$ for $n=0,\ldots$ such that $\lim_{n\rightarrow\infty}\dH^Z(\varphi^n(Y^n),\varphi(X))=0$. By \Cref{thm:W-equal}, 
\[\lim_{i\rightarrow\infty}\dH^{\mathcal{W}_1(Z)}\big((\varphi^n)_\#(\mathcal{W}_1(Y^n)),\varphi_\#(\mathcal{W}_1(X))\big)=0.\]
For each $n=0,\ldots$, we have that ${\gamma}^n:[0,1]\rightarrow\mathcal{W}_1(Y^n)$ is $\rho$-Lipschitz. Then, since $(\varphi^n)_\#$ is an isometric embedding, $(\varphi^n)_\#\circ\gamma^n:[0,1]\rightarrow (\varphi^n)_\#(\mathcal{W}_1(Y^n))$ is also $\rho$-Lipschitz. Moreover, by \Cref{thm:complete-w}, $\mathcal{W}_1(Z)$ is complete. Then, by the generalized Arzel\`a-Ascoli theorem (\Cref{thm:general-AA}), the  sequence
\[\big\{(\varphi^n)_\#\circ\gamma^n:[0,1]\rightarrow (\varphi^n)_\#(\mathcal{W}_1(Y^n))\big\}_{n=0}^\infty\]
uniformly converges to a $\rho$-Lipschitz curve $\hat{\gamma}:[0,1]\rightarrow (\varphi)_\#(\mathcal{W}_1(X))$ in the space $\mathcal{W}_1(Z)$.

By uniform convergence, for each $t\in[0,1]$ we have that
$$\lim_{n\rightarrow\infty}\dW{1}^{Z}(\hat{\gamma}(t),(\varphi^n)_\#\circ\gamma^n(t))=0.$$
Let $\tilde{X}_t\coloneqq\supp(\hat{\gamma}(t))$ and let $\tilde{\gamma}(t)\coloneqq\lc\tilde{X}_t,d_Z|_{\tilde{X}_t\times\tilde{X}_t},\hat{\gamma}(t)\rc$. Then, we have that
$$0\leq \lim_{n\rightarrow\infty}\dgws 1\lc\tilde{\gamma}(t),\tilde{\gamma}^n(t)\rc\leq\lim_{n\rightarrow\infty}\dW{1}^{Z}(\hat{\gamma}(t),(\varphi^n)_\#\circ\gamma^n(t))=0.  $$
We know that $\gamma(t)$ is the Gromov-Wasserstein limit of $\{\tilde{\gamma}^n(t)\}_{n=0}^\infty$, thus $\tilde{\gamma}(t)\cong_w\gamma(t)$ for $t\in[0,1]$. Then, since $\tilde{\gamma}$ is $X$-Wasserstein-realizable, we have that $\gamma\in\Gamma_\mathcal{W}^1$.

When $p>1$, by stability of $\mathcal{W}_1$ (cf. \Cref{thm:W-1-lip}),  $\{W^n=\mathcal{W}_1(Y^n)\}_{n=0}^\infty$ also has a convergent subsequence. Then, via an argument similar to the one used in the case of $p=1$, one concludes that $\gamma\in\Gamma_\mathcal{W}^p$.

Now, we finish by proving \Cref{clm:key-in-main-w}:
\begin{proof}[Proof of \Cref{clm:key-in-main-w}]
We assume that $\gamma $ is $f$-Hausdorff-bounded for an increasing and proper function $f:[0,\infty)\rightarrow[0,\infty)$, i.e., $f(0)=0$ and $f$ is continuous at $0$. We prove the claim by suitably applying Gromov's pre-compactness theorem (cf. \Cref{thm:pre-compact}). 

\begin{enumerate}
    \item For any $t\in(0,1]$, by \Cref{rmk:f-h-bdd to gh bound} we have that $\dgh(X_0,X_t)\leq f\left(\dgws{1}(\gamma(0),\gamma(t))\right)\leq f(\rho)$. Since $\dgh(X_0,X_t)\geq \frac{1}{2}|\diam(X_0)-\diam(X_t)|$ (cf. \cite[Theorem 3.4]{memoli2012some}), we have that 
    \[\diam(X_t)\leq 2f(\rho)+\diam(X_0).\]
    Now, fix $n\in\mathbb{N}$ and $x_0\in  \varphi_0^n\lc X_{t_0^n}\rc\subseteq Y^n$. Then, for any $t_i^n\in T^n$, there exist $x_0'\in  \varphi_0^n\lc X_{t_0^n}\rc$ and $x_i'\in \varphi_i^n\lc X_{t_i^n}\rc$ such that $d_{Y^n}(x_0',x_i')=d_{Y^n}\lc\varphi_0^n\lc X_{t_0^n}\rc,\varphi_i^n\lc X_{t_i^n}\rc\rc$ by compactness. Obviously, we have that
    $$d_{Y^n}(x_0',x_i')\leq \dW{1}^{Y^n}\lc(\varphi_0^n)_\#\mu_{t_0^n},(\varphi_i^n)_\#\mu_{t_i^n}\rc=\left|t_0^n-t_i^n\right|\rho\leq \rho.$$ 
    Now, for any point $x_i\in \varphi_i^n\lc X_{t_i^n}\rc$, we have that
    \begin{align*}
        d_{Y^n}(x_0,x_i)&\leq d_{Y^n}(x_0,x_0')+d_{Y^n}(x_0',x_i')+d_{Y^n}(x_i',x_i)\\
        &\leq \diam\lc\varphi_0^n\lc X_{t_0^n}\rc\rc+\rho+\diam\lc\varphi_i^n\lc X_{t_i^n}\rc\rc\\
        &=\diam\lc X_{t_0^n}\rc+\rho+\diam\lc X_{t_i^n}\rc\leq \rho+2f(\rho)+2\diam(X_0).
    \end{align*}
    The inequality holds for any $i=0,\ldots,2^n$. So $\diam(Y^n)\leq 2(\rho+2f(\rho)+2\diam(X_0))$ for all $n\in\mathbb{N}$.
    
    \item For any $\eps>0$, there exists $M>0$ such that $t_{j+1}^M-t_j^M<f^{-1}(\frac{\eps}{2})\cdot\rho^{-1}$. Here $f^{-1}(\frac{\eps}{2})>0$ follows from {item 5} of \Cref{prop:increasing-prop}. Let $S^M_j$ be an $\frac{\eps}{2}$-net of $\gamma\left(t_j^M\right)$ for all $j=0,\ldots,2^M$. For any $n>M$, $\{t_j^M\}_{j=0}^{2^M}$ is a subsequence of $\{t_i^n\}_{i=0}^{2^n}$. Indeed, $t_j^M=t^n_{j\cdot 2^{n-M}}$ for $j=0,\ldots,2^M$. For any $t_i^n$, there exists $j$ such that $t_j^M\leq t_i^n<t^M_{j+1}$. We know by construction of $Y^n$ that
    $$\dW{1}^{Y^n}\left(\lc\varphi_{j\cdot 2^{n-M}}^n\rc_\#\mu_{t_j^M},\lc\varphi_{i}^n\rc_\#\mu_{t_i^n}\right)= |t_i^n-t_j^M|\rho\leq |t^M_{j+1}-t_j^M|\rho< f^{-1}\left(\frac{\eps}{2}\right).$$
    Therefore,
    $$\dH^{Y^n}\left(\varphi_{j\cdot 2^{n-M}}^n\lc X_{t_j^M}\rc,\varphi_i^n\lc X_{t_i^n}\rc\right)\leq f\left(\dW{1}^{Y^n}\left(\lc\varphi_{j\cdot 2^{n-M}}^n\rc_\#\mu_{t_j^M},\lc\varphi_{i}^n\rc_\#\mu_{t_i^n}\right)\right)\leq\frac{\eps}{2},$$ 
    where we used {item 3} of \Cref{prop:increasing-prop} in the second inequality.

    Therefore, for each point $x_i^n\in \varphi_i^n\lc X_{t_i^n}\rc$, there exists a point $x_j^M\in \varphi_{j\cdot 2^{n-M}}^n\lc X_{t_j^M}\rc$ such that $d_{Y^n}\left(x_i^n,x_j^M\right)\leq\frac{\eps}{2}$. Note that $\varphi_{j\cdot 2^{n-M}}^n\lc S_j^M\rc$ is an $\frac{\eps}{2}$-net of $\varphi_{j\cdot 2^{n-M}}^n\lc X_{t_j^M}\rc$.
    Then, there exists a point $\tilde{x}_j^M\in \varphi_{j\cdot 2^{n-M}}^n\lc S_j^M\rc$ such that $d_{Y^n}\left(x^M_j,\tilde{x}_j^M\right)\leq \frac{\eps}{2}$. So, $d_{Y^n}\left(x^n_i,\tilde{x}_j^M\right)\leq \eps$. Therefore, we have that $Y^n\subseteq\cup_{j=0}^{2^M} \left(\varphi_{j\cdot 2^{n-M}}^n\lc S_j^M\rc\right)^\eps$. Let
    $$Q\left(\eps\right)\coloneqq\max\left(\max\{\mathrm{cov}_\eps\left(Y^n\right):\,n=1,\ldots,M-1\},\sum_{j=0}^{2^M}\left|S_j^M\right|\right),$$
    then we have that $\mathrm{cov}_\eps\left(Y^n\right)\leq Q\left(\eps\right)$ for all $n=0,\ldots$.
\end{enumerate}
Therefore, $\{Y^n\}_{n=0}^\infty\subseteq \mathcal{K}\big(Q,2(\rho+2f(\rho)+2\diam(X_0))\big)$ (cf. \Cref{def:CND}). By Gromov's pre-compactness theorem (cf. \Cref{thm:pre-compact}), $\{Y^n\}_{n=0}^\infty$ has a convergent subsequence.
\end{proof}
\end{proof}

There exist examples of non-Hausdorff-bounded Gromov-Wasserstein geodesics.

\begin{example}[An example of non-Hausdorff-bounded Gromov-Wasserstein geodesic]\label{ex:non-h-b-gw}
Let $\mathcal{X}=(X,d_X,\mu_X)$ be the one point metric measure space and $\mathcal{Y}=(Y,d_Y,\mu_Y)$ be a two-point metric measure space with unit distance and uniform probability measure. Assume that $X=\{x\}$ and $Y=\{y_1,y_2\}$. Then, under the map $\varphi_X:X\rightarrow Y$ defined by $x\mapsto y_1$ and the identity map $\varphi_Y\coloneqq \mathrm{Id}:Y\rightarrow Y$, it is easy to check that 
$$\dgws 1(\X,\Y)=\dW 1^Y\lc(\varphi_X)_\#\mu_X,(\varphi_Y)_\#\mu_Y\rc. $$

Define $\gamma:[0,1]\rightarrow\mathcal{W}_1(Y)$ by mapping each $t\in[0,1]$ to $(1-t)\,(\varphi_X)_\#\mu_X+t\,(\varphi_Y)_\#\mu_Y$. It is easy to describe $\gamma(t)$ explicitly as follows: for each $t\in(0,1]$, $\gamma(t)(\{y_1\})=1-\frac{t}{2}$ and $\gamma(t)(\{y_2\})=\frac{t}{2}$. By \Cref{lm:W-geodesic}, $\gamma$ is an $\ell^1$-Wasserstein geodesic. Thus $\gamma$ corresponds to an $\ell^1$-Wasserstein-realizable Gromov-Wasserstein geodesic $\tilde{\gamma}$ connecting $\X$ and $\Y$ (cf. \Cref{lm:wgeo-to-dgwgeo}). 

Note that $\dgh(X,\tilde{\gamma}(t))=\dgh(X,Y)=\frac{1}{2}$ for all $t\in(0,1]$. However, $\lim_{t\rightarrow0}\dgws 1(\X,\tilde{\gamma}(t))=0$ which precludes the existence of any increasing and proper function $f:[0,\infty)\rightarrow[0,\infty)$ such that 
$$\frac{1}{2}=\dgh(X,\tilde{\gamma}(t))\leq f\left(\dgws 1(\X,\tilde{\gamma}(t))\right). $$
Therefore, the geodesic $\tilde{\gamma}$ is not Hausdorff-bounded. 
\end{example}

Note that the example constructed above is highly dependent on the special linear interpolation geodesic corresponding to $\dW 1$ (cf. \Cref{lm:W-geodesic}). We are not aware of any example of an $\ell^p$-Gromov-Wasserstein geodesic for $p>1$ which is not Hausdorff-bounded.

\begin{conjecture}\label{conj:gw}
For every $p\in(1,\infty)$, any $\ell^p$-Gromov-Wasserstein geodesic is Hausdorff-bounded. Also, for every $p\in[1,\infty)$, any $\ell^p$-Gromov-Wasserstein geodesic is Wasserstein-realizable.
\end{conjecture}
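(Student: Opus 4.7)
The conjecture splits naturally into two parts. Part 2 for $p\in(1,\infty)$ is a direct consequence of Part 1 combined with \Cref{thm:bdd-geo-w-real}, so the substantive content reduces to Part 1 (Hausdorff-boundedness for $p>1$) together with the $p=1$ case of Part 2, for which \Cref{ex:non-h-b-gw} already forecloses any approach via Hausdorff-boundedness.

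For Part 1, the goal is to produce a single strictly increasing and proper $h:[0,\infty)\rightarrow[0,1]$ such that every $\gamma(t)$ is $h$-bounded; \Cref{prop:ex-f-bdd-family} then gives Hausdorff-boundedness. The heuristic is that strict convexity of $r\mapsto r^p$ for $p>1$ should prevent mass-splitting along $\ell^p$-Wasserstein geodesics, allowing lower mass bounds on balls to propagate through the interpolation. Concretely, I would approximate $\gamma$ in $d_{\infty,p}$ by concatenations of straight-line $\dgws p$ geodesics (as in the proof of \Cref{prop:main-w-real-dense}); each piece is Hausdorff-bounded by \Cref{prop:strline-gw-h-bdd}. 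The plan is then to extract a \emph{uniform} $h$ across the approximating sequence, after which the generalized Arzel\`a-Ascoli theorem (\Cref{thm:general-AA}) applied in a common Wasserstein hyperspace transfers the $h$-bound to $\gamma$.

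For the $p=1$ case of Part 2, I would try to establish closedness of $\Gamma^1_\mathcal{W}$ in $\Gamma^1$ directly by mimicking the argument of \Cref{thm:main-h-realizable}: given a $d_{\infty,1}$-Cauchy sequence of Wasserstein-realizable geodesics $\tilde\gamma^n$ with realizing ambient spaces $X^n\in\ms$, extract a $\dgh$-convergent subsequence of $\{X^n\}$ via Gromov's pre-compactness theorem (\Cref{thm:pre-compact}) and pass to the limit via \Cref{thm:general-AA} applied inside $\mathcal{W}_1$ of a common metric space. One can arrange that $X^n=\cup_{k=0}^{2^n}\supp(\tilde\gamma^n(k/2^n))$ by exploiting the linear-interpolation structure (\Cref{lm:W-geodesic}), which bypasses the need for Hausdorff-boundedness but still demands uniform bounds on diameters and covering numbers of $\{X^n\}$.

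The main obstacle in Part 1, and the reason this remains open, is the absence of an optimal-transport-map theory in general compact metric spaces: the ``no mass splitting'' intuition for $p>1$ does not by itself yield uniqueness of couplings or the required propagation of lower mass bounds without additional ambient structure, and the $h$ produced for a straight-line approximant via \Cref{prop:strline-gw-h-bdd} depends delicately on the realizing ambient space, so uniformity across approximants is far from automatic. In the $p=1$ case, the obstacle shifts to the compactness step: without the Hausdorff-boundedness control that drove the proof of \Cref{thm:bdd-geo-w-real}, the ambient spaces $X^n$ can in principle fluctuate wildly, and controlling them through the structure of linear interpolations alone appears to require further input.
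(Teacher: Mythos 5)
This statement is labeled as a conjecture, and the paper supplies no proof of it: the authors state explicitly (just before \Cref{thm:bdd-geo-w-real}) that they were unable to establish the closedness of $\Gamma_\mathcal{W}^p$ in $\Gamma^p$ and that the conjecture remains open. So there is no argument in the paper against which to measure yours. Your structural analysis is accurate and consistent with what the paper does prove: the second assertion for $p\in(1,\infty)$ would indeed follow from the first via \Cref{thm:bdd-geo-w-real}, \Cref{prop:main-w-real-dense} gives density of $\Gamma_\mathcal{W}^p$ in $\Gamma^p$, and \Cref{ex:non-h-b-gw} shows that the Hausdorff-boundedness route is genuinely unavailable for $p=1$, so the $p=1$ case of Wasserstein-realizability must be attacked differently (e.g., via closedness of $\Gamma_\mathcal{W}^1$).

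However, what you have written is a research plan, not a proof, and you say as much yourself. The two steps on which everything hinges are precisely the ones left unargued. For the first part, \Cref{prop:strline-gw-h-bdd} produces, for each straight-line approximant, a function $h$ that depends on the supports of the particular optimal couplings chosen (via \Cref{lm:compact-surjective}), and nothing in your sketch shows these can be taken uniformly over the approximating sequence; moreover, even granting a uniform $h$ along the sequence, a lower bound $\mu_n\lc B_\eps(x_n)\rc\geq h(\eps)$ on closed balls does not automatically survive weak convergence of measures together with Gromov--Hausdorff convergence of supports, so the transfer to the limit curve $\gamma$ via \Cref{thm:general-AA} is not justified. The ``strict convexity prevents mass splitting'' heuristic is not substantiated by any lemma in the paper and is known to fail as a pointwise statement in general compact metric spaces lacking a Monge-map theory. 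For the $p=1$ case, the entire difficulty is the pre-compactness of the ambient spaces $\{X^n\}$: in the proof of \Cref{thm:bdd-geo-w-real} the uniform diameter and covering-number bounds required by \Cref{thm:pre-compact} are extracted \emph{from} the Hausdorff-boundedness hypothesis (via \Cref{rmk:f-h-bdd to gh bound}), and you offer no substitute for that control. In short, your proposal correctly locates the obstructions but does not overcome them; the statement remains, as in the paper, a conjecture.
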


\section{Dynamic geodesics}\label{sec:dyn-geo}
In this section, we first carefully study the properties of the Hausdorff displacement interpolation and prove \Cref{thm:main-dyn-hausdorff}, then extend our results to study dynamic Gromov-Hausdorff geodesics and prove \Cref{thm:main-dyn-gh}.

\subsection{Displacement interpolation}

\paragraph{Geodesics in $\mathcal{W}_p\left(X\right)$ for $p\in\left(1,\infty\right)$.} Given a metric space $X$, it is known that if $X$ is a geodesic space, then $\mathcal{W}_p\left(X\right)$ is a geodesic space (cf. \Cref{thm:wp-geo}). Geodesics in $\mathcal{W}_p\left(X\right)$ are also called \emph{displacement interpolation}, due to a refined characterization of geodesics in $\mathcal{W}_p\left(X\right)$ which we now explain. 

Let $C\left([0,1],X\right)$ denote the set of all continuous curves $\gamma:[0,1]\rightarrow X$ with the uniform metric $d_\infty^X\left(\gamma_1,\gamma_2\right)\coloneqq\sup_{t\in[0,1]}d_X\left(\gamma_1\left(t\right),\gamma_2\left(t\right)\right)$. Let $\Gamma([0,1],X)$ denote the subset of $C([0,1],X)$ consisting of all geodesics in $X$.
For $t\in[0,1]$, let $e_t:C\left([0,1],X\right)\rightarrow X$ be the \emph{evaluation map} taking $\gamma\in C\left([0,1],X\right)$ to $\gamma\left(t\right)\in X$.

\begin{definition}[Dynamic (optimal) coupling]\label{def:dyn-coup}
Let $X$ be a metric space and let $\alpha,\beta\in\mathcal{P}(X)$. We call $\Pi\in\mathcal{P}\left(C\left([0,1],X\right)\right)$ (the space of probability measures on $C([0,1],X)$) a \emph{dynamic coupling} between $\alpha$ and $\beta$, if $({e_0})_\#\Pi=\alpha$ and $({e_1})_\#\Pi=\beta$, where $(e_t)_\#$ represents the pushforward map under $e_t$. We call $\Pi$ a $\ell^p$-\emph{dynamic optimal coupling} between $\alpha$ and $\beta$, if $\supp\left(\Pi\right)\subseteq\Gamma\left([0,1],X\right)$ and $\left(e_0,e_1\right)_\#\Pi\in\mathcal{C}^\mathrm{opt}_p(\alpha,\beta)$ is an optimal transference plan with respect to $d_{\mathcal{W},p}$ for a given $p\in\left(1,\infty\right)$.
\end{definition}

Based on the notion of dynamic optimal coupling, a probability measure on the space of geodesics, there is the following characterization of geodesics in $\ell^p$-Wasserstein hyperspaces. For background materials and proofs, interested readers are referred to \cite[Section 3.2]{ambrosio2013user} or \cite[Section 7]{villani2008optimal}.

\begin{theorem}[Displacement interpolation]\label{thm:dis-int}
Let $X$ be a Polish geodesic space and let $p\in\left(1,\infty\right)$. Given $\alpha,\beta\in\mathcal{P}_p\left(X\right)$, and a continuous curve $\gamma:[0,1]\rightarrow\mathcal{W}_p\left(X\right)$, the following properties are equivalent:
\begin{enumerate}
    \item $\gamma$ is a geodesic in $\mathcal{W}_p\left(X\right)$;
    \item there exists an $\ell^p$-dynamic optimal coupling $\Pi$ between $\alpha$ and $\beta$ such that $(e_t)_\#\Pi=\gamma(t)$ for each $t\in[0,1]$.
\end{enumerate}
\end{theorem}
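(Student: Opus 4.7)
The plan is to handle the two implications separately: (2)$\Rightarrow$(1) is a direct computation, while (1)$\Rightarrow$(2) requires a time-discretization plus a weak-compactness argument.

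For (2)$\Rightarrow$(1), suppose $\Pi$ is an $\ell^p$-dynamic optimal coupling with $(e_t)_\#\Pi=\gamma(t)$. Then for $0\le s\le t\le 1$, the pushforward $(e_s,e_t)_\#\Pi$ couples $\gamma(s)$ and $\gamma(t)$, and since $\Pi$-a.e.\ $\eta$ is a geodesic in $X$,
\[
\dW{p}^X(\gamma(s),\gamma(t))^p \;\le\; \int d_X(\eta(s),\eta(t))^p\,d\Pi(\eta) \;=\; (t-s)^p\int d_X(\eta(0),\eta(1))^p\,d\Pi(\eta) \;=\; (t-s)^p\,\dW{p}^X(\alpha,\beta)^p,
\]
using optimality of $(e_0,e_1)_\#\Pi$ at the last step. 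Summing along any partition of $[0,1]$ and invoking the triangle inequality in $\mathcal{W}_p(X)$ forces equality throughout, so $\gamma$ is a geodesic.

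For (1)$\Rightarrow$(2), I would discretize and pass to the limit. For each $n$, take the dyadic partition $t_i\coloneqq i\cdot 2^{-n}$, pick $\mu_i\in\mathcal{C}_p^{\mathrm{opt}}(\gamma(t_{i-1}),\gamma(t_i))$, and use the gluing lemma to build $\tilde\mu_n\in\mathcal{P}(X^{2^n+1})$ whose consecutive two-point marginals are the $\mu_i$. Using a Borel selection of geodesics in the Polish space $C([0,1],X)$, define $G_n:X^{2^n+1}\to C([0,1],X)$ sending $(x_0,\dots,x_{2^n})$ to the curve that, on each $[t_{i-1},t_i]$, runs a chosen geodesic from $x_{i-1}$ to $x_i$, and set $\Pi_n\coloneqq (G_n)_\#\tilde\mu_n$. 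By construction $(e_{t_i})_\#\Pi_n=\gamma(t_i)$ at every dyadic node, and Jensen's inequality (this is where $p>1$ enters) together with the geodesic identity for $\gamma$ yields the uniform bound $\int d_X(\eta(0),\eta(1))^p\,d\Pi_n(\eta)\le \dW{p}^X(\alpha,\beta)^p$.

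The main obstacle is tightness of $\{\Pi_n\}$ in $\mathcal{P}(C([0,1],X))$. This follows by combining tightness of $\alpha$ (from Polishness of $X$) with the uniform $L^p$-bound on the per-curve Lipschitz constants $d_X(\eta(0),\eta(1))$ and a generalized Arzel\`a--Ascoli argument along the lines of \Cref{thm:general-AA}: outside a small-mass set one can confine $\eta(0)$ to a compact $K\subseteq X$ and $d_X(\eta(0),\eta(1))$ to a bounded range, after which equicontinuity is automatic. Prokhorov then supplies a subsequential weak limit $\Pi$. It remains to verify (a)~$\supp(\Pi)\subseteq\Gamma([0,1],X)$, (b)~$(e_t)_\#\Pi=\gamma(t)$ for every $t$, and (c)~$(e_0,e_1)_\#\Pi$ is optimal. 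Part~(a) follows from lower semicontinuity of the ``geodesic defect'' functional $\eta\mapsto \sup_{s,t}\bigl(d_X(\eta(s),\eta(t))-|t-s|\,d_X(\eta(0),\eta(1))\bigr)_+$ combined with the geodesic identity for $\gamma$ in $\mathcal{W}_p(X)$, which forces the $\Pi$-integral of the defect to vanish. Part~(b) holds at the dyadic nodes by construction and extends to all $t\in[0,1]$ by continuity of both $\gamma$ and $t\mapsto (e_t)_\#\Pi$. Part~(c) is then immediate, since by (a) and (b) the $p$-th moment of $d_X(\eta(0),\eta(1))$ under $\Pi$ equals $\dW{p}^X(\alpha,\beta)^p$, so $(e_0,e_1)_\#\Pi$ is optimal.
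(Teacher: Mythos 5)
The paper does not actually prove this theorem (it is quoted from Villani and Ambrosio--Gigli), so there is no internal proof to compare against; judged on its own, your sketch follows the standard route and the direction $(2)\Rightarrow(1)$ is correct, but the direction $(1)\Rightarrow(2)$ has a genuine gap at its crucial step. You locate the role of $p>1$ in the uniform bound $\int d_X(\eta(0),\eta(1))^p\,d\Pi_n\leq \dW{p}^X(\alpha,\beta)^p$, but that bound follows from Minkowski's inequality in $L^p(\Pi_n)$ (or discrete Jensen) and holds verbatim for $p=1$. Since the theorem is \emph{false} for $p=1$ (the linear interpolation between $\delta_0$ and $\delta_1$ in $\mathcal{W}_1([0,1])$ is a geodesic that is not the evaluation of any measure on geodesics), a correct proof must break somewhere for $p=1$, and as written yours does not. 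The unproved step is your part (a): lower semicontinuity of the defect functional only yields, via Portmanteau, $\int D\,d\Pi\leq\liminf_n\int D\,d\Pi_n$, so you still owe an argument that the right-hand side vanishes. This is exactly where strict convexity enters: from $\dW{p}^X(\alpha,\beta)\leq\|d_X(\eta(0),\eta(1))\|_{L^p(\Pi_n)}\leq\sum_i\|d_X(\eta(t_{i-1}),\eta(t_i))\|_{L^p(\Pi_n)}=\dW{p}^X(\alpha,\beta)$ one reads off equality in Minkowski's inequality, and for $p>1$ this forces the segment lengths to be $\Pi_n$-a.e.\ pairwise proportional, hence all equal to $2^{-n}d_X(\eta(0),\eta(1))$; only then are the piecewise geodesics genuine constant-speed geodesics, $\int D\,d\Pi_n=0$, and the l.s.c.\ argument closes. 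For $p=1$ the equality case of Minkowski gives nothing and $\int D\,d\Pi_n$ does not tend to $0$. (Equivalently one can run the argument with the l.s.c.\ $p$-action functional; either way this convexity step is the heart of the proof and is missing.)

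A second, smaller gap is tightness. Equicontinuity plus confining $\eta(0)$ to a compact set does \emph{not} give relative compactness in $C([0,1],X)$ when $X$ is Polish but not locally compact: a family of $M$-Lipschitz curves issuing from a single point need not be precompact (glue countably many arcs of length $1$ joining two fixed points $x,y$ at their endpoints; the resulting geodesics from $x$ to $y$ have pairwise uniform distance $1$). The paper's \Cref{thm:general-AA} assumes the target sets are compact and so does not apply here. The standard repair is to combine the equicontinuity (available once the previous gap is filled, since then $\Pi_n$-a.e.\ curve is $d_X(\eta(0),\eta(1))$-Lipschitz) with tightness of the time-marginals $(e_{t})_\#\Pi_n=\gamma(t)$ at a dense set of times $t$ --- each being a single Borel probability measure on a Polish space, hence tight --- using the tightness criterion for laws on path space, rather than tightness of $\alpha$ alone. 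Also note that the continuity of $t\mapsto(e_t)_\#\Pi$ invoked in your part (b) itself depends on part (a), so the order of the verifications matters.
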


The notion of dynamic optimal coupling is the main inspiration for our definitions of Hausdorff displacement interpolation (cf. \Cref{def:haus-dis-int}) and dynamic optimal correspondences (cf. \Cref{def:dyn-cor}), whereas \Cref{thm:dis-int} serves as the motivation for one of our main results \Cref{thm:main-dyn-hausdorff}.

\subsubsection{Hausdorff displacement interpolation}\label{sec:hausdorff displacement interpolation}
Given a compact metric space $X$, the following is a direct consequence of definition of the uniform metric $d_\infty^X$ on $C([0,1],X)$.

\begin{lemma}
For any $t\in[0,1]$, the evaluation $e_t:C\left([0,1],X\right)\rightarrow X$ taking $\gamma$ to $\gamma\left(t\right)$ is a continuous map.
\end{lemma}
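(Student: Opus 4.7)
The plan is to observe that $e_t$ is in fact $1$-Lipschitz with respect to the uniform metric $d_\infty^X$ on $C([0,1],X)$, from which continuity is immediate.

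Concretely, I would take any two curves $\gamma_1,\gamma_2\in C([0,1],X)$ and compute directly
\[
d_X\bigl(e_t(\gamma_1),e_t(\gamma_2)\bigr)
=d_X\bigl(\gamma_1(t),\gamma_2(t)\bigr)
\leq \sup_{s\in[0,1]} d_X\bigl(\gamma_1(s),\gamma_2(s)\bigr)
= d_\infty^X(\gamma_1,\gamma_2),
\]
where the inequality simply uses that $t\in[0,1]$ is one specific value of the parameter over which $d_\infty^X$ takes the supremum. This shows that $e_t$ is non-expansive, and in particular continuous. The argument works for every $t\in[0,1]$ with the same constant, so one actually obtains the slightly stronger fact that the family $\{e_t\}_{t\in[0,1]}$ is uniformly (in $t$) $1$-Lipschitz.

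There is no real obstacle here: the statement is a direct unpacking of the definition of the uniform metric. The only thing to be careful about is to explicitly point out the one-line chain of inequalities above, rather than appealing to the continuity of $e_t$ on general grounds, so the proof remains self-contained and matches the conventions set earlier in the paper (where $d_\infty^X$ was defined via a supremum over $[0,1]$).
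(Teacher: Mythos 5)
Your proof is correct and is precisely the argument the paper has in mind: the paper states this lemma without proof as ``a direct consequence of definition of the uniform metric $d_\infty^X$,'' and your one-line chain of inequalities showing $e_t$ is $1$-Lipschitz is exactly that direct consequence spelled out.
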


For any closed subsets $A,B\subseteq X$ with $\rho\coloneqq \dH^X\left(A,B\right)>0$, recall the definition of $\mathfrak{L}\left(A,B\right)$:
\begin{align*}
    \mathfrak{L}\left(A,B\right)&\coloneqq\left\{\gamma:[0,1]\rightarrow X:\,\gamma(0)\in A,\,\gamma(1)\in B\text{ and }\forall s,t\in[0,1],\,d_X\left(\gamma(s),\gamma(t)\right)\leq|s-t|\,\rho\right\}\\
    &=\left\{\gamma\in C([0,1],X):\,\gamma(0)\in A,\,\gamma(1)\in B\text{ and }\gamma\text{ is }\rho\text{-Lipschitz}\right\}.
\end{align*}

We have the following basic property of $\mathfrak{L}\left(A,B\right)$.

\begin{proposition}
$\mathfrak{L}\left(A,B\right)$ is a compact subset of $C\left([0,1],X\right)$.
\end{proposition}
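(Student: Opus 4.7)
The plan is to verify compactness via the sequential criterion, invoking the Arzelà-Ascoli theorem (\Cref{thm:AA}) already available in the paper. Since $X$ is compact and every curve in $\mathfrak{L}(A,B)$ is $\rho$-Lipschitz with the same constant $\rho = \dH^X(A,B)$, the family $\mathfrak{L}(A,B)$ is an equi-Lipschitz family of curves into a compact metric space. That is precisely the hypothesis of \Cref{thm:AA}.

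First I would fix a sequence $\{\gamma_n\}_{n=0}^\infty \subseteq \mathfrak{L}(A,B)$. By \Cref{thm:AA} applied with the uniform Lipschitz constant $C = \rho$, some subsequence $\{\gamma_{n_k}\}_{k=0}^\infty$ converges uniformly on $[0,1]$ to a $\rho$-Lipschitz curve $\gamma:[0,1]\to X$, which in particular lies in $C([0,1],X)$ and satisfies $d_X(\gamma(s),\gamma(t))\leq |s-t|\,\rho$ for all $s,t\in[0,1]$.

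Next I would check that the endpoint conditions are preserved. Since the evaluation maps $e_0, e_1:C([0,1],X)\to X$ are continuous (as noted in the preceding lemma) and both $A$ and $B$ are closed subsets of $X$, we have $\gamma(0) = \lim_{k\to\infty}\gamma_{n_k}(0) \in A$ and $\gamma(1) = \lim_{k\to\infty}\gamma_{n_k}(1) \in B$. Hence $\gamma\in\mathfrak{L}(A,B)$, so $\mathfrak{L}(A,B)$ is sequentially compact and thus compact in the metric space $(C([0,1],X),d_\infty^X)$.

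This argument has no real obstacle: the uniform Lipschitz bound $\rho$ is built into the definition of $\mathfrak{L}(A,B)$, the target space is compact, and the constraints defining membership (endpoints in closed sets, Lipschitz constant $\rho$) are all preserved under uniform limits. In other words, $\mathfrak{L}(A,B)$ is the intersection of the equi-Lipschitz set $\{\gamma\in C([0,1],X): d_X(\gamma(s),\gamma(t))\leq |s-t|\,\rho\}$ with the closed sets $e_0^{-1}(A)$ and $e_1^{-1}(B)$, which together give a closed subset of the pre-compact family supplied by Arzelà-Ascoli.
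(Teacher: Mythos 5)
Your proof is correct and follows essentially the same route as the paper's: apply the Arzel\`a-Ascoli theorem (\Cref{thm:AA}) with the uniform Lipschitz constant $\rho$ to extract a uniformly convergent subsequence with a $\rho$-Lipschitz limit, then use closedness of $A$ and $B$ to see the limit's endpoints remain in $A$ and $B$, giving sequential compactness. No issues.
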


\begin{proof}
Let $\{\gamma_i:[0,1]\rightarrow X\}_{i=0}^\infty$ be a sequence in $\mathfrak{L}\left(A,B\right)$. By definition of $\mathfrak{L}\left(A,B\right)$, each $\gamma_i$ is $\rho$-Lipschitz. Since $X$ is also compact, by Arzel\`a-Ascoli theorem (\Cref{thm:AA}), there exists a subsequence of the sequence $\{\gamma_i\}_{i=0}^\infty$ uniformly converging to a $\rho$-Lipschitz curve $\gamma:[0,1]\rightarrow X$. Since $A$ is compact, $\gamma(0)=\lim_{i\rightarrow\infty}\gamma_i(0)\in A$. Similarly, $\gamma(1)\in B$ and as a result, $\gamma\in\mathfrak{L}\left(A,B\right)$. This implies sequential compactness and thus compactness of $\mathfrak{L}\left(A,B\right)$.
\end{proof}

\begin{remark}\label{rmk:et-closed-map}
In particular, $e_t:\mathfrak{L}\left(A,B\right)\rightarrow X$ is a closed map, i.e., for any closed subset $\mathfrak{D}\subseteq\mathfrak{L}\left(A,B\right)$, the image $e_t(\mathfrak{D})$ is closed in $X$. Indeed, $\mathfrak{D}$ is then compact since $\mathfrak{L}\left(A,B\right)$ is compact. Therefore, $e_t(\mathfrak{D})$ is compact and thus closed since $e_t$ is continuous.
\end{remark}

\begin{definition}[Hausdorff displacement interpolation]\label{def:haus-dis-int}
We call a closed subset $\mathfrak{D}\subseteq\mathfrak{L}\left(A,B\right)$ a \emph{Hausdorff displacement interpolation} between $A$ and $B$ if $e_0\left(\mathfrak{D}\right)=A$ and $e_1\left(\mathfrak{D}\right)=B$.
\end{definition}

\paragraph{Proof of \Cref{thm:main-dyn-hausdorff}.}Recall that one of our main results is the following:
\thmdynhaus*

Note that in item 2 of the theorem, that $\gamma(t)\in\mathcal{H}(X)$ follows from \Cref{rmk:et-closed-map}. The proof of the implication $1\Rightarrow 2$ is based on the following observations.

\begin{lemma}\label{lm:geo-char-hausdorff}
Let $X\in\ms$ and let $\gamma:[0,1]\rightarrow \mathcal{H}\left(X\right)$ be a Hausdorff geodesic. Let $\rho\coloneqq\dH^X\left(\gamma\left(0\right),\gamma\left(1\right)\right)$ and assume that $\rho>0$. Then, for any $t_*\in[0,1]$ and any $x_*\in\gamma\left(t_*\right)$, there exists a $\rho$-Lipschitz curve $\zeta:[0,1]\rightarrow X$ such that $\zeta\left(t\right)\in\gamma\left(t\right)$ for $t\in[0,1]$ and $\zeta\left(t_*\right)=x_{*}$.
\end{lemma}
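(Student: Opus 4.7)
The plan is to approximate $\zeta$ by a diagonal extraction from a sequence of Lipschitz selections built on finer and finer dyadic grids. By treating $[0,t_*]$ and $[t_*,1]$ separately and gluing at $t_*$, it suffices to consider the case $t_* = 0$: since both halves take the value $x_*$ at $t_*$, the triangle inequality upgrades the piecewise Lipschitz bound to the desired global one on $[0,1]$.

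For each $n \geq 1$ I build a function $\zeta_n$ on the dyadic grid $D_n := \{k \cdot 2^{-n} : 0 \leq k \leq 2^n\}$ greedily: set $\zeta_n(0) := x_*$, and given $\zeta_n(k\cdot 2^{-n}) \in \gamma(k\cdot 2^{-n})$, pick $\zeta_n((k+1)\cdot 2^{-n}) \in \gamma((k+1)\cdot 2^{-n})$ realizing $d_X(\zeta_n(k\cdot 2^{-n}), \gamma((k+1)\cdot 2^{-n}))$ (the minimum is attained because $\gamma((k+1)\cdot 2^{-n})$ is compact). Since $\gamma$ is a Hausdorff geodesic, $\dH^X(\gamma(k\cdot 2^{-n}), \gamma((k+1)\cdot 2^{-n})) = \rho \cdot 2^{-n}$, so this step ensures $d_X(\zeta_n(k\cdot 2^{-n}), \zeta_n((k+1)\cdot 2^{-n})) \leq \rho \cdot 2^{-n}$, and telescoping yields the Lipschitz estimate $d_X(\zeta_n(i \cdot 2^{-n}), \zeta_n(j \cdot 2^{-n})) \leq |i - j| \cdot 2^{-n} \rho$ on $D_n$.

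Enumerate the dyadic rationals in $[0,1]$ as $\tau_1, \tau_2, \ldots$; each $\tau_k$ lies in $D_n$ for all sufficiently large $n$. By compactness of $X$ and a standard diagonal argument, pass to a subsequence $\zeta_{n_j}$ such that $\zeta_{n_j}(\tau_k)$ converges to some $\zeta(\tau_k) \in X$ for every $k$. The grid-Lipschitz estimate passes to the limit, so $\zeta$ is $\rho$-Lipschitz on the dense set of dyadic rationals and hence extends uniquely to a $\rho$-Lipschitz curve $\zeta:[0,1]\to X$ with $\zeta(0)=x_*$. To check $\zeta(t) \in \gamma(t)$: for dyadic $\tau$ this is immediate since $\zeta_{n_j}(\tau) \in \gamma(\tau)$ for large $j$ and $\gamma(\tau)$ is closed; for arbitrary $t$, pick dyadic $\tau_k \to t$, then $\zeta(\tau_k) \to \zeta(t)$ by continuity and $d_X(\zeta(t),\gamma(t)) \leq d_X(\zeta(t),\zeta(\tau_k)) + \dH^X(\gamma(\tau_k),\gamma(t)) = d_X(\zeta(t),\zeta(\tau_k)) + |\tau_k - t|\rho \to 0$, so $\zeta(t) \in \gamma(t)$ by closedness.

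The subtlety I expect to be the main obstacle is that the most tempting construction---dyadic bisection in which one commits to both endpoints $\zeta(s)$ and $\zeta(t)$ and then tries to pick a midpoint close to both---can fail: a point of $\gamma(m)$ close to $\zeta(s)$ need not be close to a pre-committed $\zeta(t)$, and the triangle inequality alone only yields a $2\rho$-Lipschitz bound on one half. The construction above circumvents this by not enforcing any consistency between grid levels and instead relying on compactness plus diagonalization to produce a single coherent $\rho$-Lipschitz limit.
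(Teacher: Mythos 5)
Your proof is correct, and the overall skeleton matches the paper's: both arguments greedily select points on finer and finer grids anchored at $(t_*,x_*)$, using $\dH^X(\gamma(s),\gamma(t))=|s-t|\rho$ plus compactness of each $\gamma(t)$ to keep consecutive selections within $|s-t|\rho$ of each other, and then pass to a limit. The execution of the limit, however, is genuinely different and more elementary in your version. The paper first embeds $X$ into the geodesic space $\mathcal{W}_1(X)$ (via \Cref{thm:W-geodesic}), joins consecutive grid points by geodesics there, concatenates them (\Cref{prop:geo-concatenate}) into honest $\rho$-Lipschitz \emph{curves} $\zeta_k:[0,1]\to\mathcal{W}_1(X)$, and only then invokes the Arzel\`a--Ascoli theorem, afterwards checking that the limit curve lands back in $X$. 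You skip the auxiliary ambient space entirely: you keep the selections as discrete functions on the dyadic grids, run a pointwise diagonal extraction over the countable dense set of dyadic rationals, observe that the grid-level Lipschitz bound survives the limit, and extend by density and completeness of the compact space $X$. This trades the Wasserstein/Arzel\`a--Ascoli machinery for the standard extension of a Lipschitz map from a dense subset, which is arguably cleaner and makes the lemma self-contained. Your other deviation --- reducing to $t_*=0$ by splitting $[0,1]$ at $t_*$ and gluing, with the triangle inequality through the common value $x_*$ restoring the global $\rho$-Lipschitz bound --- is a cosmetic reorganization of the paper's two-sided grids $T^k$ anchored at $t_*$, and is handled correctly. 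Your closing remark about why a naive bisection scheme would only give a $2\rho$ bound is accurate and identifies precisely the pitfall both proofs are designed to avoid.
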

The proof of \Cref{lm:geo-char-hausdorff} is technical and we postpone it to the end of this section. One immediate consequence of the lemma is the following result.

\begin{coro}\label{coro:1to2-thm-3}
Let $X\in\ms$ and let $\gamma:[0,1]\rightarrow \mathcal{H}\left(X\right)$ be a Hausdorff geodesic. Let $\rho\coloneqq\dH^X\left(\gamma\left(0\right),\gamma\left(1\right)\right)$ and assume that $\rho>0$. Let
$$\mathfrak{D}\coloneqq\left\{\zeta\in C([0,1],X):\,\forall t\in[0,1],\,\zeta(t)\in\gamma\left(t\right),\,\text{and }\zeta\text{ is }\rho\text{-Lipschitz}\right\}.$$ 
Then, $\mathfrak{D}$ is a nonempty closed subset of $\mathfrak{L}\left(\gamma\left(0\right),\gamma\left(1\right)\right)$ such that $e_t(\mathfrak{D})=\gamma(t)$ for each $t\in[0,1]$. 
\end{coro}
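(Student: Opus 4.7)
The plan is to deduce all three claims—nonemptiness, closedness, and the equality $e_t(\mathfrak{D})=\gamma(t)$—from \Cref{lm:geo-char-hausdorff} together with a short compactness argument. The corollary is essentially a packaging result: once the lemma provides the existence of threading $\rho$-Lipschitz curves through any prescribed point in any slice $\gamma(t_*)$, everything else follows formally.

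I would first dispatch the easy inclusions. By construction, every $\zeta\in\mathfrak{D}$ is $\rho$-Lipschitz with $\zeta(0)\in\gamma(0)$ and $\zeta(1)\in\gamma(1)$, so $\mathfrak{D}\subseteq\mathfrak{L}(\gamma(0),\gamma(1))$; likewise $\zeta(t)\in\gamma(t)$ immediately yields $e_t(\mathfrak{D})\subseteq\gamma(t)$ for every $t\in[0,1]$.

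The substantive content—nonemptiness together with the reverse inclusion $\gamma(t)\subseteq e_t(\mathfrak{D})$—I would handle in one step via \Cref{lm:geo-char-hausdorff}. Given any $t_*\in[0,1]$ and any $x_*\in\gamma(t_*)$, the lemma produces a $\rho$-Lipschitz curve $\zeta\colon[0,1]\to X$ with $\zeta(t)\in\gamma(t)$ for all $t$ and $\zeta(t_*)=x_*$. Such a $\zeta$ belongs to $\mathfrak{D}$ by definition, which simultaneously shows that $\mathfrak{D}\neq\emptyset$ (take, e.g., $t_*=0$ and any $x_*\in\gamma(0)$) and that $x_*\in e_{t_*}(\mathfrak{D})$. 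Since $t_*$ and $x_*$ were arbitrary, this completes $\gamma(t)\subseteq e_t(\mathfrak{D})$.

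For closedness, I would take a sequence $\{\zeta_n\}\subseteq\mathfrak{D}$ converging uniformly to some $\zeta\in C([0,1],X)$ (convergence in $\mathfrak{L}(\gamma(0),\gamma(1))$ is uniform convergence). The $\rho$-Lipschitz condition is preserved under uniform limits, so $\zeta$ is $\rho$-Lipschitz. Fixing $t\in[0,1]$, the sequence $\{\zeta_n(t)\}_{n\ge 0}$ lies in the closed subset $\gamma(t)\subseteq X$ and converges to $\zeta(t)$, so $\zeta(t)\in\gamma(t)$. Therefore $\zeta\in\mathfrak{D}$, establishing that $\mathfrak{D}$ is closed in $\mathfrak{L}(\gamma(0),\gamma(1))$. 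No genuine obstacle is expected here; the entire argument is bookkeeping once \Cref{lm:geo-char-hausdorff} is in hand.
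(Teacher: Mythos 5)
Your proposal is correct and follows essentially the same route as the paper: both derive nonemptiness and $\gamma(t)\subseteq e_t(\mathfrak{D})$ from \Cref{lm:geo-char-hausdorff}, and both prove closedness by taking a uniform limit of curves in $\mathfrak{D}$, using preservation of the $\rho$-Lipschitz bound and the closedness of each $\gamma(t)$. The only difference is that you spell out the trivial inclusions $\mathfrak{D}\subseteq\mathfrak{L}(\gamma(0),\gamma(1))$ and $e_t(\mathfrak{D})\subseteq\gamma(t)$, which the paper leaves implicit.
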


\begin{proof}
By \Cref{lm:geo-char-hausdorff}, $\mathfrak{D}$ is obviously nonempty and $e_t(\mathfrak{D})=\gamma(t)$ for each $t\in[0,1]$. Now, it remains to prove the  closedness of $\mathfrak{D}$. Let $\{\zeta_i\}_{i=1}^\infty$ be a sequence in $\mathfrak{D}$ converging to some $\zeta\in C([0,1],X)$ with respect to the metric $d_\infty^X$. Then, by \Cref{thm:AA}, $\zeta$ is $\rho$-Lipschitz. Moreover, for each $t\in[0,1]$, since $\zeta_i(t)\in\gamma(t)$ for all $i=1,\ldots$ and $\lim_{i\rightarrow \infty}\zeta_i(t)=\zeta(t)$,  by the  closedness of $\gamma(t)$ we have that $\zeta(t)\in\gamma(t)$. Therefore, $\zeta\in\mathfrak{D}$ and thus $\mathfrak{D}$ is closed.
\end{proof}

This corollary establishes the direction $1\Rightarrow 2$ of \Cref{thm:main-dyn-hausdorff}. We prove the opposite direction as follows:

\begin{proof}[{Proof of $2\Rightarrow 1$ in \Cref{thm:main-dyn-hausdorff}}]
Suppose that there exists a closed subset $\mathfrak{D}\subseteq\mathfrak{L}\left(\gamma\left(0\right),\gamma\left(1\right)\right) $ such that $\gamma\left(t\right)=e_t\left(\mathfrak{D}\right)$ for all $t\in[0,1]$. For any $t,t'\in[0,1]$, choose an arbitrary $x_t\in e_t\left(\mathfrak{D}\right)$. Let $\zeta\in\mathfrak{D}$ be such that $\zeta(t)=x_t$ for each $t\in[0,1]$. Let $x_{t'}\coloneqq \zeta({t'})$. Then, $d_X\left(x_t,x_{t'}\right)\leq |t-t'|\rho$. So $\gamma\left(t\right)\subseteq\lc\gamma\left(t'\right)\rc^{|t-t'|\rho}$. Similarly, $\gamma\left(t'\right)\subseteq\lc\gamma\left(t\right)\rc^{|t-t'|\rho}$ and thus 
$$\dH^X\left(\gamma\left(t\right),\gamma\left(t'\right)\right)\leq |t-t'|\cdot\rho= |t-t'|\cdot\dH^X\left(\gamma\left(0\right),\gamma\left(1\right)\right).$$
Hence, $\gamma$ is a Hausdorff geodesic.
\end{proof}

\paragraph{Interpretations and consequences of \Cref{thm:main-dyn-hausdorff}.}The following remark discusses a difference between the dynamic optimal coupling and the Hausdorff displacement interpolation.
\begin{remark}[A difference between dynamic optimal coupling and Hausdorff displacement interpolation]\label{rmk:couterex-haus-geo}
Given a compact metric space $X$ and $\alpha,\beta\in\mathcal{P}\left(X\right)$, suppose $A=\supp\left(\alpha\right)$ and $B=\supp\left(\beta\right)$. Note in fact that any dynamic optimal coupling $\Pi$ between $\alpha$ and $\beta$ is supported on $\Gamma\left(A,B\right)$, the set of geodesics $\gamma$ with $\gamma\left(0\right)\in A$ and $\gamma\left(1\right)\in B$ (cf. \cite[Corollary 7.22]{villani2008optimal}). It is tempting to ask whether $\mathfrak{L}\left(A,B\right)$ can be replaced by $\mathfrak{L}\left(A,B\right)\cap \Gamma\left(A,B\right)$ in \Cref{thm:main-dyn-hausdorff}. This is not necessarily true. Indeed, consider the following case where $X\subseteq\R^2$ is a large enough compact disk around the origin. Let $A$ be the square $[-3,-1]\times [-1,1]$ and let $B$ be the square $[1,3]\times [-1,1]$. Then, $\dH^X\left(A,B\right)=4$. Now, let $\gamma:[0,1]\rightarrow \mathcal{H}\left(X\right)$ be the{ Hausdorff geodesic constructed in \Cref{thm:haus-geo-cons}}, i.e., $\gamma(t)=A^{(1-t)\dH^X(A,B)}\cap B^{t\dH^X(A,B)}$ for each $t\in[0,1]$. Then, $\gamma\left(\frac{1}{2}\right)=A^2\cap B^2$. It is easy to see that $\left(0,2\right)\in \gamma\left(\frac{1}{2}\right)$ but there is no geodesic passing through $\left(0,2\right)$ starting in $A$ and ending in $B$. See \Cref{fig:haus-geo} for an illustration.
\end{remark}

\begin{figure}[htb]
	\centering		\includegraphics[width=0.6\textwidth]{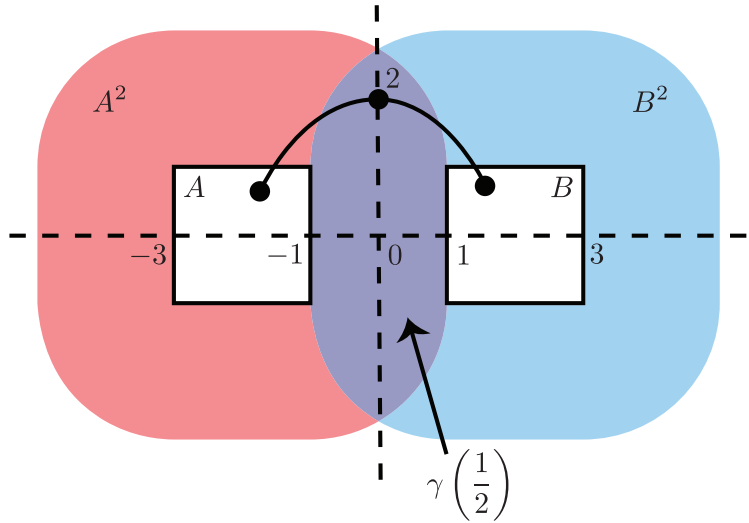}
	\caption{\textbf{Illustration of the example in \Cref{rmk:couterex-haus-geo}.} There is no line segment (geodesic) passing through $\left(0,2\right)$ starting $A$ and ending in $B$. \textbf{(color figure)}} \label{fig:haus-geo}
\end{figure}

\Cref{thm:main-dyn-hausdorff} is a powerful tool which allows us to easily reprove the geodesic property of the Hausdorff hyperspace of a geodesic space (cf. \Cref{thm:hgeo}). In fact, we apply \Cref{thm:main-dyn-hausdorff} to prove a stronger result:

\begin{theorem}\label{thm:hyper-geod-iff}
Suppose $X$ is a compact metric space. Then, $\mathcal{H}\left(X\right)$ is geodesic \emph{if and only if} $X$ is geodesic.
\end{theorem}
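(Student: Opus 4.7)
The plan is to prove both directions using \Cref{thm:main-dyn-hausdorff} as the central tool: the forward direction recovers \Cref{thm:hgeo} (and in fact \Cref{thm:haus-geo-cons}) by exhibiting an explicit Hausdorff displacement interpolation, while the backward direction is the new content and proceeds by applying \Cref{thm:main-dyn-hausdorff} to a Hausdorff geodesic between two singletons.

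For the forward direction, assume $X$ is geodesic and fix $A,B\in\mathcal{H}(X)$ with $\rho\coloneqq\dH^X(A,B)>0$ (the case $\rho=0$ is trivial). I take $\mathfrak{D}$ to be the entire set $\mathfrak{L}(A,B)$ of $\rho$-Lipschitz curves from $A$ to $B$; this is closed in $C([0,1],X)$ because $A$ and $B$ are closed and the $\rho$-Lipschitz condition is preserved under uniform limits. To verify $e_0(\mathfrak{D})=A$, for each $a\in A$ choose $b\in B$ with $d_X(a,b)\leq\rho$ (possible by definition of $\dH^X$) and connect $a$ to $b$ by a geodesic in $X$, which is $d_X(a,b)$-Lipschitz and hence $\rho$-Lipschitz. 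The analogous argument gives $e_1(\mathfrak{D})=B$. The implication $2\Rightarrow 1$ of \Cref{thm:main-dyn-hausdorff} then produces a Hausdorff geodesic $\gamma(t)=e_t(\mathfrak{D})$ between $A$ and $B$. A short verification also shows that $e_t(\mathfrak{L}(A,B))=A^{t\rho}\cap B^{(1-t)\rho}$, so this construction recovers the explicit formula of \Cref{thm:haus-geo-cons}.

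For the backward direction, assume $\mathcal{H}(X)$ is geodesic and fix $x_0,x_1\in X$, setting $\rho\coloneqq d_X(x_0,x_1)$; the case $\rho=0$ is trivial, so suppose $\rho>0$. The singletons $\{x_0\},\{x_1\}\in\mathcal{H}(X)$ satisfy $\dH^X(\{x_0\},\{x_1\})=\rho$, and by hypothesis there is a Hausdorff geodesic $\Gamma:[0,1]\to\mathcal{H}(X)$ connecting them. Applying the implication $1\Rightarrow 2$ of \Cref{thm:main-dyn-hausdorff} yields a nonempty closed subset $\mathfrak{D}\subseteq\mathfrak{L}(\{x_0\},\{x_1\})$. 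Picking any $\zeta\in\mathfrak{D}$, we obtain a $\rho$-Lipschitz curve with $\zeta(0)=x_0$ and $\zeta(1)=x_1$; then the chain of inequalities $\rho=d_X(\zeta(0),\zeta(1))\leq d_X(\zeta(0),\zeta(t))+d_X(\zeta(t),\zeta(1))\leq t\rho+(1-t)\rho=\rho$ forces equality everywhere, upgrading $\zeta$ to a genuine geodesic in $X$ connecting $x_0$ and $x_1$. Since \Cref{thm:main-dyn-hausdorff} absorbs all the technical work, no real obstacle remains; the main point is simply to spot that applying the dynamic characterization to singletons is exactly what converts a geodesic in the hyperspace into a geodesic downstairs.
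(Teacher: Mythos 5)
Your proposal is correct and follows essentially the same route as the paper: both directions hinge on \Cref{thm:main-dyn-hausdorff}, taking $\mathfrak{D}=\mathfrak{L}(A,B)$ for the forward implication and applying the displacement-interpolation characterization to singletons for the converse. Your forward direction is in fact slightly more careful than the paper's (which only records $\mathfrak{L}(A,B)\neq\emptyset$), since you explicitly verify $e_0(\mathfrak{L}(A,B))=A$ and $e_1(\mathfrak{L}(A,B))=B$, which is what is actually needed to invoke the implication $2\Rightarrow 1$.
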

\begin{proof}
We first assume that $X$ is geodesic. Let $A,B\in\mathcal{H}(X)$ be two distinct subsets. Then, we have the following observation: 
\begin{claim}\label{clm:nonempty L}
$\mathfrak{L}(A,B)\neq\emptyset$.
\end{claim}
\begin{proof}[Proof of \Cref{clm:nonempty L}]
Let $\rho\coloneqq\dH^X(A,B)$. Since $X$ is compact and $A,B$ are closed, there exists $a\in A$ and $b\in B$ such that $d_X(a,b)=\rho$. Then, there exists a geodesic $\gamma:[0,1]\rightarrow X$ such that $\gamma(0)=a\in A$, $\gamma(1)=b\in B$ and $\gamma$ is $\rho$-Lipschitz. This implies that $\gamma\in\mathfrak{L}(A,B)$ and thus $\mathfrak{L}(A,B)\neq\emptyset$.
\end{proof}
Then by \Cref{thm:main-dyn-hausdorff}, any nonempty closed subset of $\mathfrak{L}(A,B)$ (e.g., $\mathfrak{L}(A,B)$ itself) gives rise to a Hausdorff geodesic connecting $A$ and $B$ in $\mathcal{H}(X)$. Therefore, $\mathcal{H}(X)$ is geodesic.

Now, we assume that $\mathcal{H}(X)$ is geodesic. Then, for any two distinct points $x,y\in X$, there exists a Hausdorff geodesic connecting $\{x\}$ and $\{y\}$. By \Cref{thm:main-dyn-hausdorff}, $\mathfrak{L}(\{x\},\{y\})\neq \emptyset$. Note that $d_X(x,y)=\dH^X(\{x\},\{y\})=:\rho$. Then, any $\rho$-Lipschitz curve in $\mathfrak{L}(\{x\},\{y\})$ is automatically a geodesic connecting $x$ and $y$ which implies that $X$ is geodesic.
\end{proof}

Let $X$ be a compact geodesic metric space. Then, for any two distinct sets $A,B\in\mathcal{H}(X)$, we know by \Cref{clm:nonempty L} that $\mathfrak{L}(A,B)\neq\emptyset$. Then by letting $\mathfrak{D}\coloneqq\mathfrak{L}(A,B)$ and applying \Cref{thm:main-dyn-hausdorff}, we have that the curve $\gamma_{A,B}^X:[0,1]\rightarrow\mathcal{H}(X)$ defined by $t\mapsto e_t(\mathfrak{L}(A,B))$ for all $t\in[0,1]$ is a Hausdorff geodesic. In fact, $\gamma_{A,B}^X$ coincides with the curve constructed in \Cref{thm:haus-geo-cons}, which provides an alternative proof of \Cref{thm:haus-geo-cons}:

\begin{proposition}\label{prop:hausdorff geod from interpolation}
Let $X$ be a compact geodesic metric space. For any two distinct sets $A,B\in\mathcal{H}(X)$, we define $\gamma_{A,B}^X:[0,1]\rightarrow\mathcal{H}(X)$ by $t\mapsto e_t(\mathfrak{L}(A,B))$ for all $t\in[0,1]$. Then, $\gamma_{A,B}^X$ is a Hausdorff geodesic. Moreover, if we let $\rho\coloneqq\dH^X(A,B)$, then for any $t\in[0,1]$, we have that 
\[ e_t(\mathfrak{L}(A,B))=A^{t\rho}\cap B^{(1-t)\rho}.\]
\end{proposition}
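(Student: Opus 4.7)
My plan has two parts: showing $\gamma_{A,B}^X$ is a Hausdorff geodesic, and identifying $e_t(\mathfrak{L}(A,B))$ with $A^{t\rho}\cap B^{(1-t)\rho}$.

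For the first part, I will invoke the implication $2\Rightarrow 1$ of \Cref{thm:main-dyn-hausdorff} applied to $\mathfrak{D}=\mathfrak{L}(A,B)$. To do this, I need that $\mathfrak{L}(A,B)$ is a nonempty closed subset of itself (trivial), that $e_0(\mathfrak{L}(A,B))=A$, and that $e_1(\mathfrak{L}(A,B))=B$. The inclusion $\subseteq$ is immediate from the definition of $\mathfrak{L}(A,B)$. For the reverse inclusion, fix $a\in A$; by compactness of $B$ and the definition of Hausdorff distance there exists $b\in B$ with $d_X(a,b)\leq\rho$, so since $X$ is geodesic one can take a $d_X(a,b)$-Lipschitz (hence $\rho$-Lipschitz) geodesic from $a$ to $b$, which lies in $\mathfrak{L}(A,B)$ and evaluates to $a$ at $t=0$. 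The argument for $e_1$ is symmetric. This yields the geodesic claim.

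For the set equality, the inclusion $e_t(\mathfrak{L}(A,B))\subseteq A^{t\rho}\cap B^{(1-t)\rho}$ is immediate: any $\zeta\in\mathfrak{L}(A,B)$ is $\rho$-Lipschitz with $\zeta(0)\in A$ and $\zeta(1)\in B$, so $d_X(\zeta(t),\zeta(0))\leq t\rho$ and $d_X(\zeta(t),\zeta(1))\leq (1-t)\rho$, whence $\zeta(t)\in A^{t\rho}\cap B^{(1-t)\rho}$. For the nontrivial direction, fix $x\in A^{t\rho}\cap B^{(1-t)\rho}$; by compactness of $A$ and $B$ choose $a\in A$ with $d_X(a,x)\leq t\rho$ and $b\in B$ with $d_X(x,b)\leq (1-t)\rho$. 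Using that $X$ is geodesic I will build a geodesic $\sigma_1:[0,t]\to X$ from $a$ to $x$ (obtained by reparametrizing a unit-speed geodesic so it has speed $d_X(a,x)/t\leq\rho$) and a geodesic $\sigma_2:[t,1]\to X$ from $x$ to $b$ of speed $d_X(x,b)/(1-t)\leq\rho$, and concatenate them into a curve $\zeta$ with $\zeta(t)=x$.

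The one point that needs a short verification is that the concatenated curve $\zeta$ is globally $\rho$-Lipschitz (and thus in $\mathfrak{L}(A,B)$): Lipschitzness inside each half-interval is clear by the speed bounds, and for $s\in[0,t]$ and $s'\in[t,1]$ the triangle inequality yields
\[
d_X(\zeta(s),\zeta(s'))\leq d_X(\zeta(s),x)+d_X(x,\zeta(s'))\leq (t-s)\rho+(s'-t)\rho=|s'-s|\rho.
\]
This is the only mildly delicate step; the rest is a direct application of the ambient geodesic structure of $X$ and of the $2\Rightarrow 1$ direction of \Cref{thm:main-dyn-hausdorff}. Since this proof never invokes the explicit construction of \Cref{thm:haus-geo-cons}, it also delivers the promised alternative proof of that theorem.
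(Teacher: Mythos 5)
Your proof is correct, and for the substantive half of the statement it takes a genuinely different route from the paper. The first part (geodesicity via the $2\Rightarrow 1$ direction of \Cref{thm:main-dyn-hausdorff} applied to $\mathfrak{D}=\mathfrak{L}(A,B)$, after checking $e_0(\mathfrak{L}(A,B))=A$ and $e_1(\mathfrak{L}(A,B))=B$) coincides with the paper's argument — indeed you are slightly more careful, since the paper's \Cref{clm:nonempty L} only records nonemptiness and leaves the endpoint surjectivity implicit. The difference is in the inclusion $A^{t\rho}\cap B^{(1-t)\rho}\subseteq e_t(\mathfrak{L}(A,B))$: you construct, for each $x$ in the intersection, an explicit $\rho$-Lipschitz curve through $x$ by concatenating two reparametrized geodesics of $X$ and checking the cross-term Lipschitz bound by the triangle inequality. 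The paper instead shows $\dH^X(A,A^{t\rho}\cap B^{(1-t)\rho})=t\rho$ and $\dH^X(B,A^{t\rho}\cap B^{(1-t)\rho})=(1-t)\rho$, concatenates two Hausdorff geodesics in $\mathcal{H}(X)$ via \Cref{prop:geo-concatenate}, and then invokes the $1\Rightarrow 2$ direction of \Cref{thm:main-dyn-hausdorff} — whose proof rests on the technical \Cref{lm:geo-char-hausdorff} — to extract a displacement interpolation $\mathfrak{D}$ with $e_t(\mathfrak{D})=A^{t\rho}\cap B^{(1-t)\rho}$. Your argument is more elementary: it uses only the easy direction of \Cref{thm:main-dyn-hausdorff} and the geodesic structure of $X$ itself, which makes the claimed "alternative proof of \Cref{thm:haus-geo-cons}" genuinely independent of the harder machinery; the paper's version, by contrast, reuses the displacement-interpolation theorem in both directions. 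The only cosmetic point to tidy is the degenerate cases $t=0$ and $t=1$, where one of your two arcs collapses to a point and the "speed $d_X(a,x)/t$" phrasing needs the obvious reading.
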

\begin{proof}
That $\gamma_{A,B}^X$ is a Hausdorff geodesic has already been proved above.

Since $\gamma_{A,B}^X$ is a Hausdorff geodesic, we have that
\[\dH^X\lc A,\gamma_{A,B}^X(t)\rc=t\rho\,\text{ and }\,\dH^X\lc \gamma_{A,B}^X(t),B\rc=(1-t)\rho.\]
Then, $\gamma_{A,B}^X(t)\subseteq A^{t\rho}$ and $\gamma_{A,B}^X(t)\subseteq B^{(1-t)\rho}$. Therefore,
\begin{equation}\label{eq:et inside cap}
    e_t(\mathfrak{L}(A,B))=\gamma_{A,B}^X(t)\subseteq A^{t\rho}\cap B^{(1-t)\rho}.
\end{equation}

For the other direction, we first observe that \Cref{eq:et inside cap} immediately implies that $A^{t\rho}\cap B^{(1-t)\rho}\neq\emptyset$ and thus $A^{t\rho}\cap B^{(1-t)\rho}\in\mathcal{H}(X)$. Since $\mathcal{H}(X)$ is geodesic, there exist geodesics $\gamma_A,\gamma_B:[0,1]\rightarrow\mathcal{H}(X)$ such that $\gamma_A(0)=A$, $\gamma_A(1)=\gamma_B(0)=A^{t\rho}\cap B^{(1-t)\rho}$ and $\gamma_B(1)=B$. Note that $A^{t\rho}\cap B^{(1-t)\rho}\subseteq A^{t\rho}$ and $A\subseteq \lc \gamma_{A,B}^X(t)\rc^{t\rho}\subseteq\lc A^{t\rho}\cap B^{(1-t)\rho}\rc^{t\rho},$ where $A\subseteq \lc \gamma_{A,B}^X(t)\rc^{t\rho}$ follows from the fact that $\dH^X\lc A, \gamma_{A,B}^X(t)\rc=t\rho$. This implies that $\dH^X\lc A, A^{t\rho}\cap B^{(1-t)\rho}\rc\leq t\rho.$ Similarly, $\dH^X\lc B, A^{t\rho}\cap B^{(1-t)\rho}\rc\leq (1-t)\rho$. By the triangle inequality,
\[\rho=\dH^X(A,B)\leq \dH^X\lc A, A^{t\rho}\cap B^{(1-t)\rho}\rc+\dH^X\lc B, A^{t\rho}\cap B^{(1-t)\rho}\rc\leq t\rho+(1-t)\rho=\rho.\]
Therefore, all equalities must hold.
Then by \Cref{prop:geo-concatenate}, we concatenate $\gamma_A$ and $\gamma_B$ to obtain a geodesic $\gamma:[0,1]\rightarrow\mathcal{H}(X)$ such that $\gamma(0)=A,\gamma(1)=B$ and $\gamma(t)=A^{t\rho}\cap B^{(1-t)\rho}$. By \Cref{thm:main-dyn-hausdorff}, there exists a closed subset $\mathfrak{D}\subseteq\mathfrak{L}(A,B)$ such that ${\gamma}(t)=e_t(\mathfrak{D})$. Then,
\[A^{t\rho}\cap B^{(1-t)\rho}=\gamma(t)=e_t(\mathfrak{D})\subseteq e_t(\mathfrak{L}(A,B)).\]
This concludes the proof.
\end{proof}

The special geodesic $\gamma_{A,B}^X$ constructed above turns out to be instrumental for proving the following interesting property of Gromov-Hausdorff geodesics:

\begin{theorem}[Existence of infinitely many Gromov-Hausdorff geodesics]\label{thm:exist infinite geod}
For any $X,Y\in\ms$ such that $X\not\cong Y$, there exist \emph{infinitely} many distinct Gromov-Hausdorff geodesics connecting $X$ and $Y$.
\end{theorem}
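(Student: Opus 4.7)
The plan combines \Cref{thm:main-h-realizable} (every Gromov-Hausdorff geodesic is Hausdorff-realizable) with the explicit description of Hausdorff geodesics in a geodesic ambient space from \Cref{prop:hausdorff geod from interpolation}. I will produce a sequence $\{\gamma_n\}_{n\geq 1}$ of Gromov-Hausdorff geodesics from $X$ to $Y$ whose midpoints $\gamma_n(1/2) \in \ms$ are pairwise non-isometric; non-isometric midpoints immediately make the geodesics pairwise distinct as curves into $\ms$.

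Write $\rho := \dgh(X, Y) > 0$. By \Cref{lm:dgh_hausdorff-realizable}, pick a compact metric space isometrically containing $X, Y$ with Hausdorff distance $\rho$, and replace it by its $\ell^1$-Wasserstein hyperspace, which is compact geodesic (\Cref{thm:compact-w}, \Cref{thm:W-geodesic}) and preserves the Hausdorff distance between the Dirac-embedded copies of $X, Y$ (\Cref{thm:W-equal}); denote the resulting compact geodesic ambient space by $Z$. For each $n \geq 1$, I would form a compact geodesic space $Z_n \supseteq Z$ by attaching to $Z$ an auxiliary decoration $T_n$ (for instance, a geodesic arc of length $\rho/(n+2)$, or a finite metric tree with $n$ leaves) anchored at an interior point $m \in X^{\rho/2} \cap Y^{\rho/2}$ chosen so that $T_n$ lies entirely inside the middle set of $Z_n$. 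Since the decoration is attached away from $X \cup Y$ and provides no shortcut between the two, $\dH^{Z_n}(X, Y) = \dH^Z(X, Y) = \rho$ automatically. Then by \Cref{prop:hausdorff geod from interpolation} the curve $\gamma_n(t) := X^{t\rho} \cap Y^{(1-t)\rho}$ computed inside $Z_n$ is a Hausdorff geodesic in $\mathcal{H}(Z_n)$, and by \Cref{lm:hgeo-to-dghgeo} it is a Gromov-Hausdorff geodesic from $X$ to $Y$.

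The midpoint $\gamma_n(1/2)$ contains $T_n$ as an isometric subset, so the midpoints $\gamma_n(1/2)$ have pairwise different isometry types (distinguishable, for example, by diameter if $T_n$ is a segment of length $\rho/(n+2)$, or by cardinality of a distinguished substructure if $T_n$ is a finite tree); consequently the $\gamma_n$ are pairwise distinct Gromov-Hausdorff geodesics. The hard part will be to secure an anchor point $m$ inside $X^{\rho/2} \cap Y^{\rho/2}$ with enough positive slack to accommodate $T_n$; this is precisely what the preliminary enlargement via the Wasserstein extensor is designed to provide, since whenever $x_0 \in X,\, y_0 \in Y$ admit $d(x_0, y_0) < \rho$ the measure $\tfrac{1}{2}(\delta_{x_0} + \delta_{y_0}) \in \mathcal{W}_1(\cdot)$ lies strictly inside the middle set, and the remaining degenerate case where every pair $(x, y) \in X \times Y$ satisfies $d(x, y) = \rho$ is handled by iterating the Wasserstein extensor finitely many times to introduce the required interior slack.
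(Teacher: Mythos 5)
Your construction works only when the chosen optimal realization $Z$ contains a pair $x_0\in X$, $y_0\in Y$ with $d_Z(x_0,y_0)<\rho$, and the degenerate case you defer to the end is not repairable by the method you propose. The obstruction is the triangle inequality: for \emph{any} metric extension $W$ of $Z$ and any $m\in W$ one has $d_W(m,X)+d_W(m,Y)\geq\inf_{x\in X,\,y\in Y}d_W(x,y)$, and the right-hand side equals $\rho$ in the degenerate case and is preserved by every isometric embedding, in particular by $\mathcal{W}_1$ (Dirac measures keep their mutual distances). Hence no amount of iterating the Wasserstein extensor produces a point with $d(m,X)<\rho/2$ and $d(m,Y)<\rho/2$, and more generally any dead-end decoration anchored at $m$ has all its interior points $p$ satisfying $d(p,X)+d(p,Y)>\rho$, so $p\notin X^{t\rho}\cap Y^{(1-t)\rho}$ for every $t\in[0,1]$. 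Your curves $\gamma_n$ then all coincide with the undecorated one. This degenerate case is not exotic: for $X=\Delta_1$ and $Y=\Delta_n$ ($n\geq 2$), \emph{every} realization with $\dH^Z(X,Y)=\dgh(X,Y)=\tfrac12$ forces all cross-distances to equal $\tfrac12$ (if some $d(pt,y_i)<\tfrac12$ then $d(pt,y_j)>\tfrac12$ for $j\neq i$, contradicting $\dH^Z=\tfrac12$), so your argument fails precisely on the motivating example the theorem generalizes. A secondary weakness: even in the non-degenerate case, "distinguish the midpoints by diameter or by a distinguished substructure" is not justified, since the base middle set is an arbitrary compact set that may already contain isometric copies of your decorations.

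The paper's proof avoids both problems by never requiring strict slack. It builds the chain $Z_0\hookrightarrow Z_1=\mathcal{W}_1(Z_0)\hookrightarrow\cdots$ and in each $Z_n$ takes the \emph{maximal} Hausdorff geodesic $t\mapsto e_t\big(\mathfrak{L}(X,Y)\big)=X^{t\rho}\cap Y^{(1-t)\rho}$. Passing from $Z_n$ to $Z_{n+1}$ adds the points $(1-t)\delta_x+t\,\delta_y$ with $d_{Z_n}(x,y)\leq\rho$, which lie on new $\rho$-Lipschitz curves from $X$ to $Y$ and sit at distance \emph{exactly} $t\rho$ from $X$ and $(1-t)\rho$ from $Y$ — i.e., on the boundary of the middle set, where no slack is needed. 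The resulting inclusion of compact middle sets is a non-surjective isometric embedding, and non-isometry then follows from \cite[Theorem 1.6.14]{burago2001course} rather than from an ad hoc invariant. If you want to salvage your idea, you would need your auxiliary points to lie on genuine $\rho$-Lipschitz interpolants between $X$ and $Y$ rather than on dead-end attachments; at that point you have essentially reconstructed the paper's argument.
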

In \cite{chowdhury2018explicit} it is shown that for any $n\in\mathbb{N}$ there exist  infinitely many Gromov-Hausdorff geodesics between the one point space $\Delta_1$ and the $n$-point space $\Delta_n$ . Then, \Cref{thm:exist infinite geod} is a generalization of this result to the case of two arbitrary compact metric spaces. 

\begin{proof}[Proof of \Cref{thm:exist infinite geod}]
By \Cref{lm:dgh_hausdorff-realizable}, there exists $Z_0\in\ms$ and isometric embeddings $\varphi_X^{(0)}:X\hookrightarrow Z$ and $\varphi_Y^{(0)}:Y\hookrightarrow Z$ such that 
\[\dH^{Z_0}\left(\varphi_X^{(0)}(X),\varphi_Y^{(0)}(Y)\right)=\dgh(X,Y).\]
Without loss of generality, we assume that $Z_0$ is geodesic (otherwise we replace $Z_0$ with one  its extension $\mathcal{W}_1\left(Z_0\right)$, which is geodesic by \Cref{thm:W-geodesic}). Consider the following chain of isometric embeddings:
\begin{equation}\label{eq:chain of z embeddings}
    Z_0\hookrightarrow Z_1\hookrightarrow Z_2\hookrightarrow\cdots \hookrightarrow Z_n \hookrightarrow \cdots
\end{equation}
where for each $n\geq 1$, $Z_n\coloneqq\mathcal{W}_1(Z_{n-1})$ and the map $Z_{n-1}\hookrightarrow Z_n=\mathcal{W}_1(Z_{n-1})$ is the canonical embedding sending $z_{n-1}\in Z_{n-1}$ to the Dirac delta measure $\delta_{z_{n-1}}\in \mathcal{P}(Z_{n-1})$. Let $\varphi_X^{(n)}:X\hookrightarrow Z_n$ denote the composition of the map $\varphi_X^{(0)}:X\hookrightarrow Z_0$ with the following composition of canonical isometric embeddings:
\[Z_0\hookrightarrow Z_1\hookrightarrow Z_2\hookrightarrow\cdots \hookrightarrow Z_n.\]
We similarly define $\varphi_Y^{(n)}:Y\hookrightarrow Z_n$. Then, by \Cref{lm:dH under embedding} we have that 

\begin{equation}\label{eq:dh=dgh n}
    \dH^{Z_n}\left(\varphi_X^{(n)}(X),\varphi_Y^{(n)}(Y)\right)=\dH^{Z_0}\left(\varphi_X^{(0)}(X),\varphi_Y^{(0)}(Y)\right)=\dgh(X,Y),\,\,\,\forall n\in\mathbb{N}.
\end{equation}

Following \Cref{eq:chain of z embeddings}, we have the following chain of isometric embeddings:
\begin{equation}\label{eq:chain of l embedding}
    \mathfrak{L}\left(\varphi_X^{(0)}(X),\varphi_Y^{(0)}(Y)\right)\hookrightarrow\mathfrak{L}\left(\varphi_X^{(1)}(X),\varphi_Y^{(1)}(Y)\right)\hookrightarrow\cdots
\end{equation}
By \Cref{prop:hausdorff geod from interpolation} and \Cref{lm:hgeo-to-dghgeo}, for each $n\in\mathbb{N}$, the curve $\gamma_{\varphi_X^{(n)}(X),\varphi_Y^{(n)}(Y)}^{Z_n}:[0,1]\rightarrow \mathcal{H}(Z_n)$ defined by $$\gamma_{\varphi_X^{(n)}(X),\varphi_Y^{(n)}(Y)}^{Z_n}(t)\coloneqq e_t\lc \mathfrak{L}\left(\varphi_X^{(n)}(X),\varphi_Y^{(n)}(Y)\right)\rc$$ for $t\in[0,1]$ is a Gromov-Hausdorff geodesic connecting $\varphi_X^{(n)}(X)\cong X$ and $\varphi_Y^{(n)}(Y)\cong Y$. Now, to conclude the proof, we show that for each $m\neq n\in\mathbb{N}$, $\gamma_{\varphi_X^{(m)}(X),\varphi_Y^{(m)}(Y)}^{Z_m}\neq\gamma_{\varphi_X^{(n)}(X),\varphi_Y^{(n)}(Y)}^{Z_{n}}$.

Note that for any $n\in\mathbb{N}$ and any $t\in(0,1)$, we have the isometric embedding (induced from \Cref{eq:chain of l embedding}):
\[\Psi_t^n:\gamma_{\varphi_X^{(n)}(X),\varphi_Y^{(n)}(Y)}^{Z_n}(t)\hookrightarrow\gamma_{\varphi_X^{(n+1)}(X),\varphi_Y^{(n+1)}(Y)}^{Z_{n+1}}(t).\]
In fact, $\Psi_t^n$ is the restriction of the canonical embedding $Z_n\hookrightarrow Z_{n+1}$ and sends each $z_n\in \gamma_{\varphi_X^{(n)}(X),\varphi_Y^{(n)}(Y)}^{Z_n}(t)$ to $\delta_{z_n}\in \gamma_{\varphi_X^{(n+1)}(X),\varphi_Y^{(n+1)}(Y)}^{Z_{n+1}}(t)$. Let $\rho\coloneqq\dgh(X,Y)$. Then, for any $x\in \varphi_X^{(n)}(X)$ there exists $y\in \varphi_Y^{(n)}(Y)$ such that $d_{Z_n}(x,y)\leq \rho$ (cf. \Cref{eq:dh=dgh n}). By \Cref{lm:W-geodesic}, the curve $s\mapsto (1-s)\delta_x+s\,\delta_y$ is a geodesic in the space $Z_{n+1}=\mathcal{W}_1(Z_n)$ and therefore it is a $\rho$-Lipschitz curve in $\mathfrak{L}\left(\varphi_X^{(n+1)}(X),\varphi_Y^{(n+1)}(Y)\right)$. Then, it is easy to see that for the given $t\in(0,1)$
\[(1-t)\delta_x+t\delta_y\,\,\in \,\,\gamma_{\varphi_X^{(n+1)}(X),\varphi_Y^{(n+1)}(Y)}^{Z_{n+1}}(t)\backslash \Psi_t^n\lc \gamma_{\varphi_X^{(n)}(X),\varphi_Y^{(n)}(Y)}^{Z_n}(t)\rc.\]
Therefore, $\Psi_t^n$ is not surjective. By \cite[Theorem 1.6.14]{burago2001course}, \[\gamma_{\varphi_X^{(n)}(X),\varphi_Y^{(n)}(Y)}^{Z_n}(t)\not\cong\gamma_{\varphi_X^{(n+1)}(X),\varphi_Y^{(n+1)}(Y)}^{Z_{n+1}}(t)\]
since both spaces are compact. Therefore
\[\gamma_{\varphi_X^{(n)}(X),\varphi_Y^{(n)}(Y)}^{Z_n}\neq\gamma_{\varphi_X^{(n+1)}(X),\varphi_Y^{(n+1)}(Y)}^{Z_{n+1}}.\]

Now, for $m<n$ and $t\in(0,1)$, we have an embedding $\Psi_t^{m,n}:\gamma_{\varphi_X^{(m)}(X),\varphi_Y^{(m)}(Y)}^{Z_m}(t)\hookrightarrow\gamma_{\varphi_X^{(n)}(X),\varphi_Y^{(n)}(Y)}^{Z_{n}}(t)$ defined as the following composition of maps:
\[\Psi_t^{m,n}:\gamma_{\varphi_X^{(m)}(X),\varphi_Y^{(m)}(Y)}^{Z_m}(t)\xhookrightarrow[]{\Psi_t^m}\gamma_{\varphi_X^{(m+1)}(X),\varphi_Y^{(m+1)}(Y)}^{Z_{m+1}}(t)\xhookrightarrow[]{\Psi_t^{m+1}}\cdots\xhookrightarrow[]{\Psi_t^{n-1}} \gamma_{\varphi_X^{(n)}(X),\varphi_Y^{(n)}(Y)}^{Z_{n}}(t).\]
Then, it {follows from the above} that $\Psi_t^{m,n}$ is not surjective. Hence 
\[\gamma_{\varphi_X^{(m)}(X),\varphi_Y^{(m)}(Y)}^{Z_m}(t)\neq\gamma_{\varphi_X^{(n)}(X),\varphi_Y^{(n)}(Y)}^{Z_{n}}(t),\]
and thus 
\[\gamma_{\varphi_X^{(m)}(X),\varphi_Y^{(m)}(Y)}^{Z_m}\neq\gamma_{\varphi_X^{(n)}(X),\varphi_Y^{(n)}(Y)}^{Z_{n}} \forall m\neq n\in \mathbb{N}.\]
Therefore, $\left\{ \gamma_{\varphi_X^{(n)}(X),\varphi_Y^{(n)}(Y)}^{Z_{n}}\right\}_{n\in\mathbb{N}}$ is an infinite family of \emph{distinct} Gromov-Hausdorff geodesics connecting $X$ and $Y$.

We summarize the construction of $\left\{ \gamma_{\varphi_X^{(n)}(X),\varphi_Y^{(n)}(Y)}^{Z_{n}}\right\}_{n\in\mathbb{N}}$ above via the following diagram:
$$
\begin{tikzcd}
\centering
     [0,1]\arrow{rr}{=}\arrow[hookrightarrow,"\gamma_{\varphi_X^{(0)}(X),\varphi_Y^{(0)}(Y)}^{Z_{0}}"]{dd} && {[0,1]}  \arrow[rr,"="]\arrow[hookrightarrow,"\gamma_{\varphi_X^{(1)}(X),\varphi_Y^{(1)}(Y)}^{Z_{1}}"]{dd} &&  {[0,1]} \arrow[hookrightarrow,"\gamma_{\varphi_X^{(2)}(X),\varphi_Y^{(2)}(Y)}^{Z_{2}}"]{dd}\arrow{rr}{=}&&  {\cdots}\arrow[hookrightarrow]{dd} \\
      && \,\,\,\,\,\,\,\,\,\, & \\
        \mathcal{H}(Z^0)\arrow[hookrightarrow]{rr} && \mathcal{H}(Z^1)\arrow[hookrightarrow]{rr} && \mathcal{H}(Z^2)\arrow[hookrightarrow]{rr}&&  {\cdots}\\
      && \,\,\,\,\,\,\,\,\,\, & \\
      Z^0\arrow{rr}{\mathcal{W}_1}\arrow[hookrightarrow,"\mathcal{H}"]{uu} && {Z^1}  \arrow[rr,"\mathcal{W}_1"]\arrow[hookrightarrow,"\mathcal{H}"]{uu} &&  {Z^2} \arrow[hookrightarrow,"\mathcal{H}"]{uu}\arrow{rr}{\mathcal{W}_1}&&  {\cdots}\arrow[hookrightarrow,"\mathcal{H}"]{uu}
\end{tikzcd}
$$
\end{proof}

{We end this section by proving \Cref{lm:geo-char-hausdorff}:}

\begin{proof}[Proof of \Cref{lm:geo-char-hausdorff}]
Without loss of generality, we assume that $0<t_*<1$. For $k=0,\ldots$ and $i=0,\ldots, 2^{k+1}$, let
$$t_i^k\coloneqq\begin{cases}\frac{i}{2^k}\cdot t_*,& 0\leq i\leq 2^k\\
t_*+\lc\frac{i}{2^k}-1\rc(1-t_*),& 2^k+1\leq i\leq 2^{k+1}\end{cases}. $$
Let $T^k=\{t_i^k\}_{i=0}^{2^{k+1}}$. Then, $t_*=t_{2^k}^k\in T^k$, $T^k\subseteq T^{k+1}$ and $T\coloneqq\cup_{k=0}^\infty T^k$ is dense in $[0,1]$. Now, for any given $k\in\mathbb{N}$, let $x_{2^k}^k\coloneqq x_{*}$. Then, there exist $x_{{2^k+1}}^k\in\gamma\left(t^k_{2^k+1}\right)$ and $x_{{2^k-1}}^k\in\gamma\left(t^k_{2^k-1}\right)$ such that 
\[d_X\left(x_{{2^k+1}}^k,x_{2^k}^k\right)\leq |{t^k_{2^k+1}}-t_*|\,\rho\text{ and }d_X\left(x_{{2^k-1}}^k,x_{2^k}^k\right)\leq \ls{t^k_{2^k-1}}-t_*\rs\,\rho\] 
since 
\[\dH^X\left(\gamma\left(t^k_{2^k+1}\right),\gamma\left(t_*\right)\right)\leq \ls{t^k_{2^k+1}}-t_*\rs\,\rho\text{ and }\dH^X\left(\gamma\left(t^k_{2^k-1}\right),\gamma\left(t_*\right)\right)\leq \ls{t^k_{2^k-1}}-t_*\rs\,\rho.\] 
Similarly, there exist $x_{{2^k+2}}^k\in\gamma\left(t^k_{2^k+2}\right)$ and $x_{{2^k-2}}^k\in\gamma\left(t^k_{2^k-2}\right)$ such that 
\[d_X\left(x_{{2^k+2}}^k,x_{2^k+1}^k\right)\leq \ls{t^k_{2^k+2}}-t^k_{2^k+1}\rs\,\rho\text{ and }d_X\left(x_{{2^k-2}}^k,x_{2^k-1}^k\right)\leq \ls{t^k_{2^k-2}}-t^k_{2^k-1}\rs\,\rho.\]
Then, in a similar fashion, we inductively construct a sequence of points $\{x_i^k\}_{i=0}^{2^{k+1}}$ such that $x_i^k\in\gamma\left(t_i^k\right)$ and $d_X\left(x_i^k,x_{i+1}^k\right)\leq \ls t_i^k-t^k_{i+1}\rs\,\rho$. In particular, by the triangle inequality, we have $d_X\left(x_i^k,x_j^k\right)\leq \ls t_i^k-t_j^k\rs\,\rho$ for all $i,j=0,\ldots,2^{k+1}$.

Let $Z\coloneqq \mathcal{W}_1\left(X\right)$. Since $X$ is compact, $Z$ is geodesic by \Cref{thm:W-geodesic}. We identify $X$ with its image under the canonical embedding $X\hookrightarrow\mathcal{W}_1(X)=Z$, which then is a closed subset of $Z$. For each $k\in\mathbb{N}$, we interpolate between points $x_i^k$ and $x_{i+1}^k$ by a geodesic in $Z$ for all $i=0,\ldots,2^{k+1}-1$ and concatenate all such geodesics via \Cref{prop:geo-concatenate} to obtain a $\sum_{i=0}^{2^{k+1}-1}d_X\lc x_i^k,x_{i+1}^k\rc$-Lipschitz curve $\zeta_k:[0,1]\rightarrow Z$. In particular, $\zeta_k$ satisfies that $\zeta_k\left(t_i^k\right)=x_i^k$ for all $i=0,\ldots,2^{k+1}$. Moreover,
\[\sum_{i=0}^{2^{k+1}-1}d_X\lc x_i^k,x_{i+1}^k\rc\leq\sum_{i=0}^{2^{k+1}-1}\ls t_i^k-t_{i+1}^k\rs\,\rho=\rho. \]
Then, $\zeta_k$ is $\rho$-Lipschitz. By the Arzel\`a-Ascoli theorem (\Cref{thm:AA}), $\{\zeta_k\}_{k=0}^\infty$ has a subsequence, still denoted by $\{\zeta_k\}_{k=0}^\infty$, uniformly converging to a $\rho$-Lipschitz curve $\zeta:[0,1]\rightarrow Z$. 

Since $\zeta_k(t_*)=x_*$ for all $k$, we have that $\zeta(t_*)=x_*$.  For each $t_i^k$, since it belongs to $T^m$ for all $m\geq k$, we have $\zeta\left(t_i^k\right)=\lim_{m\rightarrow\infty}\zeta_m\left(t_i^k\right)\in X$. Now, it remains to prove that $x_t\in\gamma\left(t\right)$ for each $t\in[0,1]\backslash T$. Let $\{t_{n_k}\}_{k=0}^\infty$ be a subsequence of $T=\cup_k T^k$ converging to $t\in[0,1]\backslash T$. Assume on the contrary that $x_t\not\in\gamma\left(t\right)$ and let $\delta\coloneqq d_X\left(x_t,\gamma\left(t\right)\right)>0$. For $k$ large enough, we have $d_X\left(x_{t_{n_k}},x_t\right)<\frac{\delta}{2}$, which implies that $d_X\left(x_{t_{n_k}},\gamma\left(t\right)\right)>\frac{\delta}{2}$. This contradicts with the fact that $\lim_{k\rightarrow\infty}\dH^X\left(\gamma\left(t_{n_k}\right),\gamma\left(t\right)\right)=0$. This concludes the proof.
\end{proof}

\subsection{Dynamic Gromov-Hausdorff geodesics.}
In this section we extend our results about Hausdorff geodesics in the previous section to the case of Gromov-Hausdorff geodesics.

\begin{definition}[Dynamic correspondence]\label{def:dyn-cor}
For a continuous curve $\gamma:[0,1]\rightarrow\ms$, we call $\mathfrak{R}\subseteq\Pi_{t\in[0,1]}\gamma\left(t\right)$ a \emph{dynamic correspondence} for $\gamma$ if for any $s,t\in[0,1]$, the image of $\mathfrak{R}$ under the evaluation $e_{st}:\Pi_{t\in[0,1]}\gamma\left(t\right)\rightarrow\gamma\left(s\right)\times\gamma\left(t\right)$ taking $(x_t)_{t\in[0,1]}$ to $(x_s,x_t)$ is a correspondence between $\gamma\left(s\right)$ and $\gamma\left(t\right)$. We call $\mathfrak{R}$ a \emph{dynamic optimal correspondence} if each $e_{st}\left(\mathfrak{R}\right)$ is an optimal correspondence between $\gamma\left(s\right)$ and $\gamma\left(t\right)$.
\end{definition}

In either case, we say that $\gamma$ \emph{admits} a dynamic (optimal) correspondence.

\begin{remark}[Comparison between (dynamic) coupling and (dynamic) correspondence]\label{rmk:relation-dyn-coup-corr}
The following observation from optimal transport inspires our definition of dynamic correspondence. Given a compact metric space $X$ and $\alpha,\beta\in\mathcal{P}\left(X\right)$, note that any coupling $\mu\in\mathcal{C}\left(\alpha,\beta\right)$ satisfies $\supp\left(\mu\right)\in\mathcal{R}\left(\supp\left(\alpha\right),\supp\left(\beta\right)\right)$ (cf. \cite[Lemma 2.2]{memoli2011gromov}), i.e., the support of a coupling is a correspondence between supports of measures. Now, for a dynamic coupling $\Pi$ between $\alpha$ and $\beta$, we know $\left(e_s,e_t\right)_\#\Pi$ is a coupling between ${(e_s)}_\#\Pi$ and ${(e_t)}_\#\Pi$ for any $s,t\in[0,1]$. So
$$\supp\left(\left(e_s,e_t\right)_\#\Pi\right)\in\mathcal{R}\big(\supp\left(({e_s})_\#\Pi\right),\supp\left(({e_t})_\#\Pi\right)\big).$$
\end{remark}

\begin{definition}[Dynamic geodesic]\label{def:dyn-geo}
A Gromov-Hausdorff geodesic $\gamma:[0,1]\rightarrow\ms$ is \emph{dynamic}, if it admits a dynamic optimal correspondence $\mathfrak{R}$.
\end{definition}

It is easy to check that straight-line $\dgh$ geodesics are dynamic.
\begin{proposition}
Any straight-line $\dgh$ geodesic is dynamic.
\end{proposition}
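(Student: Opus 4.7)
The natural candidate for a dynamic optimal correspondence is the ``constant lift'' of $R$: for each $(x,y)\in R$, we follow the curve that sits at $(x,y)\in R=\gamma_R(t)$ for all interior $t$, and that evaluates to $x$ at $t=0$ and $y$ at $t=1$. Formally, define
$$\mathfrak{R}\coloneqq\left\{\lc x_t\rc_{t\in[0,1]}\in\Pi_{t\in[0,1]}\gamma_R(t):\,\exists\,(x,y)\in R\text{ s.t. }x_0=x,\,x_1=y,\text{ and }x_t=(x,y)\,\forall t\in(0,1)\right\}.$$
The first thing to check is that for every pair $s,t\in[0,1]$, the image $e_{st}(\mathfrak{R})$ is a correspondence between $\gamma_R(s)$ and $\gamma_R(t)$; this follows from the assumption that $R\in\mathcal{R}(X,Y)$, i.e., $\pi_X(R)=X$ and $\pi_Y(R)=Y$.

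The main task is then to verify that each image $e_{st}(\mathfrak{R})$ is an \emph{optimal} correspondence, i.e., that its distortion equals $2\dgh(\gamma_R(s),\gamma_R(t))=|s-t|\cdot\dis(R)$ (using that $R\in\mathcal R^\mathrm{opt}(X,Y)$ and that $\gamma_R$ is a geodesic). I plan to handle this case by case via direct computation with $d_{R_t}((x,y),(x',y'))=(1-t)d_X(x,x')+t\,d_Y(y,y')$:
\begin{itemize}
    \item For $s,t\in(0,1)$, $e_{st}(\mathfrak{R})=\{((x,y),(x,y)):(x,y)\in R\}$ is the diagonal, and a one-line computation gives $|d_{R_s}((x,y),(x',y'))-d_{R_t}((x,y),(x',y'))|=|s-t|\cdot|d_X(x,x')-d_Y(y,y')|$, whose supremum over $R\times R$ equals $|s-t|\cdot\dis(R)$.
    \item For $s=0$ and $t\in(0,1)$, $e_{0t}(\mathfrak{R})=\{(x,(x,y)):(x,y)\in R\}$, and a similar computation gives distortion $t\cdot\dis(R)$; the case $s\in(0,1),\,t=1$ is symmetric.
    \item For $s=0,\,t=1$, $e_{01}(\mathfrak{R})=R$, whose distortion is $\dis(R)$ by definition.
\end{itemize}

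There is no serious obstacle here beyond bookkeeping: the only subtle points are remembering that $\gamma_R(0)=X$ and $\gamma_R(1)=Y$ are \emph{different} from $\gamma_R(t)=R$ for $t\in(0,1)$ (so the product $\Pi_{t\in[0,1]}\gamma_R(t)$ mixes three types of coordinates, and $\mathfrak{R}$ must be defined accordingly), and accurately matching the four cases above against the identity $2\dgh(\gamma_R(s),\gamma_R(t))=|s-t|\cdot\dis(R)$. With those bookkeeping items in place, the verification of dynamic optimality of $\mathfrak{R}$ is immediate from the triangle-type computation on $d_{R_t}$.
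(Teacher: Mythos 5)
Your proposal is correct and takes essentially the same approach as the paper: the paper also defines $\mathfrak{R}$ as the constant lift of $R$ (phrased via a quotient map $q:\Pi_{[0,1]}R\rightarrow\Pi_{[0,1]}R_t$ sending $((x,y))_{t}$ to the tuple equal to $x$ at $t=0$, $y$ at $t=1$, and $(x,y)$ in between) and asserts its optimality. Your case-by-case distortion computations, which all check out against $2\dgh(\gamma_R(s),\gamma_R(t))=|s-t|\dis(R)$, simply make explicit the verification the paper leaves to the reader.
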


\begin{proof}
We adopt the notation from \Cref{thm:str-line-geo}. We first define
$$\Tilde{\mathfrak{R}}:=\left\{\left(\left(x,y\right)\right)_{t\in[0,1]}:\,\left(x,y\right)\in R\right\}\subseteq\Pi_{t\in[0,1]} R.$$

Consider the quotient map $q:\Pi_{[0,1]}R\rightarrow\Pi_{[0,1]}R_t$ taking $\left(\left(x,y\right)\right)_{t\in[0,1]}$ to the tuple $\left(z_t\right)_{t\in[0,1]}$ such that $z_0=x$, $z_1=y$ and $z_t=\left(x,y\right)$ for any $t\in\left(0,1\right)$. Then, $\mathfrak{R}\coloneqq q\left(\Tilde{\mathfrak{R}}\right)$ is a dynamic optimal correspondence for $\gamma_R$.
\end{proof}



Now, we proceed to proving \Cref{thm:main-dyn-gh}. {The proof combines \Cref{thm:main-h-realizable} with \Cref{thm:main-dyn-hausdorff} in a direct way: we transform a Gromov-Hausdorff geodesic into a Hausdorff geodesic, and its Hausdorff displacement interpolation will then generate a dynamic optimal correspondence. }
\thmdyngh*
\begin{proof}
Let $\gamma:[0,1]\rightarrow\ms$ be a Gromov-Hausdorff geodesic and let $\rho\coloneqq\dgh\left(\gamma\left(0\right),\gamma\left(1\right)\right)$. Without loss of generality, assume $\rho>0$. By \Cref{thm:main-h-realizable}, there exist $Z\in\ms$ and isometric embeddings $\varphi_t:\gamma\left(t\right)\hookrightarrow Z$ for $t\in[0,1]$ such that $\dH^Z\big(\varphi_s\left(\gamma\left(s\right)\right),\varphi_t\left(\gamma\left(t\right)\right)\big)=\dgh\left(\gamma\left(s\right),\gamma\left(t\right)\right)$ for $s,t\in[0,1]$. 

Since $t\mapsto\varphi_t(\gamma(t))$ is a Hausdorff geodesic in $Z$, by \Cref{thm:main-dyn-hausdorff} there exists a Hausdorff displacement interpolation $\mathfrak{D}\subseteq\mathfrak{L}\big(\varphi_0(\gamma(0)),\varphi_1(\gamma(1))\big)$ such that $e_t(\mathfrak{D})=\varphi_t(\gamma(t))$ for each $t\in[0,1]$. Now, let 
$$\mathfrak{R}\coloneqq\left\{\left(x_t\right)_{t\in[0,1]}\in\Pi_{t\in[0,1]}\gamma(t):\,\exists\zeta\in\mathfrak{D}\text{ such that }\varphi_t(x_t)=\zeta(t),\,\forall t\in[0,1]\right\}. $$
It is obvious that $\mathfrak{R}\neq\emptyset$ and $e_t(\mathfrak{R})=\gamma(t)$ for any $t\in[0,1]$. Since $e_t=e_{t}\circ e_{st}$ and $e_s=e_{s}\circ e_{st}$ for $s,t\in[0,1]$, we have that $e_{s}(e_{st}(\mathfrak{R}))=\gamma(s)$ and $e_{t}(e_{st}(\mathfrak{R}))=\gamma(t)$ so that $e_{st}(\mathfrak{R})$ is a correspondence between $\gamma(s) $ and $\gamma(t)$. Therefore, $\mathfrak{R}$ is a dynamic correspondence for $\gamma$.

Now, we show that $\mathfrak{R}$ is optimal. In fact, for any $s,t\in[0,1]$,
\begin{align*}
    e_{st}(\mathfrak{R})&=\left\{\left(x_s,x_t\right)\in\gamma\left(s\right)\times \gamma\left(t\right):\,\exists\zeta\in\mathfrak{D}\text{ such that }\varphi_s(x_s)=\zeta(s)\text{ and }\varphi_t(x_t)=\zeta(t)\right\}\\
    &\subseteq\{\left(x_s,x_t\right)\in\gamma\left(s\right)\times \gamma\left(t\right):\,d_Z\left(\varphi_s(x_s),\varphi_t(x_t)\right)\leq |s-t|\,\rho\}=:{R}_{st}.
\end{align*}
For any $\left(x_s,x_t\right),\left(x_s',x_t'\right)\in{R}_{st}$, by identifying $x_t$ with $\varphi_t(x_t)\in Z$, we have
\begin{align*}
    |d_Z\left(x_s,x_s'\right)-d_Z\left(x_t,x_t'\right)|&\leq |d_Z\left(x_s,x_s'\right)-d_Z\left(x_s',x_t\right)|+|d_Z\left(x_s',x_t\right)-d_Z\left(x_t,x_t'\right)|\\
    &\leq d_Z\left(x_s,x_t\right)+d_Z\left(x_s',x_t'\right)\\
    &\leq2|s-t|\,\rho\\
    &=2\dgh\left(\gamma\left(s\right),\gamma\left(t\right)\right).
\end{align*}
Therefore, ${R}_{st}$ is an optimal correspondence between $\gamma\left(s\right)$ and $\gamma\left(t\right)$ and so \emph{must} be $e_{st}(\mathfrak{R})\subseteq{R}_{st}$. Therefore, $\mathfrak{R}$ is an dynamic optimal correspondence for $\gamma$.
\end{proof}

\section{Discussion}\label{sec:discussion}

Gromov-Hausdorff and Gromov-Wasserstein geodesics have been studied in \cite{ivanov2016gromov,chowdhury2018explicit,ivanov2019hausdorff,sturm2006geometry,sturm2012space}. However, those papers did not address the \emph{characterization} of such geodesics. In this paper,  we have proved that not only Gromov-Hausdorff geodesics are actually Hausdorff geodesics but also, in an analogous sense, that a large collection of Gromov-Wasserstein geodesics are Wasserstein geodesics. We further drew structural connections between Hausdorff geodesics and Wasserstein geodesics and studied the dynamic characterization of Gromov-Hausdorff geodesics.

\subsection*{Some open problems}

Besides \Cref{conj:gw} about Wasserstein-realizable Gromov-Wasserstein geodesics, there are other related unsolved problems which we summarize next.

\paragraph{Geodesic hull.} \Cref{lm:union-geo-closed} states that for any $X$-Hausdorff-realizable Gromov-Hausdorff geodesic $\gamma$, the union $\mathcal{G}_X\coloneqq\cup_{t\in[0,1]}\gamma\left(t\right)\subseteq X$ also Hausdorff-realizes $\gamma$. Intuitively speaking, such a $\mathcal{G}_X$ is a ``minimal'' ambient space containing $\gamma$ without any redundant points. It is tempting to call $\mathcal{G}_X$ ``the geodesic hull'' of $\gamma$. In order to make sense of this nomenclature, we need to understand the relation between $\mathcal{G}_X$ and $\mathcal{G}_Y$ whenever $\gamma$ is also $Y$-Hausdorff-realizable for some $Y\in\ms$ which is not isometric to $X$. Is it true that $\mathcal{G}_X\cong\mathcal{G}_Y$ for any such $Y$? Even if this is not the case, it still remains interesting to unravel commonalities between elements of the family $$\mathcal{M}(\gamma):=\{\mathcal{G}_X:\, \gamma \,\mbox{is $X$-Hausdorff-realizable}\}$$ where each $X$ Hausdorff-realizes the given Gromov-Hausdorff geodesic $\gamma$. For example: what is the Gromov-Hausdorff diameter of $\mathcal{M}(\gamma)$?

\paragraph{Beyond geodesics.} 

One of the main insights behind our characterization of Gromov-Hausdorff geodesics is  the observation in \Cref{coro:finite-seq-h-dgh} that any finite collection of compact metric spaces along a given Gromov-Hausdorff geodesic is embeddable into a common ambient space in a way such that pairwise Hausdorff distances agree with the corresponding Gromov-Hausdorff distances.

A natural question  is whether such an ambient space still exists for an arbitrary finite collection of compact metric spaces that dot not necessarily reside along the trace of a Gromov-Hausdorff geodesic. The following conjecture pertains to the simplest unsolved case: the case involving only three spaces.

\begin{conjecture}\label{conj:3-haus}
Given three compact metric spaces $X_1,X_2,X_3\in\ms$, there exist $Z\in\ms$ and isometric embeddings $\varphi_i:X_i\hookrightarrow Z$ such that $\dH^Z\left(\varphi_i\left(X_i\right),\varphi_j\left(X_j\right)\right)=\dgh\left(X_i,X_j\right)$ for all $i,j\in\{1,2,3\}$.
\end{conjecture}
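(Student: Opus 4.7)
The conjecture generalizes \Cref{lm:dgh_hausdorff-realizable} from two spaces to three, and it can be reformulated as a metric extension problem: can the three metrics $d_{X_1}, d_{X_2}, d_{X_3}$ be extended to a single metric on $X_1 \sqcup X_2 \sqcup X_3$ whose three pairwise Hausdorff distances equal the respective Gromov-Hausdorff distances? Once such a metric is in hand, the resulting compact metric space embeds isometrically into $\mathbb{U}$ by universality (\Cref{thm:urysohn}), which proves the conjecture.

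My first attempt would proceed by a direct gluing. By \Cref{lm:dgh_hausdorff-realizable}, for each pair $(i,j)$ there exists a metric $d_{ij}^{(0)}$ on $X_i \sqcup X_j$ extending $d_{X_i}$ and $d_{X_j}$ with $\dH(X_i, X_j) = \dgh(X_i, X_j)$, explicitly built from an optimal correspondence $R_{ij} \in \mathcal{R}^{\mathrm{opt}}(X_i, X_j)$. I would then try to glue the three pairwise metrics into a joint pseudometric on $X_1 \sqcup X_2 \sqcup X_3$ by declaring $d(x,y)$ to be the infimum of lengths of finite piecewise admissible paths from $x$ to $y$, where each segment uses one of the three $d_{ij}^{(0)}$. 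The main obstacle is that this gluing can fail to restrict to the original $d_{X_i}$ on each $X_i$: a path $X_1 \to X_2 \to X_3 \to X_1$ may be strictly shorter than the direct edge inside $X_1$, collapsing the triangle. Equivalently, the three optimal correspondences $R_{12}, R_{13}, R_{23}$ may be mutually incompatible, in the sense that they cannot all arise as the pairwise projections of a single tripartite subset $R \subseteq X_1 \times X_2 \times X_3$. Overcoming this demands that the three realizations be chosen coherently rather than independently.

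My strategy for the coherent choice would introduce the notion of a tripartite correspondence $R \subseteq X_1 \times X_2 \times X_3$ (each singleton projection being surjective), equipped with pairwise distortions
\[
\dis_{ij}(R)\coloneqq\sup_{\mathbf{x},\mathbf{x}'\in R}\,\big|d_{X_i}(x_i,x_i')-d_{X_j}(x_j,x_j')\big|,
\]
and aim to produce a compact $R^*$ with $\dis_{ij}(R^*) = 2\,\dgh(X_i, X_j)$ simultaneously for all three pairs $(i,j)$. A natural candidate construction is to start from a straight-line $\dgh$ geodesic $\gamma_{13}$ connecting $X_1$ and $X_3$ (\Cref{thm:str-line-geo}), locate an intermediate parameter $t^*\in[0,1]$ at which $X_2$ is ``optimally positioned'' relative to $\gamma_{13}(t^*)$, and lift an optimal correspondence between $X_2$ and $\gamma_{13}(t^*)$ through the projection $R_{13}\to X_1\times X_3$ to extract coherent $R_{12}$ and $R_{23}$ from a single underlying triple. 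Once such an $R^*$ is secured, imitating the construction in the proof of \Cref{lm:dgh_hausdorff-realizable}, with pair-dependent shifts $\dgh(X_i, X_j)$ replacing the single term $\tfrac{1}{2}\dis(R)$, should yield a metric on $X_1 \sqcup X_2 \sqcup X_3$ realizing all three Gromov-Hausdorff distances as Hausdorff distances.

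The hardest step will be the simultaneous realization of the three distortions. I expect this to require a compactness/selection argument over the space of all tripartite correspondences (viewed as compact subsets of $X_1 \times X_2 \times X_3$ equipped with the Hausdorff distance of the product metric), combined with the dynamic optimal correspondence framework developed in \Cref{thm:main-dyn-gh} applied along the geodesic $\gamma_{13}$. If every attempt in this direction fails, the natural alternative is to search for an explicit counterexample among three finite metric spaces whose pairwise optimal correspondences are rigid enough to be forced into incompatibility, which would refute the conjecture.
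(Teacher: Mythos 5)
This statement is not proved in the paper: it is stated as \Cref{conj:3-haus}, an \emph{open} conjecture in the Discussion section, so there is no proof of record to compare against. Your proposal, read on its own terms, is a strategy sketch rather than a proof, and the gap it leaves open is exactly the content of the conjecture. You correctly identify that the crux is producing a single tripartite set $R^*\subseteq X_1\times X_2\times X_3$ all of whose pairwise projections are optimal correspondences; this is precisely the paper's companion \Cref{conj:3-cor}, which is also left open, and the authors themselves only remark that it is ``plausible'' that \Cref{conj:3-cor} implies \Cref{conj:3-haus}. Your proposal does not establish the existence of $R^*$: the candidate construction via a straight-line geodesic $\gamma_{13}$ and an ``optimally positioned'' intermediate slice $\gamma_{13}(t^*)$ has no supporting argument — there is no reason an arbitrary $X_2$ lies near any point of a chosen geodesic from $X_1$ to $X_3$, and no reason an optimal correspondence between $X_2$ and $\gamma_{13}(t^*)$ should push forward to correspondences realizing $\dgh(X_1,X_2)$ and $\dgh(X_2,X_3)$. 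The closing sentence of your proposal, offering to look for a counterexample instead, concedes that the claim has not been settled.

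Two further points. First, even granting a coherent $R^*$ with all three distortions optimal, the final gluing step is not automatic: the two-space formula from \Cref{lm:dgh_hausdorff-realizable} adds the single shift $\tfrac12\dis(R)$, and with three pair-dependent shifts one must verify the triangle inequality for mixed triples $x_1\in X_1$, $x_2\in X_2$, $x_3\in X_3$, which imposes a compatibility condition among the three shifts and the three distortions that you do not check. Second, the appeal to the dynamic optimal correspondence machinery of \Cref{thm:main-dyn-gh} does not obviously help here, since that theorem produces correspondences that are simultaneously optimal only between pairs of slices \emph{along a single geodesic}, whereas $X_1,X_2,X_3$ are three arbitrary spaces with no geodesic passing through all of them. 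In short, the proposal restates the difficulty accurately but does not resolve it; the statement remains open.
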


A similar question can be posed in relation to dynamic optimal correspondences:

\begin{conjecture}\label{conj:3-cor}
Given three compact metric spaces $X_1,X_2,X_3\in\ms$, there exists $R\subseteq X_1\times X_2\times X_3$ such that $e_{ij}\left(R\right)$ is an \emph{optimal} correspondence between $X_i$ and $X_j$ for all $i,j\in\{1,2,3\}$.
\end{conjecture}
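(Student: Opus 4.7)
The plan is to reduce Conjecture~\ref{conj:3-cor} to Conjecture~\ref{conj:3-haus}, which appears to be the more fundamental open problem in this pair. Assuming Conjecture~\ref{conj:3-haus}, there exist a compact metric space $Z$ and isometric embeddings $\varphi_i:X_i\hookrightarrow Z$ with $\dH^Z(\varphi_i(X_i),\varphi_j(X_j))=\dgh(X_i,X_j)=:\delta_{ij}$. The natural candidate for the triple correspondence is
\[
R:=\left\{(x_1,x_2,x_3)\in X_1\times X_2\times X_3 : d_Z(\varphi_i(x_i),\varphi_j(x_j))\leq\delta_{ij}\ \text{for all}\ i<j\right\}.
\]
Each pairwise projection $e_{ij}(R)$ has distortion at most $2\delta_{ij}$ by the triangle inequality in $Z$, so once surjectivity of $e_{ij}(R)$ onto each factor is verified, optimality follows automatically from \Cref{thm:dgh-dual}.

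The remaining step in the reduction is to show that each $e_{ij}(R)$ is surjective onto both factors. For every $x_1\in X_1$ the Hausdorff condition supplies individual partners $x_2\in X_2$ and $x_3\in X_3$, but $d_Z(\varphi_2(x_2),\varphi_3(x_3))$ may exceed $\delta_{23}$. A first attempt is a two-step selection: pick $x_2$ realizing $d_Z(\varphi_1(x_1),\varphi_2(x_2))\le\delta_{12}$, and then pick $x_3$ that simultaneously satisfies the constraints toward both $\varphi_1(x_1)$ and $\varphi_2(x_2)$; by compactness this reduces to showing nonemptiness of the admissible set for $x_3$. If that set is empty for some $x_1$, the idea is to replace $Z$ by the Wasserstein extension $\mathcal{W}_1(Z)$ (compact and geodesic by \Cref{thm:compact-w} and \Cref{thm:W-geodesic}, with pairwise Hausdorff distances preserved by \Cref{thm:W-equal}) and to use Dirac-combination interpolants to supply missing triples while keeping each of the three pairwise distortions below its $2\delta_{ij}$ threshold.

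The main obstacle is then Conjecture~\ref{conj:3-haus}. The plan there is to work inside the Urysohn universal metric space $\mathbb{U}$: embed $X_1\hookrightarrow\mathbb{U}$ arbitrarily, apply \Cref{lm:u-dgh-any} to embed $X_2\hookrightarrow\mathbb{U}$ with $\dH^\mathbb{U}(X_1,X_2)=\dgh(X_1,X_2)$, and then attempt to embed $X_3\hookrightarrow\mathbb{U}$ realizing both remaining constraints simultaneously. Since the homogeneity property (\Cref{thm:u-compact-homo}) only allows one constraint to be optimized at a time, a finite approximation scheme mirroring the proof of \Cref{thm:main-h-realizable} seems natural: choose $\eps$-nets in each $X_i$, solve the three-way embedding problem combinatorially at the level of nets (a constraint-satisfaction problem governed by the triangle inequalities among $\delta_{12},\delta_{13},\delta_{23}$), uniformly bound the resulting ambient spaces via Gromov's pre-compactness theorem (\Cref{thm:pre-compact}), and pass to a Gromov-Hausdorff limit that realizes all three distances simultaneously. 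The delicate point, and the one I most expect to fail in general, is the consistency of the finite-scale constraints: three-way constraints can in principle be obstructed by higher-order inequalities absent from the pairwise setting, so resolving this may require either additional hypotheses on $X_1,X_2,X_3$ or a strengthening of Urysohn-type homogeneity beyond \Cref{thm:u-compact-homo}.
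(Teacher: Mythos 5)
The statement you are trying to prove is stated in the paper as an open conjecture; the paper offers no proof of it, and explicitly leaves it unresolved. Your proposal therefore has to stand on its own, and it does not: it is a conditional reduction to \Cref{conj:3-haus}, which is itself the other open conjecture in the same section, and you acknowledge at the end that you expect the key consistency step there to fail in general. Note also that the paper suggests the implication more plausibly runs in the \emph{opposite} direction --- that \Cref{conj:3-cor} could imply \Cref{conj:3-haus} --- mirroring the two-space case (\Cref{lm:dgh_hausdorff-realizable}), where one starts from an optimal correspondence and uses it to \emph{build} the ambient metric, not the other way around. So even the direction of your reduction is the less natural one given what is known.

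Beyond the conditionality, the reduction itself has a concrete gap at the surjectivity step, and your proposed repair cannot work. Given $x_1\in X_1$, the Hausdorff condition supplies $x_2$ with $d_Z(\varphi_1(x_1),\varphi_2(x_2))\leq\delta_{12}$ and $x_3$ with $d_Z(\varphi_1(x_1),\varphi_3(x_3))\leq\delta_{13}$, but the triangle inequality only gives $d_Z(\varphi_2(x_2),\varphi_3(x_3))\leq\delta_{12}+\delta_{13}$, which can exceed $\delta_{23}$; so the admissible set for $x_3$ may genuinely be empty and $e_{12}(R)$, $e_{13}(R)$ need not project onto $X_1$. Your fix --- passing to $\mathcal{W}_1(Z)$ and using Dirac-combination interpolants ``to supply missing triples'' --- confuses points of the ambient space with points of the $X_i$: an element of $R$ must be a triple $(x_1,x_2,x_3)\in X_1\times X_2\times X_3$, and a measure of the form $(1-t)\delta_z+t\,\delta_{z'}$ is not a point of any $X_i$, so enlarging the ambient space cannot create new admissible triples. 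Until either \Cref{conj:3-haus} is established \emph{and} the surjectivity obstruction is resolved by a genuinely different mechanism (e.g.\ a selection argument producing a single correspondence-compatible family, or a direct construction of $R$ from two pairwise optimal correspondences), the conjecture remains open.
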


It seems also interesting to elucidate the relationship between these two conjectures: note that when only dealing with two spaces, in the proof of \Cref{lm:dgh_hausdorff-realizable} we used the existence of an optimal correspondence to establish the existence of an ambient space which realizes the Gromov-Hausdorff distance between the two given spaces; it is then plausible that \Cref{conj:3-cor} could imply \Cref{conj:3-haus}.

\section*{Acknowledgements}
This work was partially supported by the NSF through grants DMS-1723003, CCF-1740761, and CCF-1526513. {We thank Professor Karl-Theodor Sturm for sharing \cite{sturm2020email} with us.} We also thank Qingsong Wang for pointing out the simple counterexample (\Cref{ex:counter}) to \Cref{claim:false claim}.

\paragraph{Declarations of interest:}none

\appendix
\section{Geodesics in Hausdorff hyperspaces}\label{app:geo-hyp}
\subsection{A counterexample}
\begin{example}[Counterexample to \Cref{claim:false claim}]\label{ex:counter}
Consider $X=[0,3]\times [0,1]$ endowed with the usual Euclidean metric $d_X$. We define $\tilde{d}_{X}:X\times X\rightarrow\R$ on $X$ as follows:
$$\tilde{d}_X\left(\left(x,y\right),\left(x',y'\right)\right)\coloneqq \min\left(x+x',d_X\left(\left(x,y\right),\left(x',y'\right)\right),6-(x+x')\right). $$
It is easy to check that $\tilde{d}_X$ is a pseudo-metric on $X$ and that $\tilde{d}_X|_{[1,2]\times[0,1]}=d_{X}|_{[1,2]\times[0,1]}$.

Note that $\tilde{d}_X\left(\left(x,y\right),\left(x',y'\right)\right)=0$ if and only if one of the following three cases holds: $\left(x,y\right)=\left(x',y'\right)$, $x=x'=0$, or $x=x'=3$. This induces an equivalence relation $\sim$ on $X$. In fact, $\tilde{d}_X$ is the quotient metric (\cite[Definition 3.1.12]{burago2001course}) on $X/\sim$ which arises by identifying all points along $\{0\}\times[0,1]$ and all points along $\{3\}\times[0,1]$, respectively. Let $[0]$ denote the equivalence class of all points on $\{0\}\times[0,1]$ and by $[3]$ the equivalence class of all points on $\{3\}\times[0,1]$. Then, it follows from definition of $\tilde{d}_X$ that for any $\left(x,y\right)\in X/\sim$, we have
$$\tilde{d}_X\left(\left(x,y\right),[0]\right)=x\text{ and }\tilde{d}_X\left(\left(x,y\right),[3]\right)=3-x. $$
So, any continuous curve $\gamma:[0,1]\rightarrow X/\sim$ such that $\gamma\left(0\right)=[0]$, $\gamma\left(1\right)=[3]$ and $p_1\left(\gamma\left(t\right)\right)=3t$ satisfies the condition in \Cref{claim:false claim}, where $p_1:\R^2\rightarrow\R$ is the projection onto the first component. For example, the graph of any continuous function defined on $[0,3]$ with value bounded between $0$ and $1$ composed with the quotient $X\rightarrow X/\sim$ generates such a continuous curve $\gamma$. However, for such a curve, the image of $\gamma|_{\left[\frac{1}{3},\frac{2}{3}\right]}$ lies in $[1,2]\times[0,1]\subseteq X/\sim$, where $\tilde{d}_X$ agrees with the Euclidean metric. As long as the image of $\gamma|_{\left[\frac{1}{3},\frac{2}{3}\right]}$ is not straight, $\gamma$ is not a geodesic in $X/\sim$. See \Cref{fig:counterexample} for an illustration. For an explicit counterexample, we let $\gamma$ be the composition of the curve $t\mapsto (3t,\left|\sin({3t}{\pi})\right|)\in X$ for $t\in[0,1]$ and the quotient $X\rightarrow X/\sim$. Then, obviously the image of $\gamma|_{\left[\frac{1}{3},\frac{2}{3}\right]}$ is not straight and thus $\gamma$ is not a geodesic in $X/\sim$.
\end{example}

\begin{figure}[htb]
	\centering		\includegraphics[width=\textwidth]{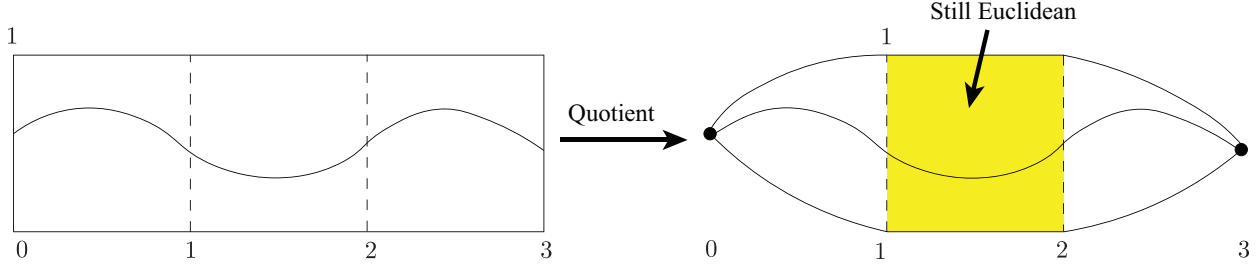}
	\caption{\textbf{Illustration of \Cref{ex:counter}.} Any continuous curve $\gamma:[0,1]\rightarrow X/\sim$ such that $\gamma\left(0\right)=[0]$, $\gamma\left(1\right)=[3]$ and $p_1\left(\gamma\left(t\right)\right)=3t$ satisfies $\tilde{d}_X([0],\gamma(t))=t\,\tilde{d}_X([0],[3])$ and $\tilde{d}_X(\gamma(t),[3])=(1-t)\,\tilde{d}_X([0],[3])$. However, for such an $\gamma$, $\gamma|_{\left[\frac{1}{3},\frac{2}{3}\right]}$ may not be straight (with respect to the Euclidean metric), which will imply that $\gamma$ is not a geodesic in $X/\sim$. \textbf{(color figure)} } \label{fig:counterexample}
\end{figure}

\subsection{Proof of \Cref{thm:haus-geo-cons}}
We now provide a proof for \Cref{thm:haus-geo-cons} which is an amended version of the one given in \cite{serra1998hausdorff}. Recall that for a metric space $X$, a subspace $A$ and $r\geq 0$, $A^r\coloneqq\{x\in X:\,\exists a\in A \text{ such that }d_X\left(a,x\right)\leq r\}$.

\begin{lemma}\label{lm:geo-sum-thick}
Given a \emph{geodesic} metric space $X$ and a closed subset $A\subseteq X$, for any $r_1,r_2\geq0$, we have $(A^{r_1})^{r_2}=A^{r_1+r_2}$.
\end{lemma}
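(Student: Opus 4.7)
The plan is to prove the two inclusions separately. The inclusion $(A^{r_1})^{r_2}\subseteq A^{r_1+r_2}$ is a direct consequence of the triangle inequality and does \emph{not} require the geodesic hypothesis: given any $x\in(A^{r_1})^{r_2}$, pick $y\in A^{r_1}$ with $d_X(y,x)\leq r_2$ and then $a\in A$ with $d_X(a,y)\leq r_1$, so that $d_X(a,x)\leq r_1+r_2$ yields $x\in A^{r_1+r_2}$.

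The reverse inclusion $A^{r_1+r_2}\subseteq(A^{r_1})^{r_2}$ is where the geodesic assumption enters. Given $x\in A^{r_1+r_2}$, I would pick $a\in A$ with $d_X(a,x)\leq r_1+r_2$ and then split into two cases. If $d_X(a,x)\leq r_1$, simply take $y\coloneqq x$, so that $y\in A^{r_1}$ and $d_X(y,x)=0\leq r_2$. Otherwise, invoke the geodesic property of $X$ to choose a geodesic $\zeta:[0,1]\to X$ with $\zeta(0)=a$ and $\zeta(1)=x$, and set $y\coloneqq\zeta\!\left(r_1/d_X(a,x)\right)$. By the defining identity $d_X(\zeta(s),\zeta(t))=|s-t|\cdot d_X(a,x)$ for geodesics, we get $d_X(a,y)=r_1$ (so $y\in A^{r_1}$) and $d_X(y,x)=d_X(a,x)-r_1\leq r_2$, hence $x\in(A^{r_1})^{r_2}$.

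There is no real obstacle here: the only place geodesicity is used is to produce an interpolating point at the correct distance from both $a$ and $x$, which is exactly what a geodesic supplies. Two small sanity checks I would also include: the boundary cases $r_1=0$ or $r_2=0$ reduce to the (already assumed) fact that $A$ is closed and thus $A^0=A$; and the argument above is insensitive to whether the thickening is defined with $\leq r$ or $<r$, since in the geodesic case the distance is realized exactly along $\zeta$.
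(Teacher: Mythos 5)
Your proposal is correct and follows essentially the same route as the paper: the forward inclusion by the triangle inequality, and the reverse inclusion by taking an interpolating point on a geodesic from $a$ to $x$ (the paper parametrizes at $r_1/(r_1+r_2)$ and gets $d_X(a,x_1)\leq r_1$, while you parametrize at $r_1/d_X(a,x)$ and get equality; this is an immaterial variation). Your explicit handling of the case $d_X(a,x)\leq r_1$ neatly absorbs the boundary cases that the paper dismisses as trivial.
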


\begin{proof}
The case when ${r_1}=0$ or ${r_2}=0$ is trivial. Now, we assume that ${r_1}>0$ and ${r_2}>0$.

That $(A^{r_1})^{r_2}\subseteq A^{{r_1}+{r_2}}$ holds obviously for any metric space $X$ (not necessarily geodesic). For the other direction, suppose $x\in A^{{r_1}+{r_2}}$ and let $a\in A$ such that $d_X(x,a)\leq {r_1}+{r_2}$. Let $\gamma:[0,1]\rightarrow X$ be a geodesic such that $\gamma(0)=a$ and $\gamma(1)=x$. Let $x_1\coloneqq \gamma\lc\frac{{r_1}}{{r_1}+{r_2}}\rc$. Then, 
$$d_X(a,x_1)=d_X\lc\gamma(0),\gamma\lc\frac{{r_1}}{{r_1}+{r_2}}\rc\rc\leq\frac{{r_1}}{{r_1}+{r_2}}\cdot d_X(\gamma(0),\gamma(1))\leq\frac{{r_1}}{{r_1}+{r_2}}\cdot({r_1}+{r_2})= {r_1}. $$
Similarly, $d_X(x_1,x)\leq {r_2}$. Therefore, $x_1\in A^{r_1}$ and thus $x\in (A^{r_1})^{r_2}$.
\end{proof}

\begin{lemma}\label{lm:geo-int-nonempty}
Given a \emph{geodesic} metric space $X$ and $A,B\in \mathcal{H}(X)$, suppose $\rho\coloneqq\dH^X(A,B)>0$. Then, for any ${r_1},{r_2}\geq 0$ such that ${r_1}+{r_2}=\rho$, we have $A^{r_1}\cap B^{r_2}\neq\emptyset$.
\end{lemma}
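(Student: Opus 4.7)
The plan is to combine the addition-of-thickenings identity from \Cref{lm:geo-sum-thick} with an explicit geodesic interpolation construction. Since $\dH^X(A,B)=\rho$, the identities $A\subseteq B^\rho$ and $B\subseteq A^\rho$ hold (the thickening $B^s$ is closed and equals $\bigcap_{s'>s}B^{s'}$, so the infimum defining $\dH^X$ is in fact attained on both sides). In particular, every $a\in A$ satisfies $d_X(a,B)\le \rho$, and by \Cref{lm:geo-sum-thick} we have the decomposition $A^\rho=(B^{r_2})^{r_1}$, which says that each $a\in A$ is at distance $\le r_1$ from $B^{r_2}$. The strategy is to turn this abstract witness into an honest element of $A^{r_1}\cap B^{r_2}$ by using a geodesic to ``land'' at the correct parameter.

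Concretely, I would fix $a\in A$ and select a point $b\in B$ (or a minimizing sequence, in case the nearest point is not attained; see below) with $\ell\coloneqq d_X(a,b)\le \rho$. Using the hypothesis that $X$ is geodesic, pick a unit-speed geodesic $\gamma\colon[0,\ell]\to X$ with $\gamma(0)=a$ and $\gamma(\ell)=b$. The argument then splits into two cases:
\begin{itemize}
\item If $\ell\le r_1$, then $b\in A^{r_1}$ (since $d_X(b,a)\le r_1$) while $b\in B\subseteq B^{r_2}$, so $b\in A^{r_1}\cap B^{r_2}$.
\item If $\ell>r_1$, set $x\coloneqq\gamma(r_1)$. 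Then $d_X(a,x)=r_1$ gives $x\in A^{r_1}$, and
\[ d_X(x,b)=\ell-r_1\le \rho-r_1=r_2 \]
gives $x\in B^{r_2}$.
\end{itemize}

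The only delicate point is whether one may actually choose $b\in B$ attaining $d_X(a,B)$. This is automatic in the setting in which the lemma will be applied (\Cref{thm:haus-geo-cons}), where $X\in\ms$ is compact and hence $B$ is compact, so the nearest point exists. In the more general setting where $A,B\in\mathcal{H}(X)$ are merely bounded closed, one replaces $b$ with a sequence $b_n\in B$ satisfying $d_X(a,b_n)\to d_X(a,B)\le\rho$, runs the construction above for each $n$ to produce $x_n\in A^{r_1}$ with $d_X(x_n,B)\le r_2+\tfrac{1}{n}$, and then extracts a cluster point using the usual completeness/compactness properties of $X$ (e.g., properness). Since the paper invokes this lemma only in the compact case, the main obstacle is purely notational: making sure the two cases above are written cleanly and that the degenerate boundary cases $r_1=0$ or $r_2=0$ (which are trivial since $A\subseteq B^\rho$ and $B\subseteq A^\rho$) are handled.
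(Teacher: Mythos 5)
Your proof is correct and is essentially the paper's argument: pick $a\in A$, a point $b\in B$ with $d_X(a,b)\le\rho$, join them by a geodesic, and take the point at parameter $r_1$ along it, which lands in $A^{r_1}\cap B^{r_2}$ (the paper avoids your case split by using the $[0,1]$-parametrization and evaluating at $r_1/\rho$). The only blemish is the motivational identity, which should read $B^\rho=(B^{r_2})^{r_1}$ rather than $A^\rho=(B^{r_2})^{r_1}$; this does not affect the actual construction.
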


\begin{proof}
For any $a\in A$, there exists $b\in B$ such that $d_X(a,b)\leq \rho$. Let $\gamma:[0,1]\rightarrow X$ be a geodesic such that $\gamma(0)=a$ and $\gamma(1)=b$. Let $x_1\coloneqq\gamma\lc\frac{{r_1}}{\rho}\rc$, then it is easy to check as in the previous lemma that $d_X(a,x_1)\leq {r_1}$ and $d_X(x_1,b)\leq {r_2}$. This implies that $x_1\in A^{r_1}\cap B^{r_2}$ and thus $A^{r_1}\cap B^{r_2}\neq\emptyset$.
\end{proof}

\begin{proof}[Proof of \Cref{thm:haus-geo-cons}]
Given $A,B\in \mathcal{H}\left(X\right)$, suppose $\dH^X\left(A,B\right)=\rho.$ Define $\gamma:[0,1]\rightarrow \mathcal{H}\left(X\right)$ by $t\mapsto A^{t\rho}\cap B^{\left(1-t\right)\rho}$. Obviously, $\gamma\left(0\right)=A,\gamma\left(1\right)=B$ and $\gamma\left(t\right)\neq\emptyset$ due to \Cref{lm:geo-int-nonempty}. Now, for any $s<t\in[0,1]$, we need to show that $\dH^X\left(\gamma\left(s\right),\gamma\left(t\right)\right)\leq|s-t|\rho.$

\begin{claim}\label{claim:A1}
{For any $E,F\in \mathcal{H}\left(X\right)$, suppose that for a given $r\geq 0$ we have $E\cap F^r\neq\emptyset$. Then, for any $r_0\in[0,r]$ we have}
$$ E^{r_0}\cap F^{r-r_0}\subseteq\left(E\cap F^r\right)^{r_0}.$$
\end{claim}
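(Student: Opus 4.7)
The plan is to unpack the definitions of the thickenings appearing in the left-hand side and use a single application of the triangle inequality to produce an explicit witness in $E\cap F^r$ lying within distance $r_0$ of any given point $x\in E^{r_0}\cap F^{r-r_0}$. Concretely, I would fix such an $x$ and use the definition of $E^{r_0}$ to select $e\in E$ with $d_X(x,e)\leq r_0$, and the definition of $F^{r-r_0}$ to select $f\in F$ with $d_X(x,f)\leq r-r_0$.

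The key (and essentially only) observation is that the point $e$ chosen above already belongs to $F^r$. Indeed, by the triangle inequality,
\[
d_X(e,f)\;\leq\;d_X(e,x)+d_X(x,f)\;\leq\;r_0+(r-r_0)\;=\;r,
\]
so $e\in F^r$ and therefore $e\in E\cap F^r$. Since $d_X(x,e)\leq r_0$, this shows that $x\in (E\cap F^r)^{r_0}$, which proves the claim.

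No step here is delicate; there is no real obstacle. I would note in passing that the hypothesis $E\cap F^r\neq\emptyset$ is only needed to make the right-hand side $(E\cap F^r)^{r_0}$ a genuine (nonempty-based) thickening, and it is in fact automatically implied by the argument whenever the left-hand side is nonempty, since the element $e$ produced above is always a member of $E\cap F^r$. Note also that no geodesic hypothesis on $X$ is used here; this is a purely metric lemma about thickenings, which is presumably why it is split off from the surrounding geodesic construction in \Cref{thm:haus-geo-cons}.
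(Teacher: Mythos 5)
Your proof is correct and is essentially identical to the paper's: both select witnesses $e\in E$ and $f\in F$ for the two thickenings, apply the triangle inequality to get $d_X(e,f)\leq r_0+(r-r_0)=r$, conclude $e\in E\cap F^r$, and hence $x\in(E\cap F^r)^{r_0}$. Your side remarks (that the nonemptiness hypothesis is automatic when the left-hand side is nonempty, and that no geodesic assumption is used) are accurate but not needed.
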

\begin{proof}[Proof of \Cref{claim:A1}]
Given $x\in E^{r_0}\cap F^{r-{r_0}}$, there exist $e\in E$ and $f\in F$ such that
$x\in B_s(e)\cap B_{r-{r_0}}(f)\neq\emptyset$. Hence $d_X\left(e,f\right)\leq d_X\left(x,e\right)+d_X\left(x,f\right)\leq {r_0} +r-{r_0}= r$. This implies $e\in E\cap F^r$ and thus $x\in \left(E\cap F^r\right)^{r_0}$.
\end{proof}

Now, let $r_0=\left(t-s\right)\rho,r=\left(1-s\right)\rho$, $E=A^{s\rho}$ and $F=B$, then one has the following from \Cref{lm:geo-sum-thick} and the Claim:
$$A^{t\rho}\cap B^{\left(1-t\right)\rho}=E^{r_0}\cap F^{r-r_0}\subseteq\left(E\cap F^r\right)^{r_0}= \left(A^{s\rho}\cap B^{\left(1-s\right)\rho}\right)^{\left(t-s\right)\rho}. $$ 
Similarly, if we let $r_0=\left(t-s\right)\rho,r=t\rho,E=B^{\left(1-t\right)\rho},F=A$, then we have
$$A^{s\rho}\cap B^{\left(1-s\right)\rho}=F^{r-r_0}\cap E^{r_0} \subseteq\left(F^r\cap E \right)^{r_0}= \left(A^{t\rho}\cap B^{\left(1-t\right)\rho}\right)^{\left(t-s\right)\rho}. $$ 

Therefore $\dH^X\left(\gamma\left(s\right),\gamma\left(t\right)\right)\leq\rho|s-t|.$
\end{proof}

\section{{Additional proofs regarding Wasserstein hyperspaces}}\label{app:ot}

\subsection{Proof of \Cref{thm:W-equal}}
We prove \Cref{thm:W-equal} via the following series of lemmas.

\begin{lemma}\label{lm:proof-lm-1}
For any $X\in\ms$ and a (not necessarily compact) metric space $Y$ and for any $p\in[1,\infty]$, if $\varphi:X\hookrightarrow Y$ is an isometric embedding, then the pushforward map $\varphi_\#:\mathcal{W}_p\left(X\right)\hookrightarrow \mathcal{W}_p\left(Y\right)$ is also an isometric embedding.
\end{lemma}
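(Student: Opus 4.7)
The plan is to prove the two inequalities $d_{\mathcal{W},p}^Y(\varphi_\#\alpha,\varphi_\#\beta) \leq d_{\mathcal{W},p}^X(\alpha,\beta)$ and $d_{\mathcal{W},p}^Y(\varphi_\#\alpha,\varphi_\#\beta) \geq d_{\mathcal{W},p}^X(\alpha,\beta)$ for arbitrary $\alpha,\beta\in\mathcal{W}_p(X)$, which together will yield the isometric embedding property. I will treat the case $p\in[1,\infty)$ explicitly; the case $p=\infty$ proceeds analogously by replacing $L^p$ integrals with supports.

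For the $(\leq)$ direction, I would take any coupling $\mu\in\mathcal{C}(\alpha,\beta)$ on $X\times X$ and push it forward via the map $\varphi\times\varphi:X\times X\to Y\times Y$ to obtain $(\varphi\times\varphi)_\#\mu$, which is a coupling between $\varphi_\#\alpha$ and $\varphi_\#\beta$. Because $\varphi$ is an isometric embedding, $d_Y(\varphi(x),\varphi(x'))=d_X(x,x')$ for all $x,x'\in X$, so the change-of-variables formula gives
\[\int_{Y\times Y} d_Y^p(y_1,y_2)\,d(\varphi\times\varphi)_\#\mu = \int_{X\times X} d_X^p(x_1,x_2)\,d\mu.\]
Taking infima over $\mu$ produces the desired inequality.

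For the $(\geq)$ direction, which is the main obstacle to verify carefully, I would start from an arbitrary $\nu\in\mathcal{C}(\varphi_\#\alpha,\varphi_\#\beta)$ on $Y\times Y$. The key observation is that since $X$ is compact, $\varphi(X)$ is a compact (hence closed and Borel) subset of $Y$, and both marginals of $\nu$ are concentrated on $\varphi(X)$. This forces $\nu$ to be concentrated on $\varphi(X)\times\varphi(X)$, because for example
\[\nu\big((Y\setminus\varphi(X))\times Y\big) = \varphi_\#\alpha(Y\setminus\varphi(X)) = 0,\]
and symmetrically for the second factor. Then $\varphi:X\to\varphi(X)$ is an isometric bijection, so its inverse $\varphi^{-1}$ is well-defined and measurable on $\varphi(X)$, and I can pull $\nu$ back by $(\varphi^{-1}\times\varphi^{-1})$ to obtain a coupling $\tilde\mu\in\mathcal{C}(\alpha,\beta)$ on $X\times X$. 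Using again the isometry property of $\varphi$, the cost of $\tilde\mu$ equals that of $\nu$, yielding $d_{\mathcal{W},p}^X(\alpha,\beta)\leq d_{\mathcal{W},p}^Y(\varphi_\#\alpha,\varphi_\#\beta)$ after taking infima.

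Combining both inequalities gives equality, so $\varphi_\#$ preserves distances and is therefore an isometric embedding $\mathcal{W}_p(X)\hookrightarrow\mathcal{W}_p(Y)$. The only technical hazard is checking measurability and well-definedness of the pullback through $\varphi^{-1}$; the compactness of $\varphi(X)$ in $Y$ and the fact that $\varphi$ restricts to a homeomorphism onto its image make this routine, but it is the step that relies essentially on the assumption $X\in\ms$ rather than on $Y$.
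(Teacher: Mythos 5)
Your proposal is correct and takes essentially the same route as the paper: the forward inequality via pushforward of couplings under $\varphi\times\varphi$, and the reverse inequality via the observation that $\varphi(X)$ is closed in $Y$, so couplings between $\varphi_\#\alpha$ and $\varphi_\#\beta$ concentrate on $\varphi(X)\times\varphi(X)$ and can be transported back through the inverse isometry $\varphi^{-1}:\varphi(X)\to X$. The paper packages that last step as an application of the forward inequality to $\varphi^{-1}$ rather than an explicit pullback of couplings, but the content is identical.
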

\begin{proof}
Given $\alpha,\beta\in \mathcal{P}\left(X\right)$, let $\mu\in\mathcal{C}\left(\alpha,\beta\right)$ be any coupling. Consider the pushforward $\tilde{\mu}=\left(\varphi\times\varphi\right)_\#\mu$, where $\varphi\times\varphi:X\times X\rightarrow Y\times Y$ takes $(x,x')$ to $(\varphi(x),\varphi(x'))$. It is easy to check that $\varphi_\#\alpha,\varphi_\#\beta\in\mathcal{P}_p(Y)$ and $\tilde{\mu}\in\mathcal{C}\left(\varphi_\#\alpha,\varphi_\#\beta\right)$. Then, for $p\in[1,\infty)$ we have the following
\begin{align*}
d_{\mathcal{W},p}^Y\left(\varphi_\#\alpha,\varphi_\#\beta\right) & \leq  \left(\int_{Y\times Y}d_Y^p\left(y_1,y_2\right)d\tilde{\mu}\left(y_1,y_2\right) \right) ^\frac{1}{p} \\
 & = \left(\int_{X\times X}d_Y^p\left(\varphi\left(x_1\right),\varphi\left(x_2\right)\right)d\mu\left(x_1,x_2\right)  \right)^\frac{1}{p} \\
 & =  \left(\int_{X\times X}d_X^p\left(x_1,x_2\right)d\mu\left(x_1,x_2\right)  \right)^\frac{1}{p}.
\end{align*}
The last equality follows from the fact that $\varphi$ is an isometric embedding.

For $p=\infty$, since $\varphi$ is an isometric embedding, we have that $(\varphi\times\varphi)\left(\supp\left(\mu\right)\right)=\supp\left(\tilde{\mu}\right).$ Then,
\begin{align*}
d_{\mathcal{W},\infty}^Y\left(\varphi_\#\alpha,\varphi_\#\beta\right) & \leq \sup_{\left(y_1,y_2\right)\in\supp\left(\tilde{\mu}\right)}d_Y\left(y_1,y_2\right)=\sup_{\left(y_1,y_2\right)\in\varphi\times\varphi\left(\supp\left({\mu}\right)\right)}d_Y\left(y_1,y_2\right) \\
 & =\sup_{\left(x_1,x_2\right)\in\supp\left({\mu}\right)}d_Y\left(\varphi\left(x_1\right),\varphi\left(x_2\right)\right)=\sup_{\left(x_1,x_2\right)\in\supp\left({\mu}\right)}d_X\left(x_1,x_2\right).
\end{align*}
By taking infimum over $\mu\in\mathcal{C}(\alpha,\beta)$, we conclude that for $p\in[1,\infty]$, 
$$d_{\mathcal{W},p}^Y\left(\varphi_\#\alpha,\varphi_\#\beta\right)\leq d_{\mathcal{W},p}^X\left(\alpha,\beta\right).$$

Since $\varphi$ is continuous, $\varphi\left(X\right)$ is compact in $Y$ and hence closed. Then, it is obvious to see that $\mathrm{supp}\left(\varphi_\#\alpha\right)\subseteq\varphi\left(X\right)$ for any $\alpha\in \mathcal{P}\left(X\right)$. Hence we have that $$d_{\mathcal{W},p}^Y\left(\varphi_\#\alpha,\varphi_\#\beta\right)= d_{\mathcal{W},p}^{\varphi\left(X\right)}\left(\varphi_\#\alpha,\varphi_\#\beta\right)\leq d_{\mathcal{W},p}^X\left(\alpha,\beta\right).$$
Since $\varphi^{-1}:\varphi\left(X\right)\rightarrow X$ is also an isometric embedding, one has that
$$d_{\mathcal{W},p}^X\left(\alpha,\beta\right)=d_{\mathcal{W},p}^X\left(\varphi^{-1}_\#\circ\varphi_\#\alpha,\varphi^{-1}_\#\circ\varphi_\#\beta\right)\leq d_{\mathcal{W},p}^{\varphi\left(X\right)}\left(\varphi_\#\alpha,\varphi_\#\beta\right).  $$
Therefore, $d_{\mathcal{W},p}^X\left(\alpha,\beta\right)=d_{\mathcal{W},p}^Y\left(\varphi_\#\alpha,\varphi_\#\beta\right)$ and thus $\varphi_\#$ is an isometric embedding.
\end{proof}

\begin{lemma}\label{lm:w<=h}
Given two compact metric spaces $\left(X,d_X\right)$ and $\left(Y,d_Y\right)$, suppose there exist a (not necessarily compact) metric space $\left(Z,d_Z\right)$ and isometric embeddings $\varphi_X:X\hookrightarrow Z$ and $\varphi_Y:Y\hookrightarrow Z$. Then, for any $p\in[1,\infty]$, we have that 
\[\dH^{\mathcal{W}_p\left(Z\right)}\big((\varphi_X)_\#\lc \mathcal{W}_p\left(X\right)\rc,(\varphi_Y)_\#\lc \mathcal{W}_p\left(Y\right)\rc\big)\leq \dH^Z\left(\varphi_X(X),\varphi_Y(Y)\right).\]
\end{lemma}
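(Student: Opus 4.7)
Identifying $X$ with $\varphi_X(X)\subseteq Z$ and $Y$ with $\varphi_Y(Y)\subseteq Z$, set $\eta\coloneqq \dH^Z(X,Y)$. By the definition of Hausdorff distance on $\mathcal{W}_p(Z)$, it suffices to show that for every $\alpha\in\mathcal{W}_p(X)$ and every $\eps>0$, there exists $\beta\in\mathcal{W}_p(Y)$ with $\dW{p}^Z(\alpha,\beta)\leq \eta+\eps$, together with the symmetric statement obtained by swapping $X$ and $Y$. The construction of $\beta$ is done by transporting $\alpha$ to a nearby measure on $Y$ via a carefully chosen measurable map.

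The transport map is constructed by discretization. Since $X$ is compact, fix a finite Borel partition $\{X_i\}_{i=1}^k$ of $X$ with $\diam(X_i)\leq \eps/2$, and in each $X_i$ select a point $x_i$. By the definition of $\eta=\dH^Z(X,Y)$, for each $i$ we may pick $y_i\in Y$ with $d_Z(x_i,y_i)\leq \eta+\eps/2$. Define $T\colon X\to Y$ by $T(x)\coloneqq y_i$ whenever $x\in X_i$; this is a Borel map, and by the triangle inequality
\[
d_Z(x,T(x))\leq d_Z(x,x_i)+d_Z(x_i,y_i)\leq \frac{\eps}{2}+\eta+\frac{\eps}{2}=\eta+\eps\qquad \forall x\in X.
\]
Set $\beta\coloneqq T_\#\alpha$; since $Y$ is compact, $\beta\in\mathcal{W}_p(Y)$ automatically.

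To estimate $\dW{p}^Z(\alpha,\beta)$, consider the coupling $\mu\coloneqq(\mathrm{id}_X\times T)_\#\alpha\in\mathcal{C}(\alpha,\beta)$, viewed as a coupling on $Z\times Z$ via the embeddings. For $p\in[1,\infty)$,
\[
\dW{p}^Z(\alpha,\beta)\leq\lc\int_{Z\times Z}d_Z(z,z')^p\,d\mu(z,z')\rc^{1/p}=\lc\int_X d_Z(x,T(x))^p\,d\alpha(x)\rc^{1/p}\leq \eta+\eps,
\]
while for $p=\infty$, since $\supp(\mu)\subseteq\{(x,T(x)):x\in\supp(\alpha)\}$, one obtains $\dW{\infty}^Z(\alpha,\beta)\leq \sup_{x\in X}d_Z(x,T(x))\leq \eta+\eps$. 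Letting $\eps\to 0$ yields $\alpha\in \big((\varphi_Y)_\#(\mathcal{W}_p(Y))\big)^\eta$; the symmetric construction swapping the roles of $X$ and $Y$ produces the opposite inclusion, and together they give the desired inequality.

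\textbf{Anticipated obstacle.} There is no serious obstacle: the only place to be careful is to avoid invoking a general measurable selection theorem by instead performing an explicit $\eps$-discretization, which both guarantees Borel measurability of $T$ and keeps the argument self-contained. The fact that the infimum in the definition of $\dH^Z$ need not be attained is handled by the additional $\eps/2$ slack in the choice of $y_i$, and the passage $\eps\to 0$ is harmless since we work with the distance-to-set formulation rather than requiring an actual optimizer.
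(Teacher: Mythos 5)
Your proof is correct and follows essentially the same route as the paper's: both construct a Borel "snapping" map $X\to Y$ from a finite partition of $X$ into small-diameter Borel pieces (the paper uses disjointified Voronoi cells of an $\eps$-net), push $\alpha$ forward, and bound $\dW{p}^Z$ via the graph coupling, treating $p<\infty$ by the integral estimate and $p=\infty$ via the support of the coupling. The only cosmetic difference is your extra $\eps/2$ slack in choosing $y_i$ (the paper takes $d_Z(x_k,y_k)\leq\eta$ directly, using compactness of $Y$), and for $p=\infty$ one should note that $\supp(\mu)$ lies in the \emph{closure} of the graph of $T$, where the distance bound still holds by continuity, exactly as the paper does.
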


\begin{proof}
For notational simplicity, we identify $X$ with $\varphi_X\left(X\right)$ and $\mathcal{W}_p\left(X\right)$ with $\left(\varphi_X\right)_\#\left(\mathcal{W}_p\left(X\right)\right)$ (same for $Y$ and $\mathcal{W}_p\left(Y\right)$).

Let $\eta\coloneqq\dH^Z\left(X,Y\right)$, and consider some small $\eps>0$.

\begin{claim}\label{clm:mc}
There exists a Borel measurable map $\xi:X\rightarrow Y$ such that for any $x\in X$, 
$$d_Z\left(x,\xi\left(x\right)\right)\leq\eta+\eps.$$
\end{claim}

\begin{proof}[Proof of \Cref{clm:mc}]
Consider an $\eps$-net $\{x_k\}_{k=1}^n$ of $X$. Then, we construct the Voronoi cells $\{X_k\}_{k=1}^n$ of $X$ with respect to the net, i.e., $X_k\coloneqq\{x\in X:\,d_X\left(x,x_k\right)=\min_{i=1,\ldots,n}d_X\left(x,x_i\right)\}$. We adjust the Voronoi cells to make them disjoint. For example, let $Y_1=X_1$ and $Y_k=X_k\backslash\cup_{i=1}^{k-1}X_i$ for $k\geq 1$. We still let $\{X_k\}_{k=1}^n$ denote the cells after adjustment. Note that these cells are Borel measurable. For each $x_k$, we let $y_k\in Y$ such that $d_Z\left(x_k,y_k\right)\leq\eta$. Then, we define $\xi:X\rightarrow Y$ by mapping $x$ to $y_k$ if $x\in X_k$. The map $\xi$ thus constructed is obviously measurable. Moreover, since $x\in X_k$ and $\{x_k\}_{k=1}^n$ is an $\eps$-net, one has that $d_Z\left(x,x_k\right)=d_X\left(x,x_k\right)\leq\eps$. Therefore
$$d_Z\left(x,\xi\left(x\right)\right)\leq d_Z\left(x,x_k\right)+d_Z\left(x_k,y_k\right)\leq\eps+\eta. $$
\end{proof}

Let $\xi:X\rightarrow Y$ be the map constructed in \Cref{clm:mc}. For any $\alpha\in \mathcal{P}\left(X\right)$, let $\beta\coloneqq\xi_\#\alpha\in \mathcal{P}\left(Y\right)$. For $p\in[1,\infty)$, we have
\[\left(d_{\mathcal{W},p}^Z\left(\alpha,\beta\right)\right)^p\leq\int_X d_Z^p\left(x,\xi\left(x\right)\right)d\alpha\left(x\right)\leq\int_X \left(\eta+\eps\right)^p d\alpha\left(x\right)=\left(\eta+\eps\right)^p.\]

For $p=\infty$, define $\widehat{\xi}:X\rightarrow X\times Y$ by letting $\widehat{\xi}\left(x\right)\coloneqq\left(x,\xi\left(x\right)\right)$ for any $x\in X$. Then, $\mu:=\widehat{\xi}_\#\alpha$ is a coupling between $\alpha$ and $\beta$. Let $C=\overline{\widehat{\xi}\left(X\right)}$, then $\mathrm{supp}\left(\mu\right)\subseteq C$. Indeed, for any $\left(x,y\right)\in (X\times Y)\backslash C$, there exists an open neighborhood $N$ of $\left(x,y\right)$ such that $N\cap C=\emptyset$ and therefore
$$\mu\left(N\right)=\alpha\left(\widehat{\xi}^{-1}N\right)=\alpha\left(\emptyset\right)=0. $$
Now, for any point $\left(x,y\right)\in C$, there exists a sequence $\{\left(x_i,y_i\right)\}_{i=1}^\infty$ in $\widehat{\xi}\left(X\right)$ approaching $\left(x,y\right)$ as $i\rightarrow\infty$. Then, we have that 
$$d_Z\left(x,y\right)=\lim_{i\rightarrow\infty}d_Z\left(x_i,y_i\right)=\lim_{i\rightarrow\infty}d_Z\left(x_i,\xi\left(x_i\right)\right)\leq\eta+\eps. $$
Hence we have that 
$$d_{\mathcal{W},\infty}^Z\left(\alpha,\beta\right)\leq\sup_{\left(x,y\right)\in\mathrm{supp}\left(\mu\right)}d_Z\left(x,y\right)\leq\eta+\eps. $$

Therefore, for any $p\in[1,\infty]$ we have that $\mathcal{W}_p\left(X\right)\subseteq \lc\mathcal{W}_p\left(Y\right)\rc^{\eta+\eps}$, and similarly we have that $\mathcal{W}_p\left(Y\right)\subseteq \lc\mathcal{W}_p\left(X\right)\rc^{\eta+\eps}$. This implies that
$$\dH^{\mathcal{W}_p\left(Z\right)}\left(\mathcal{W}_p\left(X\right),\mathcal{W}_p\left(Y\right)\right)\leq{\eta+\eps}=\dH^Z\left(X,Y\right)+\eps.$$ 
Since $\eps$ is arbitrary, we conclude that 
$$ \dH^{\mathcal{W}_p\left(Z\right)}\left(\mathcal{W}_p\left(X\right),\mathcal{W}_p\left(Y\right)\right)\leq \dH^Z\left(X,Y\right).$$
\end{proof}

\begin{lemma}\label{lm:w>=h}
Under the same assumptions as in \Cref{lm:w<=h}, we have that 
\[\dH^{\mathcal{W}_p\left(Z\right)}\big((\varphi_X)_\#\lc \mathcal{W}_p\left(X\right)\rc,(\varphi_Y)_\#\lc \mathcal{W}_p\left(Y\right)\rc\big)\geq \dH^Z\left(\varphi_X(X),\varphi_Y(Y)\right).\]
\end{lemma}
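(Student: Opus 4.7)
The strategy is to exploit the isometric embedding $X \hookrightarrow \mathcal{W}_p(X)$ given by $x \mapsto \delta_x$ so that every point of $\varphi_X(X)$ is ``visible'' inside $(\varphi_X)_\#(\mathcal{W}_p(X))$ as a Dirac mass, and similarly for $Y$. The key fact making this work is that the Wasserstein distance from a Dirac mass to any probability measure is controlled by (and, in the case of distances to the support, computes) the underlying $d_Z$-distance.

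First I would set $H \coloneqq \dH^{\mathcal{W}_p(Z)}\big((\varphi_X)_\#\mathcal{W}_p(X), (\varphi_Y)_\#\mathcal{W}_p(Y)\big)$ and fix an arbitrary $\eps > 0$. For each $x \in X$, the measure $\delta_{\varphi_X(x)}$ belongs to $(\varphi_X)_\#\mathcal{W}_p(X)$, so by the definition of $H$ there exists $\mu \in \mathcal{W}_p(Y)$ with
\[
d_{\mathcal{W},p}^Z\big(\delta_{\varphi_X(x)},(\varphi_Y)_\#\mu\big)\leq H+\eps.
\]
Write $\beta \coloneqq (\varphi_Y)_\# \mu$ and note that $\supp(\beta) \subseteq \varphi_Y(Y)$, since $\varphi_Y(Y)$ is closed (being compact).

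The next step is the key observation: I claim that for every $\alpha \in \mathcal{P}_p(Z)$ and every $z_0 \in Z$,
\[
\inf_{z \in \supp(\alpha)} d_Z(z_0, z) \;\leq\; d_{\mathcal{W},p}^Z(\delta_{z_0},\alpha).
\]
For $p \in [1,\infty)$ this is immediate from the fact that the unique coupling of $\delta_{z_0}$ and $\alpha$ is the product measure, so $d_{\mathcal{W},p}^Z(\delta_{z_0},\alpha)^p = \int_Z d_Z(z_0,z)^p\,d\alpha(z)$, and if $d_Z(z_0,z) > c$ on all of $\supp(\alpha)$ then the integral exceeds $c^p$. For $p = \infty$ it is even more direct: $d_{\mathcal{W},\infty}^Z(\delta_{z_0},\alpha) = \sup_{z\in\supp(\alpha)} d_Z(z_0,z)$. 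Applying this to $z_0 = \varphi_X(x)$ and $\alpha = \beta$, and using $\supp(\beta) \subseteq \varphi_Y(Y)$, I obtain some $y \in Y$ with
\[
d_Z\big(\varphi_X(x),\varphi_Y(y)\big) \;\leq\; d_{\mathcal{W},p}^Z\big(\delta_{\varphi_X(x)},\beta\big) \;\leq\; H+\eps.
\]

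Since $x$ was arbitrary, this shows $\varphi_X(X) \subseteq (\varphi_Y(Y))^{H+\eps}$ inside $Z$. The symmetric argument, starting from an arbitrary $y \in Y$ and Dirac $\delta_{\varphi_Y(y)} \in (\varphi_Y)_\#\mathcal{W}_p(Y)$, gives $\varphi_Y(Y) \subseteq (\varphi_X(X))^{H+\eps}$. Therefore $\dH^Z(\varphi_X(X),\varphi_Y(Y)) \leq H+\eps$, and letting $\eps \to 0$ yields the claim. No real obstacle is anticipated; the only subtle point is the ``support-distance is bounded by Wasserstein-distance from a Dirac'' lemma above, which is why singling out Dirac masses (rather than arbitrary measures) is essential to the argument.
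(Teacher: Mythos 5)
Your proof is correct and rests on the same key fact as the paper's: the $\ell^p$-Wasserstein distance from a Dirac mass $\delta_{z_0}$ to a measure $\alpha$ dominates $\inf_{z\in\supp(\alpha)}d_Z(z_0,z)$, because the only coupling involving a Dirac is the product. The only (immaterial) difference is organizational — the paper picks a single extremal pair $(x_0,y_0)$ realizing $\dH^Z(X,Y)$ and shows $\delta_{y_0}$ is at least that far from every $\mu_X\in\mathcal{W}_p(X)$, whereas you run the Dirac argument over all points to obtain the two thickening inclusions and let $\eps\to 0$.
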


\begin{proof}
Again for notational simplicity, we will omit the symbols for isometric embedding maps such as $\varphi_X$ or $(\varphi_X)_\#$ in the following proof.

It follows from \Cref{rmk:alternative dH} that $\dH^Z(X,Y)=\max\lc \sup_{x\in X}\inf_{y\in Y}d_Z\left(x,y\right),\sup_{y\in Y}\inf_{x\in X}d_Z\left(x,y\right)\rc$. Without loss of generality, we assume that $\dH^Z(X,Y)=\sup_{y\in Y}\inf_{x\in X}d_Z\left(x,y\right)$. Suppose that the points $x_0\in X,y_0\in Y$ are such that $d_Z\left(x_0,y_0\right)=\sup_{y\in Y}\inf_{x\in X}d_Z\left(x,y\right)$. The existence of such $(x_0,y_0)$ follows from compactness of $X$ and $Y$. Then, we consider the Dirac delta measure $\delta_{y_0}\in \mathcal{P}\left(Y\right)$ and any $\mu_X\in \mathcal{P}\left(X\right)$. We identify $\delta_{y_0}$ with $(\varphi_Y)_\#\delta_{y_0}\in\mathcal{P}(Z)$ and $\mu_X$ with $(\varphi_Y)_\#\mu_X\in\mathcal{P}(Z)$. Then, for any $p\in[1,\infty)$,

$$d_{\mathcal{W},p}^Z\left(\delta_{y_0},\mu_X\right)=\left(\int_Xd_Z\left(x,y_0\right)^pd\mu_X\left(x\right)\right)^{\frac{1}{p}}\geq d_Z\left(x_0,y_0\right). $$

For $p=\infty,$ we have that 
$$d_{\mathcal{W},\infty}^Z\left(\delta_{y_0},\mu_X\right)=\sup_{x\in\supp\left(\mu_X\right)}d_Z\left(x,y_0\right)\geq d_Z\left(x_0,y_0\right). $$

Consequently, we have for all $p\in[1,\infty]$ that
$$\dH^{\mathcal{W}_p\left(Z\right)}\left(\mathcal{W}_p\left(X\right),\mathcal{W}_p\left(Y\right)\right)\geq \inf_{\mu_X\in\mathcal{P}(X)}d_{\mathcal{W},p}^Z\left(\delta_{y_0},\mu_X\right)\geq d_Z(x_0,y_0)= \dH^Z\left(X,Y\right).$$
\end{proof}

Then, \Cref{thm:W-equal} follows from \Cref{lm:proof-lm-1}, \Cref{lm:w<=h} and \Cref{lm:w>=h}.

\subsection{Proof of \Cref{lm:W-geodesic} (linear interpolation)}
\begin{proof}
For any $x_0\in X$ and each $t\in[0,1]$, we have that 
$$\int_Xd_X(x,x_0)\,d\lc\gamma(t)\rc(x)=(1-t)\int_Xd_X(x,x_0)\,d\alpha(x)+t\int_Xd_X(x,x_0)\,d\beta(x)<\infty. $$
Therefore, for each $t\in[0,1]$, $\gamma(t)\in\mathcal{P}_1(X)$.

Since $X$ is Polish, there exists an optimal coupling $\mu$ between $\alpha$ and $\beta$ such that $d_{\mathcal{W},1}\left(\alpha,\beta\right)=\int_{X\times X} d_X\left(x,y\right)\,d\mu(x,y)$ (see for example \cite[Chapter 4]{villani2008optimal}). For $0\leq s< t\leq 1$, let 
$$\mu\left(s,t\right):=\left(1-t\right)\cdot\iota_\#\alpha+s\cdot\iota_\#\beta+\left(t-s\right)\cdot\mu,$$ 
where {$\iota:X\rightarrow X\times X$ taking $x$ to $(x,x)$ is the diagonal map. }

Then, it is easy to show that $\mu\left(s,t\right)\in\mathcal{C}\left(\gamma\left(s\right),\gamma\left(t\right)\right)$. Therefore, we have that 
\begin{align*}
    d_{\mathcal{W},1}\left(\gamma\left(s\right),\gamma\left(t\right)\right)&\leq\int_{X\times X}  d_X\left(x,y\right) d(\mu(s,t))(x,y)\\
    &= \int_{X\times X}  d_X\left(x,y\right) \lc\left(1-t\right)\,d\lc\iota_\#\alpha\rc+s\,d\lc\iota_\#\beta\rc+\left(t-s\right)\,d\mu\rc(x,y)\\
    &=\int_{X} d_X\left(x,x\right)\left(\left(1-t\right)\,d\alpha+s\,d\beta\right)(x)+\left(t-s\right)\,\int_{X\times X}  d_X\left(x,y\right)\,d\mu(x,y)\\
    &=\left(t-s\right)\,d_{\mathcal{W},1}\left(\alpha,\beta\right).
\end{align*}
Hence, by the triangle inequality we obtain that $d_{\mathcal{W},1}\left(\gamma\left(s\right),\gamma\left(t\right)\right)=|t-s|\cdot d_{\mathcal{W},1}\left(\alpha,\beta\right)$ and thus $\gamma$ is a geodesic in $\mathcal{W}_1\left(X\right)$ connecting $\alpha$ and $\beta$. 
\end{proof}
\section{Deviant and branching Gromov-Wasserstein geodesics}\label{app:d-b-geodesic}
For $n\in\mathbb{N}$, denote by $\Delta_n$ the $n$-point space with interpoint distance 1. Now, endow $\Delta_n$ with uniform probability measure (denoted by $\mu_n$) and denote the corresponding metric measure space by $\tilde{\Delta}_n=(\Delta_n,d_n,\mu_n)$. In \cite{chowdhury2018explicit}, the authors constructed an infinite family of deviant Gromov-Hausdorff geodesics between $\Delta_1$ and $\Delta_n$ for $n\geq 2$ and an infinite family of Gromov-Hausdorff geodesics \emph{branching} from the straight-line $\dgh$ geodesic from $\Delta_1$ to $\Delta_n$. In this section, we mimic their constructions and construct deviant and branching $\ell^p$-Gromov-Wasserstein geodesics for $p\in[1,\infty)$.

Since there is only one measure coupling between $\mu_1$ and $\mu_n$, there exists a unique straight-line $\dgws p$ geodesic $\gamma$ that connects $\tilde{\Delta}_1$ and $\tilde{\Delta}_n$ (cf. \Cref{thm:straight-line-gw-geo}). We first write down $\gamma$ explicitly: $\gamma(0)=\tilde{\Delta}_1$ and $\gamma(1)=\tilde{\Delta}_n$; for each $t\in(0,1)$, $\gamma(t)$ has the underlying set $X_n=\{x_1,\ldots,x_n\}$, distance function $d_t$ such that $d_t(x_i,x_j)=t\cdot\delta_{i\neq j}$ for $i,j=1,\ldots,n$ and uniform probability measure $\nu_t$.

Next, we compute explicitly $\dgws{p}\left(\gamma(0),\gamma(1)\right)=\dgws{p}\left(\tilde{\Delta}_1,\tilde{\Delta}_n\right)$ for later use.

\begin{proposition}\label{prop:delta1 deltan}
For each $p\in[1,\infty)$ and any positive integer $n\in\mathbb N$, we have that
$$\dgws{p}\left(\tilde{\Delta}_1,\tilde{\Delta}_n\right)=\frac{1}{2}.$$
\end{proposition}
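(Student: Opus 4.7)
The plan is to reduce the computation of $\dgws{p}(\tilde{\Delta}_1,\tilde{\Delta}_n)$ to a finite-dimensional optimization using the metric coupling formulation (Remark 2.22) and the fact that there is a unique probability coupling between a Dirac measure and any other measure.

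First I would establish the upper bound $\dgws{p}(\tilde{\Delta}_1,\tilde{\Delta}_n) \leq \tfrac{1}{2}$. Put $Z := \Delta_1 \sqcup \Delta_n$, write $p_0$ for the unique point of $\Delta_1$, and define $d \in \mathcal{D}(d_1,d_n)$ by setting $d(p_0,x_i) = \tfrac{1}{2}$ for every $i = 1,\dots,n$. The triangle inequality $1 = d_n(x_i,x_j) \leq d(x_i,p_0) + d(p_0,x_j) = 1$ is satisfied (with equality), so $d$ is a genuine metric coupling. The only coupling of $\mu_1 = \delta_{p_0}$ with $\mu_n$ is the product $\delta_{p_0} \otimes \mu_n$, hence
\[
\dW{p}^{(Z,d)}(\mu_1,\mu_n)^p = \int_Z d(p_0,y)^p\, d\mu_n(y) = \frac{1}{n}\sum_{i=1}^n \left(\tfrac{1}{2}\right)^p = \left(\tfrac{1}{2}\right)^p,
\]
which gives the upper bound.

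Next I would establish the matching lower bound. Let $d \in \mathcal{D}(d_1,d_n)$ be an arbitrary metric coupling and set $r_i := d(p_0,x_i)$. The triangle inequality applied to $1 = d_n(x_i,x_j) \leq r_i + r_j$ forces
\[
r_i + r_j \geq 1 \quad \text{for all } i \neq j.
\]
Uniqueness of couplings again yields $\dW{p}^{(Z,d)}(\mu_1,\mu_n)^p = \tfrac{1}{n}\sum_{i=1}^n r_i^p$, so the task becomes showing
\[
\frac{1}{n}\sum_{i=1}^n r_i^p \geq \left(\tfrac{1}{2}\right)^p
\]
for any nonnegative $(r_1,\dots,r_n)$ satisfying the pairwise constraints.

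The key combinatorial observation is that the constraints $r_i + r_j \geq 1$ allow at most one index $i_0$ to have $r_{i_0} < \tfrac{1}{2}$ (if two such indices existed, their sum would be less than $1$). If no such index exists, each $r_i \geq \tfrac{1}{2}$ and the bound is immediate. Otherwise, setting $r = r_{i_0} \in [0,\tfrac{1}{2})$, every other $r_j$ satisfies $r_j \geq 1 - r$, so it suffices to minimize $f(r) := r^p + (n-1)(1-r)^p$ over $r \in [0,\tfrac{1}{2}]$ and check $f(r) \geq n/2^p$. A short calculus argument does the job: $f'(r) = p\bigl(r^{p-1} - (n-1)(1-r)^{p-1}\bigr)$, which (for $p \geq 1$, $n \geq 2$, and $r \in [0,\tfrac{1}{2}]$) is nonpositive because $r \leq 1-r$ and $n-1 \geq 1$. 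Hence $f$ is nonincreasing on $[0,\tfrac{1}{2}]$ and attains its minimum at $r = \tfrac{1}{2}$, where $f(\tfrac{1}{2}) = n/2^p$. The main (mild) obstacle is handling the edge cases $n=2$ and $p=1$ where the derivative vanishes on part of the interval, but in those regimes $f$ is either constant equal to $n/2^p$ on a subinterval containing $\tfrac{1}{2}$ or one checks the lower bound directly. Combining the two bounds yields $\dgws{p}(\tilde{\Delta}_1,\tilde{\Delta}_n) = \tfrac{1}{2}$.
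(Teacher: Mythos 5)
Your proof is correct, and it shares the paper's overall scaffolding: both reduce to the metric coupling formulation of \Cref{rmk:metric coupling}, both exploit the fact that the unique coupling of $\delta_{p_0}$ with $\mu_n$ is the product measure, and both get the upper bound from the coupling $d(p_0,x_i)=\tfrac12$. Where you genuinely diverge is the lower bound. The paper first applies the power-mean inequality to pass from the $\ell^p$ average $\lc\frac1n\sum_i r_i^p\rc^{1/p}$ down to the $\ell^1$ average $\frac1n\sum_i r_i$, then rewrites that average as $\frac{1}{n(n-1)}\sum_{i<j}(r_i+r_j)$ and applies the triangle inequality $r_i+r_j\geq d_n(x_i,x_j)=1$ to each of the $\binom{n}{2}$ pairs; this is two lines and needs no case analysis. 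You instead work directly in the constraint polytope $\{r_i+r_j\geq 1\}$: you observe that at most one coordinate can dip below $\tfrac12$, reduce to minimizing $f(r)=r^p+(n-1)(1-r)^p$ on $[0,\tfrac12]$, and show $f$ is nonincreasing there. Your route is more elementary (no appeal to the generalized means inequality) and actually identifies the structure of the minimizing configurations, at the cost of a case split and a derivative computation; note that the ``edge cases'' $n=2$ and $p=1$ you flag are not really obstacles, since the sign computation $f'(r)\leq p(1-r)^{p-1}(2-n)\leq 0$ goes through uniformly for $n\geq 2$ and $p\geq 1$. (Both your argument and the paper's implicitly assume $n\geq 2$; for $n=1$ the stated value $\tfrac12$ is vacuously wrong and the pairwise constraints are empty, but that is an issue with the statement as written, not with your proof.)
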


\begin{proof}
Note that there exists only one coupling $\pi_n$ between $\mu_1$ and $\mu_n$. Let $\{p\}$ and $X_n=\{x_1,\ldots,x_n\}$ be the underlying sets of $\Delta_1$ and $\Delta_n$, respectively. Assume that $d\in\mathcal{D}(d_1,d_n)$, i.e., $d$ is a metric coupling between $d_1$ and $d_n$ (cf. \Cref{rmk:metric coupling}). Then, we have that 
$$\dgws{p}\left(\tilde{\Delta}_1,\tilde{\Delta}_n\right)\leq \lc\frac{1}{n}\sum_{i=1}^n d^p(p,x_i) \rc^\frac{1}{p}.$$
Since $p\geq 1$, by the generalized means inequality (see \cite{bullen2013means}), we have that
\begin{align*}
    \lc\frac{1}{n}\sum_{i=1}^n d^p(p,x_i) \rc^\frac{1}{p}&\geq \frac{1}{n} \sum_{i=1}^nd(p,x_i)=\frac{1}{n}\cdot\frac{1}{n-1}\sum_{i<j}(d(x_i,p)+d(p,x_j)) \\
    &\geq \frac{1}{n(n-1)}\sum_{i<j}d_n(x_i,x_j)=\frac{1}{n(n-1)}\cdot \frac{n(n-1)}{2}\cdot 1=\frac{1}{2}.
\end{align*}
If we let $d(p,x_i)= \frac{1}{2}$ for each $i=1,\ldots,n$, then $d$ is a metric coupling between $d_1$ and $d_n$ satisfying
\[\lc\frac{1}{n}\sum_{i=1}^n d^p(p,x_i) \rc^\frac{1}{p}=\frac{1}{2}.\]
Therefore, we obtain the proposition.
\end{proof}

\subsection{Deviant geodesics} 
In this section, we construct an infinite family of Gromov-Wasserstein geodesics $\{\tilde{\gamma}_\sigma\}_{\sigma\in(0,1]}$ connecting $\tilde{\Delta}_1$ and $\tilde{\Delta}_n$ such that for each $\sigma\in(0,1]$, $\tilde{\gamma}_\sigma$ is different from the straight-line $\dgws p$ geodesic $\gamma$. Since $\gamma$ is the unique straight-line $\dgws p$ geodesic connecting $\tilde{\Delta}_1$ and $\tilde{\Delta}_n$, $\{\tilde{\gamma}_\sigma\}_{\sigma\in(0,1]}$ is an infinite family of deviant geodesics.

For any $\sigma\in(0,1]$ and $t\in[0,1]$, define
$$f(\sigma,t)\coloneqq\begin{cases}t\sigma,&0\leq t\leq \frac{1}{2}\\ \sigma-t\sigma,& \frac{1}{2}<t\leq 1\end{cases}. $$

Let $m$ be an integer such that $1\leq m\leq n$. For each $t\in(0,1)$, let $X_{n+m}^t\coloneqq\{x_1^t,x_2^t,\ldots,x_{n+m}^t\}$ be an $(n+m)$-point set. Choose $\sigma\in(0,1]$ and define for each $t\in(0,1)$ a function $ d_t^\sigma:X_{n+m}^t\times X_{n+m}^t\rightarrow\mathbb{R}_{\geq 0}$ as follows:

$$\forall 1\leq i,j\leq n+m,\quad d_t^\sigma(x_i,x_j)\coloneqq\begin{cases}0,&i=j\\
f(\sigma,t),&|j-i|=n\\
t,&\text{otherwise}\end{cases}. $$
It was proved in \cite[Section 1.1.1]{chowdhury2018explicit} that $(X_{n+m}^t, d_t^\sigma)$ is a metric space when $0<t<1$. Moreover, they proved that
$$\gamma_{\sigma}(t)\coloneqq\begin{cases}\Delta_1, & t=0\\
(X_{n+m}^t,d_t^\sigma), & t\in(0,1)\\
\Delta_n, &t=1
\end{cases}$$
is a Gromov-Hausdorff geodesic connecting $\Delta_1$ and $\Delta_n$. Now, we assign to $\gamma_{\sigma}(t)$ for each $t\in(0,1)$ a probability measure $\nu_t$ as follows: 
$$\nu_t(\{x_i^t\})\coloneqq\begin{cases}\frac{1}{2n},&1\leq i\leq m\text{ or }n+1\leq i\leq n+m\\
\frac{1}{n},&\text{otherwise.}\end{cases} $$

Define a curve $\tilde{\gamma}_{\sigma}(t):[0,1]\rightarrow \ms^w$ of metric measure spaces as follows:
$$\tilde{\gamma}_{\sigma}(t)\coloneqq\begin{cases}\tilde{\Delta}_1 & t=0\\
(X_{n+m}^t,d_t^\sigma,\nu_t) & t\in(0,1)\\
\tilde{\Delta}_n &t=1
\end{cases}.$$

\begin{proposition}
For each $\sigma\in(0,1]$ and each $p\in[1,\infty)$, we have that
\begin{enumerate}
    \item $\tilde{\gamma}_{\sigma}$ is an $\ell^p$-Gromov-Wasserstein geodesic;
    \item for any $t\in(0,1)$, we have that $\tilde{\gamma}_\sigma\lc t\rc\not\cong_w\gamma\lc t\rc$. Given any $\sigma'\in(0,1]$ such that $\sigma\neq\sigma'$, we have that $\tilde{\gamma}_\sigma\lc t\rc\not\cong_w\tilde{\gamma}_{\sigma'}\lc t\rc$;
    \item $\tilde{\gamma}_\sigma$ is Hausdorff-bounded.
\end{enumerate}
\end{proposition}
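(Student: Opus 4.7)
My approach for the three parts mirrors the template for the deviant Gromov--Hausdorff geodesics of \cite{chowdhury2018explicit}, adapted here to the Gromov--Wasserstein setting.

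For part (1), since $\dgws{p}(\tilde{\Delta}_1,\tilde{\Delta}_n)=\tfrac{1}{2}$ by \Cref{prop:delta1 deltan}, it suffices by the triangle inequality to prove $\dgws{p}(\tilde{\gamma}_\sigma(s),\tilde{\gamma}_\sigma(t))\leq \tfrac{|s-t|}{2}$ for every pair $s,t\in[0,1]$. For $s,t\in(0,1)$ I will use the identity correspondence $R=\{(x_k^s,x_k^t):1\leq k\leq n+m\}$; a case analysis on the formula defining $d_t^\sigma$, together with the bound $|f(\sigma,s)-f(\sigma,t)|\leq|s-t|$ (which follows from $\sigma\leq 1$ and the piecewise linearity of $f$), shows $\dis(R)\leq|s-t|$. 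Following the amalgamation construction in \Cref{lm:dgh_hausdorff-realizable}, I define
\[
d(x_i^s,x_j^t)\coloneqq\inf_k\bigl(d_s^\sigma(x_i,x_k)+d_t^\sigma(x_k,x_j)+\tfrac{|s-t|}{2}\bigr)
\]
on $X_{n+m}^s\sqcup X_{n+m}^t$, which is a valid metric coupling extending $d_s^\sigma$ and $d_t^\sigma$. Taking $k=i$ yields $d(x_i^s,x_i^t)\leq \tfrac{|s-t|}{2}$, so the identity coupling between $\nu_s$ and $\nu_t$ has $\ell^p$-Wasserstein cost at most $\tfrac{|s-t|}{2}$ in this ambient space. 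The boundary cases $s=0$ or $t=1$ are handled analogously: for $\dgws{p}(\tilde{\Delta}_1,\tilde{\gamma}_\sigma(t))$ I place the unique point of $\Delta_1$ at distance $\tfrac{t}{2}$ from every point of $X_{n+m}^t$, while for $\dgws{p}(\tilde{\gamma}_\sigma(t),\tilde{\Delta}_n)$ I use the natural surjection $x_i^t\mapsto y_{((i-1)\bmod n)+1}$, whose graph is a correspondence of distortion at most $1-t$, and amalgamate. Once all these upper bounds are established, the endpoint distance forces every inequality to be an equality, so $\tilde{\gamma}_\sigma$ is a $\dgws{p}$-geodesic.

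For part (2), I distinguish the spaces at the level of their underlying metric structure. For $t\in(0,1)$, the straight-line space $\gamma(t)$ has exactly $n$ points whereas $\tilde{\gamma}_\sigma(t)$ has $n+m>n$ points, so the underlying metric spaces cannot be isometric and hence $\tilde{\gamma}_\sigma(t)\not\cong_w\gamma(t)$. Between $\tilde{\gamma}_\sigma(t)$ and $\tilde{\gamma}_{\sigma'}(t)$ with $\sigma\neq\sigma'$ and $t\in(0,1)$, both underlying spaces have $n+m$ points but their multisets of pairwise distances differ: each contains exactly $m$ paired distances equal to $f(\sigma,t)$ (respectively $f(\sigma',t)$), and $f(\cdot,t)$ is linear with positive slope in $\sigma$ for every fixed $t\in(0,1)$, so $f(\sigma,t)\neq f(\sigma',t)$. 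Since the multiset of pairwise distances is an isometric invariant, the underlying metric spaces are not isometric and hence $\tilde{\gamma}_\sigma(t)\not\cong_w\tilde{\gamma}_{\sigma'}(t)$.

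For part (3), I invoke \Cref{prop:f-bdd-finite}. For every $t\in[0,1]$, the cardinality of $\tilde{\gamma}_\sigma(t)$ is at most $N\coloneqq n+m$; its atomic masses take values in $\{1,\tfrac{1}{n},\tfrac{1}{2n}\}$ and so are uniformly bounded below by $C\coloneqq\tfrac{1}{2n}$; and its diameter is at most $D\coloneqq 1$, since $f(\sigma,t)\leq t\leq 1$ for all $\sigma\in(0,1]$ and $t\in[0,1]$. With these three uniform bounds, \Cref{prop:f-bdd-finite} yields Hausdorff-boundedness of $\tilde{\gamma}_\sigma$. The main obstacle across the whole plan lies in part (1): verifying that the amalgamated function $d$ actually satisfies the triangle inequality across both copies of $X_{n+m}$ rests on the distortion bound $\dis(R)\leq|s-t|$ and demands a careful case analysis on whether the pairs of indices in play are paired (contributing distance $f(\sigma,\cdot)$) or unpaired (contributing distance $s$ or $t$).
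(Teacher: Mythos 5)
Your proposal is correct and follows essentially the same route as the paper: for part (1) both arguments use the identity measure coupling between $\nu_s$ and $\nu_t$ inside a metric coupling that places $x_i^s$ at distance $\frac{|s-t|}{2}$ from $x_i^t$, giving the bound $|s-t|\cdot\dgws{p}\big(\tilde{\Delta}_1,\tilde{\Delta}_n\big)$, and parts (2) and (3) are argued identically (cardinality and distance-multiset invariants, then \Cref{prop:f-bdd-finite}). The only cosmetic difference is that the paper writes the metric coupling $d_{st}^\sigma$ down explicitly and verifies the triangle inequality directly, whereas you derive it from the distortion bound $\dis(R)\leq|s-t|$ via the amalgamation of \Cref{lm:dgh_hausdorff-realizable}; both lead to the same cost computation (for the endpoint $t=1$ the one small check worth making explicit is that your surjection pushes $\nu_s$ forward exactly to $\mu_n$, which is precisely why the weights $\tfrac{1}{2n}$ and $\tfrac{1}{n}$ were chosen).
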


\begin{proof}
For item 1, we need to prove for any $0\leq s<t\leq 1$ that $\dgws p\lc\tilde{\gamma}_\sigma(s),\tilde{\gamma}_\sigma(t)\rc\leq|t-s|\cdot \dgws{p}(\tilde{\Delta}_1,\tilde{\Delta}_n).$ There are three cases: (i) $s,t\in(0,1)$; (ii) $s=0$ and $t\in(0,1)$; (iii) $s\in(0,1)$ and $t=1$. We only prove case (i) and the rest two cases follow a similar strategy. Let $\pi_{st}\in\mathcal{C}(\nu_s,\nu_t)$ be the identity coupling, i.e., $\pi_{st}=\sum_{i=1}^{n+m}\nu_s(x_i^s)\delta_{(x_i^s,x_i^t)}$.
Define a function $ d_{st}^\sigma:X_{n+m}^s\sqcup X_{n+m}^t\times X_{n+m}^s\sqcup X_{n+m}^t\rightarrow\mathbb R_{\geq 0}$ as follows:
\begin{enumerate}
    \item $ d_{st}^\sigma|_{X_{n+m}^s\times X_{n+m}^s}\coloneqq d_s^\sigma$ and $ d_{st}^\sigma|_{X_{n+m}^t\times X_{n+m}^t}\coloneqq d_t^\sigma$;
    \item for any $x_i^s\in X_{n+m}^s $ and $x_j^t\in X_{n+m}^t $, 
$$d_{st}^\sigma\lc x_i^s,x_j^t\rc\coloneqq\begin{cases}\frac{|t-s|}{2},&i=j\\
\frac{|t-s|}{2}+\min(f(\sigma,s),f(\sigma,t)),&|i-j|=n\\
\frac{s+t}{2},&\text{otherwise.}\end{cases}$$
\item  for any $x_i^s\in X_{n+m}^s $ and $x_j^t\in X_{n+m}^t $, $d_{st}^\sigma\big( x_i^t,x_j^s\big)\coloneqq d_{st}^\sigma\big( x_j^s,x_i^t\big)$.
\end{enumerate}
Then, it is easy to check that $d_{st}^\sigma\in\mathcal{D}( d_s^\sigma, d_t^\sigma)$ (cf. \Cref{rmk:metric coupling}).
Therefore, for any $p\in[1,\infty)$,
\begin{align*}
    \dgws p\lc\tilde{\gamma}_\sigma(s),\tilde{\gamma}_\sigma(t)\rc&\leq\lc\int_{X_{n+m}^s\times X_{n+m}^t}\lc  d_{st}^\sigma\lc x^s,x^t\rc\rc^p\,d\pi_{st}\lc x^s,x^t\rc\rc^\frac{1}{p}\\
    &=\lc \sum_{i=1}^m\lc\frac{|t-s|}{2}\rc^p\cdot\frac{1}{2n}+\sum_{i=m+1}^n\lc\frac{|t-s|}{2}\rc^p\cdot\frac{1}{n}+\sum_{i=n+1}^{n+m}\lc\frac{|t-s|}{2}\rc^p\cdot\frac{1}{2n}\rc^\frac{1}{p}\\
    &=\frac{|t-s|}{2}=|t-s|\cdot \dgws{p}(\tilde{\Delta}_1,\tilde{\Delta}_n),
\end{align*}
where we use \Cref{prop:delta1 deltan} in the last equality. This implies that $\tilde{\gamma}_\sigma$ is an $\ell^p$-Gromov-Wasserstein geodesic connecting $\tilde{\Delta}_1$ and $\tilde{\Delta}_n$.

For item 2, given $t\in(0,1)$ and $\sigma\in(0,1]$, we have that $\tilde{\gamma}_\sigma\lc t\rc$ is a $(n+m)$-point metric measure space (with full support). Since $\gamma\lc\frac{1}{2}\rc$ is an $n$-point space, then $\tilde{\gamma}_\sigma\lc\frac{1}{2}\rc\not\cong_w\gamma\lc\frac{1}{2}\rc$. For $\sigma\neq\sigma'$, the underlying metric spaces of $\tilde{\gamma}_\sigma$ and $\tilde{\gamma}_{\sigma'}$ have different sets of distance values and thus they must not be isometric, let alone isomorphic. Therefore, $\tilde{\gamma}_\sigma\lc t\rc\not\cong_w\tilde{\gamma}_{\sigma'}\lc t\rc$.

For item 3, that $\tilde{\gamma}_\sigma$ is Hausdorff-bounded follows directly from \Cref{prop:f-bdd-finite}.
\end{proof}
{From the above proposition, we conclude that $\gamma$ is different from $\tilde{\gamma}_\sigma$ for all $\sigma\in(0,1]$. Moreover, $\tilde{\gamma}_\sigma\neq\tilde{\gamma}_{\sigma'}$ for all $\sigma,\sigma'\in(0,1]$, $\sigma'\neq \sigma$. Therefore, $\{\tilde{\gamma}_\sigma\}_{\sigma\in(0,1]}$ is an infinite family of deviant Gromov-Wasserstein geodesics connecting $\tilde{\Delta}_1$ and $\tilde{\Delta}_n$.}

\subsection{Branching geodesics} 
We construct an infinite family of $\ell^p$-Gromov-Wasserstein geodesics $\{\gamma^a\}_{a\in(0,1)}$ branching off from the straight-line $\dgws p$ geodesic $\gamma:[0,1]\rightarrow\ws$ from $\tilde{\Delta}_1$ to $\tilde{\Delta}_n$. More precisely, for each $a\in(0,1)$, $\gamma(t)=\gamma^a(t)$ for $t\in[0,a]$ and $\gamma(t)\not\cong_w\gamma^a(t)$ for $t\in(a,1]$. See \Cref{fig:branching} for an illustration.

\begin{figure}[htb]
	\centering		\includegraphics[width=0.4\textwidth]{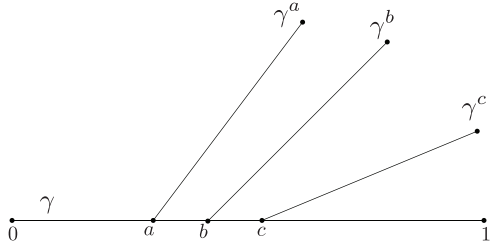}
	\caption{\textbf{Illustration of the family $\{\gamma^a\}_{a\in(0,1)}$}} \label{fig:branching}
\end{figure}

Given $a\in(0,1)$, we define for each $t\in(a,1]$ a metric space as follows: let $X_{n+1}^t\coloneqq\{x_1^t,\ldots,x_n^t,x_{n+1}^t\}$ and let $ d_t^a:X_{n+1}^t\times X_{n+1}^t\rightarrow\mathbb{R}_{\geq 0}$ be such that $ d_t^a\big( x_i^t,x_j^t\big)=t\cdot\delta_{i\neq j}$ for $i,j=1,\ldots,n$, $ d_t^a\big( x_i^t,x_{n+1}^t\big)=t$ for $i=1,\ldots,n-1$ and $ d_t^a\big( x_n^t,x_{n+1}^t\big)=t-a$. That $(X_{n+1}^t, d_t^a)$ is a metric space follows from \cite[Section 1.1.2]{chowdhury2018explicit}. Endow $(X_{n+1}^t, d_t^a)$ with a probability measure $\nu_t$ as follows:
$$\nu_t(\{x_i^t\})\coloneqq\begin{cases}\frac{1}{n},&1\leq i\leq n-1\\
\frac{1}{2n},&i=n\text{ or }i=n+1\end{cases} $$

Now, we define a curve $\gamma^a:[0,1]\rightarrow\ws$ as follows:
$$\gamma^a(t)\coloneqq\begin{cases}
\gamma(t),& t\in[0,a]\\
(X_{n+1}^t, d_t^a,\nu_t),&t\in(a,1]
\end{cases}.$$

\begin{proposition}
For each $a\in(0,1)$ and each $p\in[1,\infty)$, we have that
\begin{enumerate}
    \item $\gamma^a$ is an $\ell^p$-Gromov-Wasserstein geodesic;
    \item $\gamma^a(t)\cong_w\gamma(t)$ for $t\in[0,a]$ whereas $\gamma^a(t)\not\cong_w\gamma(t)$ for $t\in(a,1]$;
    \item $\gamma^a$ is Hausdorff-bounded.
\end{enumerate}

\end{proposition}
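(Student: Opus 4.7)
The plan is to handle each of the three items in turn, with item 1 (that $\gamma^a$ is a geodesic) constituting the main technical content. By \Cref{prop:delta1 deltan} together with the triangle inequality and the matching endpoints $\gamma^a(0) = \tilde{\Delta}_1$ and $\gamma^a(1) = \tilde{\Delta}_n$, it suffices to verify the upper bound $\dgws{p}(\gamma^a(s),\gamma^a(t)) \leq \frac{|t-s|}{2}$ for all $0 \leq s < t \leq 1$. I will split into three cases according to how $s,t$ relate to the branching point $a$. If $s,t \in [0,a]$, then $\gamma^a$ coincides with the straight-line $\dgws p$ geodesic $\gamma$ on this range and the bound follows from \Cref{thm:straight-line-gw-geo}.

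If $s,t \in (a,1]$, both $\gamma^a(s)$ and $\gamma^a(t)$ share the underlying set $X_{n+1}$ and the same measure values. I will construct a metric coupling $d_{st}^a\in\mathcal{D}(d_s^a,d_t^a)$ on $X_{n+1}^s \sqcup X_{n+1}^t$ by taking the shortest-path metric on the ``bridge'' graph obtained by adjoining crossing edges $d_{st}^a(x_i^s,x_i^t) = \frac{t-s}{2}$ for $i=1,\ldots,n+1$. To check that this restricts correctly to $d_s^a$ and $d_t^a$, I must verify that for each pair of points on the same side the direct edge is no longer than any two-step detour through the other side; the potentially awkward case involves the pair $(x_n,x_{n+1})$ whose in-side distance is $t-a$ (respectively $s-a$) rather than $t$ (respectively $s$), but this reduces to the elementary inequality $s \leq t$. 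The identity coupling $\pi_{st} = \sum_i \nu_s(x_i^s)\,\delta_{(x_i^s,x_i^t)}$ then yields $\dW{p}^{Z}(\nu_s,\nu_t) \leq \frac{t-s}{2}$, as needed. If instead $s \in [0,a]$ and $t \in (a,1]$, I pair $x_i^s \leftrightarrow x_i^t$ for $i=1,\ldots,n-1$ and split the mass on $x_n^s$ equally between $x_n^t$ and $x_{n+1}^t$; the bridge metric uses $d_{st}^a(x_i^s,x_i^t) = \frac{t-s}{2}$ for $i \leq n-1$ together with $d_{st}^a(x_n^s,x_n^t) = d_{st}^a(x_n^s,x_{n+1}^t) = \frac{t-s}{2}$, which is consistent with $d_t^a(x_n^t,x_{n+1}^t) = t-a$ precisely because $s \leq a$ gives $t-a \leq t-s$. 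The subcase $s = 0$ is even simpler, using the single point of $\tilde{\Delta}_1$ coupled to $X_{n+1}^t$ according to $\nu_t$.

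For item 2, for $t \in (a,1]$ the support of $\gamma^a(t)$ has $n+1$ points whereas the support of $\gamma(t)$ has only $n$; since an isomorphism of fully supported metric measure spaces forces a bijection of supports, $\gamma^a(t) \not\cong_w \gamma(t)$, and equality on $[0,a]$ is by construction. For item 3, I invoke \Cref{prop:f-bdd-finite} with $N = n+1$, $C = \frac{1}{2n}$, and $D = 1$; each of the three hypotheses is immediate from the definition of $\gamma^a$. The main obstacle lies in the bookkeeping required to verify the triangle inequality (and the correct restriction property) for the bridge metric in cases (ii) and (iii) of item 1, especially due to the anomalous distance $t-a$ on the pair $(x_n,x_{n+1})$, but each triangle reduces to an elementary inequality in $s$, $t$, and $a$.
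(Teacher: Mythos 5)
Your proposal is correct and follows essentially the same route as the paper: an explicit metric coupling on $X^s\sqcup X^t$ with crossing distance $\frac{t-s}{2}$ on matched points (your shortest-path/bridge metric reproduces exactly the paper's explicit values, e.g.\ $\frac{s+t}{2}-a$ for the cross pair $(x_n,x_{n+1})$), combined with the identity coupling, plus the cardinality argument for item 2 and \Cref{prop:f-bdd-finite} for item 3. The only small deviation is that you treat the mixed case $0\le s<a<t\le 1$ directly via a mass-splitting coupling (using $s\le a$ to make $t-a\le t-s$ compatible with the bridge), whereas the paper reduces it to the cases $s,t\in[0,a]$ and $s=a<t$ by the triangle inequality; both are valid.
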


\begin{proof}
First of all, using a similar strategy as in proving $\dgws p\big(\tilde{\Delta}_1,\tilde{\Delta}_n\big)=\frac{1}{2}$ (cf. \Cref{prop:delta1 deltan}), one can show that $\dgws p(\gamma^a(0),\gamma^a(1))=\frac{1}{2}$ and we omit details here.

For item 1, we need to prove for any $0\leq s<t\leq 1$ that 
\[\dgws p\lc\gamma^a(s),\gamma^a(t)\rc\leq|t-s|\cdot \dgws{p}(\gamma^a(0),\gamma^a(1)).\]
There are four cases: (i) $s,t\in[0,a]$; (ii) $0\leq s< a<t\leq 1$; (iii) $s=a<t\leq 1$; (iv) $s,t\in(a,1]$. Case (i) follows from the fact that $\gamma$ is a geodesic. Case (ii) follows from case (i) and case (iii). We only prove case (iv) and omit the proof of case (iii) since it follows a  strategy similar to the one used for proving case (iv). 

Fix $a< s<t\leq 1$. Let $\pi_{st}\in\mathcal{C}(\nu_s,\nu_t)$ be the identity coupling, i.e., $\pi_{st}=\sum_{i=1}^{n+1}\nu_s(x_i^s)\delta_{(x_i^s,x_i^t)}$. Define a function $d_{st}^a:X_{n+1}^s\sqcup X_{n+1}^t\times X_{n+1}^s\sqcup X_{n+1}^t\rightarrow\mathbb R_{\geq 0}$ as follows:
\begin{enumerate}
    \item $d_{st}^a|_{X_{n+1}^s\times X_{n+1}^s}\coloneqq d_s^a$ and $d_{st}^a|_{X_{n+1}^t\times X_{n+1}^t}\coloneqq d_t^a$;
    \item for any $x_i^s\in X_{n+1}^s $ and $x_j^t\in X_{n+1}^t $, 
$$d_{st}^a(x_i^s,x_j^t)\coloneqq\begin{cases}\frac{|t-s|}{2},&i=j\\
\frac{t+s}{2}-a,&(i,j)=(n,n+1)\text{ or }(n+1,n)\\
\frac{t+s}{2},&\text{otherwise}\end{cases}.$$
\item  for any $x_i^s\in X_{n+1}^s $ and $x_j^t\in X_{n+1}^t $, $d_{st}^a\lc x_i^t,x_j^s\rc\coloneqq d_{st}^a\lc x_j^s,x_i^t\rc$.
\end{enumerate}
Then, it is easy to check that $d_{st}^a\in\mathcal{D}(d_s^a, d_t^a)$ (cf. \Cref{rmk:metric coupling}). Therefore, for any $p\in[1,\infty)$,
\begin{align*}
    \dgws p\lc{\gamma}^a(s),{\gamma}^a(t)\rc&\leq\lc\int_{X_{n+1}^s\times X_{n+1}^t}\lc d_{st}^a\lc x^s,x^t\rc\rc^p\,d\pi_{st}\lc x^s,x^t\rc\rc^\frac{1}{p}\\
    &=\lc \sum_{i=1}^{n-1}\lc\frac{|t-s|}{2}\rc^p\cdot\frac{1}{n}+\sum_{i=n}^{n+1}\lc\frac{|t-s|}{2}\rc^p\cdot\frac{1}{2n}\rc^\frac{1}{p}\\
    &=\frac{|t-s|}{2}\\
    &=|t-s|\cdot \dgws{p}(\gamma^a(0),\gamma^a(1)).
\end{align*}

For item 2, by definition of $\gamma^a$, $\gamma^a(t)=\gamma(t)$ for $t\in[0,a]$. When $t\in(a,1]$, $\gamma^a(t)$ is an $(n+1)$-point space and as a result, $\gamma^a(t)\not\cong_w\gamma(t)$.

For item 3, that ${\gamma}^a$ is Hausdorff-bounded is a direct consequence of \Cref{prop:f-bdd-finite}.
\end{proof}

Therefore, for each $a\in(0,1)$, ${\gamma}^a$ is an $\ell^p$-Gromov-Wasserstein geodesic branching off from the straight-line $\dgws p$ geodesic $\gamma$ at $t=a$ and thus $\{\gamma^a\}_{a\in(0,1)}$ is an infinite family of Gromov-Wasserstein geodesics branching off from the straight-line $\dgws p$ geodesic $\gamma$.

\bibliography{biblio-wm}
\bibliographystyle{alpha}

\end{document}